\numberwithin{equation}{section}
\newcommand{\IC}{\bm{IC}}
\newcommand{\cHom}{\mc{H}om}
\newcommand{\cD}{\mathscr{D}}
\newcommand{\cH}{\mc{H}}
\newcommand{\cE}{\mc{E}}
\newcommand{\cF}{\mc{F}}
\newcommand{\cG}{\mc{G}}
\newcommand{\cL}{\mc{L}}
\newcommand{\cM}{\mc{M}}
\newcommand{\cO}{\mc{O}}
\newcommand{\C}{\mathbb{C}}
\newcommand{\Z}{\mathbb{Z}}
\newcommand{\R}{\mathbb{R}}
\newcommand{\Q}{\mathbb{Q}}
\newcommand{\HH}{\mathbb{H}}
\newcommand{\mf}{\mathfrak}
\newcommand{\mc}{\mathcal}
\newcommand{\bp}{\begin{pmatrix}}
\newcommand{\ep}{\end{pmatrix}}
\newcommand{\SBim}{\mathbb{S}\text{Bim}}
\DeclareMathOperator{\pr}{pr}
\DeclareMathOperator{\pt}{pt}
\DeclareMathOperator{\isom}{\cong}
\DeclareMathOperator{\id}{id}
\DeclareMathOperator{\Loc}{Loc}
\DeclareMathOperator{\ch}{ch}
\DeclareMathOperator{\End}{End}
\DeclareMathOperator{\Hom}{Hom}
\DeclareMathOperator{\Ind}{Ind}
\DeclareMathOperator{\stab}{stab}
\DeclareMathOperator{\Lie}{Lie}
\DeclareMathOperator{\Res}{Res}
\DeclareMathOperator{\For}{For}
\DeclareMathOperator{\im}{im}
\DeclareMathOperator{\rres}{res}
\theoremstyle{plain}
\newtheorem{theorem}{Theorem}[subsection]
\theoremstyle{definition}
\newtheorem{definition}[theorem]{Definition}
\theoremstyle{plain}
\newtheorem{lemma}[theorem]{Lemma}
\theoremstyle{plain}
\newtheorem{proposition}[theorem]{Proposition}
\theoremstyle{remark}
\theoremstyle{plain}
\newtheorem{corollary}[theorem]{Corollary}
\theoremstyle{remark}
\newtheorem{remark}[theorem]{Remark}
\theoremstyle{plain}
\newtheorem{example}[theorem]{Example}
\theoremstyle{definition}
\newtheorem{notation}[theorem]{Notation}
\def\l@subsection{\@tocline{2}{0pt}{2.5pc}{5pc}{}}
\def\l@subsubsection{\@tocline{3}{
0pt}{5pc}{5pc}{}}
\title{A categorification of the Lusztig--Vogan module}
\author{Scott Larson and Anna Romanov}
\date{}
\begin{document}

\maketitle

\begin{abstract}
    We construct two categorifications of the Lusztig--Vogan module associated to a real reductive algebraic group. The first categorification is given by semisimple complexes in an equivariant derived category, and the second is constructed as a module category over Soergel bimodules. Our categorifications are related by taking equivariant hypercohomology.  
\end{abstract}

\tableofcontents

\section{Introduction}
\label{sec: introduction}

\subsection{Overview}
\label{sec: Overview}
Let $G$ be a connected complex reductive algebraic group, $\theta$ a holomorphic involution of $G$, and $K$ a finite-index subgroup of the fixed point group $G^\theta$. From this data, Lusztig--Vogan constructed a module over the Hecke algebra of $G$ and a collection of polynomials that arise as entries in a change-of-basis matrix for this module \cite{Vogan3, LV}. These Kazhdan--Lusztig--Vogan (KLV) polynomials have both geometric and  representation theoretic significance: geometrically, they capture subtle information about the singularity of closures of $K$-orbits in the flag variety of $G$, and representation theoretically, they give character formulas for irreducible representations of the associated real reductive group  \(G_\R\) (cf., e.g., \cite{AdamsLeeuwenTrapaVogan2020}). 

In this paper, we take a Soergel bimodule approach to the Lusztig--Vogan module. In classic work \cite{Soergel90, Soergel2000}, Soergel explained that the hypercohomology functor applied to certain categories of constructible sheaves appearing in Lie theory is fully faithful. The upshot is that one can understand categories of constructible sheaves algebraically, as certain modules or bimodules. In this paper, we explain that a piece of the Lusztig--Vogan module can also be understood in this way after taking hypercohomology of a category of constructible sheaves. 


A major motivation for Soergel's work was to establish certain Koszul duality equivalences on the derived category of category $\mc{O}$. Similarly, one motivation for giving a bimodule theoretic interpretation of the Lusztig--Vogan module might be to establish Soergel's conjectures \cite{Soe00}, which generalize Koszul duality to real reductive groups. It is this point of view that has been taken in recent work of Bezrukavnikov--Vilonen \cite{BV}, which has results close to those of this paper\footnote{We comment on these results in more detail in Section \ref{sec: relation to the results of Bezrukavnikov-Vilonen}}. 

Another motivation for our work is to understand as explicitly as possible the Lusztig--Vogan module from a Soergel bimodule point of view, as a rich new source of module categories for Soergel bimodules. 


\subsection{Geometry of the Lusztig--Vogan module}
\label{sec: the geometry of the Lusztig--Vogan module}
The remainder of the introduction describes our results in more detail. Fix a Borel subgroup $B$ of $G$. The group $K$ acts on the flag variety $X=G/B$ with finitely many orbits, and the Lusztig--Vogan module is constructed using the geometry of these orbits. Denote by $\mathscr{D}$ the set of pairs $(\mc{O}, \mc{L})$ consisting of a $K$-orbit $\mc{O}$ and an irreducible $K$-equivariant local system on $\mc{O}$, and set $M_{LV}$ to be the free $\Z[q^{\pm 1}]$-module with basis $\mathscr{D}$. Let $(W,S)$ be the Coxeter system associated to $(G, B)$ and $\bm{H} = \bm{H}(W,S)$ the corresponding Hecke algebra. For each $s \in S$, one can define endomorphisms 
\[
T_s: M_{LV} \rightarrow M_{LV}
\]
using push-pull functors to partial flag varieties or via a case-by-case analysis of root types (Definition~\ref{def: Ts}). The operators $T_s$ give $M_{LV}$ the structure of an $\bm{H}$-module. 

The geometric construction of $M_{LV}$ leads naturally  to a categorification within $D^b_K(X)$, the $K$-equivariant derived category of sheaves of $\Q$-vector spaces\footnote{For simplicity, we use $\Q$-coefficients throughout; however, our arguments would also work for any field of characteristic zero.} on $X$. Associated to each Lusztig--Vogan parameter $(\mc{O}, \mc{L}) \in \mathscr{D}$ is an indecomposible object in $D^b_K(X)$: the $K$-equivariant intersection cohomology complex of $\overline{\mc{O}}$ with coefficients in $\mc{L}$ (shifted by $\dim \mc{O}$) which we denote by $\bm{IC}(\overline{\mc{O}}, \mc{L})$. We define 
\[
\mc{M}_{LV}^{ss} = \langle \bm{IC}(\overline{\mc{O}},\mc{L}) \mid (\mc{O}, \mc{L}) \in \mathscr{D} \rangle_{\oplus, [1]}
\]
to be the additive subcategory of $D^b_K(X)$ generated by $\IC$ sheaves under direct sum ($\oplus$) and grading shift ($[1]$). Via right convolution, $D^b_K(X)$ admits a categorical action of the $B$-equivariant derived category $D^b_B(X)$. Thanks to the decomposition theorem, $\mc{M}_{LV}^{ss}$ is preserved under this action, and thus obtains a categorical action of the geometric Hecke category $\mc{H}$ (Definition~\ref{def: geometric Hecke category}). Passing to Grothendieck groups\footnote{The notation $[\mc{A}]_\oplus$ denotes the split Grothendieck group of an additive category $\mc{A}$}, we recover the Lusztig--Vogan module. 
\begin{theorem}
(Theorem~\ref{thm: semisimple categorification}) As right $\bm{H}$-modules, $[\mc{M}_{LV}^{ss}]_{\oplus} \simeq M_{LV}$.
\end{theorem}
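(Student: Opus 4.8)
The plan is to establish the isomorphism $[\mc{M}_{LV}^{ss}]_\oplus \simeq M_{LV}$ by first exhibiting a canonical $\Z[q^{\pm 1}]$-module isomorphism on the level of bases, and then checking that it intertwines the two $\bm{H}$-actions. For the module isomorphism: the indecomposable objects of the additive category $\mc{M}_{LV}^{ss}$ are, by construction and the decomposition theorem, precisely the shifts $\IC(\overline{\mc{O}}, \mc{L})[n]$ for $(\mc{O},\mc{L}) \in \mathscr{D}$ and $n \in \Z$. Since $\IC$ sheaves are indecomposable and pairwise non-isomorphic (distinct supports, or same support but distinct local systems), the split Grothendieck group $[\mc{M}_{LV}^{ss}]_\oplus$ is free over $\Z[q^{\pm 1}]$ — with $q$ acting by the shift $[1]$ — on the classes $[\IC(\overline{\mc{O}}, \mc{L})]$, $(\mc{O},\mc{L}) \in \mathscr{D}$. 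This matches the defining basis $\mathscr{D}$ of $M_{LV}$, so we get a $\Z[q^{\pm 1}]$-linear isomorphism $\varphi$ sending $[\IC(\overline{\mc{O}},\mc{L})] \mapsto (\mc{O},\mc{L})$.

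**Next I would** verify that $\varphi$ is a map of right $\bm{H}$-modules. Since $\bm{H}$ is generated by the $T_s$, $s \in S$, it suffices to check $\varphi\big([\IC(\overline{\mc{O}},\mc{L}) \star \mc{B}_s]\big) = \varphi\big([\IC(\overline{\mc{O}},\mc{L})]\big) \cdot T_s$ for each $s$, where $\mc{B}_s$ is the object of the geometric Hecke category $\mc{H}$ corresponding to $T_s$ (the $\IC$ of the relevant $B$-orbit closure in $X$) and $\star$ is right convolution. The left side is computed geometrically: convolution with $\mc{B}_s$ is, up to shift, the composite $(p_s)^* (p_s)_!$ (equivalently $(p_s)^*(p_s)_*$, as $p_s$ is proper) for the projection $p_s\colon X \to X_s$ to the partial flag variety, and the decomposition theorem expresses the result as a direct sum of shifted $\IC$ sheaves. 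The right side is the operator $T_s$ on $M_{LV}$, which by Definition~\ref{def: Ts} is defined either by exactly this push-pull recipe or by a case-by-case root-type analysis. So the check amounts to matching the output of the decomposition theorem for $(p_s)^*(p_s)_! \IC(\overline{\mc{O}},\mc{L})$ against the combinatorial formula for $T_s$.

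**The main obstacle** is precisely this last matching, i.e. computing the semisimplification of $(p_s)^* (p_s)_! \IC(\overline{\mc{O}},\mc{L})$ and comparing it term-by-term with the definition of $T_s$. This is where the fiber $K$-orbit structure enters: one must analyze, for each $K$-orbit $\mc{O}$, how $\mc{O}$ sits over its image $p_s(\mc{O})$ in $X_s$ — the fibers of $p_s$ are $\bP^1$'s, and the possible local behaviors of $\mc{O} \to p_s(\mc{O})$ fall into the standard list of root types for $K$-orbits on $X$ relative to $s$ (complex, real, non-compact imaginary, compact imaginary, with their type I/II refinements). In each case one computes the stalks and costalks of the pushforward-pullback and reads off the $\IC$ summands, including the multiplicities and shifts and the behavior of the equivariant local systems (which is where the type I/II distinction and the factor corresponding to the nontrivial local system on a $\bP^1$ appear). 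The bookkeeping of equivariant local systems under the two projections — pullback along $p_s$ and the induced maps on component groups of stabilizers — is the delicate part. Once each root-type case is checked against Definition~\ref{def: Ts}, $\bm{H}$-equivariance of $\varphi$ follows, and with the module isomorphism already in hand this completes the proof. I would organize the case analysis as a single lemma computing $[\IC(\overline{\mc{O}},\mc{L}) \star \mc{B}_s]$ in $[\mc{M}_{LV}^{ss}]_\oplus$, then invoke it to conclude.
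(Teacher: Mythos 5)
There is a genuine gap, and it sits exactly where you locate the ``main obstacle.'' The map $\varphi\colon[\IC(\overline{\cO},\cL)]\mapsto(\cO,\cL)$ is a $\Z[v^{\pm1}]$-linear bijection of free modules, but it is \emph{not} an $\bm{H}$-module homomorphism, so the verification you propose would fail rather than succeed. The point is that the formulas of Definition~\ref{def: Ts} describe $T_s$ in the \emph{standard} basis $\mathscr{D}$ of local systems, whereas the classes $[\IC(\overline{\cO},\cL)]$ realize the \emph{self-dual (KLV)} basis; the transition matrix between the two is given by the KLV polynomials and is not the identity. Already for $SL_2(\R)$ (Examples~\ref{example: LV module for SL(2,R)} and \ref{example: category generated by closed orbits SL(2,R)}): $\IC_{\mc{Q}_0}*\IC_s=\IC(X,\Q_{\cO})$, a single unshifted summand, so $\varphi([\IC_{\mc{Q}_0}*\IC_s])=\Q_{\cO}$; but $[\IC_s]$ corresponds to $b_s=vT_s+v$, and $\Q_{\mc{Q}_0}\cdot b_s=v(\Q_{\mc{Q}_0}+\Q_{\mc{Q}_\infty}+\Q_{\cO})\neq\Q_{\cO}$. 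Matching the output of the decomposition theorem for $\pi_s^*\pi_{s*}\IC(\overline{\cO},\cL)$ term-by-term against Definition~\ref{def: Ts} is therefore comparing quantities expressed in two different bases; carrying it out honestly would require knowing the $\IC$-decomposition multiplicities, i.e.\ essentially the KLV polynomials themselves.

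The fix, which is what the paper does, is to replace $\varphi$ by the character map $\ch$ of \eqref{equation: ch}, which records the stalks of the cohomology sheaves: $\ch$ sends $[\IC(\overline{\cO},\cL)]$ to $\cL$ plus a $q^{1/2}\Z[q^{1/2}]$-combination of lower parameters (a unitriangular change of basis, hence still an isomorphism of free modules), and the intertwining property is checked not on $\IC$ sheaves but on the extensions by zero $k_{\cO!}\cL$, where the root-type case analysis you describe is actually tractable (Lemma~\ref{lemma: local systems}). One then passes from $k_{\cO!}\cL$ to $\widetilde{\cL}$ using open--closed distinguished triangles and an auxiliary character $m$ \eqref{equation: m} that is additive on such triangles; the parity statement of Lemma~\ref{lemma: push pull star even} is what guarantees $m=\ch$ on the relevant objects. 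Two smaller corrections: the shift $[1]$ acts on the Grothendieck group as $v=q^{-1/2}$, not as $q$, so one must extend scalars of $M_{LV}$ to $\Z[q^{\pm1/2}]$ before any identification of free modules; and the generator of $\mc{H}$ corresponds to $b_s$, not to $T_s$, so even the correct check is against $b_s=vT_s+v$ (equivalently $T_s+1$ up to normalization, as in \eqref{equation: push-pull}).
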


Recall that, by the Beilinson--Bernstein localization theorem, modules for the Lie algebra $\mf{g}$ of $G$ may be understood as global sections of $D$-modules on the flag variety. The irreducible $(\mf{g}, K)$-modules with trivial infinitesimal character occur as the global sections of $D$-modules corresponding to $\bm{IC}(\overline{\mc{O}}, \mc{L})$ under the Riemann--Hilbert correspondence. In this way, the set $\mathscr{D}$ parametrizes certain equivalence classes of irreducible representations of the real reductive group corresponding to the Harish-Chandra pair $(\mf{g}, K)$. This was a major motivation for the work of Lusztig and Vogan, and leads to the block decomposition (Definition~\ref{def: block equivalence}, Remark~\ref{rmk: blocks}) of the Lustig--Vogan module. Blocks of $M_{LV}$ keep track of which irreducible $(\mf{g}, K)$-modules are related via composition series of standard modules. The block of the trivial representation, $M_{LV}^0$, arises in the geometric formulation above as the submodule of $M_{LV}$ generated by trivial local systems on closed orbits (Theorem~\ref{lem: closed orbits generate the block}). Set
\[
\mc{M}_{LV}^0:= \langle \bm{IC}(\mc{Q}, \Q_\mc{Q}) * \mc{H} \mid \mc{Q} \text{ a closed $K$-orbit} \rangle_{\oplus, \ominus, [1]}
\]
to be the subcategory of $\mc{M}_{LV}^{ss}$ generated by $\IC$ sheaves corresponding to trivial local systems on closed orbits. (Here $\mathbb{Q}_\mc{Q}$ is the constant sheaf on $\mc{Q}$ and the subscript $\oplus, \ominus, [1]$ denotes the the Karoubi envelope.) 
\begin{theorem}
(Theorem~\ref{thm: geometric categorification}) As right $\bm{H}$-modules, $[\mc{M}_{LV}^0]_\oplus \simeq M_{LV}^0.$
\end{theorem}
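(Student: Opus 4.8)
The plan is to bootstrap from Theorem~\ref{thm: semisimple categorification}, which gives an isomorphism $[\mc{M}_{LV}^{ss}]_\oplus \simeq M_{LV}$ of right $\bm{H}$-modules under which the class of the $\IC$ sheaf of a closed orbit corresponds to the standard basis vector $(\mc{Q},\Q_{\mc{Q}}) \in \mathscr{D}$ (for a closed orbit $\mc{Q}$ the variety $\overline{\mc{Q}}=\mc{Q}$ is smooth, so $\IC(\mc{Q},\Q_{\mc{Q}})$ is just the shifted constant sheaf), together with Theorem~\ref{lem: closed orbits generate the block}, which identifies $M_{LV}^0$ with the $\bm{H}$-submodule of $M_{LV}$ generated by $\{(\mc{Q},\Q_{\mc{Q}}) : \mc{Q} \text{ a closed } K\text{-orbit}\}$. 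First I would record that $\mc{M}_{LV}^0$ is a full subcategory of $\mc{M}_{LV}^{ss}$ closed under $\oplus$, $\ominus$, $[1]$, and the categorical action of $\mc{H}$: closure under the first three is built into the definition, and $\mc{H}$-stability follows from $\big(\IC(\mc{Q},\Q_{\mc{Q}})*B\big)*B' \cong \IC(\mc{Q},\Q_{\mc{Q}})*(B*B')$ together with the fact that $B*B'$ is again an object of $\mc{H}$. Hence the inclusion $\iota\colon \mc{M}_{LV}^0 \hookrightarrow \mc{M}_{LV}^{ss}$ induces a morphism of right $\bm{H}$-modules $\iota_*\colon [\mc{M}_{LV}^0]_\oplus \to [\mc{M}_{LV}^{ss}]_\oplus \simeq M_{LV}$, and the whole task reduces to showing that $\iota_*$ is injective with image $M_{LV}^0$.

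For injectivity: $\mc{M}_{LV}^{ss}$ is Krull--Schmidt (an idempotent-complete additive subcategory of $D^b_K(X)$ with finite-dimensional $\Hom$-spaces, whose indecomposables are exactly the shifts $\IC(\overline{\mc{O}},\mc{L})[n]$), so $[\mc{M}_{LV}^{ss}]_\oplus$ is free over $\Z[q^{\pm 1}]$ on $\{[\IC(\overline{\mc{O}},\mc{L})]\}_{(\mc{O},\mc{L})\in\mathscr{D}}$. By the decomposition theorem every object of $\mc{M}_{LV}^0$ is a direct sum of shifted $\IC$ sheaves, so the indecomposables of $\mc{M}_{LV}^0$ form a subset of those of $\mc{M}_{LV}^{ss}$; thus $[\mc{M}_{LV}^0]_\oplus$ is free on the corresponding subset and $\iota_*$ is a split injection.

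For the image: after passing to split Grothendieck groups, $[\mc{M}_{LV}^0]_\oplus$ is spanned over $\Z[q^{\pm 1}]$ by the classes $[\IC(\mc{Q},\Q_{\mc{Q}})*B]$ with $\mc{Q}$ a closed $K$-orbit and $B$ an object of $\mc{H}$. Under $\iota_*$ these map to $(\mc{Q},\Q_{\mc{Q}})\cdot [B]$, using the compatibility of the categorical $\mc{H}$-action with the $\bm{H}$-action on Grothendieck groups from Theorem~\ref{thm: semisimple categorification} and the identification $[\IC(\mc{Q},\Q_{\mc{Q}})]=(\mc{Q},\Q_{\mc{Q}})$ above. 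Since the classes $[B]$ span $\bm{H}$ over $\Z[q^{\pm 1}]$ as $B$ ranges over the objects of $\mc{H}$, the image of $\iota_*$ is precisely the $\bm{H}$-submodule of $M_{LV}$ generated by the trivial local systems on closed $K$-orbits, which by Theorem~\ref{lem: closed orbits generate the block} equals $M_{LV}^0$. Combined with injectivity, $\iota_*$ restricts to an isomorphism $[\mc{M}_{LV}^0]_\oplus \xrightarrow{\ \sim\ } M_{LV}^0$ of right $\bm{H}$-modules.

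The main obstacle I anticipate is the Grothendieck-group bookkeeping in the injectivity step: one must be sure that the idempotent completion in the definition of $\mc{M}_{LV}^0$ introduces nothing beyond shifted $\IC$ sheaves (this is exactly where the decomposition theorem enters) and that $\mc{M}_{LV}^{ss}$ is genuinely Krull--Schmidt, so that its split Grothendieck group is free on the indecomposables. A cleaner but essentially equivalent route is to combine these observations with the block decomposition of $M_{LV}$ (cf.\ Remark~\ref{rmk: blocks}) in order to identify the indecomposables of $\mc{M}_{LV}^0$ outright as the $\IC(\overline{\mc{O}},\mc{L})$ with $(\mc{O},\mc{L})$ in the block of the trivial representation, after which the desired isomorphism follows by matching bases. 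Beyond this, the argument is formal given Theorems~\ref{thm: semisimple categorification} and~\ref{lem: closed orbits generate the block}.
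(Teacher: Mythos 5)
Your proposal is correct and takes essentially the same route as the paper: the paper's proof of this statement is a one-line deduction from Theorem~\ref{thm: semisimple categorification} and Theorem~\ref{lem: closed orbits generate the block}, and your argument simply spells out the Grothendieck-group bookkeeping (Krull--Schmidt plus the decomposition theorem for injectivity, the $\mc{H}$-action for the image) that the paper treats as immediate. The only cosmetic point is that, because odd shifts are allowed, the split Grothendieck groups are naturally modules over $\Z[q^{\pm 1/2}]$ rather than $\Z[q^{\pm 1}]$, and $[\IC_{\mc{Q}}]$ matches $(\mc{Q},\Q_{\mc{Q}})$ only up to the unit $q^{-\dim\mc{Q}/2}$; neither affects the argument.
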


The categories $\mc{M}_{LV}^{ss}$ and $\mc{M}_{LV}^0$ provide geometric categorifications of the Lusztig--Vogan module $M_{LV}$ and its trivial block $M_{LV}^0$. The existence of these categorifications is not surprising---they are just reformulations of the construction in \cite{LV} and classical theorems in \cite{Vogan-green-book} using different geometric language. Our interest in formulating the Lusztig--Vogan module in these geometric terms is twofold: (1) we believe that the equivariant derived category provides the most natural language to describe Lusztig--Vogan's construction, and (2) this formulation allows us to give a bimodule description of the Lusztig--Vogan module which relates it to the monoidal category of Soergel bimodules. 

\subsection{A Soergel bimodule approach}
\label{sec: A Soergel bimodule approach}

Our tool for moving from the $K$-equivariant derived category to a category of bimodules is the hypercohomology functor (Definition~\ref{eq: equivariant hypercohomology}): 
\[
\mathbb{H}_K^\bullet(-):= \Hom_K^\bullet( \Q_X, - ): D^b_K(X) \rightarrow H^*_K(X)\mathrm{-gmod}.
\]
To make this approach work, we add the assumption $K$ is the {\em identity component} of $G^\theta$. We suspect that the main results hold for disconnected $K$ as well, but different arguments would be needed. Under this connectedness assumption on $K$, a standard computation (Lemma~\ref{lem: K equivariant cohomology of X}) proves that the $K$-equivariant cohomology of the flag variety is 
\[
H^*_K(X) = P^{W_K} \otimes_{R^W} R,
\]
where $P = S(X(T_K)_\Q)$ is the symmetric algebra on the $\Q$-span of the cocharacter lattice of a maximal torus $T_K$ in $K$, $R = S(X(T)_\Q)$ is the analogous ring for a maximal $\theta$-stable torus $T$ in $G$, and $W_K$ is the Weyl group of $K$. This allows us to view vector spaces in the image of $\mathbb{H}^\bullet_K$ as graded $(P^{W_K}, R)$-bimodules. 

Our algebraic Lusztig--Vogan category is defined to be the essential image of the functor $\mathbb{H}_K^\bullet$ when restricted to $\mc{M}_{LV}^0$. It can be described in an explicit combinatorial way by computing the hypercohomology of $\IC$ sheaves on closed orbits (Lemma~\ref{lem: standards are equiv cohomology}). For a closed $K$-orbit $\mc{Q}$, there exists an element $x \in W$ such that
\[
\mathbb{H}_K^\bullet(\bm{IC}(\mc{Q}, \Q_\mc{Q})) = P_x,
\]
where $P_x$ is the standard $(P^{W_K}, R)$-bimodule (Definition~\ref{def: standard modules}) corresponding to $x$. We define a category
\[
\mc{N}_{LV}^0:=\langle P_x \otimes_R \SBim \mid x\in W^\theta \rangle_{\oplus, \ominus, (1)}
\]
of $(P^{W_K}, R)$-bimodules which is generated by standard modules under the natural right action of the category of Soergel bimodules (denoted $\SBim$, Definition~\ref{def: SBim}). Here $W^\theta$ is the set of elements of $W$ fixed under the involution induced from $\theta$. These are precisely the elements of $W$ corresponding to $T$-fixed points in $X$ which are found in closed $K$-orbits. 

Though motivated by geometry, this construction can be done completely combinatorially with only the input data of the group $G$ and involution $\theta$. From the perspective of Soergel bimodules, this provides an interesting class of module categories for $\SBim$ coming from real groups. It also potentially leads to ``exotic'' Lusztig--Vogan modules attached to ``exotic'' involutions (e.g.\ of non-crystallographic Weyl groups). 

The main result of our paper is the following theorem. 
\begin{theorem}
\label{thm: main theorem} (Theorem~\ref{thm: image of hypercohomology}, Theorem~\ref{thm: fully faithful})
The hypercohomology functor 
\[
\mathbb{H}_K^\bullet: \mc{M}_{LV}^0 \rightarrow \mc{N}_{LV}^0
\]
is an equivalence of categories. 
\end{theorem}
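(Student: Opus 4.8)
The statement asserts that $\mathbb{H}_K^\bullet$ restricts to an equivalence $\mc{M}_{LV}^0 \xrightarrow{\sim} \mc{N}_{LV}^0$. Since the paper splits this into two cited results (Theorem~\ref{thm: image of hypercohomology} for essential surjectivity, Theorem~\ref{thm: fully faithful} for full faithfulness), I would prove it in exactly those two halves. For \emph{essential surjectivity}, the key point is that $\mc{N}_{LV}^0$ was \emph{defined} as the additive-Karoubian closure of the objects $P_x \otimes_R \SBim$ for $x \in W^\theta$; so I need (i) that each generator $P_x$ is hit --- this is precisely Lemma~\ref{lem: standards are equiv cohomology}, which identifies $\mathbb{H}_K^\bullet(\bm{IC}(\mc{Q},\Q_\mc{Q})) = P_x$ for $\mc{Q}$ a closed orbit; (ii) that $\mathbb{H}_K^\bullet$ intertwines the right convolution action of $\mc{H}$ on $\mc{M}_{LV}^0$ with the right $\SBim$-action on $\mc{N}_{LV}^0$ --- this requires unwinding that $\mathbb{H}_K^\bullet$ is monoidal for the $\ast$-action, i.e. $\mathbb{H}_K^\bullet(F \ast G) \cong \mathbb{H}_K^\bullet(F)\otimes_{H^*_B(X)} \mathbb{H}_K^\bullet(G)$, together with Soergel's identification $\mathbb{H}^\bullet_B(\mc{H}) \simeq \SBim$; and (iii) closure under $\oplus, \ominus, [1]$, which is automatic since $\mathbb{H}_K^\bullet$ is additive and sends $[1]$ to the grading shift $(1)$, and it is an \emph{exact} additive functor so it commutes with taking direct summands. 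Combining (i)--(iii), every object of $\mc{N}_{LV}^0$ built from the generators is in the essential image.

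**Full faithfulness.** This is where the real work lies, and it is the classical Soergel-style argument. The strategy is: (1) reduce to computing $\Hom$ between the $\bm{IC}$-generators and their $\ast$-translates, since both categories are generated under $\oplus,\ominus,[1]$ by such objects and $\mathbb{H}_K^\bullet$ is additive; (2) show $\mathbb{H}_K^\bullet$ is faithful on these $\Hom$-spaces using a \emph{purity/parity} argument --- the objects $\bm{IC}(\overline{\mc{O}},\mc{L})$ are pure, so $\Hom^\bullet_K$ between them is a free graded module over $H^*_K(\mathrm{pt})$ concentrated in the "right" degrees, and the cohomology functor does not kill maps because the relevant hypercohomology groups are themselves free over the base ring (this is the analogue of Soergel's "$\mathbb{H}$ is fully faithful on the category of semisimple complexes" — cf. \cite{Soergel2000}, or the Ginzburg/Bernstein–Lunts formalism); (3) for fullness, a dimension count: show that $\dim_\Q \Hom^\bullet_K(F, G)$ and $\dim_\Q \Hom^\bullet_{\mc{N}}(\mathbb{H}_K^\bullet F, \mathbb{H}_K^\bullet G)$ agree in each degree, which combined with injectivity forces bijectivity. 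The dimension count on the geometric side comes from the decomposition theorem plus knowledge of stalks (KLV polynomials), and on the algebraic side from the explicit bimodule structure of $P_x \otimes_R \SBim$ and the standard $\Hom$-formula between Soergel bimodules (Soergel's Hom-formula / the "Soergel conjecture" bookkeeping, here only needed in the already-known form since $P_x$ are standards).

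**Main obstacle.** I expect the crux to be step (2)--(3) of full faithfulness: establishing that $\mathbb{H}_K^\bullet$ is faithful and then matching dimensions. Faithfulness hinges on a \emph{freeness} statement --- that $\mathbb{H}_K^\bullet(\bm{IC}(\overline{\mc{O}},\mc{L}))$ is free (or at least flat) as a module over $H^*_K(X)$, equivalently over $R$ after the identification $H^*_K(X) = P^{W_K}\otimes_{R^W} R$. In Soergel's original setting this is the statement that $B$-equivariant cohomology of $B$-constructible $\bm{IC}$-sheaves on $G/B$ is free over $R$, proved via a Bialynicki–Birula/contracting-$\mathbb{G}_m$ argument. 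Here the ambient space carries only a $K$-action, not a large torus, so one must instead exploit the $\theta$-stable torus $T$ and the structure of $K$-orbit closures (their $T$-fixed points are indexed by $W^\theta$, as noted in the excerpt) to run the analogous localization/weight argument; making this rigorous for arbitrary real forms --- and checking the base-change $H^*_K(X) = P^{W_K}\otimes_{R^W} R$ interacts correctly with the convolution bimodule structure --- is the technical heart. The additivity, exactness, and monoidality of $\mathbb{H}_K^\bullet$, by contrast, I regard as formal (Bernstein–Lunts equivariant formalism) and would cite rather than reprove. Once faithfulness and the two dimension computations are in hand, the equivalence follows by the standard "faithful additive functor that is bijective on $\Hom$" argument, and essential surjectivity is then immediate from the definition of $\mc{N}_{LV}^0$ as a generated category.
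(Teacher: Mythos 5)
Your outline of essential surjectivity matches the paper: the two ingredients are exactly Lemma~\ref{lem: standards are equiv cohomology} and Lemma~\ref{lem: push pull is tensoring with Bs}. One caution: you invoke a general monoidality statement $\mathbb{H}_K^\bullet(\mc{F}*\mc{G})\simeq \mathbb{H}_K^\bullet(\mc{F})\otimes \mathbb{H}_K^\bullet(\mc{G})$, which is neither formal nor needed; the paper only proves compatibility with convolution by the generators $\bm{IC}_s$ (equivalently with $\Theta_s=\pi_s^*\pi_{s*}[1]$), which suffices because $\mc{H}$ is generated by these under $*,\oplus,\ominus,[1]$. Your reduction to $\mc{F}=\bm{IC}_\mc{Q}$ via self-adjointness, and your faithfulness step via purity/parity, are also essentially the paper's route (Lemmas~\ref{lem: hypercohomology commutes with adjunctions}, \ref{lem: commuting pentagon}, \ref{lem: injective}, Corollary~\ref{cor: M_LV is parity}), though the precise mechanism is the injectivity of $\mathbb{H}_K^\bullet(i_{\mc{Q}!}i_{\mc{Q}}^!\mc{G})\to\mathbb{H}_K^\bullet(\mc{G})$ for parity $\mc{G}$, not freeness per se.

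The genuine gap is in your step (3), fullness by dimension count. On the algebraic side you would need to know $\dim_\Q\Hom_{(P^K,R)\text{-gbim}}(\mathbb{H}_K^\bullet\mc{F},\mathbb{H}_K^\bullet\mc{G})$ in each degree, i.e.\ a Soergel-type Hom formula for the category $\mc{N}_{LV}^0$; but no such formula is available as an input --- it is a \emph{consequence} of the equivalence being proved, since the indecomposable objects of $\mc{N}_{LV}^0$ are not classified independently of the geometry. On the geometric side the count would likewise require external input from $(\mf{g},K)$-module theory (the analogue of the category-$\mc{O}$ facts Soergel used in \cite{Soergel90}). The paper deliberately avoids this circularity: fullness (Theorem~\ref{thm: full}) is proved by restricting to $T_K$-equivariant cohomology (injective for parity objects, Lemma~\ref{lem: res is injective}), localizing to the $T_K$-fixed points via equivariant formality and \cite{GKM} (Lemma~\ref{lem: loc is injective}), and then showing that the images of $\Hom_K(\bm{IC}_\mc{Q},\mc{G})$ and $\Hom_{\mathrm{bim}}(\mathbb{H}_K^\bullet\bm{IC}_\mc{Q},\mathbb{H}_K^\bullet\mc{G})$ inside $\bigoplus_{w}(P_w)^{\oplus q_w}$ coincide --- both are cut out by the condition of being supported on the coset $W_Kv$ of fixed points lying in $\mc{Q}$ (Lemma~\ref{lem: equating images}). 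You correctly sensed that a localization argument over the $\theta$-stable torus is the technical heart, but it replaces the dimension count rather than supplementing it; as written, your fullness step does not go through.
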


\begin{remark}
This result can also be deduced from Theorem 3.12 in \cite{BV}.
\end{remark}

The proof of Theorem~\ref{thm: main theorem} relies on two main ingredients: the theory of parity sheaves developed in \cite{ParitySheaves}, and localization to torus fixed points as in \cite{GKM}. We find it interesting that this appears to be the first application of parity sheaf techniques to $K$-orbits on flag varieties, as they provide a powerful tool which has been used in many other settings in geometric representation theory. It is also of note that when specialized to the case of a complex group (i.e., the involution of $G \times G$ which switches the factors), our argument provides an alternate proof of \cite[Erweiterungssatz 17]{Soergel90} which does not rely on known facts about dimensions of modules in category $\mc{O}$.  

Theorem~\ref{thm: main theorem} immediately implies that $\mc{N}_{LV}^0$ categorifies the trivial block of the Lusztig--Vogan module. 
\begin{corollary}
(Corollary \ref{cor: main theorem}) As right $\bm{H}$-modules, $[\mc{N}_{LV}^0]_\oplus \simeq M_{LV}^0$.
\end{corollary}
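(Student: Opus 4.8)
The plan is to deduce the corollary by transporting the Hecke-module structure across the equivalence of Theorem~\ref{thm: main theorem}. First I would recall that $\mc{M}_{LV}^0$ carries a categorical action of the geometric Hecke category $\mc{H}$ (by right convolution, as explained in Section~\ref{sec: the geometry of the Lusztig--Vogan module}), and that by Theorem~\ref{thm: geometric categorification} passing to split Grothendieck groups yields $[\mc{M}_{LV}^0]_\oplus \simeq M_{LV}^0$ as right $\bm{H}$-modules, where the $\bm{H}$-action on the left comes from $[\mc{H}] \simeq \bm{H}$. On the algebraic side, $\mc{N}_{LV}^0$ is defined as $\langle P_x \otimes_R \SBim \mid x \in W^\theta\rangle_{\oplus,\ominus,(1)}$, so by construction it carries a right action of the monoidal category $\SBim$, and hence (via Soergel's categorification theorem $[\SBim] \simeq \bm{H}$) its split Grothendieck group $[\mc{N}_{LV}^0]_\oplus$ is a right $\bm{H}$-module.

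The core of the argument is then to check that the equivalence $\mathbb{H}_K^\bullet \colon \mc{M}_{LV}^0 \to \mc{N}_{LV}^0$ is compatible with these two module structures, i.e.\ that it intertwines the convolution action of $\mc{H}$ with the $\SBim$-action up to the standard identification of $\mc{H}$ with $\SBim$ (this is essentially Soergel's theorem that $\mathbb{H}^\bullet_B$ is monoidal and sends indecomposable $\IC$ sheaves to indecomposable Soergel bimodules, together with the fact that $\mathbb{H}^\bullet_K(\mc{F} * \mc{G}) \cong \mathbb{H}^\bullet_K(\mc{F}) \otimes_{H^*_B(X)} \mathbb{H}^\bullet_B(\mc{G})$). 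Granting this intertwining, $\mathbb{H}_K^\bullet$ induces an isomorphism $[\mc{M}_{LV}^0]_\oplus \xrightarrow{\sim} [\mc{N}_{LV}^0]_\oplus$ of right $\bm{H}$-modules, since an equivalence of additive categories induces an isomorphism of split Grothendieck groups, and it is $\bm{H}$-linear by the compatibility just discussed. Combining with Theorem~\ref{thm: geometric categorification} gives $[\mc{N}_{LV}^0]_\oplus \simeq [\mc{M}_{LV}^0]_\oplus \simeq M_{LV}^0$ as right $\bm{H}$-modules, which is the claim.

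The only genuine content beyond bookkeeping is the monoidal/module-compatibility of $\mathbb{H}_K^\bullet$ with the two actions, and I expect this to be the main (though not deep) obstacle: one must be careful that the right $\SBim$-action on $\mc{N}_{LV}^0$ used to \emph{define} the category is literally the one obtained by applying $\mathbb{H}_K^\bullet$ to the convolution action on $\mc{M}_{LV}^0$ and using $\mathbb{H}_B^\bullet(\IC$ sheaves$) =$ Soergel bimodules, rather than merely an abstractly isomorphic one. This should follow from the projection-formula identity $\mathbb{H}_K^\bullet(\mc{F} * \mc{G}) \cong \mathbb{H}_K^\bullet(\mc{F}) \otimes_{H^*_B(X)} \mathbb{H}_B^\bullet(\mc{G})$ together with the fact that $\mathbb{H}_K^\bullet(\bm{IC}(\mc{Q},\Q_\mc{Q})) = P_x$ (Lemma~\ref{lem: standards are equiv cohomology}); modulo checking that these identifications are natural and associative, the corollary is immediate. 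I would also remark that, alternatively, one can simply observe that $[\mathbb{H}_K^\bullet]$ sends the basis of $\IC$ sheaves of $\mc{M}_{LV}^0$ to a set of objects in $\mc{N}_{LV}^0$ whose classes form a $\Z[q^{\pm 1}]$-basis, and that the defining $\SBim$-action on generators matches the $T_s$-operators on $M_{LV}^0$ by construction, giving a more hands-on proof of $\bm{H}$-linearity if the abstract monoidality is not spelled out earlier.
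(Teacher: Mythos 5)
Your proposal is correct and follows essentially the same route as the paper: the corollary is obtained by combining the equivalence $\mathbb{H}_K^\bullet\colon \mc{M}_{LV}^0 \xrightarrow{\sim} \mc{N}_{LV}^0$ with the geometric categorification $[\mc{M}_{LV}^0]_\oplus \simeq M_{LV}^0$, the $\bm{H}$-linearity being exactly the content of Lemma~\ref{lem: standards are equiv cohomology} and Lemma~\ref{lem: push pull is tensoring with Bs} (which handles the generators $B_s$ and hence, by monoidal generation, all of $\SBim$). The compatibility issue you flag as the "only genuine content" is indeed addressed in the paper by those two lemmas, so no gap remains.
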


A conjecture of Soergel \cite{Soe00} predicts that associated to the category $\mc{N}_{LV}^0$ (or $\mc{M}_{LV}^0$) is a Koszul dual category of Harish-Chandra modules for a real form of the Langlands dual group of $G$. The duality on the level of categories should provide a categorification of Vogan's character duality \cite{Vogan4}, which is a duality on the level of $\bm{H}$-modules. A version of this relationship is established in \cite{BV}. 

One might hope that by applying the functor $\mathbb{H}_K^\bullet$ to the semisimple category $\mc{M}_{LV}^{ss}$, we could establish a bimodule description of the entire Lusztig--Vogan module. Unfortunately, $\mathbb{H}_K^\bullet$ vanishes outside of $\mc{M}_{LV}^0$ (see Example~\ref{example: hypercohomology vanishes on mobius band}), so this strategy does not work. 

\subsection{Application: Resolutions of singularities}
\label{Application: Resolutions of singularities}
We can view the results in this paper as studying continuous actions of real reductive algebraic groups.
Gelfand's program asserts that one should first linearize such actions by considering vector spaces related to functions on the original space.
This philosophy lead to the definition of \(\cD\) through work of Harish-Chandra and Vogan \cite{Vogan3}.
In particular, continuous actions of \(G_\R\) are studied by considering the easier problem of an algebraic group \(K\) acting on an algebraic variety \(X\).
It is interesting to observe that in the case of \(G_\R\) acting on the flag variety, Matsuki duality gives a perfect relationship between orbit structures (cf., \cite{Wolf1969},\cite{Matsuki1979}).
Moreover, singularities of \(K\)-orbit closures in \(X\) are related to representation theory of \(G_\R\) through KLV polynomials and characteristic cycles of \(D\)-modules.

Closures of \(K\)-orbits in \(X\) have complicated singularities.
In \cite{BarbaschEvens1994}, equivariant resolutions of singularities were constructed to study the geometry of these varieties, and in \cite{Larson2020} equivariant proper maps were further analyzed.
As an application of our geometric categorification of the Lusztig--Vogan module, we give a formula in Theorem~\ref{theorem: fibers} to compute the cohomology of fibers of the resolutions introduced in \cite{Larson2020} using the character map
\begin{equation}
m\,\colon D_K^b(X)\to M_{LV}
\end{equation}
appearing in the proof of Theorem~\ref{thm: semisimple categorification}.
This result can be viewed through the paradigm of Gelfand and Harish-Chandra as studying equivariant resolutions of singularities first via linearization, then through algebraic methods using our algebraic categorification.

\subsection{Relation to the results of Bezrukavnikov--Vilonen}
\label{sec: relation to the results of Bezrukavnikov-Vilonen}
Recent results of Bezrukavnikov--Vilonen \cite{BV} establish an important special case of Soergel's conjecture on  a categorification of Vogan duality (see \ref{sec: Overview}). One side of their equivalence takes place in the equivariant derived category, and a key part of their argument involves the fully-faithfullness of a global sections functor. This part of their argument involves a theorem equivalent to Theorem \ref{thm: main theorem} above. Our work on this project began before we were aware of their results, and our proofs differ. (In particular, their proof involves a reduction to low ranks, and ours does not, see \ref{rmk: further remarks on BV relationship} below). For the benefit of the reader, we comment here on the relation between our results. 

The fully-faithfullness result alluded to above is \cite[Thm. 3.2]{BV}. They prove that (in their notation)
\[
R\Gamma_{\check{K}}:  \EuScript{S}^\mathrm{gr} \rightarrow \mathrm{Coh}^{\C^\times}(\mathfrak{B})
\]
is fully faithful. The relation between this result and Theorem \ref{thm: main theorem} is as follows. 
\begin{enumerate}
    \item Their category $\EuScript{S}^\mathrm{gr}$ is equivalent to our category $\mc{M}_\mathrm{LV}^0$. Indeed, both categories consist of certain $K$-equivariant semisimple complexes on $G/B$, and \cite[Prop. 3.6]{BV} establishes that $\EuScript{S}^\mathrm{gr}$ is generated by $\bm{IC}$-sheaves corresponding to trivial local systems on closed orbits. 
    \item There is an equivalence 
    \begin{equation}
        \label{eq: relation between bimodule categories}
    \mathrm{Coh}^{\C^\times}(\mathfrak{B}) \simeq P^K \otimes_{R^W}R \mathrm{-gmod},
    \end{equation}
    where we are using notation from \cite{BV} on the left and our notation on the right. This is because $\C[ \mathfrak{B}] = P^K \otimes_{R^W} R$ by \cite[Lem. 3.1]{BV}. The grading on the right hand side of (\ref{eq: relation between bimodule categories}) is due to the $\C^\times$-action on the left hand side of (\ref{eq: relation between bimodule categories}). The weights agree by the definition of the $\C^\times$-action in \cite[Thm. 3.2]{BV}.
    \item In \cite{BV}, the category $\EuScript{S}^\mathrm{gr}$ is defined using $K$-equivariant $D$-modules on $G/B$. It is then implicitly identified with a subcategory of the (constructible) equivariant derived category $D^b_K(G/B)$. In contrast, we work exclusively within the equivariant derived category. 
\end{enumerate}
We complete this section with some further remarks. 
\begin{remark}
\label{rmk: further remarks on BV relationship}
\begin{enumerate}[label=(\roman*)]
\item  Bezrukavnikov--Vilonen's proof uses reduction to small rank and a computation, see \cite[\S3.8]{BV}. Our proof avoids this reduction.
\item Our theorem describes the image in terms of Soergel bimodules. One can deduce a similar description from \cite{BV}. Note that our Soergel bimodule action corresponds to their convolution with $\alpha$-lines, see \cite[\S3.3]{BV}. 
\item For the purposes of Bezrukavnikov--Vilonen, it is necessary to consider all finite-index subgroups of $K$, included disconnected ones. They overcome this obstacle by imposing equivariance under a finite abelian group.
\end{enumerate}
\end{remark}

\subsection{Acknowledgements}
We would like to thank Jeff Adams, Adam Brown, Bill Graham, Dragan Milicic, Laura Rider, Peter Trapa, David Vogan and Geordie Williamson for useful discussions and comments. We would like to thank the University of Georgia for providing us the opportunity to first meet and discuss \(K\)-orbits, and the first author further thanks this university for providing an environment conducive to learning material related to this paper. The second author completed the final stages of this work on the University of Sydney's One Tree Island research station, which she would like to thank for providing an exceptional working environment. The second author acknowledges support from the National Science Foundation Award No. 1803059 and the Australian Research Council grant DP170101579.

\section{Index of notation}
\label{sec: Index of notation}

\noindent
$G$: connected reductive complex algebraic group (\ref{sec: Overview})

\vspace{2mm}
\noindent
$\theta$: holomorphic involution of $G$ (\ref{sec: Overview})

\vspace{2mm}
\noindent
$K$: finite-index subgroup of $G^\theta$ in Sections \ref{sec: preliminaries} - \ref{sec: a categorification of the trivial block}, the identity component of $G^\theta$ in Sections \ref{sec: An algebraic categorification} - \ref{sec: the hypercohomology functor}

\vspace{2mm}
\noindent
$T_K$: maximal torus in $K$ (\ref{sec: torus fixed points and closed K-orbits})

\vspace{2mm}
\noindent
$T=Z_G(T_K)$: $\theta$-stable maximal torus in $G$ (Lemma~\ref{lem: centralizer is a torus})

\vspace{2mm}
\noindent
$B$: Borel subgroup of $G$ containing $T$ (\ref{sec: introduction})

\vspace{2mm}
\noindent
$B_K = B \cap K$: Borel subgroup of $K$ (\ref{sec: torus fixed points and closed K-orbits})

\vspace{2mm}
\noindent
$\mf{g}, \mf{k}, \mf{t}, \mf{t}_K$: Lie algebras of $G, K, T, T_K$ (\ref{sec: torus fixed points and closed K-orbits})

\vspace{2mm}
\noindent 
$W = N_G(T)/T$, $W_K = N_K(T_K)/T_K$: Weyl groups of $G, K$ (Lemma~\ref{lem: Weyl group embedding})

\vspace{2mm}
\noindent
$W^\theta$: group of elements fixed by the involution on $W$ induced by $\theta$ (\ref{sec: torus fixed points and closed K-orbits})

\vspace{2mm}
\noindent
$\bm{H} = \bm{H}(W,S)$: Hecke algebra of $(W,S)$ (\ref{sec: The Hecke algebra})

\vspace{2mm}
\noindent
$\{\delta_x \mid x \in W\}$: standard basis of the Hecke algebra $\bm{H}$ (\ref{sec: The Hecke algebra})

\vspace{2mm}
\noindent
$\{b_x \mid x \in W\}$: Kazhdan--Lusztig basis of the Hecke algebra $\bm{H}$ (Theorem~\ref{thm: KL basis})

\vspace{2mm}
\noindent
$X(T), X(T_K)$: character lattices of $T$, $T_K$ (\ref{sec: A Soergel bimodule approach})

\vspace{2mm}
\noindent
$X(T)_\Q= X(T) \otimes_\Z \Q$, $X(T_K)_\Q= X(T_K) \otimes_\Z \Q$: $\Q$-vector spaces spanned by character lattices (\ref{sec: A Soergel bimodule approach})

\vspace{2mm}
\noindent
$R= S(X(T)_\Q)$: Symmetric algebra on $X(T)_\Q$ (\ref{sec: Soergel bimodules})

\vspace{2mm}
\noindent
$P=S(X(T_K)_\Q)$: Symmetric algebra on $X(T_K)_\Q$ (\ref{sec: An algebraic categorification})

\vspace{2mm}
\noindent
$B_s = R \otimes_{R^s} R(1)$, $BS(\underline{w}) = B_{s_1} \otimes_R \cdots \otimes_R B_{s_n}$: Bott--Samelson bimodules corresponding to $s \in S$ and $\underline{w}=(s_1, \ldots s_n)$ (Equation \ref{eq: Bs}, Definition~\ref{def: Bott-Samelson bimodule})

\vspace{2mm}
\noindent
$\SBim$: category of Soergel bimodules (Definition~\ref{def: SBim})

\vspace{2mm}
\noindent
$P^K:= P^{W_K}$: $W_K$-invariant polynomials in $P$ (\ref{sec: An algebraic categorification})

\vspace{2mm}
\noindent
$P_x$: standard bimodule in $(P^K, R)\mathrm{-gbim}$ corresponding to $x \in W$ (Definition~\ref{def: standard modules})

\vspace{2mm}
\noindent
$\mc{N}_{LV}^0$: subcategory in $(P^K, R)\mathrm{-gbim}$ generated by standard bimodules (Definition~\ref{def: LV category})

\vspace{2mm}
\noindent
$X=G/B$: the flag variety of $G$ (\ref{sec: The geometric Hecke category})

\vspace{2mm}
\noindent
$X_w=BwB/B$: Schubert cell corresponding to $w \in W$

\vspace{2mm}
\noindent
$\mc{H}$: the geometric Hecke category (Definition~\ref{def: geometric Hecke category}, Equation \ref{eq: second incarnation Hecke cateory})

\vspace{2mm}
\noindent
$\bm{IC}_w$, $\bm{IC}(\overline{\mc{O}}, \mc{L})$, $\bm{IC}_\mc{Q}$: intersection cohomology complexes on orbit closures $\overline{X}_w, \overline{\mc{O}}$, $\overline{\mc{Q}}=\mc{Q}$ (Equation \ref{eq: IC sheaf}, Equation \ref{eq: K-equivariant IC sheaves}, Equation \ref{eq: IC_Q})

\vspace{2mm}
\noindent
$[\mathcal{A}]_\oplus$: split Grothendieck group of an additive category $\mc{A}$ 

\vspace{2mm}
\noindent
$\mathbb{H}_H^\bullet: D^b_H(X) \rightarrow H^*_H(X)\mathrm{-gmod}$: equivariant hypercohomology functor

\vspace{2mm}
\noindent
$\mathscr{D} = \{(\mc{O}, \mc{L}) \mid \mc{O} \text{ is a $K$ orbit on $X$}, \mc{L} \in \Loc_K(\mc{O})\}$: Lusztig--Vogan parameters (\ref{eq: D})

\vspace{2mm}
\noindent 
$M_{LV}$: the Lusztig--Vogan module of the Hecke algebra (Equation \eqref{eq: LV module as a vector space}, Definition~\ref{def: Ts}, Lemma~\ref{lem: Ts give a Hecke algebra action})

\vspace{2mm}
\noindent
$M_{LV}^0$: trivial block of the Lusztig--Vogan module (Theorem~\ref{lem: closed orbits generate the block})

\vspace{2mm}
\noindent
$\mc{M}_{LV}^{ss}$: subcategory of semisimple complexes in $D^b_K(X)$ (Definition~\ref{def: semisimple category})

\vspace{2mm}
\noindent
$\mc{M}_{LV}^0$: subcategory in $D^b_K(X)$ generated by trivial local systems on closed orbits (Definition~\ref{def: geometric LV category})

\vspace{2mm}
\noindent
$\pi_s: X \rightarrow G/P_s$: projection to partial flag variety (\ref{eq: pi s})

\vspace{2mm}
\noindent
$\Theta_s= \pi_s^* \pi_{s*} [1]: D^b_K(X) \rightarrow D^b_K(X)$: push-pull functor (\ref{eq: Theta})


\section{Preliminaries}
\label{sec: preliminaries}

We begin by establishing our notation and conventions. We encourage readers who are familiar with this material to skip to Section \ref{sec: a geometric categorification} and refer back to this section only when needed.  


\subsection{The Hecke algebra}
\label{sec: The Hecke algebra}
Let $(W, S)$ be the Coxeter system associated to the pair $(G, B)$. We denote by $\ell:W \rightarrow \mathbb{N}$ the length function on $W$ and $\leq$ the Bruhat order. A sequence $\underline{w}=(s_1, s_2, \ldots, s_m)$ of simple reflections $s_i \in S$ is called an {\em expression} for the group element $w$ if $w = s_1 s_2 \cdots s_m \in W$. Notationally we distinguish between expressions and group elements by underlining. We say an expression $\underline{w}=(s_1, s_2, \ldots, s_m)$ is {\em reduced} if $m=\ell(w)$. 

The {\em Hecke algebra} $\bm{H}=\bm{H}(W,S)$ of the Coxeter system $(W,S)$ is the unital associative $\Z[v^{\pm 1}]$-algebra generated by symbols $\{\delta_s \mid s \in S\}$ subject to relations 
\begin{enumerate}[label=(\roman*)]
\item (quadratic) $\delta_s^2 = 1+(v^{-1}-v)\delta_s$ for all $s \in S$, and
\item (braid) $\delta_s \delta_t \delta_s \ldots = \delta_t \delta_s \delta_t\ldots $ for all $s \neq t \in S$,  
\end{enumerate}
where the products in (ii) consist of $m_{st}$ elements on each side of the equality. 

\begin{remark}
\label{rmk: T_s formulation of Hecke algebra}
Alternatively, the Hecke algebra can be presented as the unital $\Z[q^{\pm 1}]$-algebra generated by symbols $\{T_s \mid s \in S \}$ subject to relations
\begin{enumerate} [label=(\roman*)]
    \item $T_s^2 = (q-1) T_s + q$ for all $s \in S$, and 
    \item $T_s T_t T_s \ldots = T_t T_s T_t \ldots$ for all $s \neq t \in S$,  
\end{enumerate}
where each product in (ii) consists of $m_{st}$ elements. Our initial presentation can be obtained from this one by setting $v=q^{-1/2}$ and $\delta_s = vT_s$. 
\end{remark}

\vspace{2mm} 
The elements $\{\delta_s \mid s \in S\}$ are invertible, with inverses given by $\delta_s^{-1}=\delta_s+(v-v^{-1})$. For $x\neq e \in W$, define an element $\delta_x \in \bm{H}$ by choosing a reduced expression $\underline{x}=(s_1, \ldots, s_m)$ of $x$ and setting $\delta_x:=\delta_{s_1}\cdots \delta_{s_m}$. One can show that this element is independent of choice of reduced expression. Define $\delta_e:=1$. The set $\{\delta_x \mid x \in W\}$ is the {\em standard basis} of $\bm{H}$. 

The Hecke algebra comes equipped with a {\em bar involution} 
\[
\bar{ } : \bm{H} \rightarrow \bm{H}, \hspace{2mm}
h \mapsto \bar{h}
\]
which is defined to be the unique ring homomorphism sending $v \mapsto v^{-1}$ and $\delta_x \mapsto \delta_{x^{-1}}$. We say an element $h \in \bm{H}$ is {\em self-dual} if $\overline{h}=h$. 

\begin{theorem}
\label{thm: KL basis}
\cite{KL} \cite[Thm.\ 2.1]{Soergel97} For each $x \in W$, there exists a unique self-dual element $b_x \in \bm{H}$ such that $b_x = \delta_x+ \sum_{y<x}h_{y,x}\delta_y$ for $h_{y,x} \in v\Z[v]$. 
\end{theorem}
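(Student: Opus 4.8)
The plan is to prove existence and uniqueness simultaneously by induction on the Bruhat order (equivalently, on $\ell(x)$), following the standard strategy of Kazhdan--Lusztig as streamlined by Soergel. For uniqueness, suppose $b_x$ and $b_x'$ both satisfy the stated conditions; then $b_x - b_x'$ is self-dual and lies in $\sum_{y < x} v\Z[v]\,\delta_y$. A general lemma (to be proved first) states that the only self-dual element of $\bigoplus_{y} v\Z[v]\,\delta_y$ is zero: indeed, writing $h = \sum_y p_y \delta_y$ with $p_y \in v\Z[v]$, pick $y$ maximal in the Bruhat order with $p_y \neq 0$; comparing the coefficient of $\delta_y$ in $h = \bar h$, using that $\overline{\delta_y} = \delta_{y^{-1}} = \delta_y + (\text{lower terms})$ modulo terms below $y$, one finds $p_y = \overline{p_y}$, which forces $p_y \in v\Z[v] \cap \Z[v^{-1}] = 0$, a contradiction. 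Hence $b_x = b_x'$.

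For existence, I would induct on $\ell(x)$. The base case is $b_e := 1 = \delta_e$, which is manifestly self-dual. For the inductive step, choose a simple reflection $s$ with $sx < x$ (possible since $x \neq e$), so that by induction $b_{sx}$ exists. First establish that $b_s := \delta_s + v$ is self-dual: this is a direct computation from $\overline{\delta_s} = \delta_s^{-1} = \delta_s + (v - v^{-1})$. Then consider the product $b_s \cdot b_{sx}$, which is self-dual since the bar involution is a ring homomorphism and both factors are self-dual. Using the multiplication rule for $\delta_s \cdot \delta_y$ in the standard basis (namely $\delta_s \delta_y = \delta_{sy}$ if $sy > y$ and $\delta_s \delta_y = \delta_{sy} + (v^{-1} - v)\delta_y$ if $sy < y$), expand $b_s b_{sx} = \delta_x + \sum_{y < x} c_y \delta_y$ for some $c_y \in \Z[v^{\pm 1}]$, where the leading coefficient $1$ on $\delta_x$ comes from $\delta_s \cdot \delta_{sx} = \delta_x$. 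The element $b_s b_{sx}$ need not yet satisfy the positivity condition $c_y \in v\Z[v]$, but we can correct it: for each $y < x$ with $c_y \notin v\Z[v]$, the constant term $c_y(0) \in \Z$, and one subtracts $\sum_y c_y(0)\, b_y$ (the lower KL basis elements, which exist by induction). I would verify that this subtraction is well-defined by a downward induction on $y$ and that it both preserves self-duality (since each $b_y$ is self-dual and the $\overline{c_y(0)} = c_y(0)$ are bar-invariant scalars) and kills all constant terms, landing the result in $\delta_x + \sum_{y<x} v\Z[v]\,\delta_y$. This element is the desired $b_x$.

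The main obstacle is the bookkeeping in the correction step: one must argue that after subtracting $\sum_{z} c_z(0) b_z$ over $z$ in a suitable order, no new nonzero constant terms are reintroduced at elements $y$ already cleared, and that the coefficient of $\delta_x$ stays equal to $1$ (this is automatic since all $b_z$ with $z < x$ contribute nothing to $\delta_x$). The cleanest formulation is to process the elements $z < x$ in decreasing Bruhat order and observe that $b_z = \delta_z + (\text{strictly lower})$, so subtracting $c_z(0) b_z$ changes the $\delta_z$-coefficient to remove its constant term while only altering coefficients of $\delta_w$ with $w < z$; hence once an element is cleared it stays cleared. Everything else reduces to the two routine self-duality computations ($\overline{\delta_s} = \delta_s + (v - v^{-1})$ and $\overline{b_s} = b_s$) and the standard-basis multiplication formulas, which I would cite or state without belaboring. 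I note that since the statement quotes \cite{KL} and \cite[Thm.~2.1]{Soergel97}, one could alternatively simply reference those; but the above is the self-contained argument I would write out.
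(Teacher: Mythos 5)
The paper does not prove this statement; it is quoted from \cite{KL} and \cite[Thm.\ 2.1]{Soergel97}, and your argument is precisely the standard proof given in those references, so there is no internal proof to compare against. Your proposal is essentially correct, but two points need attention. First, the identity ``$\overline{\delta_y}=\delta_{y^{-1}}=\delta_y+(\text{lower terms})$'' is garbled: $\delta_{y^{-1}}$ is a single standard basis element, not $\delta_y$ plus lower terms. The bar involution is the ring homomorphism with $\overline{v}=v^{-1}$ and $\overline{\delta_y}=(\delta_{y^{-1}})^{-1}$ (the paper's own definition in Section 3.1 contains the same slip), and the unitriangularity you actually need, $\overline{\delta_y}=\delta_y+\sum_{z<y}r_z\delta_z$ with $r_z\in\Z[v^{\pm1}]$, holds for $(\delta_{y^{-1}})^{-1}$ and is proved by induction on $\ell(y)$ from $\overline{\delta_s}=\delta_s^{-1}=\delta_s+(v-v^{-1})$. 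With that correction your uniqueness lemma is fine. Second, in the existence step you subtract the constant terms $c_y(0)$, which only makes sense once you know $c_y\in\Z[v]$, i.e.\ that no strictly negative powers of $v$ occur in $b_sb_{sx}$; this is true but should be verified: the only source of negative powers is $\delta_s\delta_y=\delta_{sy}+(v^{-1}-v)\delta_y$ when $sy<y$, and $h_{y,sx}\,(v^{-1}-v)\in\Z[v]$ precisely because $h_{y,sx}\in v\Z[v]$. Once this is in place, your correction $b_x:=b_sb_{sx}-\sum_{y<x}c_y(0)\,b_y$ actually works in a single step, with no downward induction or re-clearing needed: each $b_z$ contributes to the coefficient of $\delta_y$ (for $y<z$) only through $h_{y,z}\in v\Z[v]$, which has no constant term, so already-cleared coefficients cannot be re-polluted.
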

The set $\{b_x \mid x \in W\}$ is the {\em Kazhdan--Lusztig basis} of $\bm{H}$. The polynomials $h_{y,x} \in v\Z[v]$ are {\em Kazhdan--Lusztig polynomials}. One can check directly that the Kazhdan--Lusztig basis element corresponding to a simple reflection $s \in S$ is $b_s=\delta_s + v$. The other Kazhdan--Lusztig basis elements can be computed recursively using Theorem~\ref{thm: KL basis}.

\subsection{The Lusztig--Vogan module}
\label{sec: the lustig-vogan module}

The mathematical object at the heart of this project is a representation of the Hecke algebra introduced in \cite{LV} and \cite{Vogan3}, which we refer to as the {\em Lusztig--Vogan module}. This representation is constructed using the geometry of local systems on $K$-orbits in the flag variety of $G$. From this representation one can extract character formulas for irreducible representations of $G_\R$. In this section we sketch the construction of Lusztig--Vogan. We refer the reader to \cite{LV} for full details of the construction and \cite{Vogan3} for the relationship to representation theory of $G_\R$. 

Let $G$, $B$, $\theta$ and $K$ be as in Section \ref{sec: the geometry of the Lusztig--Vogan module}. The first ingredient in the construction of Lusztig--Vogan is a parameterizing set for a basis of their $\bm{H}=\bm{H}(W,S)$-module. This set is defined geometrically, but it also has significance in the representation theory of $G_\R$: it parameterizes a certain set of equivalence classes of irreducible representations of $G_\R$.  See \cite{Vogan1, Vogan2, Vogan3, Vogan4} for the full story or \cite{penrose} for a geometric survey.  

Let $X=G/B$ be the flag variety of $G$. The group $K$ acts on $X$ with finitely many orbits \cite[Prop 5.1]{penrose}. For a $K$-orbit $\mc{O}$, denote by $\Loc_K(\mc{O})$ the set of irreducible $K$-equivariant local systems on $\mc{O}$. Set
\begin{equation}
    \label{eq: D}
    \mathscr{D} = \{(\mc{O}, \mc{L}) \mid  \mc{O} \text{ is a $K$-orbit on $X$}, \mc{L} \in \Loc_K(\mc{O})\}.
\end{equation}
In a pair $(\mc{O}, \mc{L}) \in \mathscr{D}$, the local system $\mc{L}$ determines the orbit $\mc{O}$, so we sometimes write elements of $\mathscr{D}$ simply as $\mc{L} \in \mathscr{D}$. 

Let $q$ be an indeterminate. Set \begin{equation}
    \label{eq: LV module as a vector space}
    M_{LV}=\bigoplus_{\mc{L} \in \mathscr{D}} \Z[q^{\pm 1}] \mc{L}
\end{equation}
to be the free $\Z[q^{\pm 1}]$-module with basis $\mathscr{D}$. 

The second ingredient in the construction is a set of endomorphisms $\{T_s \mid s \in S\}$ of $M_{LV}$ which give it the structure of an $\bm{H}$-module. These endomorphisms can be constructed in two ways: via push-pull functors to partial flag varieties, or formulaically based on root type of points in the orbit. Both constructions are equivalent. We will describe the latter, and refer the reader to \cite[\S 3]{LV} for the details on the former. 

 For a simple reflection $s \in S$, denote by
\begin{align}
\label{eq: notations for s}
    &P_s = \text{ standard parabolic subgroup of $G$ of type $s$},\\
    \label{eq: pi s}
    &\pi_s: X \rightarrow G/P_s \text{ natural projection map, and } \\
   &L_x^s = \pi_s^{-1}(\pi_s(x)) \simeq \mathbb{P}^1 \text{ the line of type $s$ through $x \in X$}.
\end{align}
Further, for a $K$-orbit $\mc{O}$, set
\begin{equation}
\label{eq: O hat}
    \widehat{\mc{O}}_s:= \bigcup_{x \in \mc{O}} L_x^s.
\end{equation}

\begin{definition}
\label{def: Ts}
 Let $(\mc{O}, \mc{L}) \in \mathscr{D}$, fix $x \in \mc{O}$, and use notation (\ref{eq: notations for s}) - (\ref{eq: O hat}). For $s \in S$, define $T_s: M_{LV} \rightarrow M_{LV}$ to be the endomorphism determined by the following formulas\footnote{We write the endomorphism $T_s$ on the right to indicate that the corresponding action of the Hecke algebra is a right action.} (enumerated and labeled as to align with \cite[Def.\ 6.4]{Vogan3}). 
\begin{itemize}
    \item[(a)] (compact imaginary) If $L_x^s \subseteq \mc{O}$, then
    \[
    \mc{L} T_s  = q \mc{L}.
    \]
    \item[(b1)] (complex ascent) If $L_x^s \cap \mc{O} = \{x\}$ and $\widehat{\mc{O}} \smallsetminus \mc{O}$ is a single $K$-orbit, then 
    \[
    \mc{L} T_s = \widehat{\mc{L}}|_{\widehat{\mc{O}} \smallsetminus \mc{O}},
    \]
    where $\widehat{\mc{L}}$ is the unique extension (as a local system) of $\mc{L}$ to $\widehat{\mc{O}}$. 
    \item[(b2)] (complex descent) If $L_x^s \cap \mc{O} = L_x^s \smallsetminus \{\text{point}\}$, then $\widehat{\mc{O}} \smallsetminus \mc{O}$ is a single $K$-orbit and we define
    \[
    \mc{L} T_s = (q-1)\mc{L} + q(\widehat{\mc{L}}|_{\widehat{\mc{O}} \smallsetminus \mc{O}}),
    \]
    where $\widehat{\mc{L}}$ is the unique extension of $\mc{L}$ to $\widehat{\mc{O}}$.
    \item[(c1)] (type II noncompact imaginary) If $L_x^s \cap \mc{O} = \{x, x'\}$, then $\widehat{\mc{O}} \smallsetminus \mc{O}$ is a single $K$-orbit and $\mc{L}$ has two distinct extensions to $\widehat{\mc{O}}$, which we denote $\widehat{\mc{L}}_1$ and $\widehat{\mc{L}}_2$. In this case, we define
    \[
    \mc{L} T_s = \mc{L} + (\widehat{\mc{L}}_1 + \widehat{\mc{L}}_2)|_{\widehat{\mc{O}} \smallsetminus \mc{O}}.
    \]
    \item[(c2)] (type II real) If $L_x^s \cap \mc{O} = L_x^s \smallsetminus \{\text{two points in one $K$-orbit}\}$ and $\mc{L}$ extends to $\widehat{\mc{O}}$, then
    \[
    \mc{L} T_s = (q-1) \mc{L} - \widehat{\mc{L}}_2|_{\mc{O}} + (q-1) \mc{L}',
    \]
    where $\mc{L}'=\widehat{\mc{L}}|_{\widehat{\mc{O}} \smallsetminus \mc{O}}$ (here $\widehat{\mc{L}}$ is the extension of $\mc{L}$ to $\widehat{\mc{O}}$) and  $\widehat{\mc{L}}_2$ is the other extension of $\mc{L}'$ to $\widehat{\mc{O}}$.
    \item[(d1)] (type I noncompact imaginary) If $L_x^s \cap \mc{O} = \{x\}$ and $\widehat{\mc{O}} \smallsetminus \mc{O}$ is the union of two $K$-orbits $\mc{O}'$ and $\mc{O}''$ (labeled so that $\dim \mc{O} = \dim \mc{O}'' = \dim \mc{O}'-1$), then 
    \[
    \mc{L} T_s = \widehat{\mc{L}}|_{\mc{O}'} + \widehat{\mc{L}}|_{\mc{O}''},
    \]
    where $\widehat{\mc{L}}$ is the unique extension of $\mc{L}$ to $\widehat{\mc{O}}$. 
    \item[(d2)] (type I real) If $L_x^s \cap \mc{O} = L_x^s \smallsetminus \{\text{two points in two $K$-orbits $\mc{O}'$ and $\mc{O}''$}\}$ and $\mc{L}$ extends to $\widehat{\mc{O}}$, then 
    \[
    \mc{L} T_s = (q-2)\mc{L} + (q-1)(\widehat{\mc{L}}|_{\mc{O}'} + \widehat{\mc{L}}|_{\mc{O}''}),
    \]
    where $\widehat{\mc{L}}$ is the unique extension of $\mc{L}$ to $\widehat{\mc{O}}$.
    \item[(e)] (real nonparity) If $L_x^s \cap \mc{O} = \{\text{two points}\}$ and $\mc{L}$ does not extend to $\widehat{\mc{O}}$, then 
    \[
    \mc{L} T_s = -\mc{L}.
    \]
\end{itemize}
\end{definition}

\begin{remark}
A word on the labels in Definition~\ref{def: Ts}: To an element $(\mc{O}, \mc{L}) \in \mathscr{D}$, one can associate a $\theta$-stable maximal torus $T$ of $G$ \cite[Cor.\ 2.2]{Vogan3}, which gives an action of $\theta$ on roots. The roots of $(G,B)$ can then be partitioned into ``root types'' based on their behavior under this action \cite[Def.\ 2.5]{Vogan3}. These root types are the origin of the labels in Definition~\ref{def: Ts}. These labels play no role in our arguments, but we include them to orient any readers familiar with this perspective.
\end{remark}

\begin{lemma}
\label{lem: Ts give a Hecke algebra action} \cite[Cor.\ 3.5]{LV} For any $s \in S$, the endomorphism $T_s$ of Definition~\ref{def: Ts} satisfies 
\[
T_s^2 = (q-1)T_s + q.
\]
Hence the operators $\{T_s, s \in S\}$ generate a right action of the Hecke algebra $\bm{H} = \bm{H}(W,S)$ (in the formulation in Remark~\ref{rmk: T_s formulation of Hecke algebra}) on $M_{LV}$.
\end{lemma}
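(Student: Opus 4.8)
The real content of the lemma is the quadratic identity $\mc{L}\,T_s^2=(q-1)\,\mc{L}\,T_s+q\,\mc{L}$ for every $(\mc{O},\mc{L})\in\mathscr{D}$; my plan is to verify this by a direct case analysis, and then to supply the braid relations separately so as to conclude that the $T_s$ generate an action of $\bm{H}$. The first observation is that $T_s$ is \emph{local with respect to $\pi_s$}: both $\mc{L}\,T_s$ and the effect of $T_s$ on each local system occurring in $\mc{L}\,T_s$ are computed entirely inside the $s$-saturation $\widehat{\mc{O}}_s=\pi_s^{-1}(\pi_s(\mc{O}))$, and they depend only on the configuration of $K$-orbits in $\widehat{\mc{O}}_s$ relative to the $\mathbb{P}^1$-fibration $\pi_s$, together with the behaviour of irreducible $K$-equivariant local systems under restriction to, and extension along, the lines $L_x^s$. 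One may therefore replace $X$ by the single $\mathbb{P}^1$-bundle $\widehat{\mc{O}}_s\to\pi_s(\mc{O})$. The possible configurations are finite in number and are precisely the seven situations (a), (b1)--(b2), (c1)--(c2), (d1)--(d2), (e) of Definition~\ref{def: Ts}, indexed by the type of the root $\alpha_s$ at $\mc{O}$ in the sense of \cite[Def.\ 2.5]{Vogan3}.

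The second, and essential, ingredient is the bookkeeping that controls a \emph{second} application of $T_s$: for each local system occurring in $\mc{L}\,T_s$ one must know the type, relative to $s$, of its supporting orbit, and how extension and restriction of local systems interact across the relevant pairs of orbits. This is Vogan's calculus of root types under Cayley transform \cite[\S2,\S6]{Vogan3}: a complex-ascent orbit is paired with a complex-descent orbit and conversely; a noncompact imaginary orbit of type I produces in $\widehat{\mc{O}}_s\smallsetminus\mc{O}$ a real orbit of type I together with a further imaginary orbit of type I; a noncompact imaginary orbit of type II produces a single real orbit of type II; real-nonparity orbits are stable; and in every case the operation ``extend to $\widehat{\mc{O}}_s$, then restrict to the paired orbit'' recovers the original local system, while in the type II cases it also interchanges the two extensions $\widehat{\mc{L}}_1,\widehat{\mc{L}}_2$ in the correct way. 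Granting these facts, each of the seven cases reduces to a short $\Z[q^{\pm1}]$-linear computation: the compact imaginary case is immediate since $q^2=(q-1)q+q$; in the complex case one has $(\widehat{\mc{L}}|_{\mc{O}'})\,T_s=(q-1)\,\widehat{\mc{L}}|_{\mc{O}'}+q\,\mc{L}$ by (b2), so $\mc{L}\,T_s^2=(q-1)\,\mc{L}\,T_s+q\,\mc{L}$ at once; in the type I noncompact imaginary case one expands $\mc{L}\,T_s=\widehat{\mc{L}}|_{\mc{O}'}+\widehat{\mc{L}}|_{\mc{O}''}$, applies (d2) to the first summand and (d1) to the second, and watches the resulting terms collapse; and the type II cases (c1), (c2) go the same way, now tracking carefully which of $\widehat{\mc{L}}_1,\widehat{\mc{L}}_2$ appears at each stage.

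Finally, to promote the quadratic relation to a right action of $\bm{H}$ on $M_{LV}$ one must also verify the braid relations $T_sT_tT_s\cdots=T_tT_sT_t\cdots$; for this I would pass to the equivalent description of the $T_s$ via the push--pull functors $\Theta_s=\pi_s^*\pi_{s*}[1]$ on $D^b_K(X)$ and argue as in the construction of \cite[\S3]{LV}, or simply cite \cite[Cor.\ 3.5]{LV} for the full statement. The step I expect to be the real obstacle is the type II bookkeeping in the second paragraph: verifying that the two extensions of $\mc{L}$ --- and, in case (c2), the auxiliary extension of $\mc{L}'$ --- are matched up under a second application of $T_s$ so that all cross terms cancel and exactly $(q-1)\,\mc{L}\,T_s+q\,\mc{L}$ survives. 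This, rather than any formal manipulation, is the heart of the argument, and is where the work of \cite[\S3]{LV} is concentrated.
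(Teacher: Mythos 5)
Your proposal is correct, and it is essentially the argument the paper relies on: the paper gives no proof of this lemma beyond the citation to \cite[Cor.\ 3.5]{LV}, and your case-by-case verification of the quadratic relation (compact imaginary trivially, complex via the ascent/descent pairing, types I and II via the Cayley-transform bookkeeping, nonparity via $(-1)^2=1$) together with the braid relations is precisely the content of \cite[\S 3]{LV} that the citation outsources. Since you also end by citing \cite{LV} for the braid relations, the two routes coincide; your identification of the type II extension-matching as the delicate point is accurate.
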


Definition~\ref{def: Ts} and Lemma~\ref{lem: Ts give a Hecke algebra action} define the Lusztig--Vogan module $M_{LV}$. In \cite{LV}, several important properties of $M_{LV}$ are established. Most notably, the existence and uniqueness of a self-dual (with respect to an appropriately defined duality endomorphism on $M_{LV}$) basis for $M_{LV}$ is proven. The change-of-basis matrix between this self-dual basis and the basis $\mathscr{D}$ in (\ref{eq: LV module as a vector space}) is given by a collection of polynomials which we will refer to as Kazhdan--Lusztig--Vogan (KLV) polynomials. From these polynomials one can extract character formulas for $G_\R$. We encourage the uninitiated reader to explore \cite{LV} and \cite{Vogan3} in more detail to learn more about this beautiful story. 

\begin{example}
\label{example: LV module for SL(2,R)} $($The Lusztig--Vogan module of $SL_2(\R))$ Let $G=SL_2(\C)$ and $\theta: G \rightarrow G$ the involution given by 
\[
\theta g = \bp 1 & 0 \\ 0 & -1 \ep g \bp 1 & 0 \\ 0 & -1 \ep.
\]
Then $K=G^\theta$ is the torus of diagonal elements in $G$, 
\[
K= \left\{ \bp a & 0 \\ 0 & a^{-1} \ep \mid a \in \C^\times \right\} \simeq \C^\times. 
\]
The flag variety $X$ is isomorphic to $\mathbb{P}^1$, which we identify with the set of lines through the origin in $\C^2$. Denote the line through $(x, y) \in \C^2$ by $[x,y]$. The group $K$ acts on $X$ with three orbits:
\[
\mc{Q}_0 := \{[1,0]\}, \hspace{2mm} \mc{Q}_\infty := \{ [0,1]\}, \text{ and }\hspace{1mm} \mc{O} := \{ [x,y] \mid x \neq 0, y \neq 0 \} \simeq \C^\times.
\]
The stabilizers are $K$, $K$, and $\{ \pm I\}$, respectively. As irreducible $K$-equivariant local systems correspond to irreducible representations of the component group of the stabilizer, we have four elements of $\mathscr{D}$:
\[
\mathscr{D} = \{(\mc{Q}_0, \Q_{\mc{Q}_0}), (\mc{Q}_\infty, \Q_{\mc{Q}_\infty}), (\mc{O}, \Q_\mc{O}), (\mc{O}, \mc{L}) \},
\]
where $\mc{L}$ is the ``M\"obius band'' local system corresponding to the sign representation of $\{\pm I\}$. 

Let $s \in S$ be the non-trivial simple reflection in $W=S_2$. We have $\pi_s: X \rightarrow \mathrm{pt}$, and $L_x^s = X$ for any $x \in X$. An examination of Definition~\ref{def: Ts} shows that the reflection $s$ is type I noncompact imaginary (d1) with respect to the parameters $(\mc{Q}_0, \Q_{\mc{Q}_0})$ and $(\mc{Q}_\infty, \Q_{\mc{Q}_\infty})$, type I real (d2) with respect to the parameter $(\mc{O}, \Q_{\mc{O}})$, and non-parity real (e) with respect to the parameter $(\mc{O}, \mc{L})$. Hence the action of the element $T_s\in \bm{H}$ on the basis of local systems in $M_{LV}$ is given by the following formulas: 
\begin{align}
\label{eq: T_s formulas for SL2}
    & \Q_{\mc{Q}_0} \cdot T_s= \Q_{\mc{Q}_\infty} + \Q_\mc{O}, \hspace{7mm}   \Q_\mc{O} \cdot T_s = (q-2) \Q_\mc{O} + (q-1)(\Q_{\mc{Q}_0} + \Q_{\mc{Q}_\infty}),\\
    \nonumber
    & \Q_{\mc{Q}_\infty} \cdot T_s = \Q_{\mc{Q}_0} + \Q_{\mc{O}}, \hspace{7mm}   \mc{L} \cdot T_s = -\mc{L}.
\end{align}
After extending scalars to $\Z[v^{\pm 1}] = \Z[q^{\pm 1/2}]$ (Remark~\ref{rmk: T_s formulation of Hecke algebra}), we can also describe the action of the Kazhdan--Lusztig basis element $b_s = vT_s + v$ on the basis $\mathscr{D}$:
\begin{align}
\label{eq: b_s formulas for SL2}
     \Q_{\mc{Q}_0} \cdot b_s  = v(\Q_{\mc{Q}_0} + \Q_{\mc{Q}_\infty} + \Q_\mc{O}), \hspace{3mm} &  \Q_\mc{O} \cdot b_s= (v^{-1} - v)(\Q_{\mc{Q}_0} + \Q_{\mc{Q}_\infty} + \Q_\mc{O}), \\
    \nonumber
     \Q_{\mc{Q}_\infty} \cdot b_s  = v(\Q_{\mc{Q}_0} + \Q_{\mc{Q}_\infty} + \Q_\mc{O}),\hspace{2mm}  &  \mc{L} \cdot b_s= 0.
\end{align}
\end{example}



\subsection{Soergel bimodules}
\label{sec: Soergel bimodules}
Associated to the Coxeter system $(W,S)$ is a category of graded bimodules called Soergel bimodules. The category of Soergel bimodules categorifies the Hecke algebra of $(W,S)$. In this section, we describe the construction of this category and establish our notation. For a detailed description of the category of Soergel bimodules from both an algebraic and diagrammatic point of view, see \cite{SBim}. 

Fix a maximal torus $T$ of $G$ contained in $B$ and let
\begin{equation}
    \label{eq: R}
R:=S(X(T)_\Q)
\end{equation}
be the symmetric algebra on $X(T)_\Q:=X(T) \otimes_\Z \Q$, graded so that $X(T)_\Q$ has degree $2$. 

We will work in the category $R$-gbim of finitely generated (as both left and right $R$-modules) graded $R$-bimodules, with morphisms given by graded $R$-bimodule maps which are homogeneous of degree $0$. The category $R$-gbim is the ambient home of Soergel bimodules. There is a natural shift functor $(n): R\mathrm{-gbim} \rightarrow R\mathrm{-gbim}$ for each $n\in \Z$ sending $M \mapsto M(n)$, where $M(n)^i:=M^{n+i}$. Given an object $M$ in $R$-gbim and a Laurent polynomial $p=\sum p_iv^i \in \Z_{\geq 0}[v^{\pm 1}]$, denote by
\begin{equation}
    \label{eq: polynomial powers}
    M^{\oplus p} := \bigoplus M(i)^{\oplus p_i}.
\end{equation}
The tensor product $ - \otimes_R -$ gives $R$-gbim the structure of a monoidal category. 

The natural action of $W$ on $X(T)_\Q$ extends to an action\footnote{See Notation \ref{not: W action}.} of $W$ on $R$. For an element $s \in S$, denote by 
\[
R^s = \{r \in R \mid s  r =r\}.
\]
For $s \in S$, we define an object $B_s$ in $R$-gbim by 
\begin{equation}
    \label{eq: Bs}
    B_s:=R \otimes_{R^s}R(1).
\end{equation}
The modules $B_s$ for $s \in S$ are the building blocks of Bott--Samelson bimodules. 
\begin{definition}
\label{def: Bott-Samelson bimodule} Let $\underline{w}=(s_1, \ldots, s_n)$ be an expression for $w \in W$. The {\em Bott--Samelson bimodule} corresponding to  $\underline{w}$ is the graded $R$-bimodule
\[
BS(\underline{w}):=B_{s_1} \otimes_R B_{s_2} \otimes_R \cdots \otimes_R B_{s_n}. 
\]
\end{definition}
It is easy to see that there is a canonical isomorphism 
\[
BS(\underline{w})=R \otimes_{R^{s_1}} R \otimes_{R^{s_2}} \cdots \otimes_{R^{s_n}} R (\ell(\underline{w})). 
\]
With this we can define Soergel bimodules. 
\begin{definition}
\label{def: SBim}
Let $\SBim$ be the additive Karoubian subcategory of $R$-gbim generated by Bott-Samelson bimodules and their shifts. A {\em Soergel bimodule} is an object in $\SBim$. In other words, a Soergel bimodule is a graded $R$-bimodule which is isomorphic to a direct summand of a finite direct sum of grading shifts of Bott-Samelson bimodules.  
\end{definition}
The category $\SBim$ of Soergel bimodules is closed under tensor products, so it is a monoidal category. It is additive, but not abelian. 

\begin{example}
\label{example: SBim for type A1} $($Soergel bimodules of type $A_1)$ Let $W=S_2$, generated by the simple reflection $s$. The bimodules $R$ and $B_s$ are indecomposible Soergel bimodules, and the square of $B_s$ is 
\begin{align}
    \label{eq: square of Bs}
    B_s\otimes_R B_s &=R\otimes_{R^s}R \otimes_{R^s}R(2) \\
    \nonumber &\simeq R \otimes_{R^s} (R^s \oplus R^s(-2)) \otimes_{R^s} R(2) \\
    \nonumber &= B_s(1) \oplus B_s(-1).
\end{align}
Hence up to shift and isomorphism, $R$ and $B_s$ are the only indecomposible objects in $\SBim$. 
\end{example}

Denote by $[\SBim]_\oplus$ the split Grothendieck group of $\SBim$. That is, $[\SBim]_\oplus$ is the abelian group generated by symbols $[M]$ for all objects $M \in \SBim$ subject to the relations $[M]=[M]'+[M'']$ whenever $M \simeq M' \oplus M''$ in $\SBim$. We can give $[\SBim]_\oplus$ the structure of a $\Z[v^{\pm 1}]$-module by 
\[
p[M]:=[M^{\oplus p}]
\]
for $M \in \SBim$ and $p \in \Z_{\geq 0}[v^{\pm 1}]$, with notation as in (\ref{eq: polynomial powers}). Because $\SBim$ is monoidal, $[\SBim]_\oplus$ has the structure of a $\Z[v^{\pm 1}]$-algebra. 

Soergel showed that $\SBim$ categorifies the Hecke algebra. 
\begin{theorem}
\label{thm: Soergel's categorification theorem}
{\em (Soergel's categorification theorem)} The map $b_s \mapsto [B_s]$ defines an isomorphism of $\Z[v^{\pm 1}]$-algebras 
\[
\bm{H} \xrightarrow{\sim} [\SBim]_\oplus. 
\]
\end{theorem}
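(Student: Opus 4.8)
The plan is to establish the isomorphism $\bm{H} \xrightarrow{\sim} [\SBim]_\oplus$ in several stages: first produce a well-defined ring homomorphism $\bm{H} \to [\SBim]_\oplus$, then produce an inverse map, and finally check the two compositions are the identity. For the forward direction, I would define $\varphi\colon \bm{H} \to [\SBim]_\oplus$ by sending the Kazhdan--Lusztig generator $b_s$ to $[B_s]$, and to see this is well-defined I must verify the defining relations of $\bm{H}$ hold for the images. In the $b_s$-presentation the relations are the quadratic relation $b_s^2 = (v+v^{-1})b_s$ and the braid relations; the quadratic relation follows immediately from the computation $B_s \otimes_R B_s \cong B_s(1) \oplus B_s(-1)$ carried out in Example~\ref{example: SBim for type A1}, which translates to $[B_s]^2 = (v + v^{-1})[B_s]$. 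The braid relations require knowing that the indecomposable Soergel bimodule $B_w$ for a rex $\underline{w}$ of $w$ is independent of the reduced expression (up to isomorphism), so that the two sides of a braid relation, being built from Bott--Samelson objects with the same underlying indecomposable summand structure, have equal classes in the split Grothendieck group.

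Next I would construct the inverse. The key structural input is Soergel's classification of indecomposable Soergel bimodules: for each $w \in W$ there is a unique (up to shift and isomorphism) indecomposable Soergel bimodule $B_w$ occurring as a summand of $BS(\underline{w})$ for any reduced expression $\underline{w}$ but not occurring in $BS(\underline{v})$ for any shorter expression, and $\{B_w : w \in W\}$ is a complete list of indecomposables up to shift. This gives that $[\SBim]_\oplus$ is a free $\Z[v^{\pm 1}]$-module with basis $\{[B_w] : w \in W\}$. I would then define $\psi\colon [\SBim]_\oplus \to \bm{H}$ on this basis by $[B_w] \mapsto b_w$; this is well-defined on the split Grothendieck group precisely because $[\SBim]_\oplus$ is free on the $[B_w]$. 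To see $\psi \circ \varphi = \id$ and $\varphi \circ \psi = \id$, one uses Soergel's character formula: in $BS(\underline{w})$ the indecomposable $B_w$ appears exactly once and all other summands are $B_v$ with $v < w$, so by induction on the Bruhat order $[B_w] = b_w$ under $\varphi$ applied to... more precisely, $\varphi(b_w)$ expands as $[B_w]$ plus a $\Z[v^{\pm 1}]$-combination of $[B_v]$ with $v<w$, and an induction pins down $\varphi(b_w) = [B_w]$, giving bijectivity on bases.

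The main obstacle is establishing Soergel's classification of the indecomposables together with the unitriangularity statement relating $b_w$ and $[BS(\underline{w})]$ — this is really Soergel's Hom-formula / the fact that $\End(BS(\underline{w}))$ is local-ish enough (more precisely, the graded-rank computation of $\Hom$ spaces between Bott--Samelson bimodules, computed via the "nil-Hecke"/standard-filtration techniques) to force the Krull--Schmidt behavior and the Bruhat-order support bound. In a treatment that may cite \cite{SBim}, I would simply invoke this as the Soergel classification theorem; if proving from scratch, the crux is the computation of $\Hom_{R\text{-gbim}}^\bullet(BS(\underline{v}), BS(\underline{w}))$ as a graded free $R$-module whose rank is given by evaluating a bilinear pairing on the Hecke algebra, which then yields both the classification and the categorification statement. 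Everything else — the ring homomorphism axioms, the freeness of the Grothendieck group, the inductive identification of bases — is routine once that structural result is in hand.
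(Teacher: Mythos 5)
The paper does not actually prove Theorem~\ref{thm: Soergel's categorification theorem}; it is recalled as Soergel's classical result (see \cite{SBim}), so your sketch is being measured against the standard argument. Your overall architecture --- a homomorphism on generators, Soergel's classification of indecomposables to get freeness of $[\SBim]_\oplus$, and a unitriangularity argument for bijectivity --- is the right one, but two steps are genuinely wrong as written. First, the Kazhdan--Lusztig generators $b_s$ do \emph{not} satisfy the braid relations, and ``quadratic plus braid'' is not a presentation of $\bm{H}$ in the generators $b_s$: already for $m_{st}=3$ one has $b_sb_tb_s=b_{sts}+b_s$ while $b_tb_sb_t=b_{sts}+b_t$. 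So checking the quadratic relation for $[B_s]$ and then ``the braid relations'' does not establish well-definedness of $b_s\mapsto[B_s]$. One must either use the $\delta_s$-presentation (setting $\delta_s\mapsto[B_s]-v$), in which case verifying the braid relation forces you to control the classes $[BS(\underline{u})]$ for \emph{all} subwords $\underline{u}$ of the two reduced expressions of the longest element of $\langle s,t\rangle$, including non-reduced ones --- far more than ``the same top indecomposable summand'' --- or, more cleanly, build the map in the opposite direction first: the character map $\ch\colon[\SBim]_\oplus\to\bm{H}$ defined via standard ($\Delta$-)filtrations needs no relation check, satisfies $\ch([M\otimes_R B_s])=\ch([M])\,b_s$ and $\ch([R])=1$, and its injectivity (a consequence of Soergel's Hom formula) delivers the theorem.

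Second, your concluding induction that ``pins down $\varphi(b_w)=[B_w]$'' is not available: that identity is precisely Soergel's conjecture, a far deeper statement than the categorification theorem, proved only by Elias--Williamson via Hodge-theoretic methods. It does not follow from the support bound on summands of $BS(\underline{w})$, and consequently your proposed inverse $\psi([B_w])=b_w$ is a $\Z[v^{\pm1}]$-module isomorphism but not in general the inverse of $\varphi$. Fortunately it is also not needed. What the decomposition $BS(\underline{w})\cong B_w\oplus\bigoplus_{v<w}B_v(k)^{\oplus m_{v,k}}$ for reduced $\underline{w}$ actually gives is that the matrix expressing $\{\varphi(b_w)\}_{w\in W}$ (equivalently $\{[BS(\underline{w})]\}$, since $\{b_w\}$ and the products $\{b_{s_1}\cdots b_{s_n}\}$ over chosen reduced expressions are themselves unitriangularly related) in the basis $\{[B_w]\}$ is unitriangular over $\Z[v^{\pm1}]$, hence invertible. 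Bijectivity of $\varphi$ follows from this alone, with no identification of $\varphi(b_w)$ and $[B_w]$.
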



\subsection{The equivariant derived category}
\label{sec: the equivariant derived category}
The geometric setting of this paper is the equivariant derived category, as developed in \cite{BL}. Here we briefly recall some of its basic structure. 

Let $H$ be a complex algebraic group and $X$ a $H$-variety (over $\C$, equipped with the classical topology). Denote by $D_c^b(X):=D^b_c(X; \Q)$ the bounded derived category of constructible (with respect to the $H$-orbit stratification) sheaves of $\Q$-vector spaces on $X$. Fix a smooth model $EH \rightarrow BH$ for the classifying space of $H$, as in \cite[\S 12.4.1]{BL}. Let 
\[
X \xleftarrow{p} X \times EH \xrightarrow{q} X \times_H EH
\]
be the natural projections. We denote elements in quotient spaces using brackets; e.g., we write $[x, e] \in X \times_H EH$ for $x \in X$ and $e \in EH$. Denote by $D^b(X \times EH)$ and $D^b(X \times_H EH)$ the bounded derived categories of $\Q$-vector spaces on $X \times EH$ and $X \times_H EH$, respectively.  The (bounded, constructible) {\em equivariant derived category $D^b_H(X) := D^b_H(X; \Q)$} is defined as follows. 
\begin{definition}
\label{def: equivariant derived category} An object $\mc{G} \in D^b_H(X)$ is a triple $\mc{G} = (\mc{G}_X, \overline{\mc{G}}, \beta_{\mc{G}})$ with $\mc{G}_X \in D^b_c(X)$, $\overline{\mc{G}} \in D^b(X \times_H EH)$, and $\beta_{\mc{G}}:p^*\mc{G}_X \xrightarrow{\sim} q^*\overline{\mc{G}}$ an isomorphism in $D^b(X \times_H EH)$. A morphism $\varphi:\mc{G} \rightarrow \mc{F}$ in $D_H^b(X)$ is a pair $\varphi = (\varphi_X, \overline{\varphi})$ with $\varphi_X: \mc{G}_X \rightarrow \mc{F}_X$ a morphism in $D^b_c(X)$ and $\overline{\varphi}: \overline{\mc{G}} \rightarrow \overline{\mc{F}}$ a morphism in $D^b(X \times_H EH)$ such that the diagram 
\[
\begin{tikzcd}
p^* \mc{G}_X \arrow[r, "p^* \varphi_X"] \arrow[d, "\beta_\mc{G}"] & p^* \mc{F}_X \arrow[d, "\beta_\mc{F}"] \\
q^* \overline{\mc{G}} \arrow[r, "q^* \overline{\varphi}"] & q^* \overline{\mc{F}}
\end{tikzcd}
\]
commutes. 
\end{definition}

There is a forgetful functor
\begin{equation}
    \label{eq: forgetful functor}
    \mathrm{For}: D^b_H(X) \rightarrow D^b(X)
\end{equation}
sending $\mc{G}=(\mc{G}_X, \overline{\mc{G}}, \beta_\mc{G})$ to $\For\mc{G}:=\mc{G}_X$. Let $\Q_X \in D^b_c(X)$ be the constant sheaf, viewed as a complex concentrated in degree zero. There is a canonical  equivariant lift $(\Q_X, \Q_{X \times_H EH}, id) \in D^b_H(X)$ of $\Q_X$ which we will also denote by $\Q_X$. Whether we are referring to the object of $D^b_H(X)$ or $D^b_c(X)$ should always be clear from context. 

The equivariant derived category $D^b_H(X)$ is triangulated and supports the usual six functor formalism in a way that is compatible with the forgetful functor. It also supports a natural shift functor 
\[
[n]: \mc{G} \mapsto \mc{G}[n]:= (\mc{G}_X[n], \overline{\mc{G}}[n], \beta_\mc{G}[n])
\]
for each $n \in \Z$. Given $\mc{G}, \mc{F} \in D^b_H(X)$, we denote by 
\begin{equation}
    \label{eq: Hom in equivariant derived category}
    \Hom_H(\mc{G}, \mc{F}):= \Hom_{D^b_H(X)}(\mc{G}, \mc{F}),
\end{equation}
and 
\begin{equation}
    \label{eq: Hom dot}
    \Hom^\bullet_H (\mc{G}, \mc{F}):= \bigoplus_{n \in \Z} \Hom_{D^b_H(X)}(\mc{G}, \mc{F}[n]). 
\end{equation}

There are two cohomology functors on $D^b_H(X)$: the {\em equivariant hypercohomology functor}
\begin{equation}
    \label{eq: equivariant hypercohomology}
    \mathbb{H}^\bullet_H:= \Hom^\bullet_H(\Q_X, -): D^b_H(X) \rightarrow H^*_H(X)\mathrm{-gmod}
\end{equation}
and the {\em ordinary hypercohomology functor}
\begin{equation}
    \label{eq: ordinary hypercohomology}
    \mathbb{H}^\bullet:= \Hom^\bullet_{D^b_c(X)}(\Q_X, \For(-)):D^b_H(X) \rightarrow H^*(X)\mathrm{-gmod}.
\end{equation}
Here $H^*_H(X)-$gmod (resp.\ $H^*(X)-$gmod) is the category of graded modules over the graded ring $H^*_H(X):=H^*(X \times_H EH)$ (resp.\ $H^*(X)$). The graded vector space $\Hom_H^\bullet(\Q_X, \mc{G})$ (resp.\ $\Hom^\bullet_{D^b_c(X)}(\Q_X, \mc{G})$) gains a $H^*_H = \Hom^\bullet_H(\Q_X, \Q_X)$-action (resp.\ $H^*(X) = \Hom^\bullet_{D^b_c(X)}(\Q_X, \Q_X)$-action) via precomposition. 

The hypercohomology functors agree with the usual sheaf cohomology functors: for every $\mc{G} \in D^b_H(X)$, 
\begin{equation}
    \label{eq: hypercohomology agrees with sheaf cohomology}
    \mathbb{H}_H^\bullet(\mc{G}) = H^*(X \times_H EH; \overline{\mc{G}}), \text{ and } \mathbb{H}^\bullet(\mc{G}) = H^*(X; \mc{G}_X).
\end{equation}
Equivariant hypercohomology can be computed using the Leray spectral sequence associated to the fibration $X\times_H EH \rightarrow BH$, which simplifies to 
\begin{equation}
    \label{eq: spectral sequence for equivariant hypercohomology}
    E^{pq}_2=H^p(BH) \otimes \mathbb{H}^q(\mc{G}) \implies \mathbb{H}^{p+q}_H(\mc{G}). 
\end{equation}


\subsection{The geometric Hecke category}
\label{sec: The geometric Hecke category}
 The definitions of Section \ref{sec: Soergel bimodules} have their roots in geometry. There is another categorification of the Hecke algebra which can be found within the equivariant derived category $D^b_B(G/B)$, coming from Grothendieck's function-sheaf dictionary. This geometric incarnation of the Hecke category will play a key role in our arguments that follow, so we recall its construction. For more details (including a more general framework which makes the function-sheaf perspective explicit), see \cite[\S2]{parity}.

Return to the setup of Section \ref{sec: introduction}, and let $X:=G/B$ be the flag variety of $G$. The action of $B$ on $X$ stratifies $X$ into Schubert cells:
\[
X= \bigsqcup_{w \in W} X_w \text{ where }X_w:=B wB/B \simeq \mathbb{A}^{\ell(w)}.
\]

Let $D^b_B(X):=D^b_B(X; \Q)$ be the $B$-equivariant derived category, as in Section \ref{sec: the equivariant derived category}. The category $D^b_B(X)$ has a monoidal structure given by convolution, which is constructed as follows. Denote by $\mathrm{mult}:G \times_B X \rightarrow X$ the map induced by multiplication in $G$. The convolution of $\mc{F}, \mc{G} \in D^b_B(X)$ is defined to be 
\[
\mc{F} * \mc{G} := \mathrm{mult}_*(\mc{F} \boxtimes_B \mc{G}),
\]
where $\mc{F} \boxtimes_B \mc{G}$ is obtained by descent from $\mc{F} \boxtimes \mc{G} \in D^b_{B^3}(G \times X)$. 

For $s \in S$, let 
\[
P_s:=\overline{BsB}=BsB \sqcup B 
\]
be the corresponding parabolic subgroup of $G$. Denote by $\Q_{P_s/B}$ the constant sheaf on $P_s/B$, considered as a sheaf on $X$ by extending by zero. 

\begin{definition}
\label{def: geometric Hecke category}
The {\em geometric Hecke category} $\mc{H}$ is the full subcategory of $D^b_B(X)$ generated by $\{\Q_{P_s/B}\}_{s \in S}$ under convolution ($*$), direct sums ($\oplus$), homological shifts ($[1]$), and direct summands ($\ominus$):
\[
\mc{H}=\langle \Q_{P_s/B} \mid s \in S \rangle _{*, \oplus, [1], \ominus}.
\]
\end{definition}
By construction, $\mc{H}$ is closed under convolution, so it is a monoidal category. It is additive, but not abelian. 

Let $[\mc{H}]_\oplus$ be the split Grothendieck group\footnote{See end of Section \ref{sec: Soergel bimodules} for a definition of split Grothendieck group.} of $\mc{H}$. We can give $[\mc{H}]_\oplus$ the structure of a $\Z[v^{\pm 1}]$-module by setting
\[
v \cdot [\mc{F}]:=[\mc{F}[1]]. 
\]
The monoidal structure on $\mc{H}$ gives $[\mc{H}]_\oplus$ the structure of a $\Z[v^{\pm 1}]$-algebra. As one might hope from its name, the geometric Hecke category categorifies the Hecke algebra. 
\begin{theorem}
\label{thm: geometric categorification}
 The map $b_s \mapsto [\Q_{P_s/B}[1]]$ defines an isomorphism of $\Z[v^{\pm 1}]$-algebras
\[
\bm{H} \xrightarrow{\sim} [\mc{H}]_\oplus.
\]
\end{theorem}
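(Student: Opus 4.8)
The plan is to prove $\bm{H} \xrightarrow{\sim} [\mc{H}]_\oplus$ by combining two independent pieces of input: the known categorification $\bm{H} \xrightarrow{\sim} [\SBim]_\oplus$ (Theorem~\ref{thm: Soergel's categorification theorem}) together with Soergel's Erweiterungssatz, or alternatively a direct geometric argument via the decomposition theorem and the stratification into Schubert cells. I would choose the self-contained geometric route and organize it around the classification of indecomposable objects of $\mc{H}$. First I would observe that $\mc{H}$ is generated under $*, \oplus, [1], \ominus$ by the $\Q_{P_s/B}$, each of which is (a shift of) a semisimple perverse sheaf, and that convolution of semisimple complexes is semisimple by the decomposition theorem (Beilinson--Bernstein--Deligne--Gabber, available since all maps involved are proper). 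Hence every object of $\mc{H}$ is a finite direct sum of shifts of $B$-equivariant intersection cohomology sheaves $\IC_w$ supported on Schubert varieties $\overline{X}_w$; I would next show that in fact \emph{every} $\IC_w$ appears, by an induction on $\ell(w)$: given a reduced expression $\underline{w} = (s_1, \dots, s_n)$, the Bott--Samelson object $\Q_{P_{s_1}/B} * \cdots * \Q_{P_{s_n}/B}$ decomposes with $\IC_w$ as a summand with multiplicity one (all other summands being $\IC_y$ for $y < w$, by a dimension/support count using properness of the Bott--Samelson resolution), so an inductive subtraction inside the Karoubian category $\mc{H}$ produces $\IC_w$.

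With the classification in hand, the second step is to identify $[\mc{H}]_\oplus$ as a $\Z[v^{\pm 1}]$-module: it is free with basis $\{[\IC_w] \mid w \in W\}$, since the $\IC_w$ are pairwise non-isomorphic indecomposables (distinct supports, and $\IC_w$ is not a proper shift of itself), and every object decomposes uniquely into them by Krull--Schmidt, which holds because $\End(\IC_w)$ is local — indeed $\Hom$ spaces in $\mc{H}$ are finite-dimensional and $\IC_w$ is simple in the perverse $t$-structure. Then I would define the candidate map $\bm{H} \to [\mc{H}]_\oplus$ on the standard basis by $\delta_s \mapsto [\Q_{P_s/B}[1]] - v$ and check it is a well-defined ring homomorphism by verifying the quadratic and braid relations: the quadratic relation $b_s^2 = (v + v^{-1}) b_s$ corresponds to the computation $\Q_{P_s/B}[1] * \Q_{P_s/B}[1] \cong \Q_{P_s/B}[2] \oplus \Q_{P_s/B}$, which follows from the fact that $P_s/B \cong \mathbb{P}^1$ and the structure of the multiplication map $P_s \times_B (P_s/B) \to P_s/B$ (a $\mathbb{P}^1$-bundle), giving $H^*(\mathbb{P}^1) = \Q \oplus \Q(-1)$ as the fiber contribution; the braid relations follow from associativity of convolution and the identification $\Q_{P_{s_1}/B} * \cdots * \Q_{P_{s_n}/B}$ for the two reduced words for the longest element of a rank-two parabolic having the same $\IC_w$-summand with the same multiplicity (by the support/multiplicity-one argument above applied in the finite rank-two situation, where one can also invoke the known computation on Grothendieck groups).

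Finally I would prove the map is an isomorphism by a triangularity argument: $[\Q_{P_s/B}[1]]$ expands in the standard-type basis coming from the cell stratification (the classes $[j_{w!}\Q_{X_w}[\ell(w)]]$, which also form a $\Z[v^{\pm 1}]$-basis of $[\mc{H}]_\oplus$ by the stratification long exact sequences / weight filtration) as $\delta_s + v$ plus lower terms, exactly matching the Kazhdan--Lusztig basis element $b_s$ under Theorem~\ref{thm: KL basis}; since the $b_s$ generate $\bm{H}$ as an algebra and the map sends them to an algebra-generating set, it is surjective, and a rank count ($\bm{H}$ is free of rank $|W|$ over $\Z[v^{\pm 1}]$, and so is $[\mc{H}]_\oplus$ with basis $\{[\IC_w]\}$) forces injectivity. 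The main obstacle I anticipate is not the Grothendieck-group bookkeeping but establishing cleanly that convolution preserves $\mc{H}$ and behaves well with respect to the decomposition theorem at the level of \emph{objects} (so that $[\mc{H}]_\oplus$ is genuinely a ring and the $\IC_w$ all lie in $\mc{H}$) — this requires care with equivariance, the properness of the relevant multiplication maps, and the Krull--Schmidt property in the equivariant setting; once those structural facts are in place, the identification with $\bm{H}$ is essentially forced by matching bases.
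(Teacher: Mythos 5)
The paper gives no proof of this statement at all --- it simply points to \cite{parity} --- so there is nothing internal to compare against; your outline is essentially the standard argument from that literature (decomposition theorem plus Bott--Samelson resolutions to show the indecomposables of $\mc{H}$ are exactly the shifts of the $\IC_w$, Krull--Schmidt to get freeness of $[\mc{H}]_\oplus$ on $\{[\IC_w]\}$, verification of the Hecke relations for $[\Q_{P_s/B}[1]]$, and a triangularity/rank argument for bijectivity). The quadratic relation computation, the multiplicity-one induction producing every $\IC_w$, and the surjectivity-plus-rank-count conclusion are all correct as written.

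The one step that does not go through as justified is the braid relation. You assert it follows because the Bott--Samelson objects for the two reduced words of the longest element of a rank-two parabolic ``have the same $\IC_w$-summand with the same multiplicity.'' They share only the top summand; the objects themselves are not isomorphic, and their full decompositions differ. Already in type $A_2$ one has $\mc{E}_{(s,t,s)} \cong \IC_{sts} \oplus \IC_s$ while $\mc{E}_{(t,s,t)} \cong \IC_{sts} \oplus \IC_t$, mirroring $b_sb_tb_s = b_{sts}+b_s \neq b_{sts}+b_t = b_tb_sb_t$ in $\bm{H}$; so no comparison of these two objects yields $\delta_s\delta_t\delta_s=\delta_t\delta_s\delta_t$. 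The correct route is the standard-basis machinery you introduce only at the end: define $[\Delta_w]$ as the $\Z[v^{\pm 1}]$-combination of the $[\IC_y]$ determined by the unitriangular matrix of graded stalk dimensions (note that in a \emph{split} Grothendieck group there are no stratification long exact sequences to invoke, so this unitriangular change of basis is the actual definition), prove via proper base change over the cells that $[\Delta_x]\cdot[\Q_{P_s/B}[1]] = [\Delta_{xs}] + v[\Delta_x]$ whenever $xs>x$, and deduce inductively that $([\Q_{P_{s_1}/B}[1]]-v)\cdots([\Q_{P_{s_m}/B}[1]]-v) = [\Delta_w]$ for any reduced word of $w$. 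Since the right-hand side depends only on $w$, the braid relation follows, and the same computation simultaneously delivers your triangularity, surjectivity, and injectivity.
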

For a proof, see, for example \cite{parity}. 


\subsection{From the geometric Hecke category to Soergel bimodules}
\label{sec: Relationship between the geometric Hecke category and Soergel bimodules}

By Theorem~\ref{thm: Soergel's categorification theorem} and Theorem~\ref{thm: geometric categorification}, both $\SBim$ and $\mc{H}$ categorify $\bm{H}$. In fact, their relationship can be made more explicit using an alternate characterization of the geometric Hecke category. 

The category $D^b_B(X)$ is Krull-Schmidt, meaning that every object decomposes into a finite direct sum of indecomposable objects, which are characterized by having local endomorphism rings. Hence, to understand the category $\mc{H}$, we should start by understanding the indecomposible objects. By definition, objects in $\mc{H}$ are direct summands of finite direct sums of shifts of objects of the form 
\[
\mc{E}_{\underline{w}}:=\Q_{P_{s_1}/B} * \Q_{P_{s_2}/B} * \cdots * \Q_{P_{s_n}/B}
\]
for some expression $\underline{w} = (s_1, s_2, \ldots, s_n)$ of an element $w \in W$. The objects $\mc{E}_{\underline{w}}$ can be described in terms of Bott-Samelson resolutions of the Schubert varieties $\overline{X}_w$. If $\underline{w}=(s_1, s_2, \ldots, s_n)$ is an expression for $w \in W$, then the Bott-Samelson space 
\[
BS_{\underline{w}} := P_{s_1} \times_B P_{s_2} \times_B \cdots \times_B P_{s_n}/B 
\]
is a smooth algebraic variety. There is a natural projective morphism 
\[
\xi: BS_{\underline{w}} \rightarrow \overline{X}_w
\]
induced by multiplication in $G$. For any expression $\underline{w}$, there is a canonical isomorphism \cite[Lem.\ 3.2.1]{Soergel2000}
\[
\xi_* \Q_{BS_{\underline{w}}}=\mc{E}_{\underline{w}}. 
\]

\begin{remark}
\label{rem: Bott-Samelson resolution}
If $\underline{w}$ is a reduced expression for $w \in W$, then $\xi$ is a resolution of singularities of $\overline{X}_w$. Such resolutions are sometimes called {\em Bott-Samelson resolutions}. 
\end{remark}

Let
\begin{equation}
    \label{eq: IC sheaf}
\bm{IC}_w:=\bm{IC}(\overline{X}_w) \in D^b_B(G/B)
\end{equation}
 denote the $B$-equivariant intersection cohomology complex on  $\overline{X}_w$, shifted by $\ell(w)$, extended by zero, so it is perverse on $X$. The morphism $\xi$ is proper, so by the decomposition theorem, $\mc{E}_{\underline{w}}$ decomposes into a finite direct sum of shifts of $\bm{IC}_w$ for $w \in W$. This gives us a second characterization of $\mc{H}$:
\begin{equation}
\label{eq: second incarnation Hecke cateory}
\mc{H} = \langle \bm{IC}_w \mid w \in W \rangle_{\oplus, [1]}.
\end{equation}

Using the characterization (\ref{eq: second incarnation Hecke cateory}), the relationship between $\mc{H}$ and $\SBim$ becomes clear by applying hypercohomology. Let
\begin{equation}
    \label{eq: B equivariant cohomology}
\HH_B^\bullet: D^b_B(X) \rightarrow H^*_B(X)-\mathrm{gmod}
\end{equation}
be the equivariant hypercohomology functor, as in (\ref{eq: equivariant hypercohomology}). There is a canonical isomorphism of graded algebras
\[
H^*_B(X) = H^*_T(X) = R \otimes_{R^W} R,
\]
where $R=S(\mf{t}^*)$ as in (\ref{eq: R}) (cf.\ \cite[Prop.\ 1(iii)]{Brion}). The quotient map $R \otimes R \rightarrow R \otimes_{R^W} R$ induces a fully faithful functor 
\begin{equation}
    \label{eq: from cohomology modules to bimodules}
R \otimes_{R^W} R \mathrm{-gmod} \rightarrow R\mathrm{-gbim}. 
\end{equation}
By identifying $H_B^*(X)$ with its image under (\ref{eq: from cohomology modules to bimodules}), we can rewrite (\ref{eq: B equivariant cohomology}) as
\[
\HH^\bullet_B: D^b_B(X) \rightarrow R\mathrm{-gbim}.
\]
Soergel observed that when restricted to the geometric Hecke category, this functor is fully faithful, and its essential image is the category of Soergel bimodules. Hence $B$-equivariant hypercohomology provides an equivalence
\begin{equation}
    \label{eq: hypercohomology is an equivalence}
    \HH^\bullet_B: \mc{H} \xrightarrow{\sim} \SBim. 
\end{equation}
Under the equivalence (\ref{eq: hypercohomology is an equivalence}), we can see the players of Section \ref{sec: Soergel bimodules} in their geometric guise:
\begin{align}
\label{eq: hypercohomology of ICs}
\HH^\bullet_B(\bm{IC}_s) &= B_s \text{ for all }s \in S, \text{ and }\\
\label{eq: hypercohomology of Bott-Samelsons}
\HH^\bullet_B(\mc{E}_{\underline{w}}) &= BS(\underline{w}) \text{ for all expressions } \underline{w} \text{ of all elements } w \in W.
\end{align}

\begin{remark}
Equation (\ref{eq: hypercohomology of Bott-Samelsons}) together with Remark~\ref{rem: Bott-Samelson resolution} explains the terminology ``Bott-Samelson bimodule'' in Definition~\ref{def: Bott-Samelson bimodule}. 
\end{remark}


\section{A geometric categorification}
\label{sec: a geometric categorification}

In this section, we use the equivariant derived category \(D_K^b(X)\) to give a geometric categorification of the Lusztig--Vogan module (q.v.\ Theorem~\ref{thm: semisimple categorification}), we study convolution of complexes of sheaves via  character maps to the Lusztig--Vogan module, and we compute cohomology of all fibers for any resolution constructed in \cite{Larson2020} (q.v.\ Theorem~\ref{theorem: fibers}).
On one hand, our geometric categorification can be viewed as studying equivariant proper maps via linearization, and on the other hand, it is a stepping stone to our algebraic categorification in Section~\ref{sec: An algebraic categorification}.

Recall that \(K\)-equivariant local systems on \(\cO\) are in bijection with representations of the component group \(K_x/K_x^0\) of the stabilizer \(K_x\) of \(x\in\cO\) in \(K\).
Irreducible representations correspond to irreducible local systems.
Given an irreducible \(K\)-equivariant local system \(\cL\in\Loc_K(\cO)\), denote by
\begin{equation}
\widetilde\cL\in D_K^b(X)
\end{equation}
the intermediate extension of \(\cL\) to \(\overline{\cO}\)  extended by zero to \(X\).
Set
\begin{equation}
\label{eq: K-equivariant IC sheaves}
\IC(\overline{\cO},\cL)=\widetilde\cL[\dim(\cO)]
\end{equation}
to be the corresponding self-dual object.

\subsection{Semisimple complexes}
We study a subcategory of the \(K\)-equivariant derived category.
The category consists of all semisimple complexes in \(D_K^b(X)\), which is the \(K\)-equivariant analogue of the geometric Hecke category (q.v.\ \eqref{eq: second incarnation Hecke cateory}).

\begin{definition}
\label{def: semisimple category}
Let 
\[
\mc{M}_{LV}^{ss}:= \langle \bm{IC}(\overline{\cO}, \mc{L}) \mid (\cO, \mc{L}) \in \mathscr{D} \rangle_{\oplus, [1]}
\]
be the subcategory of $D^b_K(X)$ generated by $\bm{IC}$ sheaves under the operations of taking direct sums and grading shifts. 
\end{definition}

By construction, the category $\mc{M}_{LV}^{ss}$ is an additive category. 
It also admits a natural categorical action (in the sense of \cite[Ch.\  7]{TensorCategoriesBook}) of the monoidal category $\mc{H}$ via convolution.
The convolution formalism described in Section \ref{sec: The geometric Hecke category} gives a right action of $D^b_B(X)$ on $D^b_K(X)$: for $\mc{F} \in D^b_K (X)$ and $\mc{G} \in D^b_B(X)$, define 
\[
\mc{F} * \mc{G}=\mathrm{mult}_*(\mc{F}\boxtimes_B\mc{G}),
\]
where $\mathrm{mult}:G \times_B X \rightarrow X$ is as in Section \ref{sec: The geometric Hecke category} and $\mc{F} \boxtimes_B \mc{G}$ is obtained via descent from $\mc{F} \boxtimes \mc{G} \in D^b_{K \times B \times B}(G \times X)$. By restricting this action to the geometric Hecke category $\mc{H}$, we obtain an action of $\mc{H}$ on $D^b_K(X)$. 

The convolution action of $\mc{H}$ on $D^b_K(X)$ can also be realized in terms of push-pull functors. 
For $s \in S$, define a functor\footnote{We write $\mc{F} \Theta_s$ for the image of $\mc{F}$ under the functor $\Theta_s$ to remind ourselves that we should think of the functors $\Theta_s$, $s \in S$ as providing a right action of $\mc{H}$ on $D^b_K(X)$.}   $\Theta_s: D^b_{K}(X) \rightarrow D^b_K(X)$ by
\begin{equation}
    \label{eq: Theta}
\mc{F} \Theta_s := \pi_s^* \pi_{s*} \mc{F} [1]
\end{equation}
for $\mc{F} \in D^b_K(X)$.
As in (\ref{eq: IC sheaf}), denote by $\bm{IC}_s:=\bm{IC}(P_s/B) \in \mc{H}$. 
\begin{lemma}
\label{lem: push pull is convolution with ICs}
For any object $\mc{F} \in D_K^b(X)$ and $s \in S$, 
\[
\mc{F} \Theta_s \simeq \mc{F} * \bm{IC}_s.
\]
\end{lemma}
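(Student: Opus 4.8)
The plan is to unwind both sides to maps of quotient varieties and invoke base change. Recall $P_s = \overline{BsB} = BsB \sqcup B$, and note that $P_s/B \simeq \mathbb{P}^1$ is exactly a fiber of $\pi_s \colon X \to G/P_s$. The key geometric input is a Cartesian-type square relating the multiplication map $\mathrm{mult}\colon G\times_B X \to X$ restricted to $G\times_B (P_s/B)$ with the maps $\pi_s$ and its pullback. Concretely, I would identify $G \times_B (P_s/B)$ with the fiber product $X \times_{G/P_s} X$ (sending $[g, hB] \mapsto (\pi_s(gB)$ in a way making the two projections correspond to $gB$ and $ghB$), so that convolution with $\Q_{P_s/B}$ becomes $(\mathrm{pr}_2)_* (\mathrm{pr}_1)^*$ for the two projections $X \times_{G/P_s} X \to X$, up to the descent bookkeeping for the $K$-equivariant structure. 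But $X \times_{G/P_s} X \to X$ via $\mathrm{pr}_1$ is precisely the pullback of $\pi_s$ along $\pi_s$, so proper (in fact smooth) base change in the equivariant derived category gives $(\mathrm{pr}_2)_*(\mathrm{pr}_1)^* \simeq \pi_s^* \pi_{s*}$.

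First I would set up the equivariant descent carefully: $\mc{F} \boxtimes_B \Q_{P_s/B}$ on $G \times_B (P_s/B)$ is the constant sheaf along the $\mathbb{P}^1$-fibers twisted by $\mc{F}$, and under the identification above it corresponds to $(\mathrm{pr}_1)^* \mc{F}$ with its natural $K$-equivariant structure (the $K$-action being the residual one on $X \times_{G/P_s} X$ coming from left multiplication, compatible with $\pi_s$ being $K$-equivariant). Second, I would record that $\mathrm{mult}$ restricted to this locus is $\mathrm{pr}_2$ and is proper, so $\mathrm{mult}_* = (\mathrm{pr}_2)_*$. Third, I would apply base change for the Cartesian square
\[
\begin{tikzcd}
X \times_{G/P_s} X \arrow[r, "\mathrm{pr}_1"] \arrow[d, "\mathrm{pr}_2"] & X \arrow[d, "\pi_s"] \\
X \arrow[r, "\pi_s"] & G/P_s
\end{tikzcd}
\]
in $D^b_K(-)$ to get $(\mathrm{pr}_2)_* (\mathrm{pr}_1)^* \mc{F} \simeq \pi_s^* \pi_{s*} \mc{F}$. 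Finally, I would account for the shift: $\Q_{P_s/B}$ as used in $\mc{H}$ is the (unshifted) constant sheaf, while $\bm{IC}_s = \bm{IC}(P_s/B)$ is shifted by $\ell(s) = 1$, i.e.\ $\bm{IC}_s = \Q_{P_s/B}[1]$ since $P_s/B \simeq \mathbb{P}^1$ is smooth; this $[1]$ matches the $[1]$ in the definition of $\Theta_s$, yielding $\mc{F}\Theta_s = \pi_s^*\pi_{s*}\mc{F}[1] \simeq \mc{F} * \Q_{P_s/B}[1] = \mc{F} * \bm{IC}_s$.

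The main obstacle I anticipate is not any single hard estimate but rather making the identification $G \times_B (P_s/B) \cong X \times_{G/P_s} X$ and the attendant equivariant-descent identification $\mc{F}\boxtimes_B \Q_{P_s/B} \simeq (\mathrm{pr}_1)^*\mc{F}$ fully rigorous in the triple-equivariant setting of Section~\ref{sec: a geometric categorification}, tracking which group acts on which factor and checking that base change is being applied to a genuinely Cartesian square of the relevant quotient stacks. Once the diagram is correctly set up, the proof is a formal consequence of proper base change and smoothness of $\pi_s$; so I would spend most of the write-up on the geometric bookkeeping and treat the sheaf-theoretic step as standard.
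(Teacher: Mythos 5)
Your argument is correct and is essentially the standard proof: the paper itself does not write out a proof but simply cites \cite{Soergel90, LV}, and the identification $G\times_B(P_s/B)\cong X\times_{G/P_s}X$ followed by (proper or smooth) base change for the Cartesian square, together with $\bm{IC}_s=\Q_{P_s/B}[1]$, is exactly the argument in those references. The equivariant bookkeeping you flag is routine here because all maps are $K$-equivariant and the six-functor formalism on $D^b_K$ is compatible with the forgetful functor.
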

\begin{proof}
See \cite{Soergel90, LV}. 
\end{proof}

Because the map $\pi_s$ is proper for any $s \in S$, the decomposition theorem stipulates that for $(\mc{O}, \mc{L}) \in \mathscr{D}$, the complex $\bm{IC}(\overline{\mc{O}}, \mc{L})\Theta_s $ decomposes into a direct sum of shifts of $\bm{IC}$ sheaves. In particular, 
\[
\bm{IC}(\overline{\mc{O}}, \mc{L})\Theta_s \in \mc{M}_{LV}^{ss} \text{ for any } (\mc{O},\mc{L}) \in \mathscr{D} \text{ and } s \in S. 
\]
This shows that the category $\mc{M}_{LV}^{ss}\subseteq D^b_K(X)$ is stable under the action of $\mc{H}$.

For technical reasons we extend scalars of \(M_{LV}\) to \(\Z[q^{\pm 1/2}]\).
This will enable odd shifts of \(\widetilde\cL\) in our category; we do this to include all semisimple complexes, and in particular, self-dual complexes.
Given a \(K\)-orbit \(\cO\), let \(k_\cO\,\colon \cO\to X\) be the locally closed inclusion.
By construction, the cohomology sheaves \(\cH^n(k_\cO^\ast\cF)\) are objects in the abelian category of \(K\)-equivariant constructible sheaves on \(\cO\).
For an irreducible \(K\)-equivariant local system \(\cL\) on \(\cO\), denote by
\begin{equation}
[\cL\colon\cH^n(k_\cO^\ast\cF)]
\end{equation}
the multiplicity of \(\cL\) in \(\cH^n(k_\cO^\ast\cF)\).
Define \(\ch\,\colon D_K^b(X)\to M_{LV}\) by
\begin{equation}\label{equation: ch}
\cF\mapsto \sum_{n,(\cO,\cL)}[\cL:\cH^n(k_{\cO}^\ast\cF)]\,q^{n/2}\,\cL.
\end{equation}
The character map \(\ch\) does not intertwine Hecke actions on local systems, but we show in Section~\ref{sec: proof of geometric categorification}  that \(\ch\) does intertwine the Hecke actions when restricted to our semisimple category.

\begin{theorem}
\label{thm: semisimple categorification}
Let $\mc{M}_{LV}^{ss}$ be as in Definition~\ref{def: semisimple category} and $M_{LV}$ the Lusztig--Vogan module (Equation (\ref{eq: LV module as a vector space}), Definition~\ref{def: Ts}, Lemma~\ref{lem: Ts give a Hecke algebra action}). As right $\bm{H}$-modules, 
\[
[\mc{M}_{LV}^{ss}]_\oplus \simeq M_{LV}.
\]
\end{theorem}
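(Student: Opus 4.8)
The plan is to realize the isomorphism through the character map $\ch$ of \eqref{equation: ch} and to verify $\bm{H}$-equivariance on the Kazhdan--Lusztig generators.

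\textbf{Step 1 (the Grothendieck group).} First I would record that, since $D^b_K(X)$ is Krull--Schmidt and the complexes $\IC(\overline{\cO},\cL)$ for $(\cO,\cL)\in\mathscr{D}$ are pairwise non-isomorphic indecomposables with no two agreeing up to a shift (they are perverse and are distinguished by their supports and their restrictions to the open orbit), the group $[\mc{M}_{LV}^{ss}]_\oplus$ is free over $\Z[v^{\pm 1}]$ on the classes $[\IC(\overline{\cO},\cL)]$, with $v\cdot[\cF]:=[\cF[1]]$. Since $\mc{M}_{LV}^{ss}$ is stable under the categorical $\mc{H}$-action and $\bm{H}\xrightarrow{\sim}[\mc{H}]_\oplus$ (Theorem~\ref{thm: geometric categorification}), $[\mc{M}_{LV}^{ss}]_\oplus$ is a right $\bm{H}$-module. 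So both sides of the theorem are free $\Z[v^{\pm1}]$-modules on sets in bijection with $\mathscr{D}$, and the task is to build an $\bm{H}$-equivariant identification.

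\textbf{Step 2 ($\ch$ is a $\Z[v^{\pm1}]$-module isomorphism).} Next I would check that $\ch$ is additive on direct sums and satisfies $\ch(\cF[1])=v\,\ch(\cF)$, so it descends to a $\Z[v^{\pm1}]$-linear map $m\colon[\mc{M}_{LV}^{ss}]_\oplus\to M_{LV}$. Using that $\IC(\overline{\cO},\cL)$ is supported on $\overline{\cO}$ with $k_{\cO}^\ast\IC(\overline{\cO},\cL)=\cL[\dim\cO]$, one gets
\[
m\bigl([\IC(\overline{\cO},\cL)]\bigr)=v^{\dim\cO}\cL+\textstyle\sum_{\cO'\subsetneq\overline{\cO}}(\text{a }\Z[v^{\pm1}]\text{-combination of }\cL'\in\Loc_K(\cO')),
\]
so after ordering $\mathscr{D}$ compatibly with the closure order the matrix of $m$ is triangular with diagonal entries the units $v^{\dim\cO}$; hence $m$ is an isomorphism of $\Z[v^{\pm1}]$-modules.

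\textbf{Step 3 ($\bm{H}$-equivariance, the hard part).} Finally I would prove that $m$ intertwines the $\bm{H}$-actions. Under $\bm{H}\xrightarrow{\sim}[\mc{H}]_\oplus$ the generator $b_s$ corresponds to $[\bm{IC}_s]$, acting on $[\mc{M}_{LV}^{ss}]_\oplus$ by $[\cF]\mapsto[\cF\ast\bm{IC}_s]=[\cF\,\Theta_s]$ (Lemma~\ref{lem: push pull is convolution with ICs}), so by additivity it is enough to show
\[
m\bigl([\IC(\overline{\cO},\cL)\,\Theta_s]\bigr)=m\bigl([\IC(\overline{\cO},\cL)]\bigr)\cdot b_s
\]
for every $(\cO,\cL)\in\mathscr{D}$ and every $s\in S$. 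Here is where one really uses semisimplicity (recall $\ch$ does \emph{not} intertwine the actions on all of $D^b_K(X)$): since $\pi_s$ is proper, $\pi_{s\ast}\IC(\overline{\cO},\cL)$ is semisimple by the decomposition theorem, and since $\pi_s$ is smooth with fibres $\cong\mathbb{P}^1$ the complex $\IC(\overline{\cO},\cL)\,\Theta_s=\pi_s^\ast\pi_{s\ast}\IC(\overline{\cO},\cL)[1]$ again lies in $\mc{M}_{LV}^{ss}$, and its stalks --- hence $\ch$ of it --- are computed fibrewise: by proper base change the stalk of $\pi_{s\ast}\IC(\overline{\cO},\cL)$ at $z\in G/P_s$ is $R\Gamma$ of the restriction of $\IC(\overline{\cO},\cL)$ to $\pi_s^{-1}(z)\cong\mathbb{P}^1$, and that restriction is governed by how $\cO$ and $\widehat{\cO}_s\smallsetminus\cO$ meet the line together with the monodromy of $\cL$ along it. The possibilities are precisely the root-type cases (a), (b1), (b2), (c1), (c2), (d1), (d2), (e) of Definition~\ref{def: Ts}; I would compute the left-hand side in each case and compare with the right-hand side, translating via $b_s=vT_s+v$ and $q=v^{-2}$ (Remark~\ref{rmk: T_s formulation of Hecke algebra}) and using Lemma~\ref{lem: Ts give a Hecke algebra action}. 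This case-by-case computation on $\mathbb{P}^1$-fibres --- which reproduces the local analysis of \cite[\S3]{LV} --- is the technical heart of the argument; the one genuinely delicate point is bookkeeping the boundary-orbit contributions to $\ch$ in the cases where the relevant intermediate extensions are not clean. (Alternatively one could cite \cite{LV} for the equivalence of the push--pull and formulaic definitions of $T_s$ and the compatibility of $\ch$ with $\pi_{s\ast}$ on semisimple complexes.) Combining Steps 2 and 3 gives that $m$ is an isomorphism of right $\bm{H}$-modules.
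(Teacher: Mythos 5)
Your Steps 1 and 2 are fine and match what the paper leaves implicit (modulo the scalar extension to $\Z[q^{\pm 1/2}]$ that the paper makes explicit), and your overall strategy --- verify $b_s$-equivariance of a character map by a root-type case analysis --- is the paper's strategy. But Step 3 as written has a genuine gap. You propose to compute $\ch(\bm{IC}(\overline{\cO},\cL)\,\Theta_s)$ by proper base change, claiming that the restriction of $\bm{IC}(\overline{\cO},\cL)$ to a fibre $L_z\cong\mathbb{P}^1$ ``is governed by how $\cO$ and $\widehat{\cO}_s\smallsetminus\cO$ meet the line together with the monodromy of $\cL$ along it.'' That is only true over the open stratum $\pi_s(\cO)$, and even there only on the part of the fibre lying in $\cO$ itself. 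In every descent case, and for all fibres over lower strata of $\pi_s(\overline{\cO})$, the restriction of $\bm{IC}(\overline{\cO},\cL)$ involves its stalks on boundary orbits --- i.e.\ exactly the KLV polynomials one is trying to control --- so the left-hand side of your identity is not computable case-by-case from the root type of $s$ relative to $(\cO,\cL)$ alone. Your parenthetical about ``bookkeeping the boundary-orbit contributions \ldots where the relevant intermediate extensions are not clean'' is precisely the unresolved point, not a detail.

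The paper circumvents this with two devices you would need. First, it runs the root-type case analysis not on the $\bm{IC}$ sheaves but on the extensions by zero $k_{\cO!}\cL$ of irreducible local systems on single orbits (Lemma~\ref{lemma: local systems}), whose stalks are known by fiat; there the fibrewise $\mathbb{P}^1$-computations via Lemma~\ref{lemma: derived pushforward} really do reduce to the eight cases of Definition~\ref{def: Ts}. Second, since $\ch$ is not additive on distinguished triangles, it introduces the signed auxiliary map $m$ of \eqref{equation: m}, which is additive on open--closed triangles whose third term is $\ast$-even \eqref{equation: additivity}; this lets one propagate the formula from the $k_{\cO!}\cL$ to an arbitrary $\widetilde{\cL}$ by slicing $\widetilde{\cL}$ along the $K$-orbits of each $KyP_s/B$ (Section~\ref{sec: proof of geometric categorification}). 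One then returns from $m$ to $\ch$ using the parity statement of Lemma~\ref{lemma: push pull star even} (that $\pi_s^\ast\pi_{s\ast}\widetilde{\cL}$ is $\ast$-even), a step your proposal also omits but which is needed because $\ch$ and $m$ agree only on $\ast$-even complexes. With these two repairs your argument becomes the paper's proof; without them, Step 3 is circular.
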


We prove Theorem~\ref{thm: semisimple categorification} in Section~\ref{sec: proof of geometric categorification} by relating \(\ch\) to an intermediary character map and reducing computations to root types.
The formula for each root type will be seen to agree with the formulas in Definition~\ref{def: Ts}.
We begin by analyzing local systems.

\subsection{Local systems}
We start by introducing an auxiliary character map.
Define \(m\,\colon D_K^b(X)\to M_{LV}\) by
\begin{equation}\label{equation: m}
\cF\mapsto\sum_{n,(\cO,\cL)}(-1)^n[\cL: \cH^n(k^\ast_\cO\cF)]\,q^{\lfloor n/2\rfloor}\,\cL,
\end{equation}
where \(\lfloor n\rfloor\) is the floor function.
We include some easy facts about \eqref{equation: m}.
\begin{itemize}
\item For every \(\cF\in D_K^b(G/B)^{\ast-even}\) (q.v.\ Definition~\ref{def: parity}) we have
\begin{equation}\label{equation: ch is m}
    m(\cF)=\ch(\cF).
\end{equation}
\item For every \(\cF\in D_K^b(X)\) we have
\begin{equation}\label{equation: shift}
m(\cF[-2])=qm(\cF).
\end{equation}
\item If \(\cH^\ast(\cF)\in D_K^b(X)\) denotes the direct sum of the cohomology sheaves of \(\cF\), then we have
\begin{equation}\label{equation: cohomology sheaves}
m(\cF)=m(\cH^\ast(\cF)).
\end{equation}
\item If \(\cE\to\cF\to\cG\overset{+1}{\longrightarrow}\) is a distinguished triangle in \(D_K^b(X)\) such that \(\cG\in D_K^b(X)^{\ast-even}\), then
\begin{equation}\label{equation: additivity}
m(\cF)=m(\cE)+m(\cG).
\end{equation}
\end{itemize}

\begin{lemma}\label{lemma: local systems}
For every \((\cO,\cL)\in\cD\) and \(s\in S\), we have
\begin{equation}\label{equation: push-pull}
m(\pi_s^\ast\pi_{s\ast}k_{\cO!}\cL)=\cL(T_s+1).
\end{equation}
\end{lemma}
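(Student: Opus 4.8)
The strategy is to reduce the computation of $m(\pi_s^\ast\pi_{s\ast}k_{\cO!}\cL)$ to a local question on a single $\bP^1$-fiber of $\pi_s$, and then to go through the root types of Definition~\ref{def: Ts} one at a time, checking that the geometric formula matches the combinatorial one, shifted by $\cL$. The first reduction is base change: since $\pi_s$ is a $\bP^1$-bundle, for any point $y\in G/P_s$ with fiber $L=\pi_s^{-1}(y)\simeq\bP^1$, the stalk of $\pi_s^\ast\pi_{s\ast}k_{\cO!}\cL$ along $L$ is the constant complex with value $H^\ast(L; (k_{\cO!}\cL)|_L)=H^\ast_c(L\cap\cO;\cL|_{L\cap\cO})$ (up to the usual shift conventions). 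So the whole complex $\pi_s^\ast\pi_{s\ast}k_{\cO!}\cL$ is determined, fiber by fiber, by the cohomology of $\cL$ restricted to the intersection of that $\bP^1$ with $\cO$; and the $K$-orbits appearing in $\widehat{\cO}_s=\bigcup_{x\in\cO}L_x^s$ together with how the fibers meet them are exactly the data recorded in the case analysis of Definition~\ref{def: Ts}. I would phrase this carefully using the projection formula and proper base change for $\pi_{s\ast}$ and the smoothness of $\pi_s$.

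Next, I would package the bookkeeping. Using the properties \eqref{equation: shift}, \eqref{equation: cohomology sheaves}, \eqref{equation: additivity} listed just before the lemma, the value $m(\pi_s^\ast\pi_{s\ast}k_{\cO!}\cL)$ depends only on the class, in a suitable Grothendieck group, of the complex $\pi_s^\ast\pi_{s\ast}k_{\cO!}\cL$ stratified by $K$-orbits; and $m$ of a shifted constant/local system complex on a locally closed orbit is computed by \eqref{equation: m} directly. The key input on the $\bP^1$ side is the short list of possibilities for $L\cap\cO$ that occur: $L\subseteq\cO$; $L\cap\cO=\{x\}$; $L\cap\cO=L\smallsetminus\{\mathrm{pt}\}$; $L\cap\cO=\{x,x'\}$; $L\cap\cO=L\smallsetminus\{\text{two points}\}$. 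In each case I compute $H^\ast_c(L\cap\cO;\cL)$, remembering whether $\cL|_L$ is trivial or the nontrivial ($\bmod 2$) local system on $L\smallsetminus\{\text{two points}\}\simeq\C^\times$, then feed the resulting stratified complex into $m$. The extension behavior of $\cL$ to $\widehat{\cO}_s$ — whether $\cL$ has one, two, or no extensions — matches exactly the splitting of $H^\ast$ into the constituents $\widehat{\cL}|_{\widehat{\cO}\smallsetminus\cO}$, $\widehat{\cL}_1\oplus\widehat{\cL}_2$, etc., because extensions of $\cL$ along a fiber correspond to $\pi_1$-equivariant structures, and this is precisely Lusztig--Vogan's dichotomy between types I and II. Keeping the shift conventions straight (the $q^{\lfloor n/2\rfloor}$ with the $(-1)^n$ sign in \eqref{equation: m}, and the $[1]$ hidden in $\Theta_s$ versus the bare $\pi_s^\ast\pi_{s\ast}$ here) is where one must be most careful, and the identity $m(\cF[-2])=qm(\cF)$ together with the even-parity reduction \eqref{equation: ch is m} is the tool that makes the signs and $q$-powers come out right.

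Assembling these, each of the eight cases (a), (b1), (b2), (c1), (c2), (d1), (d2), (e) of Definition~\ref{def: Ts} produces a value of $m(\pi_s^\ast\pi_{s\ast}k_{\cO!}\cL)$ which, after subtracting $\cL$, is visibly $\cL T_s$; equivalently the value itself is $\cL(T_s+1)$. For instance, in case (a) ($L\subseteq\cO$, compact imaginary) one gets $H^\ast_c(\bP^1;\cL)$ constant on all of $\widehat{\cO}=\cO$, contributing $(1+q)\cL=\cL(T_s+1)$; in case (e) ($L\cap\cO$ two points, $\cL$ nonparity, so $\cL|_L$ nontrivial on $\C^\times$) one gets $H^\ast_c(\C^\times;\text{M\"obius})=0$ and hence $0=\cL(T_s+1)$ since $\cL T_s=-\cL$; the complex/real ascent/descent and the type I/II imaginary/real cases interpolate between these, with the extensions of $\cL$ accounting for the $\widehat{\cL}|_{\cO'}$, $\widehat{\cL}|_{\cO''}$ terms. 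The main obstacle is not any single case but the uniform handling of the shift and sign conventions — i.e.\ making precise, once and for all, the dictionary between ``the stratified complex $\pi_s^\ast\pi_{s\ast}k_{\cO!}\cL$ with its cohomological shifts'' and ``the element of $M_{LV}$ it maps to under $m$'' — so that the eight verifications become genuinely routine rather than eight separate sign-chasing exercises. I would isolate that dictionary as a short sub-lemma (the stalk computation plus \eqref{equation: shift}--\eqref{equation: additivity}) before touching the root types.
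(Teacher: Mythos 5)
Your overall architecture --- a case-by-case check of the eight root types, using the additivity, shift, and cohomology-sheaf properties of $m$ to reduce each case to the geometry of the lines $L_x^s$ --- is the same as the paper's. But the reduction step you propose to ``isolate as a short sub-lemma'' has a genuine gap: knowing the non-equivariant stalks $H^\ast_c(L\cap\cO;\cL|_{L\cap\cO})$ fiber by fiber does \emph{not} determine $m(\pi_s^\ast\pi_{s\ast}k_{\cO!}\cL)$. The character map $m$ is defined by the multiplicities $[\cL':\cH^n(k_{\cO'}^\ast\cF)]$ of \emph{irreducible $K$-equivariant} local systems $\cL'$, i.e.\ of irreducible representations of the component groups $K_y/K_y^0$, and distinct equivariant local systems can have the same underlying local system. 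This is exactly what happens in the type II cases: in Example~\ref{example: type II} the two equivariant local systems $\Q_\cO$ and $\cL$ on the open orbit have the same (trivial) underlying local system, and in case (c1) the two extensions $\widehat\cL_1,\widehat\cL_2$ are likewise indistinguishable after forgetting equivariance. A fiberwise stalk computation therefore cannot see the terms $(\widehat\cL_1+\widehat\cL_2)|_{\widehat\cO\smallsetminus\cO}$ in (c1) or $-\widehat\cL_2|_{\cO}$ in (c2); your appeal to ``$\pi_1$-equivariant structures'' does not repair this, since the relevant data is the component group of the stabilizer, which in these examples is invisible to $\pi_1$ of the fiber.

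The paper avoids this by never leaving the equivariant world: it replaces the single fiber $L$ by the $K$-equivariant correspondences $KxB\times_B BsB/B\to KxBsB/B$ and $KxB\times_B P_s/B\to\widehat\cO$ (diagram \eqref{equation: mu and pi} and its variants), identifies $\bar\mu_!\bar\pi^\ast\cL$ with $(\pi_sk_{\widehat\cO})^\ast\Ind_{K_x}^{K\cap xP_sx^{-1}}(\cL)$ in the type II case so that the splitting into $\widehat\cL_1\oplus\widehat\cL_2$ comes from induction of component-group representations, and uses the Leray--Hirsch statement (Lemma~\ref{lemma: derived pushforward}) only for those maps that are genuinely fiber bundles with globally generated fiber cohomology (compact imaginary, and the descent/real cases after passing to $\widehat\cL$). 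The reduction to a single $\bP^1$ that you have in mind does appear in the paper, but only in the nonparity case (e), and there it is done equivariantly via the isomorphism $\widehat\cO=K\times_{K\cap xP_sx^{-1}}L_x^s$ and the induction equivalence $D_K^b(KxP_s/B)=D^b_{K\cap xP_sx^{-1}}(L)$, which is what legitimately transports the $K$-equivariant structure to the fiber. To make your plan work you would need to replace ``stalk along $L$'' throughout by this kind of induction equivalence, at which point you are essentially reconstructing the paper's argument.
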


The proof is a case-by-case analysis of root types provided below.
We require lemmata before proceeding.

For every \(s\in S\) and \(x\in X\) define
\begin{equation}\label{equation: mu and pi}
\begin{tikzcd}
KxB\times_BBsB/B\arrow[r, "\mu"]\arrow[d, "\pi"]&KxBsB/B\\
KxB/B&
\end{tikzcd}
\end{equation}
to be the natural projection and multiplication maps. 
For every \(y\in KxBsB/B\) we have
\begin{equation}\label{equation: fibers of mu}
\mu^{-1}(y)=KxB/B\cap yBsB/B.
\end{equation}

\begin{lemma}
For every \((\cO,\cL)\in\cD\) and \(s\in S\), the relation \eqref{equation: push-pull} holds true if and only if
\begin{equation}\label{equation: equivalent push-pull}
m(k_{!}\mu_!\pi^\ast\cL)=\cL T_s,
\end{equation}
where \(k\,\colon KxBsB/B\to X\) is the locally closed inclusion, and \(\mu\) and \(\pi\) are as in \eqref{equation: mu and pi}.
\begin{proof}
The diagram
\begin{equation}
\begin{tikzcd}
KxB\times_BP_s/B\arrow[r, "\bar\mu"]\arrow[d, "\bar\pi"] &KxP_s/B\arrow[d, "\pi_s'"]\arrow[r, "k''"]&G/B\arrow[d, "\pi_s"]\\
KxB/B\arrow[r, "\widetilde\pi_s"]&KxP_s/P_s\arrow[r, "\ell"]&G/P_s
\end{tikzcd}
\end{equation}
is cartesian so by proper base change
\begin{equation}
\begin{split}
\pi_s^\ast\pi_{s\ast}k_{\cO!}&=\pi_s^\ast(\pi_sk_\cO)_!\\
&=\pi_s^\ast(\ell\widetilde\pi_s)_!\\
&=(k''\bar\mu)_!\bar\pi^\ast
\end{split}
\end{equation}
holds true.
Let 
\begin{equation}
\begin{split}
i\,\colon KxB/B&\to KxB\times_BP_s/B\\ j\,\colon KxB\times_BBsB/B&\to KxB\times_BP_s/B
\end{split}
\end{equation}
be complementary closed and open inclusions.
The distinguished triangle
\begin{equation}
j_!j^\ast\bar\pi^{\ast}\cL\to\bar\pi^{\ast}\cL\to i_\ast i^\ast\bar\pi^{\ast}\cL\overset{+1}{\longrightarrow}
\end{equation}
gives the distinguished triangle
\begin{equation}\label{equation: dt}
(k\mu)_!\pi^\ast \cL\to \pi_s^\ast(\pi_sk_\cO)_!\cL   \to k_{\cO!}\cL\overset{+1}{\longrightarrow}
\end{equation}
under the triangulated functor \((k''\bar\mu)_!\).
The character map \(m\) \eqref{equation: m} is additive on \eqref{equation: dt} by \eqref{equation: additivity}.
Because \(m(k_{\cO!}\cL)=\cL\), we have
\begin{equation}\label{equation: this guy}
m(\pi_s^\ast(\pi_sk_{\cO})_!\cL)=m((k\mu)_!\pi^\ast\cL)+\cL.
\end{equation}
The result follows.
\end{proof}

\end{lemma}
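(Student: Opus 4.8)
The strategy is to produce a distinguished triangle in $D_K^b(X)$ whose three terms have character-map images $m(k_!\mu_!\pi^\ast\cL)$, $m(\pi_s^\ast\pi_{s\ast}k_{\cO!}\cL)$, and $\cL$, and then invoke the additivity property \eqref{equation: additivity} of $m$ together with the elementary identity $m(k_{\cO!}\cL)=\cL$ to read off the claimed equivalence. The key geometric input is the cartesian square relating the multiplication maps $P_s/B \to G/P_s$ on the flag variety to their restrictions over the orbit closure $\overline{\cO}$, which lets proper base change convert $\pi_s^\ast\pi_{s\ast}k_{\cO!}$ into $(k''\bar\mu)_!\bar\pi^\ast$; here $\bar\mu,\bar\pi$ are the multiplication and projection maps for the $P_s$-bundle $KxB\times_B P_s/B$, and $k''$ is the inclusion of $KxP_s/B$ into $G/B$.

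First I would record the cartesian square and apply proper base change to rewrite $\pi_s^\ast\pi_{s\ast}k_{\cO!}\cL$ as $(k''\bar\mu)_!\bar\pi^\ast\cL$. Next, I would introduce the complementary closed and open inclusions $i\colon KxB/B \hookrightarrow KxB\times_B P_s/B$ and $j\colon KxB\times_B BsB/B \hookrightarrow KxB\times_B P_s/B$ --- these exist because $P_s/B = B \sqcup BsB/B$ decomposes the fiber $\mathbb{P}^1$ into a point and its complement, and the projection $\bar\pi$ realizes $KxB/B$ as the image of the section. The standard open-closed distinguished triangle $j_!j^\ast\bar\pi^\ast\cL \to \bar\pi^\ast\cL \to i_\ast i^\ast\bar\pi^\ast\cL \overset{+1}{\to}$, pushed forward along the proper map $k''\bar\mu$, yields exactly the triangle \eqref{equation: dt}: the left term is $(k\mu)_!\pi^\ast\cL$ (since $k''\bar\mu$ restricted to the open part factors through $k\mu$ and $\pi = \bar\pi|_{\text{open}}$), the middle term is $\pi_s^\ast(\pi_s k_\cO)_!\cL$, and the right term is $k_{\cO!}\cL$ (since $k''\bar\mu \circ i$ is the inclusion $k_\cO$ of the orbit and $i^\ast\bar\pi^\ast\cL \cong \cL$).

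Then I would apply the character map $m$ to \eqref{equation: dt}. The right-hand term $k_{\cO!}\cL$ is $\ast$-even in the relevant sense --- its pullback to any $K$-orbit has cohomology concentrated in a single parity (indeed $\cL$ is a local system in degree zero on $\cO$ and zero elsewhere) --- so the additivity property \eqref{equation: additivity} applies and gives $m(\pi_s^\ast(\pi_s k_\cO)_!\cL) = m((k\mu)_!\pi^\ast\cL) + m(k_{\cO!}\cL)$. Using $m(k_{\cO!}\cL)=\cL$, this is precisely \eqref{equation: this guy}. Comparing with \eqref{equation: push-pull}, which asserts $m(\pi_s^\ast\pi_{s\ast}k_{\cO!}\cL)=\cL(T_s+1)=\cL T_s + \cL$, we see that \eqref{equation: push-pull} holds if and only if $m((k\mu)_!\pi^\ast\cL)=\cL T_s$, i.e.\ \eqref{equation: equivalent push-pull}.

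\textbf{Anticipated obstacle.} The routine-looking but genuinely delicate point is verifying that $k_{\cO!}\cL$ lies in $D_K^b(X)^{\ast\text{-}even}$ so that \eqref{equation: additivity} is legitimately applicable; this requires unwinding Definition~\ref{def: parity} and confirming that extension by zero from $\cO$ of a local system in a single degree meets the parity condition on all orbit strata (stalks off $\cO$ vanish, so the condition is vacuous there, and on $\cO$ it holds by construction). The other point requiring care is the precise identification of the outer terms of the pushed-forward triangle with $(k\mu)_!\pi^\ast\cL$ and $k_{\cO!}\cL$ respectively, which is a matter of chasing the definitions of $\mu$, $\pi$, $k$ in \eqref{equation: mu and pi} against the restrictions of $\bar\mu$, $\bar\pi$, $k''$ to the open and closed parts --- straightforward but needing attention to which inclusion is which.
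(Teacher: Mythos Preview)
Your proposal is correct and follows essentially the same approach as the paper: the same cartesian diagram and proper base change to rewrite $\pi_s^\ast\pi_{s\ast}k_{\cO!}$ as $(k''\bar\mu)_!\bar\pi^\ast$, the same open--closed decomposition of $KxB\times_B P_s/B$ along $i$ and $j$, the resulting distinguished triangle \eqref{equation: dt}, and the application of the additivity property \eqref{equation: additivity} using that $k_{\cO!}\cL$ is $\ast$-even. Your anticipated obstacles are exactly the bookkeeping points the paper handles implicitly.
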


\begin{lemma}\label{lemma: Leray-Hirsch pullback}
Let \(\zeta\,\colon Z\to Y\) be a locally trivial fiber bundle with fiber \(F=F_y=\zeta^{-1}(y)\).
Assume that for every \(n\in\Z\) there are finitely many elements \(f_{n\ell}\in H^n(Z)\) such that for every \(y\in Y\) we obtain a \(\Q\)-basis for \(H^n(F_y)\) by restriction.
Let \(\zeta'\,\colon Z'\to Y'\) be the pullback of \(\zeta\) along a map \(Y'\to Y\).
Then for every \(p\in\Z\), every \(z'\in H^{p}(Z')\) can be written uniquely as
\begin{equation}
z'=\sum_{n\ell}y_{n\ell}'f_{n\ell},
\end{equation}
for some \(y_{n\ell}'\in H^{p-n}(Y')\).
\begin{proof}
See, e.g., \cite[App.\ A \S4]{AndersonFulton2021} for the case of identity map \(Y'\to Y\) (and \(Z'=Z\)).
Let \(y'\mapsto y\) so we have the canonical homeomorphism \(F_{y'}\to F_y\) as usual.  
The commutative diagram
\begin{equation}
\begin{tikzcd}
F_{y'}\arrow[r,"\isom"]\arrow[d]&F_y\arrow[d]\\
Z'\arrow[r]&Z
\end{tikzcd}
\end{equation}
shows that pulling back \(f_{ij}\) to \(f_{ij}'\in H^n(Z')\) restricts to a basis in \(H^n(F_{y'})\).
Applying the case of \(Z'=Z\) to \(Z'\) proves the lemma.
\end{proof}
\end{lemma}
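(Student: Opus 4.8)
The plan is to reduce the statement to the classical Leray--Hirsch theorem, that is, to the case $Y'=Y$, $Z'=Z$ recorded in \cite[App.\ A \S4]{AndersonFulton2021}, by observing that the hypotheses of that theorem are preserved under base change. Thus the whole argument splits into three steps: (i) check that $\zeta'$ is again a locally trivial fiber bundle with fiber $F$; (ii) check that the pullbacks of the classes $f_{n\ell}$ to $Z'$ again restrict to bases of the fiber cohomology; (iii) quote Leray--Hirsch for $\zeta'$.

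For (i), write $\phi\colon Y'\to Y$ for the given map and $\widetilde\phi\colon Z'\to Z$ for the projection from $Z'=Z\times_Y Y'$. If $U\subseteq Y$ is open and carries a trivialization $\zeta^{-1}(U)\cong U\times F$, then $(\zeta')^{-1}(\phi^{-1}(U))\cong\phi^{-1}(U)\times F$ by the universal property of the fiber product, and these local trivializations glue; this is standard. In particular, for $y'\in Y'$ with image $y=\phi(y')$, the fiber $F_{y'}:=(\zeta')^{-1}(y')$ is canonically homeomorphic to $F_y:=\zeta^{-1}(y)$, and the square
\[
\begin{tikzcd}
F_{y'}\arrow[r,"\isom"]\arrow[d]&F_y\arrow[d]\\
Z'\arrow[r,"\widetilde\phi"]&Z
\end{tikzcd}
\]
commutes, the vertical arrows being the fiber inclusions.

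For (ii), put $f'_{n\ell}:=\widetilde\phi^{\,\ast}f_{n\ell}\in H^n(Z')$. Applying cohomology to the commuting square shows that the restriction of $f'_{n\ell}$ to $H^n(F_{y'})$ is the image of the restriction of $f_{n\ell}$ to $H^n(F_y)$ under the isomorphism $H^n(F_y)\xrightarrow{\sim}H^n(F_{y'})$ induced by the top homeomorphism. Since the restrictions of the $f_{n\ell}$ form a $\Q$-basis of $H^n(F_y)$ by hypothesis, and an isomorphism carries a basis to a basis, the $f'_{n\ell}$ restrict to a $\Q$-basis of $H^n(F_{y'})$ for every $y'\in Y'$. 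Hence $\zeta'$ together with the classes $\{f'_{n\ell}\}$ satisfies the hypotheses of the classical Leray--Hirsch theorem, which yields, for every $z'\in H^p(Z')$, a unique expression $z'=\sum_{n\ell}(\zeta')^{\ast}(y'_{n\ell})\cup f'_{n\ell}$ with $y'_{n\ell}\in H^{p-n}(Y')$; this is exactly the asserted decomposition, with $f_{n\ell}$ in $H^\ast(Z')$ understood, as usual, to mean $\widetilde\phi^{\,\ast}f_{n\ell}$ and products taken via the $H^\ast(Y')$-module structure on $H^\ast(Z')$.

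The one point needing a little care --- the nearest thing to an obstacle --- is that $Y'$ (hence $Z'$) is only assumed to be an arbitrary space arising in the application rather than, say, a compact manifold, so one must invoke a form of Leray--Hirsch valid at that level of generality. This is harmless here: $Y'$ is a complex algebraic variety, hence paracompact and locally contractible, so the Leray spectral sequence computation underlying \cite{AndersonFulton2021} applies verbatim. Everything else is formal manipulation of the base-change square, so I do not anticipate a genuine difficulty.
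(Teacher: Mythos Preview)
Your proposal is correct and takes essentially the same approach as the paper: both reduce to the classical Leray--Hirsch theorem by using the commutative square of fiber inclusions to verify that the pulled-back classes $f'_{n\ell}=\widetilde\phi^{\ast}f_{n\ell}$ still restrict to bases on each fiber, and then apply the $Z'=Z$ case to $\zeta'$. Your write-up is simply more explicit about the intermediate steps (local triviality under base change, the paracompactness caveat) than the paper's terse version.
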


\begin{lemma}\label{lemma: derived pushforward}
Let \(\zeta\,\colon Z\to Y\) be a smooth map with fiber \(F=F_y=\zeta^{-1}(y)\).
Assume that for every \(n\in\Z\) there are finitely many elements \(f_{n\ell}\in H^n(Z)\) such that for every \(y\in Y\) we obtain a \(\Q\)-basis for \(H^n(F_y)\) by restriction.
For every \(\cF\in D^b(Y)\) we have
\begin{equation}\label{equation: smooth pushforward}
\zeta_!\zeta^\ast\cF\cong \cF\boxtimes H_c^\ast(F).
\end{equation}

\begin{proof}
We have
\begin{equation}
\zeta_!\zeta^\ast\cF\isom\cF\underset{\Q_Y}{\otimes}\zeta_!\Q_Z
\end{equation}
by \cite[Prop.\ 2.6.6]{KashiwaraSchapira1994}, so it is enough to prove \eqref{equation: smooth pushforward} for the constant sheaf.
Identify \(\zeta=\zeta\times\id_{\pt}\).
Then
\begin{equation}
\cHom(\zeta_!\zeta^\ast\Q_Y,\Q_Y\boxtimes H_c^\ast(F))\isom \zeta_\ast \cHom(\zeta^\ast\Q_Y,\zeta^!\Q_Y\boxtimes H_c^\ast(F))
\end{equation}
by Verdier duality.
For smooth morphisms the relation
\begin{equation}
\zeta^!\Q_Y\isom\Q_Z[2d]
\end{equation}
is a relative Poincar\'e duality.
It follows that
\begin{equation}\label{equation: sheaf hom}
\cHom(\zeta_!\zeta^\ast\Q_Y,\Q_Y\boxtimes H_c^\ast(F))\isom \zeta_\ast \cHom(\Q_Z,\Q_Z\boxtimes H_c^\ast(F)[2d]).
\end{equation}
We prove the lemma by constructing a global section of \eqref{equation: sheaf hom} restricting to isomorphisms on trivializable open subsets.

Let \(U\subseteq Y\) be connected such that \(\zeta\vert_{\zeta^{-1}(U)}\isom \pr_1\) is projection from \(U\times Y\).
We have
\begin{equation}\label{equation: trivial sheaf hom}
\cHom(\Q_U\boxtimes H_c^\ast(F),\Q_U\boxtimes H_c^\ast(F))\isom\zeta_\ast\cHom(\Q_{U\times F},\Q_{U\times F}\boxtimes H^\ast_c(F)[2d])
\end{equation}
since \(\zeta_!\Q_{U\times Y}\isom \Q_U\boxtimes H_c^\ast(F)\) by the K\"unneth formula.
Global sections in the RHS of \eqref{equation: trivial sheaf hom} are given by 
\begin{equation}\label{equation: generic element}
\begin{split}
Hom_{D(U\times F)}(\Q_{U\times F},\bigoplus_{n\in N}\Q_{U\times F}[2d-n])&=\bigoplus_{n\in N}H^{2d-n}(U\times F)\\
&\supseteq\bigoplus_{n\in N}H^0(U)\otimes H^{2d-n}(F)
\end{split}
\end{equation}
where \(N\) is the graded dimension of \(H_c^\ast(F)\).
The identity element in the LHS of \eqref{equation: trivial sheaf hom} corresponds to a generic element in RHS of \eqref{equation: generic element}, one coordinate for each element of \(N\).

The RHS of \eqref{equation: sheaf hom} has global sections equal to
\begin{equation}
\Hom_{D(Z)}(\Q_Z,\bigoplus_{n\in N}\Q_Z[2d-n])=\bigoplus_{n\in N}H^{2d-n}(Z).
\end{equation}
For every \(n\in N\) we have by assumption an element \(f_n\in H^{2d-n}(Z)\) restricting to a basis of \(H^{2d-n}(F)\), where we have \(H^{2d-n}(F)=H^n_c(F)\) by Poincar\'e duality; i.e., \((f_n)_{n\in N}\) restricts to a basis of \(H^\ast(F)\isom H_c^\ast(F)\).
The element \((f_n)_{n\in N}\) corresponds on the LHS of \eqref{equation: sheaf hom} to our desired isomorphism in \(\Hom_{D(Y)}(\zeta_!\zeta^\ast\Q_Y,\Q_Y\boxtimes H_c^\ast(F))\) since it restricts to isomorphisms over trivializable open sets by Lemma~\ref{lemma: Leray-Hirsch pullback}.
\end{proof}
\end{lemma}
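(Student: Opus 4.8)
The plan is to reduce at once to the constant sheaf and then construct the splitting isomorphism by hand out of the global classes $f_{n\ell}$. Since $\zeta^\ast\cF\isom\zeta^\ast\cF\otimes\Q_Z$, the projection formula gives $\zeta_!\zeta^\ast\cF\isom\cF\otimes\zeta_!\Q_Z$, so it suffices to produce a natural isomorphism $\zeta_!\Q_Z\isom\Q_Y\otimes_\Q H_c^\ast(F)$; that is, a splitting of $\zeta_!\Q_Z$ into shifted copies of $\Q_Y$ with multiplicities the compactly supported Betti numbers of $F$.

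To build the map, I would convert each $f_{n\ell}\in H^n(Z)=\Hom_{D^b(Z)}(\Q_Z,\Q_Z[n])$ into a morphism out of $\zeta_!\Q_Z$. Relative Poincar\'e duality for the smooth morphism $\zeta$ (of complex relative dimension $d$) furnishes $\zeta^!\Q_Y\isom\Q_Z[2d]$, so $f_{n\ell}$ becomes a map $\Q_Z\to\zeta^!\Q_Y[n-2d]$, which the $(\zeta_!,\zeta^!)$-adjunction turns into $\bar f_{n\ell}\colon\zeta_!\Q_Z\to\Q_Y[n-2d]$. Assembling these over all $n$ and $\ell$ yields
\[
\phi\colon\zeta_!\Q_Z\longrightarrow\bigoplus_{n,\ell}\Q_Y[n-2d]\;\isom\;\Q_Y\otimes_\Q H_c^\ast(F),
\]
the last identification using Poincar\'e duality on the smooth manifold $F$ in the form $H^n(F)\isom H_c^{2d-n}(F)^\vee$, which places the summand indexed by $f_{n\ell}$ in cohomological degree $2d-n$ with the correct multiplicity $\dim H^n(F)=\dim H_c^{2d-n}(F)$.

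It remains to check $\phi$ is an isomorphism, and since both sides are bounded constructible it is enough to check on stalks. Base change for $\zeta_!$ along $i_y\colon\{y\}\hookrightarrow Y$ identifies $i_y^\ast\zeta_!\Q_Z$ with $R\Gamma_c(F_y;\Q)$, so $i_y^\ast\phi$ is a map $H_c^\ast(F_y)\to\bigoplus_{n,\ell}\Q$; tracing $\bar f_{n\ell}$ through base change and the $(\zeta_!,\zeta^!)$-adjunction over $F_y$, I would identify its degree-$(2d-n)$ part with the assignment $v\mapsto(\langle f_{n\ell}|_{F_y},v\rangle)_\ell$ coming from the Poincar\'e pairing $H^n(F_y)\times H_c^{2d-n}(F_y)\to\Q$. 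Since by hypothesis $\{f_{n\ell}|_{F_y}\}_\ell$ is a basis of $H^n(F_y)$ and this pairing is perfect, $i_y^\ast\phi$ is an isomorphism for every $y$, hence so is $\phi$. One may instead run the argument globally: by the $(\zeta_!,\zeta^!)$-adjunction and $\zeta^!\Q_Y\isom\Q_Z[2d]$ the same classes assemble into a section of $\cHom(\zeta_!\Q_Z,\Q_Y\otimes H_c^\ast(F))\isom\zeta_\ast\cHom(\Q_Z,\Q_Z[2d]\otimes H_c^\ast(F))$ which restricts to the evident isomorphism over each trivializing open by the K\"unneth formula and Lemma~\ref{lemma: Leray-Hirsch pullback}, and then glues.

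The step I expect to be the main obstacle is the compatibility check in the last paragraph: verifying that the $(\zeta_!,\zeta^!)$-adjunction, base change for $\zeta_!$, and Poincar\'e duality on the fiber all fit together so that $\bar f_{n\ell}$ restricts on $F_y$ to cup product against $f_{n\ell}|_{F_y}$. This is routine but requires care with shifts, signs, and orientations; the global variant trades it for tracking the duality and shift functors together with the (standard) gluing packaged in Lemma~\ref{lemma: Leray-Hirsch pullback}.
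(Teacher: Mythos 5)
Your proposal is correct and constructs the same morphism as the paper: the classes $f_{n\ell}\in H^n(Z)$ are turned, via relative Poincar\'e duality $\zeta^!\Q_Y\isom\Q_Z[2d]$ and the $(\zeta_!,\zeta^!)$-adjunction, into a single map $\zeta_!\Q_Z\to\Q_Y\boxtimes H_c^\ast(F)$, and the reduction to the constant sheaf via the projection formula is identical. Where you diverge is the verification that this map is an isomorphism: the paper restricts the corresponding global section of $\cHom(\zeta_!\Q_Z,\Q_Y\boxtimes H_c^\ast(F))$ to trivializing open subsets $U$ and invokes the K\"unneth formula together with Lemma~\ref{lemma: Leray-Hirsch pullback}, whereas your primary check is stalkwise, using base change $i_y^\ast\zeta_!\Q_Z\isom R\Gamma_c(F_y;\Q)$ and the perfectness of the Poincar\'e pairing on the smooth fiber $F_y$ against the restricted basis $\{f_{n\ell}\vert_{F_y}\}$. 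Your route has the mild advantage of not invoking local triviality of $\zeta$ (which the paper's proof uses implicitly when it picks $U$ with $\zeta\vert_{\zeta^{-1}(U)}\isom\pr_1$, and which does hold in all the applications), at the cost of the duality/adjunction/base-change compatibility you correctly flag as the delicate point; that compatibility (the counit of $(\zeta_!,\zeta^!)$ restricts on a fiber to integration over the fiber) is standard, so the gap is only one of bookkeeping, not of substance. You also sketch the paper's global variant as an alternative, so the two arguments are reconciled.
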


\begin{remark}
We apply Lemma~\ref{lemma: derived pushforward} directly to the equivariant derived category by considering the finite dimensional smooth approximations in our smooth model (as in, e.g., \cite[Lem.\ 2.1]{AndersonFulton2021}).
For example, the map
\begin{equation}
ET\times_TG/B\to ET\times_{T}G/P_s,\qquad [e,x]\mapsto [e,\pi_s(x)]
\end{equation}
satisfies the Leray-Hirsch theorem.
\end{remark}

We are now ready to begin our case-by-case analysis of root types.
We align our numeration and terminology with Definition~\ref{def: Ts}.

\subsubsection*{(a) Compact imaginary}

\begin{example}[$G_\R$ maximal compact subgroup of $G$]
Let $G$ be any connected complex reductive algebraic group and \(\theta\) be the identity involution.
Then \(X\) is a homogeneous space for \(K=G\), and the simple reflection $s$ is compact imaginary (a) with respect to the single Lusztig--Vogan parameter $(X, \Q_X)$.
We have
\begin{equation}
\pi_s^\ast\pi_{s\ast}\Q_X=\Q_X\oplus\Q_X[-2]
\end{equation}
by Lemma~\ref{lemma: derived pushforward}. It follows that
\begin{equation}
m(\pi_s^\ast\pi_{s\ast}\Q_X)={\Q_X}(T_s+1)
\end{equation}
by \eqref{equation: shift}.
\end{example}

In general, for a compact imaginary reflection \(s\) with respect to \((\cO,\cL)\), the maps in \eqref{equation: mu and pi} have the form
\begin{equation}\label{equation: mu}
\begin{tikzcd}
&K/K_{[x,s]}\arrow[r, "\mu"]\arrow[d, "\pi"]&K/K_x\\
&K/K_x
\end{tikzcd}
\end{equation}
for \(x\in\cO=KxB/B\).
In this case \(\mu\) is smooth with fiber the affine line by \eqref{equation: fibers of mu}, and \(\pi=\mu\) up to a \(K\)-equivariant automorphism of \(K/K_{[x,s]}=KxB\times_BBsB/B\).
In particular, \(\pi^\ast\cF\) and \(\mu^\ast\cF\) are isomorphic in \(D_K^b(K/K_{[x,s]})\).
We have
\begin{equation}
\mu_!\pi^\ast\cL=\cL[-2]
\end{equation}
by Lemma~\ref{lemma: derived pushforward}.
It follows that
\begin{equation}
\begin{split}
m(k_{\cO!}\mu_!\pi^\ast\cL)&=q\cL\\
&=\cL T_s
\end{split}
\end{equation}
by \eqref{equation: shift}, so \eqref{equation: push-pull} and \eqref{equation: equivalent push-pull} hold true.

\subsubsection*{(b) Complex}

\begin{example}\label{example: complex root}
$(G_\R = SL_2(\C))$ Set $G=SL_2(\C) \times SL_2(\C)$ and let $\theta$ be the involution that switches the factors; i.e., $\theta(g_1, g_2) = (g_2, g_1)$. Then $K\simeq SL_2(\C)$ is the diagonal subgroup in $SL_2(\C) \times SL_2(\C)$, and the flag variety is isomorphic to $\mathbb{P}^1 \times \mathbb{P}^1$. Let $B_K$ be the Borel subgroup of upper triangular matrices in $SL_2(\C)$. We can identify $D^b_{SL_2(\C)}(\mathbb{P}^1 \times \mathbb{P}^1) = D^b_{B_K}(\mathbb{P}^1)$ via induction and study $B_K$-equivariant local systems on $\mathbb{P}^1$. The $B_K$-orbits on $\mathbb{P}^1$ are
\[
\mc{Q}_0 := \{[1,0]\} \text{ and } \mc{O}:= \{[x, 1]\} \simeq \C.
\]
The simple reflection $s$ is complex ascent (b1) with respect to the Lustig--Vogan parameter $(\mc{Q}_0, \Q_{\mc{Q}_0})$ and complex decent (b2) with respect to the parameter $(\mc{O}, \Q_\mc{O})$.

Let $i_0: \mc{Q}_0 \rightarrow \mathbb{P}^1$ and $j: \mc{O} \rightarrow \mathbb{P}^1$ be the closed and open embeddings. Because $i_{0!} = i_{0*}$, we have
\begin{align}
\label{eq: complex ascent}
\pi_s^* \pi_{s*} i_{0!} \Q_{\mc{Q}_0} &= \pi_s^* \pi_{s*} i_{0*} \Q_{\mc{Q}_0}\\
\nonumber &= \pi_s^* \Q_\mathrm{pt} \\
\nonumber &= \Q_{\mathbb{P}^1}.
\end{align}
By applying Lemma~\ref{lemma: derived pushforward} to the smooth map $\pi_s \circ j$, we compute 
\begin{align}
\label{eq: complex descent}
\pi_s^* \pi_{s*} j_! \Q_\mc{O} &= \pi_s^*(\pi_s \circ j)_! \Q_\mc{O} \\
\nonumber &= \pi_s^*(\pi_s \circ j)_! (\pi_s \circ j)^*\Q_\mathrm{pt} \\
\nonumber &= \pi_s^*\Q_\mathrm{pt}[-2] \\
\nonumber &= \Q_{\mathbb{P}^1}[-2]. 
\end{align}
It follows from \eqref{equation: shift} that \eqref{equation: push-pull} holds true for the above complex reflections.

\end{example}

In general if \(s\) is a complex ascent with respect to \((\cO=KxB/B,\cL)\) we have
\begin{equation}
\begin{tikzcd}
KxB\times_BBsB/B\arrow[r, "\mu"]\arrow[d, "\pi"]&KyB/B\\
KxB/B
\end{tikzcd}
\end{equation}
where \(\mu\) is an isomorphism.
For \(\cL=k_\cO^\ast k_{\cO!}\cL\), we have
\begin{equation}\label{equation: complex ascent}
\mu_!\pi^\ast\cL=\widehat\cL_{\widehat \cO\smallsetminus \cO}
\end{equation}
where \(\widehat\cO=\bigcup_{x\in\cO}L_x^s\) (as in Definition~\ref{def: Ts}) and \(\widehat\cL\) is the extension of \(\cL\) to \(\widehat\cO\), so \eqref{equation: equivalent push-pull} holds true.

If \(s\) is a complex descent with respect to \((\cO,\cL)\), let \(y\) be such that \(\widehat\cO=KyB/B\cup KxB/B\) and denote by \(k_{\widehat\cO}\) the inclusion of \(\widehat\cO\) in \(G/B\).
Then \(s\) is a complex ascent with respect to \((\cO'=KyB/B,\cL')\), where \((\cO',\cL')\) is the corresponding element in \(\cD\) under the identification \(K_y/K_y^0=K_x/K_x^0\).
The cartesian diagram
\begin{equation}
\begin{tikzcd}
KyB\times_BP_s/B\arrow[r, "\bar\mu"]\arrow[d, "\bar\pi"]&\widehat\cO\arrow[d,"\pi_sk_{\widehat\cO}"]\\
KyB/B\arrow[r, "\pi_s'"]&KyP_s/P_s
\end{tikzcd}
\end{equation}
gives the unique extension of \(\cL\) to \(\widehat\cO\),  \(\widehat\cL:=\bar\mu_\ast\bar\pi^\ast\cL'\), since \(\bar\mu\) is an isomorphism.
By change of base, the \(K\)-equivariant local system \(\widehat\cL\) is the pull-back of a \(K\)-equivariant local system on \(KyP_s/P_s\), so
\begin{equation}
m(\pi_s^\ast\pi_{s\ast}(k_{\widehat\cO!}\widehat\cL))=(1+q)\widehat\cL
\end{equation}
by Lemma~\ref{lemma: derived pushforward} and \eqref{equation: shift}.
The distinguished triangle
\begin{equation}\label{equation: complex dt}
\pi_s^\ast\pi_{s\ast}k_{\cO!}\cL\to \pi_s^\ast\pi_{s\ast}k_{\widehat\cO!}\to \pi_s^\ast\pi_{s\ast}k_{\cO'!}\cL'\overset{+1}{\longrightarrow}
\end{equation}
is obtained by applying the triangulated functor \(\pi_s^\ast\pi_{s\ast}k_{\widehat\cO!}\) to the open-closed distinguished triangle of \(\widehat\cO=\cO\cup\cO'\), and satisfies \eqref{equation: additivity} since \(\pi_sk_{\cO'}\) is finite. By the complex ascent case \eqref{equation: complex ascent}, we have 
\begin{equation}
\begin{split}
m(\pi_s^\ast\pi_{s\ast}(k_{\cO!}\cL))&=m(\pi_s^\ast\pi_{s\ast}k_{\widehat\cO!})-m(\pi_s^\ast\pi_{s\ast}k_{\cO'!}\cL')\\
&=q\widehat\cL\\
&=\cL (T_s+1)
\end{split}
\end{equation}
by Definition~\ref{def: Ts}.
We conclude that \eqref{equation: push-pull} and \eqref{equation: equivalent push-pull} hold true for all complex reflections.

\subsubsection*{(c) Type II}

\begin{example}
\label{example: type II}
$(G_\R=PSL_2(\R))$ Let $G=PSL_2(\C)$, and take $\theta$ to be the involution
\[
\theta(g)= \bp 1 & 0 \\ 0 & -1 \ep g \bp 1 & 0 \\ 0 & -1 \ep.
\]
Then 
\[
K = \left\{ \bp a & 0 \\ 0 & a^{-1} \ep \mid a \in \C^\times \right\} \sqcup \left\{ \bp 0 & b \\ b^{-1} & 0 \ep \mid b \in \C^\times \right\} \hspace{2mm} (\mathrm{mod} \pm I)
\]
is disconnected and isomorphic to the complex orthogonal group. The group $K$ acts on the flag variety $X =\mathbb{P}^1$ with two orbits, 
\[
\mc{Q}_{0, \infty} = \{[0, 1], [1, 0]\} \text{ and } \mc{O} = \{[x, y] \mid x \neq 0, y \neq 0 \} \simeq \C^\times.
\]
The stabilizer $K_{[1,0]} = K_{[0,1]} \simeq \C^\times$ is connected, so there is one irreducible $K$-equivariant local system on the closed orbit $\mc{Q}_{0, \infty}$. For $[x, y] \in \mc{O}$, the stabilizer $K_{[x, y]} \simeq \Z/2\Z$ is disconnected, so there are two irreducible $K$-equivariant local systems on the open orbit $\mc{O}$, corresponding to the trivial and sign representations of $\Z/2\Z$. Denote the non-trivial $K$-equivariant local system by $\mc{L}$. The reflection $s$ is noncompact imaginary of type II (c1) with respect to $(\mc{Q}_{0, \infty}, \Q_{\mc{Q}_{0, \infty}})$ and real of type II (c2) with respect to $(\mc{O}, \Q_\mc{O})$ and $(\mc{O}, \mc{L})$.

The irreducible $K$-equivariant local systems on $\mc{O}$ have the same underlying local system and only differ in their $K$-equivariant structures. This can be seen by examining the long exact sequence in homotopy groups associated to the fibration 
\[
\Z/2\Z \simeq K_{[x,y]} \hookrightarrow K \twoheadrightarrow K/K_{[x,y]} \simeq \mc{O}.
\]
Indeed, the long exact sequence in homotopy groups has the form 
\[
\cdots \rightarrow \pi_1(\mc{O}) \xrightarrow{\alpha} \pi_0(K_{[x,y]}) \xrightarrow{\beta} \pi_0(K) \rightarrow 1, 
\]
so by exactness, $\beta$ is an isomorphism and the image of $\alpha$ is $1$. Pulling back via $\alpha$ gives a functor
\[
 \pi_0(K_{[x,y]})\mathrm{-mod}\xrightarrow{\alpha^*}
\pi_1(\mc{O})\mathrm{-mod}, 
\]
which determines the underlying local system of a $K$-equivariant local system on $\mc{O}$. Because $\alpha$ is trivial, we conclude that the $K$-equivariant local systems $\Q_\mc{O}$ and $\mc{L}$ must have the same (trivial) underlying local system.

Let $i_{0, \infty}: \mc{Q}_{0, \infty} \rightarrow X$ be inclusion, so $\pi_s \circ i_{0, \infty}$ is the projection of two points onto one point. There are two irreducible $K$-equivariant local systems on a point, $\Q_\mathrm{pt}$ and $\mc{L}_{\mathrm{sgn}}$, corresponding to the two irreducible representations of the component group of $K$, which is isomorphic to $\Z/2\Z$. Under the equivalence between $K$-equivariant local systems on a point and $\Z/2\Z$-representations, $(\pi_s \circ i_{0, \infty})_*\Q_{\mc{Q}_{0, \infty}}$ corresponds to the regular representation, so 
\[
(\pi_s \circ i_{0, \infty})_* \Q_{\mc{Q}_{0, \infty}} = \Q_\mathrm{pt} \oplus \mc{L}_\mathrm{sgn}.
\]
Hence 
\begin{align}
    \label{eq: type II noncompact imaginary}
    \pi_s^*\pi_{s*} i_{0, \infty !} \Q_{\mc{Q}_{0, \infty}} &= \pi_s^* (\Q_\mathrm{pt} \oplus \mc{L}_\mathrm{sgn}) \\
    \nonumber &= \Q_X \oplus \widehat{\mc{L}},
\end{align}
where $\widehat{L} = \pi_s^*\mc{L}_\mathrm{sgn}$ is the unique extension of $\mc{L}$ to $X$. This shows that \eqref{equation: push-pull} holds for $(\mc{Q}_{0, \infty}, \Q_{\mc{Q}_{0, \infty}})$.

Let $j: \mc{O} \rightarrow X$ be inclusion of the open orbit. First, note that 
\begin{equation}
    \label{eq: computing using k_X}
    m(\pi_s^* \pi_{s*} \Q_X) = m(\pi_s^* \pi_{s*} i_{0, \infty !} \Q_{\mc{Q}_{0, \infty}}) + m(\pi_s^* \pi_{s*} j_! \Q_{\mc{O}})
\end{equation}
by \eqref{equation: additivity}. (We obtain a distinguished triangle satisfying the conditions of \eqref{equation: additivity} by applying the triangulated functor $\pi_s^* \pi_{s*}$ to the open-closed distinguished triangle for the pair $(j, i_{0, \infty})$ and using \eqref{eq: type II noncompact imaginary}.) We have
\[
\pi_s^* \pi_{s*} \Q_X = \Q_X \oplus \Q_X[-2]
\]
by Lemma~\ref{lemma: derived pushforward}, so 
\begin{equation}
\label{eq: from k_X to orbits}
m(\pi_s^* \pi_{s*} \Q_X) = (1+q) (\Q_{\mc{Q}_{0, \infty}} + \Q_{\mc{O}}). 
\end{equation}
Combining equations \eqref{eq: computing using k_X}, \eqref{eq: from k_X to orbits} and \eqref{eq: type II noncompact imaginary}, we see that 
\[
m(\pi_s^* \pi_{s*} j_! \Q_\mc{O}) = q\Q_{\mc{O}} + (q-1)\Q_{\mc{Q}_{0, \infty}} - \mc{L},  
\]
so \eqref{equation: push-pull} holds for the parameter $(\mc{O}, \Q_{\mc{O}})$. 

Finally, to see that \eqref{equation: push-pull} holds for the parameter $(\mc{O}, \mc{L})$, we apply Lemma~\ref{lemma: derived pushforward} to $\pi_s$ and  $\mc{L}_\mathrm{sgn}$. Specifically, if we denote the extension of $\mc{L}$ to $X$ by $\widehat{\mc{L}}$ as above, then $\pi_s^* \mc{L}_\mathrm{sgn} = \widehat{\mc{L}}$, so 
\begin{equation}
    \label{eq: push pull L}
\pi_s^* \pi_{s*} \widehat{\mc{L}} =  \widehat{\mc{L}} \oplus \widehat{\mc{L}}[-2].
\end{equation}
Analogously to the previous case, we have 
\begin{equation}
m(\pi_s^*\pi_{s*} \widehat{\mc{L}}) = m(\pi_s^* \pi_{s*} i_{0, \infty !} \Q_{\mc{Q}_{0, \infty}}) + m(\pi_s^* \pi_{s*} j_!\mc{L}),
\end{equation}
so $m(\pi_s^* \pi_{s*} j_!\mc{L})$ can be computed using \eqref{eq: push pull L} and \eqref{eq: type II noncompact imaginary}. The result of this computation is that \eqref{equation: push-pull} also holds for the parameter $(\mc{O}, \mc{L})$, so it holds for all type II reflections in this example. 

\end{example}

In general, for a noncompact imaginary reflection \(s\) of type II with respect to \((\cO=KxB/B,\cL)\), we have
\begin{equation}
\begin{tikzcd}
KxB\times_BP_s/B\arrow[r,"\bar\mu"]\arrow[d,"\bar\pi"]&\widehat\cO\arrow[d, "\pi_sk_{\widehat\cO}"]\\
KxB/B\arrow[r, "\pi_s'"]&KxP_s/P_s
\end{tikzcd}
\end{equation}
such that \(\bar\mu\) and \(\pi_s'\) are finte maps of degree two.
We have
\begin{equation}\label{equation: old 4.50}
\bar\mu_!\bar\pi^\ast\cL=(\pi_sk_{\widehat\cO})^\ast\Ind_{K_x}^{K\cap xP_sx^{-1}}(\cL)=\widehat\cL_1\oplus \widehat\cL_2
\end{equation}
for local systems \(\widehat\cL_1\) and \(\widehat\cL_2\) on \(\widehat\cO\), which are distinct when restricted to the open orbit \(\cO'=K/K_y\) in \(\widehat\cO\).

By Definition~\ref{def: Ts},
\begin{equation}\label{equation: old 4.51}
\begin{split}
m(\pi_s^\ast\pi_{s\ast}k_{\cO!}\cL)&=m(\widehat\cL_1)+m(\widehat\cL_2)\\
&=2\cL+(\widehat\cL_1+\widehat\cL_2)\vert_{\cO'}\\
&=\cL(T_s+1)
\end{split}
\end{equation}
so \eqref{equation: push-pull} holds true for all type II noncompact imaginary reflections.

Suppose \(s\) is type II real with respect to  \((\cO,\cL)\), and let \(y\) be such that \(\widehat\cO=KyB/B\cup KxB/B\).
By assumption, there exists \(\cL'\) on \(\cO'=KyB/B\) such that
\begin{equation}
\begin{tikzcd}
KyB\times_BP_s/B\arrow[r, "\bar\mu"]\arrow[d, "\bar\pi"]&\widehat\cO\arrow[d, "\pi_sk_{\widehat\cO}"]\\
KyB/B\arrow[r, "\pi_s'"]&KyP_s/P_s
\end{tikzcd}
\end{equation}
gives rise to one of the local systems \(\widehat\cL\) from \eqref{equation: old 4.50} satisfying \(\widehat\cL\vert_{\cO}=\cL\).
Then \(s\) is type II noncompact imaginary with respect to \((\cO',\cL')\).
By \eqref{equation: old 4.50}, \(\widehat\cL\) is the pull-back of a \(K\)-equivariant local system on \(KyP_s/P_s\), so
\begin{equation}
m(\pi_s^\ast\pi_{s\ast}(k_{\widehat\cO!}\widehat\cL))=(q+1)\widehat\cL
\end{equation}
by Lemma~\ref{lemma: derived pushforward} and \eqref{equation: shift}.
By the distinguished triangle \eqref{equation: complex dt} corresponding to the open-closed decomposition \(\widehat\cO=\cO\cup\cO'\) and the type II noncompact imaginary case \eqref{equation: old 4.51}, we have
\begin{equation}
\begin{split}
m(\pi_s^\ast\pi_{s\ast}(k_{\cO!}\cL))&=m(\pi_s^\ast\pi_{s\ast}k_{\widehat\cO!}\widehat\cL)-(2\cL'+\cL+\widehat\cL_2\vert_{\cO})\\
&=q\cL-\widehat\cL_2\vert_{\cO}+(q-1)\cL'\\
&=\cL(T_s+1)
\end{split}
\end{equation}
where \(\widehat\cL_2\) is the other \(K\)-equivariant local system on \(\widehat\cO\).
It follows that \eqref{equation: push-pull} and \eqref{equation: equivalent push-pull}
hold true for every type II real reflection.

\subsubsection*{(d) Type I}

\begin{example}\label{example: Type I}
$(G_\R=SL_2(\R))$ Let $G, K$, and $\theta$ be as in Example~\ref{example: LV module for SL(2,R)}. Denote by $i_0$ and $i_\infty$ the inclusions of the single-point orbits $\mc{Q}_0$ and $\mc{Q}_\infty$ into $X = \mathbb{P}^1$, respectively, and $j:\mc{O} \rightarrow X$ the inclusion of the open orbit. The reflection $s$ is type I noncompact imaginary (d1) for $(\mc{Q}_0, \Q_{\mc{Q}_0})$ and $(\mc{Q}_\infty, \Q_{\mc{Q}_\infty})$, and type I real (d2) for $(\mc{O}, \Q_{\mc{O}})$.

As in equation \eqref{eq: complex ascent}, we have
\begin{equation}
\label{eq: type I noncompact imaginary}
\begin{split}
\pi_s^\ast\pi_{s\ast}i_{0!}\Q_{\mc{Q}_0}&=\Q_X, \text{ and }\\
\pi_s^\ast\pi_{s\ast}i_{\infty !}\Q_{\mc{Q}_\infty}&=\Q_X.\\
\end{split}
\end{equation}
It follows that \eqref{equation: push-pull} holds for $(\mc{Q}_0, \Q_{\mc{Q}_0})$ and $(\mc{Q}_\infty, \Q_{\mc{Q}_\infty})$. 

To show that \eqref{equation: push-pull} holds for the parameter $(\mc{O}, \Q_\mc{O})$, we proceed as in Example~\ref{example: type II}. Let $i_{0, \infty}: \mc{Q}_0 \cup \mc{Q}_\infty \rightarrow X$ be the inclusion of the union of the two single point orbits into $X$. By applying the triangulated functor $\pi_s^* \pi_{s*}$ to the open-closed distinguished triangle for the pair $(i_{0, \infty}, j)$, we obtain the distinguished triangle
\begin{equation}
\label{equation: dt for summing m}
\pi_s^* \pi_{s*} j_! \Q_\mc{O} \rightarrow
\pi_s^* \pi_{s*} \Q_X \rightarrow \pi_s^* \pi_{s*} i_{0, \infty *} \Q_{\mc{Q}_0 \cup \mc{Q}_\infty} \xrightarrow{+1}.
\end{equation}
By applying \eqref{equation: additivity} to the triangle \eqref{equation: dt for summing m}, we see that
\begin{equation}
    \label{eq: character sums SL(2,R)}
    m(\pi_s^* \pi_{s*} \Q_X) = m(\pi_s^* \pi_{s*} j_! \Q_\mc{O}) + m(i_{0, \infty *} \Q_{\mc{Q}_0 \cup \mc{Q}_\infty}).
\end{equation}
By Lemma~\ref{lemma: derived pushforward}, 
\begin{equation}
\label{eq: push-pull structure sheaf for SL(2,R)}
\pi_s^* \pi_{s*} \Q_X = \Q_X \oplus \Q_X[-2],
\end{equation}
and 
\begin{equation}
    \label{eq: push-pull 2 points for SL(2,R)}
    \pi_s^* \pi_{s*} i_{0, \infty *} \Q_{\mc{Q}_0 \cup \mc{Q}_\infty} = \Q_X \oplus \Q_X. 
\end{equation}
Combining \eqref{eq: character sums SL(2,R)}, \eqref{eq: push-pull structure sheaf for SL(2,R)}, and \eqref{eq: push-pull 2 points for SL(2,R)}, we have 
\[
m(\pi_s^* \pi_{s*} j_! \Q_\mc{O}) = (q-1)( \Q_{\mc{Q}_0} + \Q_{\mc{Q}_\infty} + \Q_\mc{O}).
\]
It follows that \eqref{equation: push-pull} holds for $(\mc{O}, \Q_{\mc{O}})$. 
\end{example}

In general, let \(s\) be noncompact imaginary reflection of type I with respect to \((KxB/B,\cL)\).
We have
\begin{equation}
\begin{tikzcd}
KxB\times_BP_s/B\arrow[r,"\bar\mu"]\arrow[d,"\bar\pi"]&\widehat\cO\arrow[d, "\pi_sk_{\widehat\cO}"]\\
KxB/B\arrow[r, "\pi_s'"]&KxP_s/P_s
\end{tikzcd}
\end{equation}
such that \(\bar\mu\) and \(\pi_s'\) are isomorphisms, so
\begin{equation}\label{equation: unique type I extension}
\widehat\cL=\bar\mu_!\bar\pi^\ast\cL=(\pi_sk_{\widehat\cO})^\ast\pi_{s!}'\cL
\end{equation}
extends \(\cL\) to \(\widehat\cO\).

By Definition~\ref{def: Ts}, we have
\begin{equation}\label{equation: noncompact imaginary I}
\begin{split}
m(\pi_s^\ast\pi_{s\ast}k_{\cO!}\cL)&=m(\widehat\cL)\\
&=\cL+\widehat\cL\vert_{\cO'}+\widehat\cL\vert_{\cO^{\prime\prime}}\\
&=\cL(T_s+1)
\end{split}
\end{equation}
where \(\cO'\) and \(\cO''\) are the \(K\)-orbits such that \(\widehat\cO=\cO\cup\cO'\cup\cO''\) and \(\dim(\cO)=\dim(\cO'')=\dim(\cO')-1\).
By \eqref{equation: this guy}, \eqref{equation:  push-pull} holds true for all noncompact imaginary reflections of type I.

Now assume \(s\) is real type I with respect to \((\cO,\cL)\), so there exists \(\cL'\) on \(\cO'=KyB/B\) such that 
\begin{equation}
\begin{tikzcd}
KyB\times_BP_s/B\arrow[r,"\bar\mu"]\arrow[d,"\bar\pi"]&\widehat\cO\arrow[d, "\pi_sk_{\widehat\cO}"]\\
KyB/B\arrow[r, "\pi_s'"]&KyP_s/P_s
\end{tikzcd}
\end{equation}
gives rise to the unique local system \(\widehat\cL\) from \eqref{equation: unique type I extension}. 
We have
\begin{equation}
m(\pi_s^\ast\pi_{s\ast}(k_{\widehat\cO!}\widehat\cL))=(q+1)\widehat\cL
\end{equation}
by Lemma~\ref{lemma: derived pushforward} and \eqref{equation: shift}.
By the distinguished triangle \eqref{equation: complex dt} corresponding to the open-closed decomposition \(\widehat\cO=\cO\cup(\cO'\cup\cO'')\) and the type I noncompact imaginary case \eqref{equation: noncompact imaginary I}, we have
\begin{equation}
\begin{split}
m(\pi_s^\ast\pi_{s\ast}(k_{\cO!}\cL))&=m(\pi_s^\ast\pi_{s\ast}(k_{\widehat\cO!}\widehat\cL))-2m(k_{\widehat\cO!}\widehat\cL)\\
&=(q-1)\widehat\cL\\
&=\cL(T_s+1)
\end{split}
\end{equation}
by Definition~\ref{def: Ts}.
It follows that \eqref{equation: push-pull} and \eqref{equation: equivalent push-pull} hold true for every type I real reflection.

\subsubsection*{(e) Real Nonparity}

{\color{black}
\begin{example}\label{example: non-parity} $(G_\R = SL_2(\R))$
We return to the setting of Examples \ref{example: LV module for SL(2,R)} and \ref{example: Type I}. With respect to the M\"obius band local system $\mc{L}$ on the open orbit $\mc{O}$, the reflection $s$ is type (e), real nonparity. In particular, the local system $\mc{L}$ does not extend (as a local system) to $X$, so we have
\[
j_!\mc{L} = j_* \mc{L},
\]
where $j:\mc{O} \rightarrow X$ is inclusion. Hence we can compute $\pi_{s*}j_!\mc{L}=(\pi_s\circ j)_*\mc{L}$ by computing the sheaf cohomology of $\C^\times$ with coefficients in the local system $\mc{L}$. We will show that this cohomology is zero. 

Forgetting equivariance, the local system $\For \mc{L}$ consists of the data of a one-dimensional $\Q$-vector space $V$ and the monodromy map $m=-1:V \rightarrow V$. The cohomology of $\C^\times$ with coefficients in $\For \mc{L}$ is given by the invariants and coinvariants of $m$ \cite[Ex.\ 17.6]{RomWil}:
\[
H^i(\C^\times; \For \mc{L}) = \begin{cases} V^m & i=0; \\
V_m & i=1. 
\end{cases}
\]
Since $V^m=V_m=0$, we see that on the non-equivariant level, $\pi_{s*}j_! \For \mc{L}=0$. Hence the equivariant complex $\pi_{s*}j_!\mc{L}$ must also vanish. 

Alternately, we can see this using the finite Galois covering
\[
f: \C^\times \rightarrow \C^\times; z \mapsto z^2.
\]
Again, we forget equivariance. In the equivalence between local systems and representations of fundamental groups, pushing along a finite covering map corresponds to inducing representations. Hence the local system $f_* \Q_{\C^\times}$ corresponds to the induced representation $\Ind_{2\Z}^\Z \mathrm{triv}$, which is isomorphic to the regular representation of $\Z/2\Z$. We conclude that 
\[
f_* \Q_{\C^\times} = \Q_{\C^\times} \oplus \For \mc{L}. 
\]
Pushing along $\pi_s\circ j$, we see that 
\[
\pi_{s*}j_*\Q_{\C^\times} = \pi_{s*}j_*f_* \Q_{\C^\times} = \pi_{s*}j_*\Q_{\C^\times} \oplus \pi_{s*}j_*\For \mc{L}, 
\]
because $\pi_s \circ j = \pi_s \circ j \circ f$. Hence $\pi_{s*}j_! \For \mc{L} = 0$. 

From either of these two computations, we conclude that 
\begin{equation}
\label{eq: type e}
\pi_s^* \pi_{s*} j_! \mc{L} = 0.
\end{equation}
It follows that \eqref{equation: push-pull} holds for this type (e) reflection. 

\end{example}
}
In general, suppose \(s\) is real nonparity with respect to \((\cO,\cL)\).
We start with a Lemma.

\begin{lemma}\label{lemma: reduction to P^1}
For \(x\in\cO\),
\begin{equation}\label{equation: reduction to P^1}
\widehat\cO=K\times_{K\cap xP_sx^{-1}}L_x^s.
\end{equation}
\begin{proof}
The quotient in \eqref{equation: reduction to P^1} is an algebraic variety since \(L:=L_x^s\) is quasiprojective and \(K\cap xP_sx^{-1}\) is a closed subgroup of \(K\).
Define
\begin{equation}
\eta\,\colon K\times_{K\cap xP_sx^{-1}}xP_s/B\to KxP_s/B
\end{equation}
by multiplication.
We have
\begin{equation}
\eta^{-1}(kxpB/B)=\left\{[k',xp']\mid k'xp'B/B=kxpB/B\right\}=\left\{[k,xp]\right\}
\end{equation}
since \([k',xp']=[k,k^{-1}k'xp']=[k,xp]\) by a routine calculation.
It follows that \(\eta\) is an isomorphism since \(KxP_s/B\) is smooth and \(\eta\) is bijective.
\end{proof}
\end{lemma}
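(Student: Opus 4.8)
The plan is to realize both sides of \eqref{equation: reduction to P^1} as explicit subvarieties of $G/B$ linked by the multiplication map, and then to promote a bijection to an isomorphism. First I would unwind the left-hand side. Writing $\cO = KxB/B$, we have $\pi_s(kxB/B) = kxP_s$, so the line of type $s$ through $kxB/B$ is $kxP_s/B$, whence $\widehat{\cO}_s = \bigcup_{k \in K} kxP_s/B = KxP_s/B = \pi_s^{-1}\!\big(\pi_s(\cO)\big)$. In particular $\widehat{\cO}_s$ is the restriction of the $\bP^1$-fibration $\pi_s\colon G/B \to G/P_s$ over the $K$-orbit $\pi_s(\cO) = KxP_s/P_s$, hence a locally trivial $\bP^1$-bundle over a smooth base, hence smooth. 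On the right-hand side, $L_x^s = xP_s/B \cong \bP^1$ is quasi-projective and $K \cap xP_sx^{-1}$ is a closed subgroup of $K$, so the associated bundle $K \times_{K \cap xP_sx^{-1}} xP_s/B$ exists as an algebraic variety.

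Next I would introduce the comparison map $\eta\colon K \times_{K \cap xP_sx^{-1}} xP_s/B \to KxP_s/B$ induced by multiplication in $G$, $[k,\, xpB/B] \mapsto kxpB/B$, and check it is well defined and bijective. Well-definedness: for $h \in K \cap xP_sx^{-1}$ one has $hxp \in xP_s$, so $hxpB/B$ again lies in $xP_s/B$, and $(kh^{-1})(hxpB/B) = kxpB/B$. Surjectivity is immediate from $\widehat{\cO}_s = KxP_s/B$. For injectivity, if $k_1xp_1B/B = k_2xp_2B/B$ then $h := k_2^{-1}k_1$ satisfies $hxp_1B \subseteq xP_s$, forcing $h \in K \cap xP_sx^{-1}$ and hence $[k_1, xp_1B/B] = [k_2, xp_2B/B]$. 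This step is just double-coset bookkeeping.

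Finally I would upgrade ``bijective morphism'' to ``isomorphism''. Over $\C$, $\eta$ is a bijective morphism onto the smooth (in particular normal) variety $\widehat{\cO}_s$; being bijective it is dominant and generically injective, hence birational, and Zariski's main theorem then forces $\eta$ to be an isomorphism. (If $K$ is disconnected one applies this componentwise, noting that $\eta$ matches connected components on the two sides.) I expect this last step to be the only genuine subtlety: bijectivity alone would not suffice in positive characteristic, so the characteristic-zero hypothesis --- equivalently, normality of the target together with birationality of $\eta$ --- is essential. Everything else is formal manipulation with the double-coset description of $\widehat{\cO}_s$.
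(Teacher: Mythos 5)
Your proposal is correct and follows essentially the same route as the paper: form the associated bundle, map it to $KxP_s/B$ by multiplication, verify bijectivity by the double-coset computation, and conclude via smoothness of the target (the paper invokes the same "bijective morphism onto a smooth variety in characteristic zero" principle that you justify via normality and Zariski's main theorem). Your version merely spells out a few steps the paper leaves implicit, such as the identification $\widehat\cO_s=KxP_s/B$ and the reason smoothness suffices.
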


In particular, \eqref{equation: reduction to P^1} shows that \(K\times_{K\cap xP_sx^{-1}}L\) is a Zariski locally trivial fiber bundle over \(KxP_s/P_s=K/(K\cap xP_sx^{-1})\) with fiber \(L\) (even when \(K\cap xP_sx^{-1}\) need not be a parabolic subgroup of \(K\)).
There are finitely many \(K\)-orbits in \(\widehat\cO\), which correspond to \(K\cap xP_sx^{-1}\) orbits in \(L\) via \eqref{equation: reduction to P^1}; this is computed explicitly in \cite[Lem.\ 5.1]{Vogan3}.

We have an equivalence of t-categories 
\begin{equation}\label{equation: key trick}
D_K^b(KxP_s/B)=D^b_{K\cap xP_sx^{-1}}(L)
\end{equation}
by induction.
If \(y\in KxP_s/B\) and \(k\,\colon KyB/B\to KxP_s/B\) is inclusion, then \(\cL=k^\ast k_!\cL\) corresponds to the same representation of \(K_y/K_y^0\) in either category.
Under the equivalence \eqref{equation: key trick},
\begin{equation}
\pi_{s_\ast}\cF=p_\ast\cF
\end{equation}
where \(p\,\colon L\to\pt\) maps to a point.

It follows from Example~\ref{example: non-parity} that for any real nonparity reflection we have
\begin{equation}\label{equation: real nonparity}
\cL(T_s+1)=0.
\end{equation}
This completes the proof of Lemma~\ref{lemma: local systems}.

\subsection{Proof of Theorem~\ref{thm: semisimple categorification}}\label{sec: proof of geometric categorification}
For \(y\in G/B\), let \(k'\,\colon KyP_s/B\to G/B\).
Let \(U\) be the union of the open \(K\)-orbit \(Ky_0B/B\) in \(KyP_s/B\) and the complement of \(\overline{KyP_s/B}\) in \(G/B\).
Let \(j\,\colon U\to G/B\) be the corresponding open embedding.
Let \(Y\) be the complement of \(U\) in \(G/B\), so \(Y\) is the closure in \(G/B\) of a (possibly empty) disjoint union of closed \(K\)-orbits \(Ky_iB/B\) in \(KyP_s/B\).
Let \(i\,\colon Y\to G/B\) the corresponding closed inclusion.
The maps \(i\) and \(j\) give an open-closed decomposition of \(G/B\).
The open-closed distinguished triangle
\begin{equation}\label{equation: open closed dt}
j_!j^\ast\widetilde \cL\to\widetilde\cL\to i_\ast i^\ast\widetilde\cL\overset{+1}{\longrightarrow}
\end{equation}
gives the distinguished triangle
\begin{equation}\label{equation: triangulated functor}
\pi_s^\ast\pi_{s\ast} j_!j^\ast\widetilde\cL\to \pi_s^\ast\pi_{s\ast}\widetilde\cL\to \pi_s^\ast\pi_{s\ast}i_\ast i^\ast\widetilde\cL\overset{+1}{\longrightarrow}
\end{equation}
under an application of the triangulated functor \(\pi_s^\ast\pi_{s\ast}\).
A further application of the triangulated functor \(k'_!k^{\prime\ast}\) yields the distinguished triangle
\begin{equation}\label{equation: restriction is triangulated}
k'_!k^{\prime\ast}\pi_s^\ast\pi_{s\ast}j_!j^\ast\widetilde\cL\to k_!'k^{\prime\ast}\pi_s^\ast\pi_{s\ast}\widetilde\cL\to k_!'k^{\prime\ast}\pi_s^\ast\pi_{s\ast}i_\ast i^\ast\widetilde\cL\overset{+1}{\longrightarrow}.
\end{equation}
Simplifying, we obtain
\begin{equation}\label{equation: simplified dt}
k_!'(\pi_sk')^\ast(\pi_sj)_!j^\ast \widetilde\cL\to k_!'k^{\prime\ast}\pi_s^\ast\pi_{s\ast}\widetilde\cL\to k'_!(\pi_sk')^\ast(\pi_si)_\ast i^\ast\widetilde\cL\overset{+1}{\longrightarrow}.
\end{equation}

The diagram
\begin{equation*}
\begin{tikzcd}
&Ky_0B\times_BP_s/B\arrow[d]\arrow[ddl, bend right=30,swap, "\pi"]\arrow[r, "\mu"]&KyP_s/B\arrow[r, "k'"]\arrow[d, "\pi_s k'"]&G/B\\
&U\arrow[d, "j"]\arrow[r, "\pi_s j"]&G/P_s\\
Ky_0B/B\arrow[r, "j'"]&G/B&&\\
\end{tikzcd}
\end{equation*}
shows 
\begin{equation}
k_!'(\pi_sk')^\ast(\pi_sj)_!j^\ast\widetilde\cL=k'_!\mu_!\pi^\ast j^{\prime\ast}\widetilde\cL,
\end{equation}
since the upper right square is cartesian.
A similar diagram shows
\begin{equation}\label{equation: finite mu}
k_!'(\pi_sk')^\ast(\pi_si)_\ast i^\ast\widetilde\cL=k'_!\mu_!\pi^\ast i^{\prime\ast}\widetilde\cL,
\end{equation}
where \(i'\,\colon \coprod_i Ky_iB/B\to G/B\).
By Lemma~\ref{lemma: local systems}, we have
\begin{equation}
\begin{split}
m(k_!'\mu_!\pi^\ast j^{\prime\ast}\widetilde\cL)&=m(j'_!j^{\prime\ast}\widetilde\cL)(T_s+1)\\
m(k_!'\mu_!\pi^\ast i^{\prime\ast}\widetilde\cL)&=m(i'_!i^{\prime\ast}\widetilde\cL)(T_s+1)
\end{split}
\end{equation}
since restricting to a \(K\)-orbit gives a direct sum of irreducible \(K\)-equivariant local systems.
By \eqref{equation: additivity}, \eqref{equation: simplified dt}, and \eqref{equation: finite mu}, we have
\begin{equation}
m(k'_!k^{\prime\ast}\pi_s^\ast\pi_{s\ast}\widetilde\cL)=(m(j'_!j^{\prime\ast}\widetilde\cL)+m(i'_!i^{\prime\ast}\widetilde\cL))(T_s+1)
\end{equation}
since \(k_!'\mu_!\pi^\ast i^{\prime\ast}\widetilde\cL\) is \(\ast\)-even (q.v.\ Definition~\ref{def: parity}), as seen by observing that \(\mu\) is finite in \eqref{equation: finite mu} so \(\mu_!\) is exact.
It follows from \eqref{equation: additivity} that
\begin{equation}
(m(j'_!j^{\prime\ast}\widetilde\cL)+m(i'_!i^{\prime\ast}\widetilde\cL))(T_s+1)=m(k'_!k^{\prime\ast}\widetilde\cL)(T_s+1)
\end{equation}
by applying \(k_!'\) to the  open-closed decomposition in \(KyP_s/B\) of \(k^{\prime\ast}\widetilde\cL\).
Therefore
\begin{equation}\label{equation: proof of theorem}
m(\pi_s^\ast\pi_{s\ast}\widetilde\cL)=\sum_{k'}m(k'_!k^{\prime\ast}\pi_s^\ast\pi_{s\ast}\widetilde\cL)=m(\widetilde\cL)(T_s+1)
\end{equation}
as desired.

Recall that our initial character map \(\ch\) is given in \eqref{equation: ch}.
We prove in Lemma~\ref{lemma: push pull star even} below that \(\pi_s^\ast\pi_{s\ast}\widetilde\cL\) is \(\ast\)-even, so by \eqref{equation: ch is m}
\begin{equation}
\begin{split}
\ch(\pi_s^\ast\pi_{s\ast}\widetilde\cL)&=m(\pi_s^\ast\pi_{s\ast}\widetilde\cL)\\
&=m(\widetilde\cL)(T_s+1)\\
&=\ch(\widetilde\cL)(T_s+1)
\end{split}
\end{equation}
since \(\widetilde\cL\) is \(\ast\)-even (q.v.\ Remark \ref{remark: purity}).
The theorem then follows from the fact that for any \(\cF\in D_K^b(X)\),
\begin{equation}
\ch(\cF[1])=q^{-1/2}\ch(\cF).
\end{equation}

We note that \(m\) is required to obtain additivity on certain distinguished triangles (q.v.\ \eqref{equation: additivity}) but \(m\) fails to intertwine Hecke actions on odd shifts of \(\widetilde\cL\), so we must move to \(\ch\) to obtain a character map which intertwines Hecke actions on the entire semisimple category.

\subsection{Application: Resolutions of singularities}
\label{sec: resolutions of singularities}

We give a formula to compute the cohomology of  fibers of the resolutions of singularities defined in \cite{Larson2020}.

For a subset \(I\subseteq S\), we denote by \(w_I\) the longest element of \(W_I\), \(P_I=\overline{Bx_IB}\) the corresponding standard parabolic subgroup, and \(p_I\) the Poincar\'e polynomial of \(P_I/B\) with indeterminate \(q^{1/2}\).
Let \((W,\star)\) be the Richardson-Springer monoid associated to any Coxeter group \((W,S)\), and recall the monoid action on \(K\backslash G/B\) (cf.\ \cite{RichardsonSpringer1990}).
For \(x_0\in K\backslash G/B\), for \(x_1,\ldots,x_\ell\in B\backslash G/B\), and for \(1\leq i\leq \ell\), choose
\begin{equation}
J_i\subseteq\tau(x_{i-1})\cap\tau(x_i^{-1}),
\end{equation}
where 
\begin{equation}
\tau(x_i)=\left\{s\in S\mid x_i\star s=x_i\right\}.
\end{equation}
Given a subset \(J\subseteq\tau(x_\ell)\), choose \(I\supseteq J\) and set \(x:=x_0\star x_1\star\cdots\star x_\ell\star x_I\).
Set
\begin{equation}
Z=\overline{Kx_0B}\times_{P_{J_1}}\overline{Bx_1B}\times_{P_{J_2}}\cdots\times_{P_{J_\ell}}\overline{Bx_\ell B}/P_J
\end{equation}
and define \(\xi\,\colon Z\to G/P_I\) by
\begin{equation}\label{equation: resolution}
\xi[g_0,\ldots,g_\ell]=g_0\cdots g_\ell P_I/P_I.
\end{equation}
Then \(\xi\) is a \(K\)-equivariant proper map with image \(\overline{KxB}/P_I\).

Recall that \(m\,\colon D^b_K(G/B)\to M_{LV}\) is given by \eqref{equation: m}, and let \(h\,\colon D^b_B(G/B)\to \bm{H}\) be the corresponding character map for \(D^b_G(G\times_GG/B)=D^b_B(G/B)\) (as in \cite{Soergel2000}).

\begin{theorem}\label{theorem: fibers}
Suppose \(\overline {Kx_0B/B}\) is \(\Q\)-smooth, and for every \(1\leq i\leq \ell\), \(\overline{Bx_iB/B}\) is \(\Q\)-smooth.
Then for every \(y\in \overline{KxP_I/P_I}\), the Poincar\'e polynomial of \(\xi^{-1}(y)\)
with indeterminate \(q^{1/2}\) is computed by summing the coefficients of the local systems on \(KyB/B\) in the expression
\begin{equation}\label{equation: big convolution}
m(\widetilde\Q_{Kx_0B/B})\left(\prod_{i=1}^\ell h(\widetilde\Q_{Bx_iB/B})\ \right) h(\widetilde\Q_{P_I/B})\ p^{-1}
\end{equation}
in \(M_{LV}\), where \(p=p_J\displaystyle\prod_{i=1}^\ell p_{J_i}\).
\end{theorem}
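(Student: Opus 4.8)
The plan is to realize the right-hand side of \eqref{equation: big convolution} as the character of an explicit proper pushforward on $G/B$, evaluate that character using the semisimple categorification, and then extract the Poincar\'e polynomial of a fibre of $\xi$ via proper base change.

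\emph{Reduction to $G/B$.} First I would pull $\xi$ back along $\pi_I\colon G/B\to G/P_I$, obtaining the Cartesian square whose total space is $\hat Z:=\overline{Kx_0B}\times_{P_{J_1}}\cdots\times_{P_{J_\ell}}\overline{Bx_\ell B}\times_{P_J}P_I/B$ with map $\hat\xi[g_0,\dots,g_\ell,g_{\ell+1}]=g_0\cdots g_{\ell+1}B/B$. Since $\xi$ is proper, proper base change gives $\hat\xi_\ast\Q_{\hat Z}\isom\pi_I^\ast\xi_\ast\Q_Z$; this complex is pulled back from $G/P_I$, so on any $K$-orbit its restriction is $\pi_I^\ast$ of the restriction of $\xi_\ast\Q_Z$ to the image orbit, and hence the sum of the coefficients of the local systems along the orbit $KyB/B$ is well defined (independent of the chosen lift of $y$) and, by proper base change again, equals the Poincar\'e series of the stalk of $\xi_\ast\Q_Z$ at $y$, i.e.\ of $H^\ast(\xi^{-1}(y))$. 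Under the $\Q$-smoothness hypotheses $\hat Z$ is $\Q$-smooth and $\hat\xi_\ast\Q_{\hat Z}[\dim\hat Z]$ is pure; together with evenness of the $K$-orbit stratification this makes $\hat\xi_\ast\Q_{\hat Z}$ $\ast$-even, so on it $m=\ch$ and ``summing coefficients'' genuinely returns $\sum_n\dim H^n q^{n/2}$.

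\emph{The factor $p^{-1}$.} Next I would compare $\hat Z$ with $\hat Z^{\mathrm{big}}:=\overline{Kx_0B}\times_B\overline{Bx_1B}\times_B\cdots\times_B\overline{Bx_\ell B}\times_B P_I/B$ and its total-multiplication map $\hat\xi^{\mathrm{big}}\colon\hat Z^{\mathrm{big}}\to G/B$. The inclusions $J_i\subseteq\tau(x_{i-1})\cap\tau(x_i^{-1})$ and $J\subseteq\tau(x_\ell)$, $J\subseteq I$ make the collapsing map $r\colon\hat Z^{\mathrm{big}}\to\hat Z$ a Zariski-locally-trivial fibre bundle whose fibre $F$ is an iterated bundle of the flag varieties $P_{J_1}/B,\dots,P_{J_\ell}/B,P_J/B$; thus $H^\ast(F)$ is concentrated in even degrees with Poincar\'e polynomial $p=p_J\prod_{i=1}^\ell p_{J_i}$, and since the tautological classes lift to $\hat Z^{\mathrm{big}}$, Leray--Hirsch gives $r_\ast\Q_{\hat Z^{\mathrm{big}}}\isom\Q_{\hat Z}\otimes_\Q H^\ast(F)$. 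Applying $\hat\xi_\ast$ and then $\ch$ yields $\ch(\hat\xi^{\mathrm{big}}_\ast\Q_{\hat Z^{\mathrm{big}}})=p\cdot\ch(\hat\xi_\ast\Q_{\hat Z})$; in particular $p$ divides the left-hand side.

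\emph{Evaluating the big pushforward.} Finally, exactly as for Bott--Samelson resolutions (cf.\ the identification $\xi_\ast\Q_{BS_{\underline{w}}}=\mc{E}_{\underline{w}}$ in Section~\ref{sec: Relationship between the geometric Hecke category and Soergel bimodules}), unwinding $\ast=\mathrm{mult}_\ast(-\boxtimes_B-)$ identifies $\hat\xi^{\mathrm{big}}_\ast\Q_{\hat Z^{\mathrm{big}}}$ with the iterated convolution $\widetilde\Q_{Kx_0B/B}\ast\widetilde\Q_{Bx_1B/B}\ast\cdots\ast\widetilde\Q_{Bx_\ell B/B}\ast\widetilde\Q_{P_I/B}$, where $\Q$-smoothness is what lets one identify each of $\widetilde\Q_{Kx_0B/B}$, $\widetilde\Q_{Bx_iB/B}$, $\widetilde\Q_{P_I/B}$ with the corresponding constant sheaf, so that the last three lie in $\mc{H}$ and the first in $\mc{M}_{LV}^{ss}$. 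By the decomposition theorem the convolution lies in $\mc{M}_{LV}^{ss}$, and Theorem~\ref{thm: semisimple categorification} (that $\ch$ intertwines the $\mc{H}$-action on $\mc{M}_{LV}^{ss}$ with the $\bm{H}$-action on $M_{LV}$, together with $h(\bm{IC}_w)=b_w$) gives
\[
\ch\bigl(\hat\xi^{\mathrm{big}}_\ast\Q_{\hat Z^{\mathrm{big}}}\bigr)=m(\widetilde\Q_{Kx_0B/B})\Bigl(\prod_{i=1}^\ell h(\widetilde\Q_{Bx_iB/B})\Bigr)h(\widetilde\Q_{P_I/B}),
\]
the leading $m=\ch$ holding because $\widetilde\Q_{Kx_0B/B}$ is $\ast$-even. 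Dividing by $p$ (second paragraph) and reading off the coefficient sum along $KyB/B$ (first paragraph) gives the Poincar\'e polynomial of $\xi^{-1}(y)$ as the claimed sum in \eqref{equation: big convolution}. The main obstacle is packaged in the third paragraph: establishing that $\hat\xi^{\mathrm{big}}_\ast\Q_{\hat Z^{\mathrm{big}}}$ really is this iterated convolution and lands in $\mc{M}_{LV}^{ss}$ (this is where $\Q$-smoothness is essential, replacing the $\bm{IC}$-sheaves by constant sheaves), and that the pushforwards involved are $\ast$-even so that the coefficient sum is an honest Betti count rather than an alternating one --- which rests on purity together with evenness of the $K$-orbit stratification via parity-sheaf techniques. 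The grading-shift bookkeeping and the fibre-bundle claim for $r$ are routine; one should also confirm that only rank-one local systems occur along $KyB/B$ in the output (equivalently, that $\hat\xi_\ast\Q_{\hat Z}$ restricts there to a sum of constant sheaves), so that the unweighted coefficient sum equals $\sum_n\dim H^n(\xi^{-1}(y))q^{n/2}$; otherwise the statement must be read with multiplicities.
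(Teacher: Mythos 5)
Your proposal follows the paper's own argument in all essentials: pull $\xi$ back along $G/B\to G/P_I$, account for the factor $p^{-1}$ via the smooth fibration with fibre $P_{J_1}/B\times\cdots\times P_{J_\ell}/B\times P_J/B$, identify the resulting pushforward with an iterated convolution whose character under $m$ and $h$ is the displayed product, and reduce the claim that the coefficient sum is an honest (rather than alternating) Betti count to $\ast$-evenness of $\xi_\ast\Q_Z$. The one place to be careful is your appeal to ``purity'' for that last step --- purity does not by itself control the parity of the shifts appearing in the decomposition theorem --- and the paper instead deduces $\ast$-evenness from Lemma~\ref{lemma: push pull star even}, by exhibiting the pullback of $\xi_\ast\Q_Z$ to $G/B$ as a direct summand of even shifts of iterated push-pulls $\pi_{s_n}^\ast\pi_{s_n\ast}\cdots\pi_{s_1}^\ast\pi_{s_1\ast}\widetilde\cL$; your closing worry about higher-rank local systems is harmless, since the component groups $K_y/K_y^0$ have exponent two, so every irreducible $K$-equivariant local system has rank one.
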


The cohomology of the fiber \(\xi^{-1}(y)\) is given by the stalk at \(y\) of the direct image \(\xi_\ast\Q_Z\), and Theorem~\ref{theorem: fibers} states that these stalks are determined by \(m(\xi_\ast\Q_Z)\).
For every \(\cL\in\cD\) and \(s\in S\) we prove in Lemma~\ref{lemma: push pull star even} that
\(\pi_s^\ast\pi_{s\ast}\widetilde\cL\)
is \(\ast\)-even (q.v.\ Definition~\ref{def: parity}).
It follows that \(\xi_\ast\Q_Z\) is \(\ast\)-even since the pull-back of \(\xi_\ast\Q_Z\) to \(G/B\) is a direct summand of even shifts of 
\begin{equation}
\pi_{s_n}^\ast\pi_{s_n\ast}\cdots\pi_{s_1}^\ast\pi_{s_1\ast}\widetilde\cL
\end{equation}
for some choice of \(\cL\) and \(s_i\).
Stalks of any \(\ast\)-even sheaf can be recovered from its image under \(m\) by \eqref{equation: cohomology sheaves}, so the cohomology of \(\xi^{-1}(y)\) is computed by \(m(\xi_\ast\Q_Z)\).

The explicit formula for \(m(\xi_\ast\Q_Z)\) in \eqref{equation: big convolution} comes from first describing the direct image \(\xi^{\prime\prime}_\ast\Q_Z\) in terms of convolution of \(\cH\) on \(\cM_{LV}\), where
\begin{equation}
\begin{tikzcd}
\overline{Kx_0B}\times_B\overline{Bx_1B}\times_B\cdots\times_B\overline{Bx_\ell B}\times_BP_I/B\arrow[r, "\xi^{\prime\prime}"]\arrow[d, "\zeta"]&G/B\arrow[d, "\id"]\\
\overline{Kx_0B}\times_{P_{J_1}}\overline{Bx_1B}\times_{P_{J_2}}\cdots\times_{P_{J_\ell}}\overline{Bx_\ell B}\times_{P_J}P_I/B\arrow[r,"\xi'"]&G/B
\end{tikzcd}
\end{equation}
is such that \(\xi'\) is pullback of \(\xi\) along \(G/B\to G/P_I\).
The smooth map \(\zeta\) has fiber \(P_{J_1}/B\times\cdots\times P_{J_\ell}/B\times P_J/B\), which explains why we divide by \(p\) in \eqref{equation: big convolution} (cf.\ \cite[Eq.\ (2.9)]{Williamson2011}).
From this discussion, all that remains to prove Theorem~\ref{theorem: fibers} is to establish that \(\pi_s^\ast\pi_{s\ast}\widetilde\cL\) is \(\ast\)-even.
We do this in Lemma~\ref{lemma: push pull star even} below.

For every \((KxB/B,\cL)\in\cD\), define
\begin{equation}
\tau(\cL)=\left\{s\in S\mid x\star s=x,\ \cL\text{ extends to }KxP_s/B\right\} 
\end{equation}
as in \cite[Def.\ 5.6]{Vogan3}, called the \emph{Borho-Jantzen-Duflo \(\tau\)-invariant}.

\begin{lemma}\label{lemma: push pull star even}
For every \(\cL\in\cD\) we have
\begin{equation}\label{equation: push-pull star even}
\pi_s^\ast\pi_{s\ast}\widetilde\cL\in D^b_K(G/B)^{\ast-even}.
\end{equation}
\begin{proof}
If \(s\in\tau(\cL)\), then \(\cL=\pi_s^\ast\cL'\) for some irreducible \(K\)-equivariant local system \(\cL'\) extended by zero to \(G/P_s\).
By \cite[Prop.\ 7.15]{AdamsBarbaschVogan1992}, it follows that \(\widetilde\cL=\pi_s^\ast\widetilde{\cL'}\), so \eqref{equation: push-pull star even} holds true by Lemma~\ref{lemma: derived pushforward}.
If \(s\in\tau(x)\smallsetminus\tau(\cL)\), then \(\pi_{s\ast}\widetilde\cL=0\) by restricting to \(KxP_s/P_s\).
Otherwise \(\pi_s\) restricts to a generically finite map on the support of \(\widetilde\cL\).

Assume from now on that \(\pi_s\) restricts to a generically finite map on the support of \(\widetilde\cL\).
We claim that \(\pi_{s\ast}\IC(\overline\cO,\cL)\) is perverse -- even though the direct image of \(\IC(\overline\cO,\cL)\) under a semi-small map need not be perverse.
This will enable us to provide an upper bound for possible terms occurring in the decomposition theorem, and moreover will force each term to be shifted by zero.

Given \(y\in \overline{KxB}\) let \(k'\,\colon KyP_s/P_s\to G/P_s\) be inclusion.
The cartesian diagram
\begin{equation}\label{equation: cartesian strata}
\begin{tikzcd}
KyP_s/B\cap\overline{KxB/B}\arrow[r, "\pi_s'"]\arrow[d, "k''"]& KyP_s/P_s\arrow[d, "k'"]\\
\overline{KxB/B}\arrow[r, "\pi_s\bar k"]&G/P_s
\end{tikzcd}
\end{equation}
gives 
\begin{equation}\label{equation: restriction}
k^{\prime\ast}\pi_{s\ast}\IC(\overline\cO,\cL)=\pi_{s\ast}'k^{\prime\prime\ast}\IC(\overline\cO,\cL)
\end{equation}
by base change.
Equation \eqref{equation: restriction} allows us to relate cohomology sheaves of the direct image to the direct image over each stratum.

The following picture illustrates \(\widetilde\cL\) on the left and \(\pi_{s\ast}\widetilde\cL\) on the right.
The blue and red symbols indicate possible nonzero cohomology sheaves when restricted to orbits.
Note that \(\widetilde\cL\) has zero odd cohomology sheaves by Remark~\ref{remark: purity}.
\[\includegraphics[width=.4\textwidth]{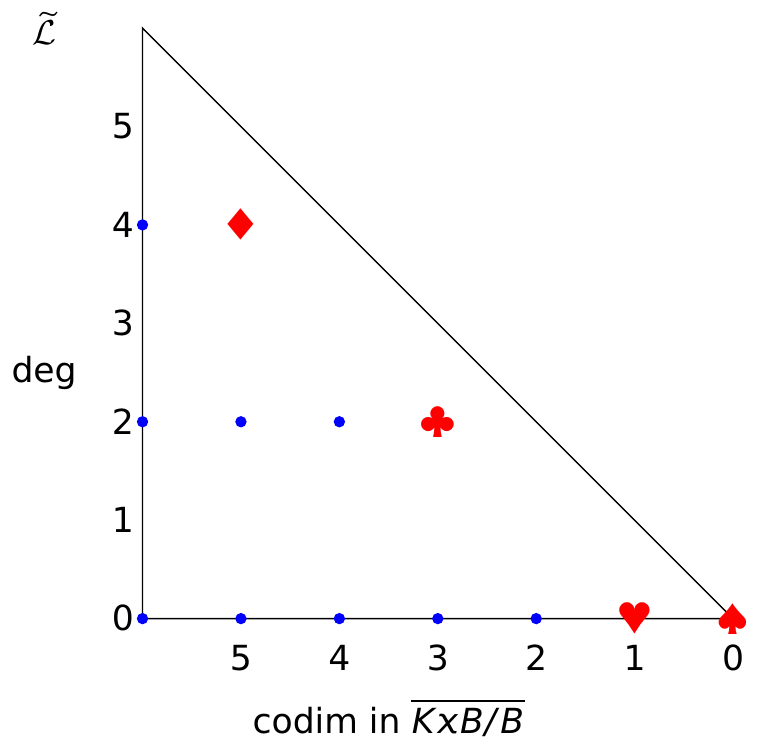}
\includegraphics[width=.4\textwidth]{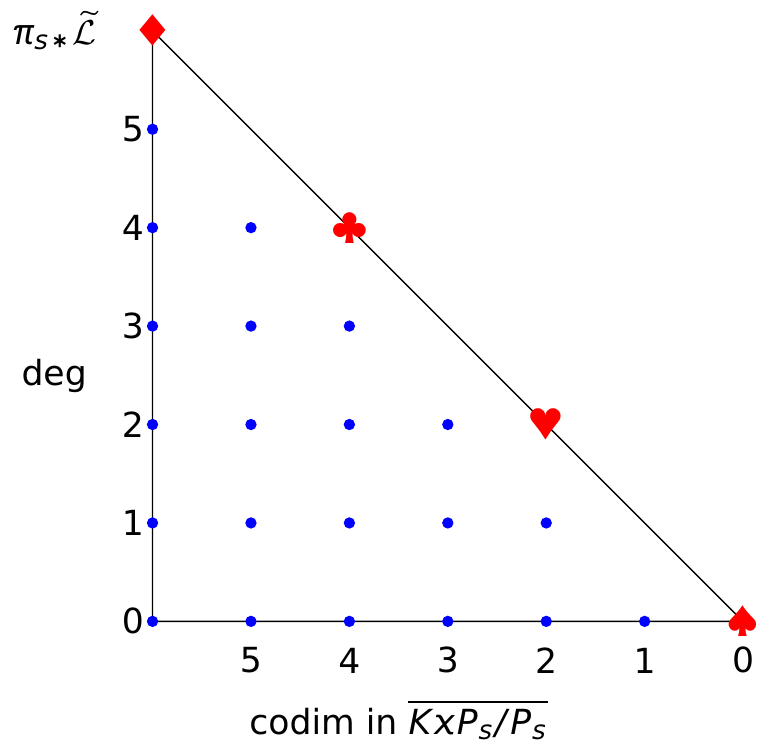}\]
The origin maps to the origin under \(\pi_{s\ast}\) since we are assuming \(\pi_s'\) \eqref{equation: cartesian strata} is finite over the open orbit; i.e., the left spade is always mapped to the right spade.
The other red symbols need not map as depicted, but their locations on the right indicate an upper bound (with respect to the diagonal) for nonzero cohomology sheaves.
The red symbols on the right represent all coordinates on the diagonal of \(\pi_{s\ast}\widetilde\cL\) with possible nonzero cohomology sheaves.
This can occur when the coordinate for \(\widetilde\cL\) is distance \(\sqrt2\) from the diagonal and the map \(\pi_s'\) \eqref{equation: cartesian strata} has one dimensional fiber (which contracts the stratum by one dimension).
In particular, every possible nonzero term on the diagonal has even degree.
There are no nonzero terms above the diagonal, so it follows that \(\pi_{s\ast}\IC(\overline\cO,\cL)\) is perverse such that
\begin{equation}\label{equation: decomposition theorem}
\pi_{s\ast}\IC(\overline\cO,\cL)=\bigoplus_{\cL'}\IC(\overline{\cO'},\cL'),
\end{equation}
where the dimension of all \(\cO'\subseteq\overline{KxP_s/P_s}\) appearing in the sum have same parity.
The claim follows by shifting degree of \eqref{equation: decomposition theorem} by \(-\dim\cO\).
\end{proof}
\end{lemma}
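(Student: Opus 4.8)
The statement asserts that $\ast$-evenness---membership in $D^b_K(G/B)^{\ast-even}$ (Definition~\ref{def: parity}), i.e.\ the property that the $\ast$-restriction to every $K$-orbit has cohomology in even degrees only---is preserved when the push-pull functor $\pi_s^\ast\pi_{s\ast}$ is applied to $\widetilde\cL=\IC(\overline\cO,\cL)[-\dim\cO]$, the intermediate extension of $\cL$. Since $\IC$ sheaves on $K$-orbit closures are pure (Remark~\ref{remark: purity}), $\widetilde\cL$ is already $\ast$-even, so the content is that the functor introduces no odd cohomology. The plan is to split into three cases according to how the fibration $\pi_s\colon G/B\to G/P_s$ behaves on the support $\overline\cO$, organized by the Borho--Jantzen--Duflo invariant $\tau(\cL)\subseteq\tau(x)$.

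In the first case, $s\in\tau(\cL)$, so $\cL$---and hence, by \cite[Prop.\ 7.15]{AdamsBarbaschVogan1992}, also $\widetilde\cL$---is pulled back from $G/P_s$: write $\widetilde\cL=\pi_s^\ast\widetilde{\cL'}$. Then $\pi_s^\ast\pi_{s\ast}\widetilde\cL=\pi_s^\ast(\pi_{s\ast}\pi_s^\ast\widetilde{\cL'})$, and Lemma~\ref{lemma: derived pushforward} identifies $\pi_{s\ast}\pi_s^\ast\widetilde{\cL'}$ with $\widetilde{\cL'}\boxtimes H^\ast(\bP^1)$, a direct sum of even shifts of the $\ast$-even complex $\widetilde{\cL'}$; pulling back along the smooth map $\pi_s$ preserves $\ast$-evenness. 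In the second case, $s\in\tau(x)\smallsetminus\tau(\cL)$: restricting to $KxP_s/P_s$ shows $\pi_{s\ast}\widetilde\cL=0$, and there is nothing to check.

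The essential case is when $\pi_s$ restricts to a generically finite map on $\overline\cO$. Here I would prove the stronger statement that $\pi_{s\ast}\IC(\overline\cO,\cL)$ is \emph{perverse}. The tool is proper base change: for $y\in\overline{KxB}$, the pullback of $\pi_{s\ast}\IC(\overline\cO,\cL)$ to $KyP_s/P_s$ agrees with the pushforward of the restriction of $\IC(\overline\cO,\cL)$ to $KyP_s/B\cap\overline{KxB/B}$ along the corresponding restriction $\pi_s'$ of $\pi_s$, so the cohomology sheaves of the direct image can be read off stratum by stratum. Because the fibers of $\pi_s'$ lie inside the lines $L_x^s\cong\bP^1$, this map contracts each stratum by at most one dimension; combining this with the support and cosupport estimates for $\IC(\overline\cO,\cL)$ forces $\pi_{s\ast}\IC(\overline\cO,\cL)$ to obey the perverse (co)support bounds, with every possibly nonzero cohomology sheaf on the diagonal sitting in even degree. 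Perversity then allows the decomposition theorem to be applied with no homological shifts, giving $\pi_{s\ast}\IC(\overline\cO,\cL)=\bigoplus_{\cL'}\IC(\overline{\cO'},\cL')$ with all $\cO'$ of the same dimension parity; shifting back by $-\dim\cO$ produces a direct sum of even shifts of $\IC$ sheaves, which is $\ast$-even, and one last pullback along the smooth map $\pi_s$ finishes the proof.

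I expect the perversity claim in the third case to be the main obstacle: one has to line up carefully the dimension-drop behavior of $\pi_s$ on each stratum against the inequalities defining the perverse $t$-structure, using the one-dimensionality of the relevant fibers to extract exactly the bound that is needed. The other two cases are immediate from Lemma~\ref{lemma: derived pushforward}, and the passage from the decomposition-theorem statement to $\ast$-evenness is just grading bookkeeping.
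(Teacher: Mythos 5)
Your proposal is correct and follows essentially the same route as the paper's proof: the same three-way case split via the $\tau$-invariant, the same use of Lemma~\ref{lemma: derived pushforward} and base change to strata $KyP_s/P_s$, and the same perversity claim for $\pi_{s\ast}\IC(\overline\cO,\cL)$ in the generically finite case, established from the one-dimensional fiber bound together with the evenness of the $\IC$ cohomology sheaves, followed by the shift-free decomposition theorem. The only difference is presentational: the paper encodes the support/cosupport bookkeeping in a picture, while you state the same estimates in words.
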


\begin{definition}
\label{def: clean}
An element \((\cO,\cL)\in\cD\) is \emph{clean} if 
\begin{equation}\label{equation: clean}
\widetilde\cL=k_{\cO!}\cL.
\end{equation}
\end{definition}

\begin{remark}\label{remark: purity}
Every block (q.v.\ Definition \ref{def: block equivalence}) is generated by clean local systems, which are \(\ast\)-even since their cohomology sheaves vanish outside of degree zero.
It follows that the proof of Lemma~\ref{lemma: push pull star even} can also be used to show the classic fact \cite[Thm.\ 1.12(a)]{LV} that \(\widetilde\cL\in D_K^b(G/B)^{\ast-even}\).
\end{remark}


\section{A categorification of the trivial block}
\label{sec: a categorification of the trivial block}

In Section \ref{sec: a geometric categorification}, we provide a geometric categorification of the entire Lusztig--Vogan module $M_{LV}$. By considering the subcategory of $\mc{M}_{LV}^{ss}$ generated by trivial $K$-equivariant local systems on closed orbits, we can refine this categorification to give a categorification of the trivial block of $M_{LV}$. The purpose of considering this refinement is that on the subcategory generated by closed orbits, the hypercohomology functor is fully faithful (Theorem~\ref{thm: fully faithful}), which allows us to use Soergel-type techniques to describe the category algebraically. In this section, we recall the notion of blocks and give a geometric categorification of the trivial block. 


\subsection{Blocks}
\label{sec: Blocks}

There is a natural equivalence relation on the set $\mathscr{D}$  in (\ref{eq: D}) which partitions $M_{LV}$ into {\em blocks}. In this section we introduce a geometric formulation of this equivalence relation which is equivalent to the original algebraic formulation in \cite[Def.\ 9.2.1]{Vogan-green-book}, see \cite{Vogan4}.

Let $G, B, K$, and $X$ be as in Section \ref{sec: the geometry of the Lusztig--Vogan module}. For a parameter $(\mc{O}, \mc{L}) \in \mathscr{D}$, the cohomology sheaves $\mc{H}^i(\bm{IC}(\overline{\mc{O}}, \mc{L}))$ are objects in the abelian category of $K$-equivariant constructible sheaves on $X$. We denote by $[\mc{L}': \mc{H}^i(\bm{IC}(\overline{\mc{O}}, \mc{L})]$ the multiplicity of  $\mc{L}'\in \Loc_K(\mc{O}')$ (considered as a $K$-equivariant constructible sheaf on $X$ by extending by zero) in the composition series of $\mc{H}^i(\bm{IC}(\overline{\mc{O}}, \mc{L}))$.

\begin{definition}
\label{def: block equivalence} {\em Block equivalence} is the equivalence relation on $\mathscr{D}$ generated by 
\[
(\mc{O}, \mc{L}) \sim (\mc{O}', \mc{L}') \text{ if } [\mc{L}': \mc{H}^i(\bm{IC}(\overline{\mc{O}}, \mc{L})] \neq 0
\]
for some $i \in \Z$. Equivalence classes under $\sim$ are called {\em blocks}.
\end{definition}

\begin{remark}
\label{rmk: blocks}
We have formulated block equivalence to be the equivalence relation generated by non-zero KLV polynomials \cite[Thm.\ 1.12]{LV}. There are also equivalent formulations using Ext groups of $(\mf{g},K)$-modules \cite[Def.\ 9.2.1]{Vogan-green-book} and composition series of standard modules \cite[Def.\ 1.14]{Vogan4}.   
\end{remark}

The trivial local system on the open $K$-orbit is the Lusztig--Vogan parameter corresponding to the trivial representation of $G_\R$. Because of this, we refer to this block containing the trivial local system on the open orbit as {\em the block of the trivial representation}, and denote it $\mathscr{D}^0$. We focus our attention on this block for several reasons. One basic reason is that it is natural to study representations with the trivial representation as a composition factor. Another reason is that $\mc{D}^0$ is the block that governs birational resolutions of $K$-orbit closures, see Section \ref{sec: resolutions of singularities}. 

The following standard (but difficult) fact is vital to our arguments. This follows from \cite[Thm.\ 9.2.11]{Vogan-green-book} and the algorithm for computing KLV polynomials in \cite{Vogan3, LV}. A geometric proof can also be found in \cite[Prop.\ 3.6]{BV}, and another proof using Vogan duality is given in \cite[Rmk.\ 3.7]{BV}.

\begin{theorem}
\label{lem: closed orbits generate the block} Let $M_{LV}^0$ be the $\bm{H}$-module generated by the action of the operators $T_s, s \in S$ (Definition~\ref{def: Ts}) on $\{(\mc{Q}, \Q_\mc{Q}) \mid \mc{Q}\text{ closed $K$-orbit}\} \subseteq \mathscr{D}$. Then 
\[
M_{LV}^0 = \bigoplus_{\mc{L} \in \mathscr{D}^0} \Z[q^{\pm 1}] \mc{L}.
\]
\end{theorem}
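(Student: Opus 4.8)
The statement asks us to identify the $\bm{H}$-submodule $M_{LV}^0 \subseteq M_{LV}$ generated by trivial local systems on closed $K$-orbits with the span of the block $\mathscr{D}^0$ of the trivial representation. There are two containments to establish. The plan is to prove $M_{LV}^0 \subseteq \bigoplus_{\mc{L}\in\mathscr{D}^0}\Z[q^{\pm 1}]\mc{L}$ first, then the reverse.

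\textbf{Step 1: $M_{LV}^0$ lies inside the span of $\mathscr{D}^0$.} I would first check that every trivial local system on a closed orbit belongs to $\mathscr{D}^0$. The key geometric input is that the open $K$-orbit $\mc{O}_{\mathrm{open}}$ has closure all of $X$, so every closed orbit $\mc{Q}$ lies in $\overline{\mc{O}_{\mathrm{open}}}$; combined with the fact (a special case of the analysis underlying Remark~\ref{remark: purity} and \cite[Thm.\ 1.12]{LV}) that a closed orbit with trivial local system appears with nonzero multiplicity in some cohomology sheaf of $\bm{IC}(X,\Q)$ restricted near $\mc{Q}$, one gets $(\mc{O}_{\mathrm{open}},\Q)\sim(\mc{Q},\Q_\mc{Q})$, so $(\mc{Q},\Q_\mc{Q})\in\mathscr{D}^0$. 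Then I would show the span of a block is $\bm{H}$-stable. This is the heart of Step 1: by Lemma~\ref{lem: Ts give a Hecke algebra action} it suffices to show each $T_s$ preserves $\bigoplus_{\mc{L}\in\mathscr{D}^0}\Z[q^{\pm1}]\mc{L}$, and for this I would argue that if $(\mc{O},\mc{L})\in\mathscr{D}^0$ then every local system appearing in $\mc{L}\cdot T_s$ (reading off Definition~\ref{def: Ts}) is block-equivalent to $(\mc{O},\mc{L})$. Geometrically, convolution with $\bm{IC}_s$ (Lemma~\ref{lem: push pull is convolution with ICs}) sends $\bm{IC}(\overline{\mc{O}},\mc{L})$ to a sum of $\bm{IC}$ sheaves all of whose parameters are block-equivalent to $(\mc{O},\mc{L})$, because passing between $\pi_s$-fibers relates the relevant intersection cohomology stalks — this is precisely the content of the $\tau$-invariant/wall-crossing analysis in \cite{Vogan3, LV} and is encoded in \cite[Thm.\ 9.2.11]{Vogan-green-book}. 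Since $M_{LV}^0$ is generated as an $\bm{H}$-module by elements of $\mathscr{D}^0$, it is contained in the block span.

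\textbf{Step 2: the span of $\mathscr{D}^0$ lies inside $M_{LV}^0$.} For the reverse inclusion I would use the defining property of block equivalence as the equivalence relation generated by nonvanishing of $[\mc{L}':\mc{H}^i(\bm{IC}(\overline{\mc{O}},\mc{L}))]$. Concretely, since $\mathscr{D}^0$ is connected under $\sim$ and contains $(\mc{Q},\Q_\mc{Q})$ for closed orbits $\mc{Q}$, it suffices to show: if $(\mc{O},\mc{L})\in M_{LV}^0$ (i.e.\ is an $\Z[q^{\pm1}]$-combination of the generators together with their $\bm{H}$-translates, but more usefully: if $\mc{L}$ lies in $M_{LV}^0$) and $(\mc{O},\mc{L})\sim(\mc{O}',\mc{L}')$ via a single elementary step, then $\mc{L}'\in M_{LV}^0$ too. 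This is where the real work is. The cleanest route is the inductive algorithm of \cite{LV}: the KLV polynomials $h_{(\mc{O}',\mc{L}'),(\mc{O},\mc{L})}$ are computed recursively via the $T_s$-action starting from closed orbits, so that $\bm{IC}(\overline{\mc{O}},\mc{L})$ — equivalently, via the character map $m$ of \eqref{equation: m}, the self-dual basis element $\underline{(\mc{O},\mc{L})}$ of $M_{LV}$ — is obtained from trivial local systems on closed orbits by repeatedly applying operators $T_s$ and taking $\Z[q^{\pm1}]$-combinations; hence every parameter appearing with nonzero coefficient in the $\mathscr{D}$-basis expansion of $\underline{(\mc{O},\mc{L})}$ lies in $M_{LV}^0$. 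The nonzero KLV polynomial condition $(\mc{O},\mc{L})\sim(\mc{O}',\mc{L}')$ is exactly the condition that $\mc{L}'$ appears in this expansion for some parameter already in $M_{LV}^0$, so by induction on the partial order all of $\mathscr{D}^0$ is reached. This is \cite[Thm.\ 9.2.11]{Vogan-green-book}.

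\textbf{Main obstacle.} The genuinely hard part is Step 2 — more precisely, establishing that the block-generation process (elementary $\sim$-steps) never escapes the $\bm{H}$-submodule generated by closed orbits. This cannot be done by a purely formal manipulation of Definition~\ref{def: Ts}; it relies on the deep fact, due to Vogan and Lusztig--Vogan, that the Kazhdan--Lusztig--Vogan basis is obtained algorithmically from closed orbits by the Hecke operators, so that block connectivity is witnessed inside the submodule they generate. Accordingly, I would not attempt an independent proof of this but would invoke \cite[Thm.\ 9.2.11]{Vogan-green-book} together with the computational algorithm of \cite{Vogan3, LV}, exactly as indicated before the statement; the geometric reformulation via \cite[Prop.\ 3.6]{BV} provides an alternative. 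The remaining pieces — $\bm{H}$-stability of block spans in Step 1 and the observation that closed orbits lie in $\mathscr{D}^0$ — are comparatively routine given the decomposition theorem and Lemma~\ref{lem: push pull is convolution with ICs}.
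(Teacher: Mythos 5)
Your proposal is correct and matches the paper's treatment: the paper offers no independent proof of this theorem, instead citing \cite[Thm.\ 9.2.11]{Vogan-green-book} together with the KLV algorithm of \cite{Vogan3, LV} (with geometric alternatives in \cite{BV}), which is exactly where you place the genuinely hard content of Step 2. The surrounding observations you supply --- that $(\mc{Q},\Q_\mc{Q})\sim(\mc{O}_{\mathrm{open}},\Q)$ since $\bm{IC}(X,\Q)$ is the shifted constant sheaf, and that block spans are $\bm{H}$-stable --- are routine and consistent with what the paper takes for granted.
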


\begin{example}
\label{example: blocks for SL(2,R)} $($Blocks for $SL_2(\R))$ We continue with the setup in Example~\ref{example: LV module for SL(2,R)}. The $\IC$ sheaf corresponding to the parameter $(\mc{O}, \Q_\mc{O}) \in \mathscr{D}$ is $\IC(X, \Q_\mc{O}) = \Q_X[1]$, which has cohomology sheaves
\[
\mc{H}^i(\IC(X, \Q_{\mc{O}})) = \begin{cases}
\Q_X & i=-1; \\
0 & \text{ else}.
\end{cases} 
\]
 Both $\Q_{\mc{Q}_0}$ and $\Q_{\mc{Q}_\infty}$ (extended by $0$ and considered as $K$-equivariant constructible sheaves on $X$) are subsheaves of $\Q_X$, so 
\[
(\mc{Q}_0, \Q_{\mc{Q}_0}) \sim (\mc{Q}_\infty, \Q_{\mc{Q}_\infty}) \sim (\mc{O}, \Q_\mc{O})
\]
in the equivalence relation of Definition~\ref{def: block equivalence}. As the M\"obius band local system $\mc{L}$ does not extend (as a local system) to $X$, the parameter $(\mc{O}, \mc{L})$ is not in the same block as any other Lusztig--Vogan parameter. Hence $\mathscr{D}$ has two blocks, 
\[
    \mathscr{D}^0 = \{(\mc{Q}_0, \Q_{\mc{Q}_0}), (\mc{Q}_\infty, \Q_{\mc{Q}_\infty}), (\mc{O}, \Q_\mc{O})\}, \text{ and }
    \mathscr{D}^1 = \{(\mc{O}, \mc{L})\}. 
\]
We can see from the formulas (\ref{eq: T_s formulas for SL2}) that the submodule of $M_{LV}$ generated by trivial local systems on closed orbits is spanned as a $\Z[q^{\pm 1}]$-module by $\mathscr{D}^0$. 
\end{example}


\subsection{The category generated by closed orbits}
\label{sec: the category generated by closed orbits}

 Let $\mc{Q}$ be a closed $K$-orbit in $X$ and $i_\mc{Q}: \mc{Q} \rightarrow X$ the inclusion map. Denote by
\begin{equation}
\label{eq: IC_Q}
\bm{IC}_\mc{Q}:=\bm{IC}(\mc{Q}, \Q_\mc{Q}) = i_{\mc{Q}!}\Q_\mc{Q} [\dim \mc{Q}]= i_{\mc{Q}*}\Q_\mc{Q} [\dim \mc{Q}] 
\end{equation}
the $\IC$ sheaf corresponding to the trivial $K$-equivariant local system on $\mc{Q}$. 

\begin{definition}
\label{def: geometric LV category}
Let $\mc{M}_{LV}^0$ be the full subcategory of $D^b_K(X)$ generated by $\{\bm{IC}_\mc{Q}\mid \mc{Q} \text{ closed}\}$ under right convolution by objects in $\mc{H}$, finite direct sums ($\oplus$), direct summands ($\ominus$), and grading shifts ($[1]$). That is, 
\[
\mc{M}_{LV}^0:=\left\langle \bm{IC}_\mc{Q} * \mc{H} \mid \mc{Q} \text{ closed} \right\rangle_{\oplus, \ominus, [1]}.
\]
\end{definition}
By construction, the category $\mc{M}^0_{LV}$ is a right $\mc{H}$-module, hence the split Grothendieck group $[\mc{M}_{LV}^0]_{\oplus}$ has the structure of a right $\bm{H}$-module. 
\begin{theorem}
\label{thm: module structure of Mgeom} 
Let $\mc{M}_{LV}^0$ be as in Definition~\ref{def: geometric LV category} and $M_{LV}^0$ as in Theorem~\ref{lem: closed orbits generate the block}. As right $\bm{H}$-modules,
\[
[\mc{M}_{LV}^0]_{\oplus}\simeq M_{LV}^0 
\]
\end{theorem}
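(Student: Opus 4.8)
The plan is to combine the geometric categorification of the full module $M_{LV}$ (Theorem~\ref{thm: semisimple categorification}) with the fact that closed $K$-orbits generate the trivial block (Theorem~\ref{lem: closed orbits generate the block}), reducing the statement to an identification of the submodule of $[\mc{M}_{LV}^{ss}]_\oplus$ generated by the $\bm{IC}_\mc{Q}$ under the $\bm{H}$-action with $M_{LV}^0$. First I would recall from the proof of Theorem~\ref{thm: semisimple categorification} that the character map $\ch\colon D^b_K(X) \to M_{LV}$ induces an isomorphism $[\mc{M}_{LV}^{ss}]_\oplus \xrightarrow{\sim} M_{LV}$ of right $\bm{H}$-modules sending $[\bm{IC}(\overline{\mc{O}},\mc{L})]$ to a self-dual element of $M_{LV}$ whose leading term is $\mc{L}$ (up to a power of $v$); in particular the classes $\{[\bm{IC}(\overline{\mc{O}},\mc{L})] : (\mc{O},\mc{L}) \in \mathscr{D}\}$ form a $\Z[v^{\pm 1}]$-basis of $[\mc{M}_{LV}^{ss}]_\oplus$ that maps under $\ch$ (extended to $\Z[v^{\pm 1}]$) to a basis of $M_{LV} \otimes_{\Z[q^{\pm 1}]} \Z[v^{\pm 1}]$. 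Under this isomorphism, for a closed orbit $\mc{Q}$, the class $[\bm{IC}_\mc{Q}]$ maps to $\mc{Q} := (\mc{Q}, \Q_\mc{Q})$ itself, since $\bm{IC}_\mc{Q}$ is clean (Definition~\ref{def: clean}) so $\ch(\bm{IC}_\mc{Q}) = \mc{Q}$ exactly.

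Next I would observe that $\mc{M}_{LV}^0$ is by construction the full subcategory $\langle \bm{IC}_\mc{Q} * \mc{H} \mid \mc{Q} \text{ closed}\rangle_{\oplus,\ominus,[1]}$ of $\mc{M}_{LV}^{ss}$ (this last inclusion follows since $\mc{M}_{LV}^{ss}$ is stable under the $\mc{H}$-action and contains every $\bm{IC}_\mc{Q}$, and is Karoubian-closed inside $D^b_K(X)$ because $D^b_K(X)$ is Krull--Schmidt). Hence $[\mc{M}_{LV}^0]_\oplus$ is precisely the $\Z[v^{\pm 1}]$-submodule of $[\mc{M}_{LV}^{ss}]_\oplus$ spanned by the classes of objects appearing as direct summands of iterated convolutions $\bm{IC}_\mc{Q} * \bm{IC}_{s_1} * \cdots * \bm{IC}_{s_n}$. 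Under $\ch$, each such convolution maps to $\mc{Q} \cdot b_{s_1} \cdots b_{s_n}$ (using Lemma~\ref{lem: push pull is convolution with ICs}, the identification $\HH$-free character behavior, and the fact that $[\bm{IC}_{s}[1]] \mapsto b_s$), and the classes of the indecomposable summands map to self-dual elements of $M_{LV} \otimes \Z[v^{\pm 1}]$. Therefore $[\mc{M}_{LV}^0]_\oplus$ maps isomorphically onto the $\Z[v^{\pm 1}]$-span inside $M_{LV}\otimes\Z[v^{\pm 1}]$ of the set of self-dual elements occurring as summands of $\{\mc{Q}\cdot h : \mc{Q} \text{ closed}, h \in \bm{H}\}$, which is exactly the $\bm{H}\otimes\Z[v^{\pm 1}]$-submodule $M^0_{LV}\otimes\Z[v^{\pm 1}]$ by Theorem~\ref{lem: closed orbits generate the block}. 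Descending back to $\Z[q^{\pm 1}]$ (the self-dual basis elements and the parameters $\mc{L}\in\mathscr{D}^0$ are all defined over $\Z[q^{\pm 1}]$, and $[\mc{M}_{LV}^0]_\oplus$ carries a $\Z[q^{\pm 1}]$-structure via $q\cdot[\mc{F}] := [\mc{F}[-2]]$), we obtain $[\mc{M}_{LV}^0]_\oplus \simeq M_{LV}^0$ as right $\bm{H}$-modules.

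The main obstacle is the bookkeeping around the scalar extension to $\Z[q^{\pm 1/2}] = \Z[v^{\pm 1}]$: the category $\mc{M}_{LV}^{ss}$ and its Grothendieck group naturally live over $\Z[v^{\pm 1}]$ (odd shifts are allowed), while $M_{LV}$ and $M_{LV}^0$ are defined over $\Z[q^{\pm 1}]$, so one must check that $[\mc{M}_{LV}^0]_\oplus$ is genuinely a free $\Z[q^{\pm 1}]$-module on $\mathscr{D}^0$ and not merely its $v$-extension. This is handled by noting that every object of $\mc{M}_{LV}^0$ is a summand of an \emph{even} shift of a convolution $\bm{IC}_\mc{Q} * \mc{E}_{\underline{w}}$ — because $\bm{IC}_\mc{Q}$ and each $\mc{E}_{\underline w}$ are $\ast$-even (Lemma~\ref{lemma: push pull star even} and Remark~\ref{remark: purity}), convolution preserves $\ast$-evenness, and self-dual summands of $\ast$-even complexes require no half-integer shift — so the $\Z[q^{\pm 1}]$-structure on $[\mc{M}_{LV}^0]_\oplus$ is well-defined and $\ch$ restricted to $\mc{M}_{LV}^0$ already lands in $M_{LV}$ (not just its $v$-extension), via \eqref{equation: ch is m} and \eqref{equation: shift}. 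Granting this evenness, the isomorphism of $\bm{H}$-modules follows formally from Theorem~\ref{thm: semisimple categorification} and Theorem~\ref{lem: closed orbits generate the block}.
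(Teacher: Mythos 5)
Your proposal is correct and follows the same route as the paper: the paper's proof of this theorem is literally the one-line observation that it follows from Theorem~\ref{lem: closed orbits generate the block} together with Theorem~\ref{thm: semisimple categorification}, which is exactly the combination you spell out (your care with the $\Z[v^{\pm 1}]$ versus $\Z[q^{\pm 1}]$ scalar extension is more detail than the paper records). The only slip is cosmetic: because of the shift by $\dim\mc{Q}$ in \eqref{eq: IC_Q}, one has $\ch(\bm{IC}_{\mc{Q}}) = q^{-\dim\mc{Q}/2}\,(\mc{Q},\Q_{\mc{Q}})$ rather than $(\mc{Q},\Q_{\mc{Q}})$ on the nose, but this power of $v$ does not affect the argument.
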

\begin{proof}
This follows immediately from Theorem~\ref{lem: closed orbits generate the block} and Theorem~\ref{thm: semisimple categorification}. 
\end{proof} 

\begin{example}
\label{example: category generated by closed orbits SL(2,R)} $($The category $\mc{M}_{\mathrm{LV}}^0$ for $SL_2(\R))$ We return to the setup of Examples \ref{example: LV module for SL(2,R)} and \ref{example: Type I}. To compute the action of $\mc{H}$ we use Lemma~\ref{lem: push pull is convolution with ICs}. Because $\mc{Q}_0$ and $\mc{Q}_\infty$ are closed, we have $\IC(X, \Q_{\mc{Q}_0})=i_{0*}\Q_{\mc{Q}_0}$ and $\IC(X, \Q_{\mc{Q}_\infty})= i_{\infty*}\Q_{\mc{Q}_\infty}$. So by \eqref{eq: type I noncompact imaginary}, we have 
\begin{align}
    \label{eq: Hecke action on closed orbit SL2}
    \IC_{\mc{Q}_0} * \IC_s &= \pi_s^* \pi_{s*} \IC_{\mc{Q}_0}[1] \\
    \nonumber &= \Q_X[1] \\
    \nonumber &= \IC(X, \Q_\mc{O}).
\end{align}
Similarly, 
\begin{equation}
    \label{eq: Hecke action on other closed orbit SL}
    \IC_{\mc{Q}_\infty} * \IC_s = \IC(X, \Q_{\mc{O}}).
\end{equation}
Using Lemma~\ref{lemma: derived pushforward}, we compute
\begin{align}
    \label{eq: Hecke action on open orbit SL2}
    \IC(X, \Q_\mc{O}) * \IC_s &= \pi_s^* \pi_{s*}\Q_X[2] \\
    \nonumber &= \pi_s^*(\Q_\mathrm{pt} \oplus \Q_\mathrm{pt}[2]) \\
    \nonumber &= \Q_X \oplus \Q_X[2] \\
    \nonumber &= \IC(X, \Q_\mc{O})[-1] \oplus \IC(X, \Q_{\mc{O}})[1].
\end{align}
We conclude from these computations that
\[
\mc{M}_\mathrm{LV}^0 = \langle \IC_{\mc{Q}_0}, \IC_{\mc{Q}_\infty}, \IC(X, \Q_{\mc{O}})\rangle_{\oplus, [1]}.
\]
\end{example}

\begin{example}
\label{example: Hecke action on the non-trivial block SL2} $($The non-trivial block for $SL_2(\R))$ In the setting of Example~\ref{example: category generated by closed orbits SL(2,R)}, it is also interesting to compute the action of $\mc{H}$ on the $\IC$ sheaf corresponding to the M\"obius band local system $\mc{L}$. First, note that $\mc{L}$ is clean (q.v.\ Definition~\ref{def: clean}), so 
\[
\IC(X, \mc{L}) = j_{!*}\mc{L} = j_! \mc{L} = j_* \mc{L},
\]
where $j: \mc{O} \hookrightarrow X$ is inclusion. Hence by the computation of $H^*(\C^\times; \mc{L})$ in Example~\ref{example: non-parity}, we have
\begin{equation}
\label{eq: push-pull on mobius band}
\IC(X, \mc{L}) * \IC_s = \pi_s^* \pi_{s*}j_!\mc{L}[2]=0.
\end{equation}
\end{example}


\section{An algebraic categorification}
\label{sec: An algebraic categorification}

In this section, we introduce a category of bimodules which plays the role of Soergel bimodules in the $K$-equivariant setting. Before introducing our category, we recall some facts about torus fixed points in $K$-orbits which will be essential in the arguments of Section \ref{sec: the hypercohomology functor}.  


\subsection{Torus fixed points and closed $K$-orbits}
\label{sec: torus fixed points and closed K-orbits}

For the remainder of the paper, we impose an additional connectedness assumption on the group $K$. Let $G$ be as in Section \ref{sec: introduction}, and set $K$ to be the {\em identity component} of the fixed point group $G^\theta$. With this assumption, the group $K$ has the structure of a connected reductive complex algebraic group. Denote by $\mf{g}$ and $\mf{k}$ the corresponding Lie algebras. The involution $\theta$ induces an involution on $\mf{g}$ which we refer to by the same symbol. Let 
\[
\mf{g} = \mf{k} \oplus \mf{p}
\]
be the decomposition of $\mf{g}$ into eigenspaces of $\theta$. Here $\mf{k}$ is the $+1$-eigenspace and $\mf{p}$ is the $-1$-eigenspace. 

\begin{remark}
\label{rem: connectedness assumption}
The assumption that $K$ is connected is necessary for several of the arguments in Section \ref{sec: the hypercohomology functor}. We suspect that the main results of this paper remain true for disconnected $K$, but to prove them one would need to alter the present arguments. We intend to explore this further in future work. 
\end{remark}



Choose a maximal torus $T_K$ in $K$, and let $W_K:= N_K(T_K)/T_K$ be the Weyl group of $(K,T_K)$. Set
\begin{equation}
    \label{eq:T}
    T:= Z_G(T_K). 
\end{equation}
\begin{lemma}
\label{lem: centralizer is a torus}
The group $T$ is the unique $\theta$-stable maximal torus containing $T_K$.
\end{lemma}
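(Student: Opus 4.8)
The plan is to verify two things: that $T = Z_G(T_K)$ is a torus which is $\theta$-stable and contains $T_K$, and then that it is the \emph{unique} such maximal torus. First I would observe that $T_K$ is a torus in $K = (G^\theta)^\circ$, so every element of $T_K$ is fixed by $\theta$; since $\theta$ is an automorphism of $G$, it carries the centralizer $Z_G(T_K)$ to $Z_G(\theta(T_K)) = Z_G(T_K)$, so $T = Z_G(T_K)$ is $\theta$-stable. Clearly $T_K \subseteq T$ because $T_K$ is abelian. The key structural input is that $T_K$ is a maximal torus of the reductive group $K$, hence in particular $T_K$ contains regular elements, and more to the point $Z_K(T_K) = T_K$. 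The main content is to show $T$ is a torus (equivalently, connected) and that it is a \emph{maximal} torus of $G$.

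For the claim that $T$ is a torus, the standard fact I would invoke is that the centralizer in a connected reductive group $G$ of any torus is connected reductive (Steinberg / Borel, \emph{Linear Algebraic Groups}); so $Z_G(T_K)$ is connected reductive. To see it is a torus, it suffices to show it is solvable, or equivalently that it contains no nontrivial unipotent elements in its derived group; the cleanest route is to show $T$ itself equals its own maximal torus. Let $S \supseteq T_K$ be a maximal torus of the connected reductive group $Z_G(T_K)$ containing $T_K$. Then $S$ is also a maximal torus of $G$ (a maximal torus of a subgroup containing a maximal torus... — more carefully, any maximal torus of $Z_G(T_K)$ is a maximal torus of $G$ since $Z_G(S) \supseteq S$ and $Z_G(S) \subseteq Z_G(T_K)$, so $S$ is maximal among tori of $G$ too). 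Now $S$ is a torus containing $T_K$, so $S \subseteq Z_G(T_K) = T$; I want the reverse. Since $S$ is a maximal torus of $G$ containing $T_K$, we have $Z_G(T_K) \supseteq Z_G(S) = S$; but also I claim $Z_G(T_K) \subseteq Z_G(S)$. This last containment is exactly the subtle point and will need the structure of $\theta$: because $\mathfrak{t}_K$ is a maximal torus of $\mathfrak k$ and the $\theta$-decomposition $\mathfrak g = \mathfrak k \oplus \mathfrak p$ is $\mathrm{ad}(T_K)$-stable, one analyzes the weight-zero space $\mathfrak g^{T_K} = \mathfrak{z}_{\mathfrak g}(\mathfrak t_K)$ and shows it is abelian, which gives $Z_G(T_K)^\circ$ a torus; then connectedness from the centralizer theorem upgrades this to $T$ being a torus and $T = S$.

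For uniqueness: suppose $T'$ is another $\theta$-stable maximal torus of $G$ with $T_K \subseteq T'$. Since $T'$ is a maximal torus and $T_K \subseteq T'$, commutativity of $T'$ gives $T' \subseteq Z_G(T_K) = T$; but $T$ and $T'$ are both maximal tori, and $T' \subseteq T$ forces $T' = T$ by dimension count (both have dimension $= \mathrm{rank}\, G$, which follows once we know $T$ is itself a maximal torus). So uniqueness is essentially free once $T$ is shown to be a maximal torus.

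The hard part — and the step I expect to be the main obstacle — is establishing that $T = Z_G(T_K)$ is connected and of full rank, i.e.\ genuinely a maximal torus of $G$ rather than merely a reductive subgroup of larger dimension. This is where the interplay with $\theta$ matters: one uses that $T_K$ is maximal in $K = (G^\theta)^\circ$ (not just any torus in $G$) to rule out the weight-zero space of $\mathrm{ad}(T_K)$ on $\mathfrak g$ being nonabelian. The clean formulation is: $\mathfrak{z}_{\mathfrak g}(\mathfrak t_K) = \mathfrak{z}_{\mathfrak k}(\mathfrak t_K) \oplus \mathfrak{z}_{\mathfrak p}(\mathfrak t_K) = \mathfrak t_K \oplus \mathfrak{z}_{\mathfrak p}(\mathfrak t_K)$, using maximality of $\mathfrak t_K$ in $\mathfrak k$; one then checks $[\mathfrak z_{\mathfrak p}(\mathfrak t_K), \mathfrak z_{\mathfrak p}(\mathfrak t_K)] \subseteq \mathfrak{z}_{\mathfrak k}(\mathfrak t_K) = \mathfrak t_K$, and a short argument (e.g.\ using that a Cartan subalgebra of the symmetric pair restricted to $\mathfrak p$ is abelian, or invoking that $Z_G(T_K)^\circ$ is reductive with $\mathfrak t_K$ central in it and $\theta$ acting, so its semisimple part would have $\theta$-fixed maximal torus strictly larger than $T_K$ unless it is trivial) shows the bracket vanishes. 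I would cite the relevant lemma from the theory of symmetric spaces (e.g.\ Kostant–Rallis, or Vust) for this, rather than reprove it.
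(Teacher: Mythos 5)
Your proposal is correct in substance and lands on the same essential point as the paper: everything reduces to showing that $\mf{z}_{\mf g}(\mf t_K)$ is abelian (equivalently, a Cartan subalgebra of $\mf g$), which is exactly where the maximality of $\mf t_K$ in $\mf k$ must be used. The route differs in the details. The paper first places $T_K$ inside some $\theta$-stable maximal torus $T'$, writes $Z_{\mf g}(\mf t_K)=\mf t'\oplus\bigoplus_{\alpha\text{ real}}\mf g_\alpha$, and then kills the real-root summands by the characterization of maximally compact Cartan subalgebras as those with no real roots (Knapp, Prop.~6.70). You instead work directly with the symmetric decomposition $\mf z_{\mf g}(\mf t_K)=\mf t_K\oplus\mf z_{\mf p}(\mf t_K)$ and reduce to showing $[\mf z_{\mf p}(\mf t_K),\mf z_{\mf p}(\mf t_K)]=0$, which you outsource to the symmetric-space literature; this makes your argument independent of first producing a $\theta$-stable maximal torus containing $T_K$ (a fact the paper simply asserts), at the cost of leaving the one genuinely nontrivial step as a citation. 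That step can in fact be closed in one line: for $X,Y\in\mf z_{\mf p}(\mf t_K)$ and $H\in\mf t_K$, invariance of a nondegenerate form $B$ gives $B([X,Y],H)=B(X,[Y,H])=0$, so $[X,Y]\in\mf t_K$ is $B$-orthogonal to $\mf t_K$ and hence zero. Two minor remarks: your intermediate scaffolding with the auxiliary maximal torus $S$ is redundant once $\mf z_{\mf g}(\mf t_K)$ is known to be abelian and $Z_G(T_K)$ is known to be connected reductive; and your uniqueness argument is fine (and, as you note, does not actually use $\theta$-stability of the competitor torus).
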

\begin{proof}
Clearly $T$ is $\theta$-stable: for $z \in T$ and $t \in T_K$, 
\[
\theta(z)t=\theta(zt)=\theta(tz)=t\theta(z),
\]
so $\theta(z) \in T$. 

As $T_K$ is a $\theta$-fixed maximal torus in $K$, it is contained in some $\theta$-stable maximal torus $T'$ of $G$. Let $\Sigma = \Sigma(\mf{g}, \mf{t}')$ be the root system of $(\mf{g}, \mf{t}':=\Lie T')$, and 
\[
\mf{g}=\mf{t}' \oplus \bigoplus_{\alpha \in \Sigma} \mf{g}_\alpha
\]
the corresponding root space decomposition of $\mf{g}$. The Lie algebra $\mf{t}'$ decomposes into a direct sum $\mf{t}' = \mf{t}_K \oplus \mf{a}$, where $\mf{t}_K = \Lie T_K$ and $\mf{a} = \mf{t}' \cap \mf{p}$. Recall that a root $\alpha \in \Sigma$ is said to be {\em real} if $\alpha |_{\mf{t}_K} = 0$. If $X \in \mf{g}_\alpha$ for $\alpha$ real, then $[t, x]=\alpha(t)x=0$ for all $t \in \mf{t}_K$. Hence we have a decomposition 
\begin{equation}
    \label{eq: centralizer in general}
Z_\mf{g}(\mf{t}_K) = \mf{t}' \oplus \bigoplus_{\alpha \in \Sigma \atop {\alpha \text{ real}}} \mf{g}_\alpha. 
\end{equation}
Because $T_K$ is a maximal torus in $K$, the Cartan subalgebra $\mf{t}'$ is {\em maximally compact}, meaning that the compact dimension (i.e., the dimension of the $+1$ $\theta$-eigenspace) is maximal among $\theta$-stable Cartan subalgebras. The maximally compact Cartan subalgebras of $\mf{g}$ are precisely those with no real roots \cite[Prop.\ 6.70]{Knapp}, so in our case, (\ref{eq: centralizer in general}) becomes
\[
Z_\mf{g}(\mf{t}_K) = \mf{t}'. 
\]
This implies the group-theoretic statement.
\end{proof}

Let $W:=N_G(T)/T$ be the Weyl group of $(G,T)$. The following lemma shows that $W$ contains $W_K$ as a subgroup. 
\begin{lemma}
\label{lem: Weyl group embedding}
There is an embedding $W_K \hookrightarrow W$. 
\end{lemma}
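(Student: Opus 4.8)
The plan is to exhibit an explicit group homomorphism $W_K \to W$ and show it is injective. Recall $T = Z_G(T_K)$, so any element of $N_K(T_K)$ normalizes $T$ as well: if $n \in N_K(T_K)$ and $z \in T = Z_G(T_K)$, then for all $t \in T_K$ we have $n z n^{-1}$ commutes with $n t n^{-1}$, and since $n$ normalizes $T_K$ this means $nzn^{-1} \in Z_G(T_K) = T$. Hence $N_K(T_K) \subseteq N_G(T)$, and this inclusion descends to a map on Weyl groups because $N_K(T_K) \cap T = T_K$ (an element of $K$ centralizing $T_K$ that also lies in $T$ centralizes $T_K$, so it lies in $Z_K(T_K) = T_K$, using that $T_K$ is a maximal torus of $K$). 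This gives a well-defined homomorphism $\varphi \colon W_K = N_K(T_K)/T_K \to N_G(T)/T = W$.

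Next I would check injectivity. Suppose $\bar n \in W_K$ maps to the identity in $W$, i.e. $n \in T$ for some representative $n \in N_K(T_K)$. Then $n \in K \cap T = Z_K(T_K) = T_K$ since $T = Z_G(T_K)$ and $T_K$ is maximal in $K$; thus $\bar n$ is trivial in $W_K$. This shows $\varphi$ is injective, completing the proof.

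I expect the proof to be short and essentially formal; the only point requiring care is the identity $K \cap T = T_K$, which follows from $T = Z_G(T_K)$ (so $K \cap T = Z_K(T_K)$) together with the fact that a maximal torus of a connected reductive group is its own centralizer. No serious obstacle is anticipated here — this lemma is preparatory, and the substantive content about $W$ versus $W_K$ lies in later comparisons (e.g. relating $W^\theta$ to closed $K$-orbits), not in the embedding itself.

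\begin{proof}
Since $T = Z_G(T_K)$, any $n \in N_K(T_K)$ normalizes $T$: for $z \in T$ and $t \in T_K$, the element $n t n^{-1}$ lies in $T_K$, and $n z n^{-1}$ commutes with it, so $n z n^{-1}$ commutes with all of $T_K$, i.e.\ $n z n^{-1} \in Z_G(T_K) = T$. Thus $N_K(T_K) \subseteq N_G(T)$. Moreover $N_K(T_K) \cap T = Z_K(T_K) = T_K$, where the last equality holds because $T_K$ is a maximal torus of the connected reductive group $K$ and hence equal to its own centralizer. Therefore the inclusion $N_K(T_K) \hookrightarrow N_G(T)$ induces a homomorphism
\[
\varphi \colon W_K = N_K(T_K)/T_K \longrightarrow N_G(T)/T = W.
\]
If $\varphi(\bar n) = 1$ for some representative $n \in N_K(T_K)$, then $n \in T$, so $n \in K \cap T = Z_K(T_K) = T_K$, whence $\bar n = 1$ in $W_K$. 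Thus $\varphi$ is injective.
\end{proof}
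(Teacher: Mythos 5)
Your proof is correct and follows essentially the same route as the paper: both produce the map from the inclusion of $N_K(T_K)$ into $N_G(T)$ and then check the kernel. Your version is in fact slightly tighter — you derive $N_K(T_K)\subseteq N_G(T)$ directly from $T=Z_G(T_K)$ (the paper instead invokes the uniqueness of the $\theta$-stable maximal torus containing $T_K$ and also proves the reverse inclusion $N_K(T)\subseteq N_K(T_K)$, which is not needed), and your injectivity step via $N_K(T_K)\cap T=Z_K(T_K)=T_K$ states the precise condition, using the connectedness of $K$ that is in force in this section.
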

\begin{proof}
To begin, we claim that 
\begin{equation}
    \label{eq: normalizers agree}
    N_K(T) = N_K(T_K).
\end{equation}
Indeed, let $k \in N_K(T)$ and $t \in T_K$. Then there exists $t' \in T$ such that $kt=t'k$. As both elements $kt$ and $k$ are fixed by $\theta$, we have
\[
t'k=\theta(t'k) = \theta(t')k,
\]
so $t'$ is also fixed by $\theta$. Hence $N_K(T) \subseteq N_K(T_K)$. 

Conversely, consider $kTk^{-1}$ for $k \in N_K(T_K)$. This is a $\theta$-stable maximal torus in $G$ containing $T_K$, so by Lemma~\ref{lem: centralizer is a torus}, $kTk^{-1}=T$. Hence $N_K(T_K) \subseteq N_K(T)$. 

Hence there is a natural injection 
\[
N_K(T_K)/T_K \xhookrightarrow{i} N_G(T)/T_K.
\]
By composing $i$ with the quotient map
\[
N_G(T)/T_K \xrightarrow{q} N_G(T)/T,
\]
we obtain a map 
\begin{equation}
    \label{eq: map between Weyl groups}
    W_K=N_K(T_K)/T_K \xrightarrow{i \circ q} N_G(T)/T=W.
\end{equation}
As $\ker q = T/T_K$ is not contained in the $\im i$, the map (\ref{eq: map between Weyl groups}) is injective.  
\end{proof}

Because the torus $T$ is $\theta$-stable, $\theta$ induces an involution on $W$. Let $W^\theta$ be the group of elements fixed by this involution. After identifying $W_K$ with its image under the injection in Lemma~\ref{lem: Weyl group embedding}, we have  
\begin{equation}
    \label{eq: all the W's}
    W_K \subseteq W^\theta \subseteq W.
\end{equation}

Choose a Borel subgroup $B \subseteq G$ containing $T$, and set $B_K= B \cap K$. The group $B_K$ is a Borel subgroup in $K$ containing $T_K$. Note that $B$ is $\theta$-stable by construction. Let $S \subseteq W$ be the set of simple reflections determined by the choice of $B$ and $\Sigma^+ \subseteq \Sigma$ be the corresponding set of positive roots. 

It is well-known that the $T$-fixed points in the flag variety $X=G/B$ are in bijection with $W$, and each Bruhat cell contains exactly one $T$-fixed point. Specifically, we have \begin{equation}
    \label{eq: T-fixed points}
    X^T=\{x_w:=wB/B\}_{w \in W}, \text{ and } x_w \in X_w = B \cdot wB/B. 
\end{equation}

\begin{lemma}
\label{lem: agreement of fixed points}
All $T_K$-fixed points in $X$ are fixed by $T$. In particular, $X^{T_K}=X^T$. 
\end{lemma}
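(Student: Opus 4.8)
The plan is to show that the fixed-point set $X^{T_K}$ is already contained in $X^T$; the reverse inclusion $X^T \subseteq X^{T_K}$ is automatic since $T_K \subseteq T = Z_G(T_K)$. So the content is in proving $X^{T_K} \subseteq X^T$. First I would reduce to understanding the $T_K$-fixed points in terms of the root-space decomposition. A point $x = gB/B \in X$ is fixed by $T_K$ if and only if $T_K \subseteq gBg^{-1}$, equivalently the Lie algebra $\mathfrak{t}_K$ is contained in $\mathfrak{b}_g := \operatorname{Ad}(g)\mathfrak{b}$. Thus I want to show: any Borel subalgebra containing $\mathfrak{t}_K$ automatically contains $\mathfrak{t} = Z_{\mathfrak{g}}(\mathfrak{t}_K) = \operatorname{Lie}(T)$, which gives $T \subseteq gBg^{-1}$ (passing from Lie algebras back to groups, using that $T$ is connected and a Borel is its own normalizer and connected) hence $x \in X^T$.

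Next I would carry out the Lie-algebra step. Suppose $\mathfrak{b}_g \supseteq \mathfrak{t}_K$. The centralizer $Z_{\mathfrak{g}}(\mathfrak{t}_K)$ is a reductive subalgebra (it is the Levi of a parabolic, being the centralizer of a torus), and by the computation in the proof of Lemma~\ref{lem: centralizer is a torus} — specifically the fact that $\mathfrak{t}' = Z_{\mathfrak{g}}(\mathfrak{t}_K)$ when $\mathfrak{t}'$ is a maximally compact $\theta$-stable Cartan containing $\mathfrak{t}_K$ — we have $Z_{\mathfrak{g}}(\mathfrak{t}_K) = \mathfrak{t}$, a Cartan subalgebra. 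Now $\mathfrak{t}$ and $\mathfrak{b}_g$ are both reductive/solvable subalgebras normalized by $\mathfrak{t}_K$; since $\mathfrak{t}_K \subseteq \mathfrak{b}_g$ and $\mathfrak{b}_g$ is solvable, $\mathfrak{t}_K$ is contained in some maximal torus of $\mathfrak{b}_g$, i.e.\ there is a Cartan subalgebra $\mathfrak{h} \subseteq \mathfrak{b}_g$ with $\mathfrak{t}_K \subseteq \mathfrak{h}$. But any Cartan subalgebra of $\mathfrak{g}$ containing $\mathfrak{t}_K$ must equal $Z_{\mathfrak{g}}(\mathfrak{t}_K) = \mathfrak{t}$ (a Cartan is contained in its own centralizer, and here the centralizer is already a Cartan, so they coincide). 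Hence $\mathfrak{t} = \mathfrak{h} \subseteq \mathfrak{b}_g$, as desired.

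Finally I would translate back to groups: from $\mathfrak{t} \subseteq \mathfrak{b}_g = \operatorname{Lie}(gBg^{-1})$ and the fact that $T$ is connected, we get $T \subseteq gBg^{-1}$, so $gB/B$ is a $T$-fixed point. Combined with the trivial inclusion $X^T \subseteq X^{T_K}$, this yields $X^{T_K} = X^T$. I expect the main (minor) obstacle to be handling the reductive-vs-solvable subtlety cleanly — i.e.\ arguing carefully that a Cartan subalgebra of $\mathfrak{g}$ containing $\mathfrak{t}_K$ is forced to be exactly $\mathfrak{t}$ — but this is immediate once one invokes the identity $Z_{\mathfrak{g}}(\mathfrak{t}_K) = \mathfrak{t}$ from Lemma~\ref{lem: centralizer is a torus}, since any Cartan $\mathfrak{h} \supseteq \mathfrak{t}_K$ satisfies $\mathfrak{h} \subseteq Z_{\mathfrak{g}}(\mathfrak{t}_K) = \mathfrak{t}$ and $\dim \mathfrak{h} = \dim \mathfrak{t}$ forces equality. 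Alternatively, one can phrase the whole argument on the group level: a $T_K$-fixed point lies in a unique closed $T_K$-orbit structure and $T = Z_G(T_K)$ acts, and invoke Lemma~\ref{lem: centralizer is a torus} directly, but the Lie-algebra route above is the most transparent.
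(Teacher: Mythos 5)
Your proof is correct, but it takes a genuinely different route from the paper's. The paper covers $X$ by the $T$-stable affine charts $U_w \simeq \bigoplus_{\alpha \in -w(\Sigma^+)} \C_\alpha$ centered at the $T$-fixed points, and observes that $x_w$ remains the unique $T_K$-fixed point of $U_w$ because no root of $(\mf{g},\mf{t})$ restricts to zero on $\mf{t}_K$ (absence of real roots, i.e.\ maximal compactness of $\mf{t}$, quoted from the proof of Lemma~\ref{lem: centralizer is a torus}). You instead characterize $T_K$-fixed points as Borel subgroups containing $T_K$, place $T_K$ inside a maximal torus of that (connected solvable) Borel, and then use $Z_G(T_K)=T$ — the statement of Lemma~\ref{lem: centralizer is a torus} — to force that maximal torus to be $T$, whence $T\subseteq gBg^{-1}$. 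Both arguments hinge on exactly the same input (the centralizer of $\mf{t}_K$ being the Cartan $\mf{t}$, equivalently no real roots), so neither is logically stronger; your version is more intrinsic and avoids choosing coordinates, while the paper's version has the side benefit of explicitly recording the (nonzero) $T_K$-weights on the tangent spaces at the fixed points, which is the kind of data used in the localization arguments later in the paper. One stylistic suggestion: the passage through "maximal tori of the solvable Lie algebra $\mf{b}_g$" is cleaner if run entirely at the group level (a torus in a connected solvable group lies in a maximal torus, and maximal tori of a Borel are maximal tori of $G$), after which $H\supseteq T_K$ abelian gives $H\subseteq Z_G(T_K)=T$ and hence $H=T$ by maximality, with no need to pass back and forth between groups and Lie algebras.
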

\begin{proof}
Each $T$-fixed point $x_w$ lies in an affine chart $U_w \simeq \mathbb{A}^{|\Sigma^+|}$ on which $T$ acts linearly by roots. In particular, we can identify
\[
x_w=0 \in U_w = \bigoplus_{\alpha \in -w(\Sigma^+)} \C_{\alpha},
\]
where $T$ acts on $\C_\alpha$ by $t \cdot z = \exp \alpha(t)z$ for $t \in T$, $z \in \C_\alpha$. The charts $\{U_w\}_{w \in W}$ cover $X$. Clearly $x_w$ is the only $T$-fixed point in $U_w$.

By restriction we obtain a linear action of $T_K$ on $U_w$, with weights \[
\{\widetilde{\alpha}:= \alpha|_{\mf{t}_K}\}_{\alpha \in -w(\Sigma^+)}.
\]
The point $x_w \in U_w$ is fixed by $T_K$. Moreover, $x_w$ is the only $T_K$-fixed point in $U_w$. To see this, recall from the proof of Lemma~\ref{lem: centralizer is a torus} that $\mf{t}=\Lie T$ is maximally compact, so there are no real roots in $\Sigma$ \cite[Prop.\ 6.70]{Knapp}. Hence the restriction map $\mf{t}^* \rightarrow \mf{t}_K^*$ is non-zero on all roots $\alpha \in \Sigma$, and thus the weights of the $T_K$ action on $U_w$ are non-zero for all $w \in W$. 

As the charts $U_w$ cover $X$ and each chart contains a single $T_K$-fixed point which is also fixed by $T$, the lemma follows. 
\end{proof}

\begin{lemma}
\label{lem: fixed points in closed orbits} Each closed $K$-orbit $\mc{Q}$ contains $|W_K|$ $T$-fixed points. Moreover, if $x_w \in \mc{Q}$, then $x_{vw} \in \mc{Q}$ for all $v \in W_K$, and all representatives of the coset $W_Kw \in W_K \backslash W$ are in $W^\theta$.  
\end{lemma}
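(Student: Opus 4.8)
The plan is to analyze the action of $T$ on a closed $K$-orbit $\mc{Q}$ together with the action of the little Weyl group $W_K$, and to use Lemma~\ref{lem: agreement of fixed points} to reduce everything to $T$-fixed points. First I would recall that $\mc{Q}$, being closed and $K$-stable, is a complete $K$-homogeneous variety, hence of the form $K/B_K'$ for some Borel subgroup $B_K'$ of $K$; since $T_K \subseteq K$ is a maximal torus, the $T_K$-fixed points of $\mc{Q}$ are a torsor under $W_K$, so there are exactly $|W_K|$ of them. By Lemma~\ref{lem: agreement of fixed points}, $X^{T_K}=X^T$, so $\mc{Q}^{T} = \mc{Q}^{T_K}$ has exactly $|W_K|$ elements; combined with \eqref{eq: T-fixed points}, each such fixed point is $x_w$ for a unique $w\in W$, giving the first assertion.

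Next, to see that if $x_w\in\mc{Q}$ then $x_{vw}\in\mc{Q}$ for all $v\in W_K$: the set $\mc{Q}^{T_K}$ is a single $N_K(T_K)$-orbit (since $\mc{Q}$ is $K$-homogeneous and the stabilizer of a point is a Borel of $K$ meeting $T_K$, so the $N_K(T_K)$-action on fixed points is transitive and factors through $W_K$). Thus $\mc{Q}^{T_K} = \{\,n\cdot x_w \mid n\in N_K(T_K)\,\}$. I would then show $n\cdot x_w = x_{\bar n w}$ where $\bar n\in W_K$ is the class of $n$: writing $n\cdot x_w = nwB/B$ and using that $n\in N_K(T_K)\subseteq N_G(T)$ by \eqref{eq: normalizers agree} from the proof of Lemma~\ref{lem: Weyl group embedding}, the point $nwB/B$ is the $T$-fixed point indexed by the Weyl-group element $\bar n w$ (where we identify $\bar n\in W_K$ with its image in $W$ via Lemma~\ref{lem: Weyl group embedding}), because the Bruhat cell containing $nwB/B$ is $B(\bar n w)B/B$. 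Hence $\mc{Q}^T = \{x_{vw}\mid v\in W_K\}$, which is exactly the statement $x_{vw}\in\mc{Q}$ for all $v\in W_K$ and also reconfirms the count $|W_K|$.

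Finally, for the claim that every representative of $W_K w\in W_K\backslash W$ lies in $W^\theta$: since $\mc{Q}$ is $\theta$-stable (it is $K$-stable and $K$ is $\theta$-fixed, or more directly $\theta$ permutes closed $K$-orbits and fixes each pointwise modulo the $K$-action, but cleanest is that $B$ is $\theta$-stable so $\theta$ acts on $X$ and preserves the $T$-fixed locus $X^T$, inducing the involution on $W$), the involution $\theta$ on $X$ permutes $\mc{Q}^T$. On the index set this permutation is $w'\mapsto \theta(w')$, so $\theta$ preserves the subset $W_K w\subseteq W$. But $W_K$ is $\theta$-fixed pointwise: each element of $W_K$ is represented by an element of $N_K(T_K)\subseteq G^\theta$, hence is $\theta$-invariant in $W$. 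Therefore $\theta$ acts on the coset $W_K w$ as left translation composed with the identity on $W_K$, and since $\theta(W_Kw)=W_Kw$ we get $\theta(w)\in W_K w$, i.e. $\theta(w) = v_0 w$ for some $v_0\in W_K$; applying $\theta$ again and using $\theta(v_0)=v_0$ and $\theta^2=\id$ gives $w = v_0^2 w$, so $v_0^2 = e$. It remains to upgrade this to $\theta(w)=w$, i.e. $v_0 = e$. The key obstacle here is ruling out a nontrivial involution $v_0$: I expect to argue that the fixed point $x_w$ can be chosen canonically inside $\mc{Q}$ — namely, among the $|W_K|$ points $x_{vw}$, one is distinguished by a length/dimension condition (e.g. $x_w$ lying in the smallest Bruhat cell meeting $\mc{Q}$, or equivalently $w$ being the minimal-length element of $W_K w$), and since $\theta$ is an automorphism of $X$ preserving $B$ and hence preserving Bruhat order and the dimension stratification, $\theta$ must fix this distinguished point, forcing $\theta(w)=w$ for that representative; then every element of $W_K w = W_K\cdot w$ is a product of a $\theta$-fixed element of $W_K$ with the $\theta$-fixed element $w$, hence is $\theta$-fixed, i.e. lies in $W^\theta$. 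This last reduction — identifying the correct $\theta$-stable distinguished representative — is the main point requiring care, and I would pin it down using that $B$ is $\theta$-stable so that $\theta$ preserves each Schubert cell decomposition and the minimal coset representative is intrinsic.
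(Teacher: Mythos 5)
Your treatment of the first two assertions is essentially the paper's: a closed orbit $\mc{Q}$ is a complete $K$-homogeneous space whose point stabilizers are Borel subgroups of $K$, hence a copy of the flag variety of $K$ with exactly $|W_K|$ torus-fixed points permuted simply transitively by $N_K(T_K)$, and Lemma~\ref{lem: agreement of fixed points} together with \eqref{eq: normalizers agree} identifies this action with the restriction to $W_K$ of the $W$-action $x_w\mapsto x_{vw}$ on $X^T$. That part is fine.

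The third assertion is where there is a genuine gap. Your argument yields at best $\theta(w)=v_0w$ with $v_0\in W_K$ and $v_0^2=e$, and the proposed upgrade to $v_0=e$ --- singling out the minimal-length representative of $W_Kw$ and arguing that $\theta$ must fix it --- does not go through: outside the equal-rank case $W_K$ need not be a reflection subgroup of $W$, and the paper explicitly remarks in Section~\ref{sec: a category of bimodules} (after Lemma~\ref{lem: standards are parameterized by cosets}) that right $W_K$-cosets need not have a \emph{unique} minimal-length representative; with several minimal-length elements, $\theta$ could a priori permute them even though it preserves length and the Bruhat stratification. In addition, the preliminary claim that $\theta(\mc{Q})=\mc{Q}$ (rather than $\theta$ merely permuting the closed orbits) is asserted but not justified. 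The paper closes both issues at once with one geometric input you do not use: the closed $K$-orbits are exactly those consisting of $\theta$-stable Borel subalgebras \cite[Lem.\ 5.8]{penrose}. Identifying $x_w$ with $\mf{b}^w=\mf{t}\oplus\bigoplus_{\alpha\in\Sigma^+}\mf{g}_{w\alpha}$ and using $\theta(\Sigma^+)=\Sigma^+$ (which holds because the maximally compact Cartan $\mf{t}$ has no real roots), one computes $\theta(\mf{b}^w)=\mf{b}^{\theta(w)}$, so $\theta$-stability of $\mf{b}^w$ is literally equivalent to $\theta(w)=w$. This shows directly that every fixed point of a closed orbit has $\theta$-fixed index (indeed each such point is fixed by $\theta$ acting on $X$, which is much stronger than stability of the orbit), with no need for a distinguished coset representative. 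I recommend replacing your final step with this argument.
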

\begin{proof}
Fix a closed $K$-orbit $\mc{Q}$. For any $q \in \mc{Q}$, the stabilizer $K_q:= \stab_K(q)$ is solvable (as it is the intersection of the solvable group $G_q$ with $K$), parabolic (as $\mc{Q} \simeq K/K_q$ is a closed subvariety of a projective variety), and connected (as it is a parabolic subgroup of a connected group), hence it is a Borel subgroup of $K$. This implies that $\mc{Q}$ is isomorphic to the flag variety of $K$, so it contains $|W_K|$ $T_K$-fixed points. By Lemma~\ref{lem: agreement of fixed points}, all of these must also be fixed by $T$.

There is a natural action of $W$ on $X^T$ coming from the conjugation action of $N_G(T)$. For $v \in W$, this action sends $x_w \mapsto x_{vw}$. Similarly, as $\mc{Q}$ is isomorphic to the flag variety of $K$, there is an action of $W_K$ on $\mc{Q}^{T_K}=\mc{Q}^T$ coming from the conjugation action of $N_K(T_K)$. Under the identification of $W_K$ with a subgroup of $W$ (Lemma~\ref{lem: Weyl group embedding}), the action of $W_K$ on $\mc{Q}^T$ is just the restriction of the $W$-action to $W_K$. This proves the first half of the second statement. 

To see that the $T$-fixed points in closed orbits correspond to elements in $W^\theta$, we shift perspectives and identify $X$ with the variety of Borel subalgebras of $\mf{g}$. Under this identification, the $T$-fixed point $x_w$ corresponds to the Borel subalgebra 
\[
\mf{b}^w := \mf{t} \oplus \bigoplus_{\alpha \in \Sigma^+} \mf{g}_{w \alpha}. 
\]
The involution $\theta: G\rightarrow G$ induces involutions on $\mf{g}$, $W$, and $\Sigma$. In what follows, we use the same symbol $\theta$ for each of these involutions. 

The closed $K$-orbits are precisely those consisting of $\theta$-stable Borel subalgebras \cite[Lem.\ 5.8]{penrose}, so the Weyl group elements $w \in W$ corresponding to fixed points in closed orbits are exactly those $w$ such that $\mf{b}^w$ is $\theta$-stable. We have 
\begin{align*}
\theta(\mf{b}^w) &= \theta(\mf{t}) \oplus \bigoplus_{\alpha \in \Sigma^+} \theta(\mf{g}_{w\alpha}) \\
&= \mf{t} \oplus  \bigoplus_{\alpha \in \Sigma^+} \mf{g}_{\theta(w) \theta(\alpha)} \\
&= \mf{t} \oplus  \bigoplus_{\alpha \in \Sigma^+} \mf{g}_{\theta(w) \alpha}.\\
\end{align*}
Here the third equality follows from the fact that $\theta(\Sigma^+)=\Sigma^+$ because $\Sigma$ has no real roots \cite[Ch. VI \S 8]{Knapp}. From this we see that $\theta(\mf{b}^w) = \mf{b}^w$ if and only if $\theta(w)=w$, so all $T$-fixed points in closed orbits correspond to $w \in W^\theta$. This completes the proof of the second statement. 
\end{proof}

Lemma~\ref{lem: fixed points in closed orbits} gives a useful combinatorial description of the closed $K$-orbits in $X$. 
\begin{corollary}
\label{cor: closed Q orbit bijection}
The closed $K$-orbits are in bijection with $W_K \backslash W^\theta$. 
\end{corollary}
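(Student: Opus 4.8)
The plan is to package the content of Lemma~\ref{lem: fixed points in closed orbits} into a clean bijection. First I would recall that the set of closed $K$-orbits on $X$ is finite and nonempty (e.g.\ there is always a closed orbit in the projective variety $X$), and that by Lemma~\ref{lem: fixed points in closed orbits} every closed $K$-orbit $\mc{Q}$ contains a nonempty set $\mc{Q}^T = \mc{Q}^{T_K}$ of $T$-fixed points, all of which correspond to elements of $W^\theta$ under the identification $X^T = \{x_w\}_{w \in W}$ in \eqref{eq: T-fixed points}. Moreover that lemma tells us $\mc{Q}^T$ is exactly a single left coset $W_K w$ for any $w$ with $x_w \in \mc{Q}$: indeed $x_w \in \mc{Q} \implies x_{vw} \in \mc{Q}$ for all $v \in W_K$ gives one inclusion, and a cardinality count ($|\mc{Q}^T| = |W_K|$, again from the lemma) gives equality $\mc{Q}^T = \{x_{vw} \mid v \in W_K\}$.

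So I would define the map $\Phi$ sending a closed $K$-orbit $\mc{Q}$ to the coset $W_K w \in W_K \backslash W^\theta$, where $w$ is chosen so that $x_w \in \mc{Q}$; the previous paragraph shows this is well-defined and independent of the choice of $w$, and lands in $W_K\backslash W^\theta$ because all representatives of the coset lie in $W^\theta$. Injectivity is immediate: if $\Phi(\mc{Q}) = \Phi(\mc{Q}')$, then $\mc{Q}$ and $\mc{Q}'$ share a $T$-fixed point $x_w$, and since a $K$-orbit is determined by any one of its points, $\mc{Q} = \mc{Q}'$. For surjectivity, given a coset $W_K w$ with $w \in W^\theta$, consider the point $x_w = wB/B$; since $w \in W^\theta$ the Borel subalgebra $\mf{b}^w$ is $\theta$-stable (the computation in the proof of Lemma~\ref{lem: fixed points in closed orbits}), and by \cite[Lem.\ 5.8]{penrose} the $K$-orbit through a $\theta$-stable Borel subalgebra is closed. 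Thus $\mc{Q} := K \cdot x_w$ is a closed $K$-orbit with $\Phi(\mc{Q}) = W_K w$.

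There is essentially no hard step here — everything has been done in Lemma~\ref{lem: fixed points in closed orbits}; the corollary is just the bookkeeping that reorganizes ``each closed orbit's $T$-fixed locus is a $W_K$-coset in $W^\theta$, and distinct orbits give distinct cosets, and every such coset arises'' into the statement of a bijection. If I wanted to be maximally careful, the one point worth a sentence is surjectivity: one must invoke the characterization of closed $K$-orbits as orbits of $\theta$-stable Borels (\cite[Lem.\ 5.8]{penrose}) to see that every coset $W_Kw$ with $w \in W^\theta$ is actually hit, rather than only knowing the image lands in $W_K \backslash W^\theta$. Since both \cite[Lem.\ 5.8]{penrose} and the relevant $\theta$-stability computation already appear in the proof of Lemma~\ref{lem: fixed points in closed orbits}, this is routine.
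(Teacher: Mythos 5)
Your proposal is correct and is essentially the argument the paper intends: the corollary is just the repackaging of Lemma~\ref{lem: fixed points in closed orbits}, with surjectivity supplied by the observation (already in that lemma's proof) that $\mf{b}^w$ is $\theta$-stable for $w\in W^\theta$ and that the $K$-orbit of a $\theta$-stable Borel is closed by \cite[Lem.\ 5.8]{penrose}. You correctly identify surjectivity as the one step that needs an extra sentence beyond the literal statement of the lemma; the paper itself relies on exactly this point later (in the proof of Lemma~\ref{lem: standards are equiv cohomology}).
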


\begin{example}
\label{example: torus fixed points for SL(2,R)} $($Torus fixed points for $SL_2(\R))$ We return to our running example (Examples~\ref{example: LV module for SL(2,R)}, \ref{example: Type I}, \ref{example: non-parity}, \ref{example: blocks for SL(2,R)}, \ref{example: category generated by closed orbits SL(2,R)}, \ref{example: Hecke action on the non-trivial block SL2}). In this case, $T_K=T=K$, so $T$-fixed points are the closed orbits $\mc{Q}_0$ and $\mc{Q}_\infty$. We have $W_K = \{1\} \subseteq W=S_2 = W^\theta$. Clearly $W_K \backslash W^\theta$ is in bijection with the set of closed orbits. 
\end{example}


\subsection{A category of bimodules}
\label{sec: a category of bimodules}

We now describe the category which provides an algebraic categorification of the trivial block of the Lusztig--Vogan module. We begin by introducing two rings.

As above, denote by $X(T)_\Q:= X(T) \otimes_\Z \Q$ and $X(T_K)_\Q: = X(T_K) \otimes_\Z \Q$ the $\Q$-span of the character lattices of $T$ and $T_K$. Let $P:=S(X(T_K)_\Q)$ be the symmetric algebra of $X(T_K)_\Q$, graded so that $X(T_K)_\Q$ has degree $2$, and denote by 
\begin{equation}
    \label{eq: P^K}
    P^K:=P^{W_K}=\{ p \in P \mid wp=p \text{ for all } w \in W_K\}.
\end{equation}
As in Section \ref{sec: Soergel bimodules}, set $R=S(X(T)_\Q)$. Let
\begin{equation}
    \label{eq: phi}
    \phi: R \rightarrow P
\end{equation}
be the natural algebra homomorphism extending the restriction map 
\[
X(T) \rightarrow X(T_K).
\] 


Let $(P^K, R)\mathrm{-gbim}$ be the category of finitely generated (as both left $P^K$- and right $R$-modules) graded $(P^K, R)$-bimodules, with morphisms given by graded $(P^K, R)$-bimodule maps which are homogeneous of degree $0$. Denote by $(n):(P^K, R)\mathrm{-gbim}\rightarrow (P^K, R)\mathrm{-gbim}$ the natural shift functor, as defined in Section \ref{sec: Soergel bimodules}. There is a right action of $\SBim$ on $(P^K, R)\mathrm{-gbim}$ given by the tensor product: for $M \in (P^K, R)\mathrm{-gbim}$ and $S \in \SBim$, 
\begin{equation}
    \label{eq: Hecke action} 
    M \otimes_R S \in (P^K, R)\mathrm{-gbim}
\end{equation}

To define our category, we imitate the geometric construction in Section \ref{sec: the category generated by closed orbits}. First we need a class of generating objects to play the role of trivial local systems on closed orbits. 

\begin{definition}
\label{def: standard modules}
For every $x \in W$, define the corresponding {\em standard bimodule} $P_x\in (P^K,R)$-gbim as follows. As a vector space, $P_x=P$. The left action of $f \in P^K$ is given by left multiplication in $P$: 
\[
f \cdot p := fp \text{ for $p \in P_x$}.
\]
The right action of $g \in R$ is given by right multiplication by $\phi(xg)$, where $\phi$ is the algebra homomorphism (\ref{eq: phi}): 
\[
p \cdot_x g := p\phi(xg) \text{ for $p \in P_x$}.
\]
\end{definition}
\begin{notation}
\label{not: W action}
 We write the action of $W$ on $R$ (or $W_K$ on $P$) with no symbol. It is implied from this notation that an element $x \in W$ acts only on the element immediately following it. If we wish for $x \in W$ to act on a product of elements, we use parentheses. For example, $wrs$ denotes the product in $R$ of the elements $wr$ and $s$, whereas $w(rs)$ denotes the image of the element $rs$ under action by $w$. Note that because the action of $W$ on $R=S(\mf{t}^*)$ (and that of $W_K$ on $P=S(\mf{t}^*_K)$) is obtained by extending the natural reflection action of $W$ on $\mf{t}^*$ (resp.\ $W_K$ on $\mf{t}_K^*$) multiplicatively, we have $w(rs)=wrws$.
\end{notation}
\begin{lemma}
\label{lem: standards are parameterized by cosets}
If $w \in W_K$, 
\[
P_{wx} \simeq P_x
\]
as $(P^K, R)$-bimodules.
\end{lemma}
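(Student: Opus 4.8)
The statement to prove is that for $w \in W_K$, there is an isomorphism $P_{wx} \simeq P_x$ of $(P^K, R)$-bimodules. The plan is to exhibit an explicit isomorphism, namely the $W_K$-action map. Recall that both $P_{wx}$ and $P_x$ are equal to $P$ as vector spaces, with the same left $P^K$-module structure (left multiplication in $P$), but with right $R$-actions twisted by $\phi \circ (wx)$ and $\phi \circ x$ respectively. First I would define the candidate map $\psi: P_x \to P_{wx}$ by $\psi(p) := w(p)$, using the $W_K$-action on $P = S(X(T_K)_\Q)$; since $w$ acts by a graded algebra automorphism of $P$, the map $\psi$ is a graded vector space isomorphism of degree $0$, with inverse $p \mapsto w^{-1}(p)$.

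Next I would check compatibility with the two module structures. For the left action: if $f \in P^K = P^{W_K}$, then $\psi(f \cdot p) = w(fp) = w(f)\,w(p) = f\,w(p) = f \cdot \psi(p)$, where the third equality uses precisely that $f$ is $W_K$-invariant (Notation~\ref{not: W action} gives $w(fp) = w(f)w(p)$). For the right action: for $g \in R$, we have $\psi(p \cdot_x g) = \psi(p\,\phi(xg)) = w\bigl(p\,\phi(xg)\bigr) = w(p)\cdot w\bigl(\phi(xg)\bigr)$. The key point is that $w\bigl(\phi(xg)\bigr) = \phi(wxg)$: indeed $xg \in R$, and the restriction homomorphism $\phi: R \to P$ intertwines the $W_K$-action on $R$ (via $W_K \hookrightarrow W$) with the $W_K$-action on $P$, because $\phi$ is induced by the restriction of characters $X(T) \to X(T_K)$ which is $W_K$-equivariant (as $W_K = N_K(T_K)/T_K$ acts compatibly on $T$ and $T_K$ by conjugation, using $N_K(T) = N_K(T_K)$ from Lemma~\ref{lem: Weyl group embedding}). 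Hence $w(p)\cdot w(\phi(xg)) = w(p)\cdot \phi((wx)g) = w(p) \cdot_{wx} g = \psi(p) \cdot_{wx} g$, as required.

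The only genuinely substantive point — and the one I would be most careful about — is the $W_K$-equivariance of $\phi: R \to P$ with respect to the embedding $W_K \hookrightarrow W$ of Lemma~\ref{lem: Weyl group embedding}; everything else is a formal consequence of $W_K$-invariance of $P^K$ and the definition of the standard bimodules. I would spell out that the embedding $W_K \hookrightarrow W$ sends the class of $n \in N_K(T_K)$ to the class of $n \in N_G(T)$ (legitimate since $N_K(T_K) = N_K(T)$), and that conjugation by $n$ on $T$ restricts to conjugation by $n$ on $T_K$; dualizing gives that the restriction map $X(T) \to X(T_K)$ commutes with the respective Weyl group actions, hence so does its multiplicative extension $\phi: R \to P$. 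With this in hand the verification above closes, and $\psi$ is the desired isomorphism $P_x \xrightarrow{\sim} P_{wx}$, whose inverse is given by $w^{-1}$; replacing $x$ by $w^{-1}x$ (or $w$ by $w^{-1}$) gives the statement as written, $P_{wx} \simeq P_x$.
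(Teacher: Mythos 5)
Your proposal is correct and takes essentially the same approach as the paper: the paper defines $\varphi: P_{wx}\to P_x$, $p\mapsto w^{-1}p$ (the inverse of your $\psi$) and verifies compatibility with both actions by the same computations. The only difference is that you explicitly justify the $W_K$-equivariance of $\phi$ (i.e., $w\phi(r)=\phi(wr)$), a step the paper uses without comment; your justification via $N_K(T)=N_K(T_K)$ and equivariance of the restriction $X(T)\to X(T_K)$ is sound.
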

\begin{proof}
Let $\varphi:P_{wx}\rightarrow P_x$ be the linear map sending $p \mapsto w^{-1}p$. We claim that $\varphi$ is an isomorphism of $(P^K, R)$-bimodules. 

Clearly $\varphi$ is a bijection of vector spaces. Let $f \in P^K$, $p \in P_{wx}$. The following computation shows that $\varphi$ is a left $P^K$-module morphism:
\[
f \cdot \varphi(p) = f \cdot w^{-1}p=fw^{-1}p=w^{-1}fw^{-1}p=w^{-1}(fp)=\varphi(fp)=\varphi(f \cdot p). 
\]
Let $g \in R$, $p \in P_{wx}$. The following computation shows that $\varphi$ is a morphism of right $R$-modules:
\begin{align*}
\varphi(p) \cdot_x g = w^{-1}p \cdot_x g = w^{-1}p\phi(xg) &= w^{-1}(pw\phi(xg)) \\
&= \varphi(pw\phi(xg)) = \varphi(p\phi(wxg))= \varphi(p \cdot_{wx} g).
\end{align*}
This completes the proof.
\end{proof}

Recall from Section \ref{sec: torus fixed points and closed K-orbits} that $W_K$ is contained in $W^\theta$, the subgroup of $\theta$-fixed elements of $W$ (\ref{eq: all the W's}). Choose a set $^KW^\theta$ of coset representatives for $W_K\backslash W^\theta$. We have a set of distinguished objects 
\[
\{P_{x} \mid x \in {^KW^\theta} \} \subseteq (P^K, R)\mathrm{-gbim}
\]
parameterized by cosets $W_K \backslash W^\theta$. By Lemma~\ref{lem: standards are parameterized by cosets} and Lemma~\ref{lem: fixed points in closed orbits}, for every $w \in W$ such that the corresponding $T$-fixed point $x_w$ is in a closed $K$-orbit, $P_w$ is isomorphic to an element of this set.

\begin{remark}
If $G_\R$ is equal rank, then the subgroup $W_K$ is a reflection subgroup of $W$ and $W^\theta=W$. In this case, every right coset $\overline{x} \in W_K \backslash W$ has a unique minimal-length representative \cite[Cor.\ 3.4]{Dyer90}. In contrast, for general $K$, it is not the case that each right $W_K$-coset in $W^\theta$ has a minimal length representative. 
\end{remark}

\begin{definition}
\label{def: LV category}
Let $\mc{N}_{LV}^0$ be the full subcategory of $(P^K,R)$-gbim generated by $\{P_x \mid x \in {^KW^\theta}\}$ under action (\ref{eq: Hecke action}) by objects in $\SBim$, finite direct sums ($\oplus$), direct summands ($\ominus$), and grading shifts ($(1)$). That is,
\[
\mc{N}_{LV}^0:= \left\langle P_x \otimes_R \SBim \mid x \in {^KW^\theta} \right\rangle_{\oplus, \ominus, (1)}.
\]
\end{definition}
By construction, $\mc{N}_{LV}^0$ is a right module over the monoidal category $\SBim$, in the sense of \cite[Ch. 7]{TensorCategoriesBook}. Let $[\mc{N}_{LV}^0]_\oplus$ be the split Grothendieck group of $\mc{N}_{LV}^0$. Then by Theorem~\ref{thm: Soergel's categorification theorem}, $[\mc{N}_{LV}^0]_\oplus$ has the structure of a right $\bm{H}$-module. 

\begin{example}
\label{example: bimodule category for SL(2,R)} $($The category $\mc{N}_{\mathrm{LV}}^0$ for $SL_2(\R))$ We return to our running example (Examples~\ref{example: LV module for SL(2,R)}, \ref{example: Type I}, \ref{example: non-parity}, \ref{example: blocks for SL(2,R)}, \ref{example: category generated by closed orbits SL(2,R)}, \ref{example: Hecke action on the non-trivial block SL2}, \ref{example: torus fixed points for SL(2,R)}). As $T_K=T$ and $W_K$ is trivial, 
\[
P^K=P=R=\Q[\alpha]
\]
is a polynomial ring in one variable, graded so that the simple root $\alpha \in \Sigma$ is in degree $2$. It follows that the objects in $\mc{N}_{\mathrm{LV}}^0$ are graded $(R, R)$-bimodules. The standard $(R, R)$-bimodules of Definition~\ref{def: standard modules} are
\begin{align*}
    R_{id} \hspace{3mm} &(R \text{ as a }(R,R)\text{-bimodule under the standard action}), \text{ and }\\
    R_{s} \hspace{4mm} &(R \text{ as a }(R,R)\text{-bimodule with right action twisted by $s$}).
\end{align*}
We compute the action of $\SBim = \langle R, B_s \rangle_{\oplus, \ominus, (-)}$ on these generators. 
\begin{gather}
    \label{eq: SBim action for SL2}
    R_{id} \otimes_R B_s = R \otimes_R R \otimes_{R^s}R(1) = R \otimes_{R^s}R(1) = B_s \\
\label{eq: second SBim action for SL2}
  R_s \otimes_R B_s = R \otimes_R R \otimes_{R^s} R (1) = R_s \otimes_{R^s} R(1) =R \otimes_{R^s} R(1) = B_s
\end{gather}
As the square of $B_s$ is $B_s(1) \oplus B_s(-1)$ (equation (\ref{eq: square of Bs})), we see that 
\[
\mc{N}_{\mathrm{LV}}^0 = \langle R_{\id}, R_s, B_s \rangle_{\oplus, \ominus, (-)}.
\]
(Compare the bimodule actions (\ref{eq: SBim action for SL2}), (\ref{eq: second SBim action for SL2}), (\ref{eq: square of Bs}) to the convolution actions (\ref{eq: Hecke action on closed orbit SL2}), (\ref{eq: Hecke action on other closed orbit SL}), (\ref{eq: Hecke action on open orbit SL2}).)
\end{example}


\section{The hypercohomology functor}
\label{sec: the hypercohomology functor}

The bridge between the geometric category $\mc{M}_{LV}^0$ and the algebraic category $\mc{N}_{LV}^0$ is the $K$-equivariant hypercohomology functor. As in (\ref{eq: equivariant hypercohomology}), we have a functor
\[
    \HH_K^\bullet: D^b_K(X) \rightarrow H^*_K(X)\mathrm{-gmod},
\]
where $H^*_K(X)\mathrm{-gmod}$ is the category of graded modules over the graded ring $H^*_K(X)$. 
\begin{lemma}
\label{lem: K equivariant cohomology of X}
There is an isomorphism of rings
\[
H^*_K(X) \simeq P^K \otimes_{R^W} R,
\]
where $P^K$ is a $R^W$-module via the map $\phi$ in (\ref{eq: phi}).
\end{lemma}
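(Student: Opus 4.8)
The plan is to compute $H^*_K(X)$ in two stages, first passing from $K$-equivariance to $T_K$-equivariance and then recognizing the answer combinatorially. The starting point is the standard fact that for a connected reductive group $K$ with maximal torus $T_K$ and Weyl group $W_K$, equivariant cohomology satisfies $H^*_K(Y) \simeq H^*_{T_K}(Y)^{W_K}$ for any reasonable $K$-space $Y$ (this uses that $H^*(BK;\Q) \simeq H^*(BT_K;\Q)^{W_K}$ and that the fibration $Y \times_{T_K} ET_K \to Y \times_K EK$ has fiber $K/T_K$ with cohomology concentrated in even degrees, so the associated spectral sequence degenerates). Applying this with $Y = X = G/B$ gives $H^*_K(X) \simeq H^*_{T_K}(X)^{W_K}$.

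Next I would compute $H^*_{T_K}(X)$. By Lemma~\ref{lem: agreement of fixed points}, the $T_K$-fixed locus on $X$ coincides with $X^T$, which is finite, and one checks the $T_K$-action is equivariantly formal (e.g. $X$ has a $T$-invariant, hence $T_K$-invariant, cell decomposition by Schubert cells, all of even real dimension, so $H^*_{T_K}(X)$ is a free module over $H^*_{T_K}(\pt)$). Now $H^*_{T_K}(\pt) = S(X(T_K)_\Q) = P$. The inclusion $T_K \hookrightarrow T$ and the projection $X \times_{T_K} ET_K \to X \times_T ET$ realize $H^*_{T_K}(X)$ as a base change: more precisely, using that $T = Z_G(T_K)$ acts on $X$ with $X^T = X^{T_K}$ and that both $T$ and $T_K$ act in an equivariantly formal way, one gets $H^*_{T_K}(X) \simeq H^*_T(X) \otimes_R P$, where the map $R = H^*_T(\pt) \to H^*_{T_K}(\pt) = P$ is exactly the restriction map $\phi$ of~(\ref{eq: phi}). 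Combined with the classical computation $H^*_T(X) = H^*_T(G/B) \simeq R \otimes_{R^W} R$ (as used already in Section~\ref{sec: Relationship between the geometric Hecke category and Soergel bimodules}, cf.\ \cite{Brion}), this yields $H^*_{T_K}(X) \simeq (R \otimes_{R^W} R) \otimes_R P \simeq R \otimes_{R^W} P$, where $P$ sits as an $R$-algebra via $\phi$ and the surviving tensor factor of $R$ carries the residual $W_K \subseteq W$-action (coming from the action on $G/B$ by left translation, which is the one descending from the $T$-action on fixed points).

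Finally I would take $W_K$-invariants. In the presentation $H^*_{T_K}(X) \simeq R \otimes_{R^W} P$, the group $W_K$ acts diagonally: on the left factor $R$ through its embedding $W_K \hookrightarrow W$, and on $P$ through its natural action (these are compatible because $\phi$ is $W_K$-equivariant, $W_K$ preserving $X(T_K)$ inside $X(T)$). Since $P$ is free over $P^K = P^{W_K}$ and $R$ is free over $R^W \subseteq R^{W_K}$, taking $W_K$-invariants commutes with the relevant tensor product, giving $H^*_K(X) \simeq (R \otimes_{R^W} P)^{W_K} \simeq R^{?} \otimes \cdots$; organizing this carefully one arrives at $P^K \otimes_{R^W} R$ as claimed, after possibly swapping the two factors to match the stated bimodule conventions (the left $P^K$-action is the image of $H^*_K(\pt) = P^K$, the right $R$-action comes from the $G$-equivariant structure, i.e.\ the residual copy of $R = H^*_T(G/B)$).

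\textbf{Main obstacle.} The subtle point is the identification $H^*_{T_K}(X) \simeq H^*_T(X) \otimes_R P$ together with correctly tracking \emph{which} $W_K$-action appears on \emph{which} tensor factor. The connectedness hypothesis on $K$ enters precisely here (ensuring $H^*_K = H^*_{T_K}{}^{W_K}$ with $W_K$ a genuine Weyl group), and Lemma~\ref{lem: agreement of fixed points}/Lemma~\ref{lem: fixed points in closed orbits} are what make the $T_K$- and $T$-localization pictures match up. I expect the cleanest route is via GKM/localization: embed both $H^*_T(X)$ and $H^*_{T_K}(X)$ into functions on the common fixed-point set $X^T = W$ with edge conditions, observe the $T_K$-conditions are the image of the $T$-conditions under $\phi$ (no real roots means every root restricts nontrivially to $\mf t_K$, so no edges are lost), and then compute the $W_K$-invariants of the resulting GKM ring explicitly.
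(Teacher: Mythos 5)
Your overall strategy (reduce from $K$ to $T_K$ via Weyl-group invariants, compute $H^*_{T_K}(X)$ by base change from $H^*_T(X)=R\otimes_{R^W}R$ along $\phi$, then take $W_K$-invariants) is genuinely different from the paper's argument, which never touches $H^*_{T_K}(X)$: the paper instead applies a chain of induction isomorphisms $H^*_K(G/B)=H^*_G(G/B\times G/K)=H^*_B(G/K)=H^*_T(G/K)$, then uses $H^*_T(G/K)=H^*_G(G/K)\otimes_{R^W}R$ and $H^*_G(G/K)=H^*_K(\mathrm{pt})=P^K$. That route only ever takes Weyl-group invariants of the cohomology of a point, so no bookkeeping of actions on tensor factors is required. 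Your first two steps are sound: $H^*_K=(H^*_{T_K})^{W_K}$ holds for connected $K$ over $\Q$, and equivariant formality gives $H^*_{T_K}(X)\simeq P\otimes_R H^*_T(X)$, the base change being over the structure-map copy of $R$, which yields $P\otimes_{R^W}R$.

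The gap is in the last step. You assert that $W_K$ acts ``diagonally'' on both tensor factors and that the invariants then ``organize'' into $P^K\otimes_{R^W}R$; as written (``$R^{?}\otimes\cdots$'') this is not a computation, and if the action really were diagonal and nontrivial on both factors the conclusion would fail --- compare $(R\otimes_{R^W}R)^{W}$ for the diagonal $W$-action, which is much larger than $R^{W}\otimes_{R^W}R=R$. What saves the argument is that the $W_K$-action on $P\otimes_{R^W}R$ is nontrivial only on the factor $P=H^*_{T_K}(\mathrm{pt})$: the other factor is generated by first Chern classes of the $G$-equivariant (hence $N_K(T_K)$-equivariant) line bundles $EK\times_{T_K}(G\times_B\C_\lambda)$, and the $W_K$-action on $X\times_{T_K}EK$ lifts to these bundles, so those classes are $W_K$-invariant. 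Equivalently, in the fixed-point embedding $P\otimes_{R^W}R\hookrightarrow\prod_{w\in W}P$, $p\otimes r\mapsto(p\,\phi(wr))_{w}$, an element $v\in W_K$ carries this tuple to the tuple of $vp\otimes r$, using $W_K$-equivariance of $\phi$ and the permutation $x_w\mapsto x_{vw}$ of fixed points. With that identification in hand, $(P\otimes_{R^W}R)^{W_K}=P^{W_K}\otimes_{R^W}R=P^K\otimes_{R^W}R$, since averaging over $W_K$ commutes with $-\otimes_{R^W}R$. So your route can be completed, but pinning down which factor carries the action is the essential missing step, not a matter of ``swapping factors to match conventions.''
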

\begin{proof}
We use standard facts about equivariant cohomology, proofs of which can be found in \cite{Brion} in the case of a compact group. After a straightforward reduction from complex reductive algebraic groups to compact groups, these facts also hold in our setting. See also \cite[Prop 1.2.1]{Wyser}.

Using three applications of the induction isomorphism in equivariant cohomology \cite[\S 1, Rmk.\ 3]{Brion} along with \cite[Prop.\ 1]{Brion}, we compute:
\begin{align*}
    H^*_K(G/B)&=H^*_G(G/B \times G/K) \\
    &= H^*_G(G \times_B G/K) \\
    &= H^*_B(G/K) \\
    &= H^*_T(G/K)\\
    &=H^*_G(G/K) \otimes_{R^W} R\\
    &= H^*_K(\mathrm{pt}) \otimes_{R^W} R\\
    &=(H^*_{T_K}(\mathrm{pt}))^{W_K} \otimes_{R^W} R \\
    &=P^K \otimes_{R^W} R.
\end{align*}
\end{proof}
As in Section \ref{sec: Relationship between the geometric Hecke category and Soergel bimodules}, the quotient map $P^K \otimes R \rightarrow P^K \otimes_{R^W} R$ induces a fully faithful functor
\begin{equation}
\label{eq: move to bimodules}
P^K \otimes_{R^W} R \mathrm{-gmod} \rightarrow (P^K, R)\mathrm{-gbim},
\end{equation}
and we identify $P^K \otimes_{R^W} R\mathrm{-gmod}$ with its image in $(P^K, R)\mathrm{-gbim}$. Hence we have a functor 
\begin{equation}
    \label{eq: K-equivariant hypercohomology}
\HH^\bullet_K: D^b_K(X)\rightarrow (P^K, R)\mathrm{-gbim}.
\end{equation}


\subsection{The essential image of the hypercohomology functor}
\label{sec: the essential image of hypercohomology}
\begin{theorem}
\label{thm: image of hypercohomology}
The essential image of the functor  
\[
\HH^\bullet_K:\mc{M}_{LV}^0 \rightarrow (P^K, R)\mathrm{-gbim}
\]
defined in (\ref{eq: K-equivariant hypercohomology}) is $\mc{N}_{LV}^0$. 
\end{theorem}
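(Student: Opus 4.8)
The plan is to show the two inclusions of essential images separately. Since $\mc{M}_{LV}^0$ is generated by the objects $\bm{IC}_\mc{Q}$ for $\mc{Q}$ a closed $K$-orbit under right convolution by $\mc{H}$, direct sums, summands, and shifts, and $\mc{N}_{LV}^0$ is generated by the $P_x$ for $x \in {}^KW^\theta$ under $\otimes_R \SBim$, direct sums, summands, and shifts, it suffices to check that $\HH_K^\bullet$ matches these generators and intertwines the two module structures. Concretely, I would establish: (1) for every closed $K$-orbit $\mc{Q}$ there is $x \in W^\theta$ with $\HH_K^\bullet(\bm{IC}_\mc{Q}) \cong P_x$ (this is Lemma~\ref{lem: standards are equiv cohomology}, referenced in the introduction); and (2) $\HH_K^\bullet$ is compatible with the actions, i.e.\ for $\mc{F} \in D^b_K(X)$ and $\mc{G} \in \mc{H}$ there is a natural isomorphism $\HH_K^\bullet(\mc{F} * \mc{G}) \cong \HH_K^\bullet(\mc{F}) \otimes_R \HH_B^\bullet(\mc{G})$ as $(P^K,R)$-bimodules, where $\HH_B^\bullet(\mc{G}) \in \SBim$ via the equivalence (\ref{eq: hypercohomology is an equivalence}). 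Granting (1) and (2), the essential image of $\HH_K^\bullet$ restricted to $\mc{M}_{LV}^0$ contains each $P_x \otimes_R S$ for $S \in \SBim$ a Bott--Samelson (hence each $S \in \SBim$ by taking summands), and is closed under $\oplus$, $\ominus$, and $(1)$; so it contains $\mc{N}_{LV}^0$. Conversely, every generator $\bm{IC}_\mc{Q} * \mc{E}_{\underline{w}}$ of $\mc{M}_{LV}^0$ is sent into $\mc{N}_{LV}^0$ by the same two facts, and since $\HH_K^\bullet$ commutes with $\oplus$ and $(1)$ and sends summands to summands, the essential image is contained in $\mc{N}_{LV}^0$. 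Combining the two inclusions gives the theorem.

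\textbf{Carrying out step (1).} Here I would use the combinatorial description of closed $K$-orbits from Corollary~\ref{cor: closed Q orbit bijection}: a closed orbit $\mc{Q}$ corresponds to a coset $W_K w \in W_K\backslash W^\theta$, and $\mc{Q} \cong K/B_K$ with $T$-fixed points $\{x_{vw} : v \in W_K\}$ (Lemma~\ref{lem: fixed points in closed orbits}). Since $\bm{IC}_\mc{Q} = i_{\mc{Q}*}\Q_\mc{Q}[\dim\mc{Q}]$, equivariant hypercohomology gives $\HH_K^\bullet(\bm{IC}_\mc{Q}) = H^*_K(\mc{Q})$ up to shift, and $H^*_K(\mc{Q}) = H^*_K(K/B_K) = H^*_{B_K}(\mathrm{pt}) = H^*_{T_K}(\mathrm{pt}) = P$ as a ring. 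The subtle point is identifying the $(P^K, R)$-bimodule structure: the left $P^K = H^*_K(X)$-action and the right $R = H^*_T(X)$-action both arise via the $H^*_K(X)$-module structure on $H^*_K(\bm{IC}_\mc{Q})$ pulled back along $i_\mc{Q}$, and one computes that the right $R$-action is twisted by the Weyl group element $w$ (or rather by the localization at the fixed point $x_w$), exactly reproducing $P_x$ of Definition~\ref{def: standard modules} with $x = w$. I would verify this by restricting to a $T$-fixed point and using the injectivity of localization $H^*_T(X) \hookrightarrow H^*_T(X^T)$ (GKM-type argument, as flagged in the introduction).

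\textbf{Carrying out step (2) --- the main obstacle.} The compatibility isomorphism $\HH_K^\bullet(\mc{F} * \mc{G}) \cong \HH_K^\bullet(\mc{F}) \otimes_R \HH_B^\bullet(\mc{G})$ is where the real work lies, and I expect this to be the principal difficulty. The cleanest approach is to realize both sides as hypercohomology on the fiber product $G \times_B X$: one has $\HH_K^\bullet(\mc{F} * \mc{G}) = \HH_K^\bullet(\mathrm{mult}_*(\mc{F} \boxtimes_B \mc{G})) = H^*_K(G \times_B X; \mc{F} \boxtimes_B \mc{G})$, and by a Leray/projection argument for the map $G\times_B X \to X$ (remembering $G \times_B X \simeq X \times X$ equivariantly in a suitable sense) this factors as a tensor product over $H^*_B(X) = R$. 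It suffices to check the isomorphism when $\mc{G} = \Q_{P_s/B}$ (the generators of $\mc{H}$), where $\mc{F} * \Q_{P_s/B} = \pi_s^*\pi_{s*}\mc{F}$ up to shift (Lemma~\ref{lem: push pull is convolution with ICs}), and then $\HH_K^\bullet(\pi_s^*\pi_{s*}\mc{F})$ can be computed by the projection formula: $\pi_{s*}$ corresponds to taking $(H^*_K(G/P_s))$-invariants, i.e.\ $R^s \subseteq R$ on the equivariant parameter side, and $\pi_s^*$ to extension of scalars, yielding $\HH_K^\bullet(\mc{F}) \otimes_{R^s} R(1) = \HH_K^\bullet(\mc{F}) \otimes_R B_s$. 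One must be careful that all these identifications are bimodule maps and that they assemble into a natural transformation compatible with composition of the $\Theta_s$; the associativity/naturality bookkeeping, together with checking the base-change isomorphisms hold equivariantly (using finite-dimensional approximations as in the remark after Lemma~\ref{lemma: derived pushforward}), is the technical heart of the argument. Once (2) is in place for the $\Theta_s$, it extends to all of $\mc{H}$ by monoidality, and the theorem follows from the two-inclusion argument above.
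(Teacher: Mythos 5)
Your proposal is correct and follows essentially the same route as the paper: the theorem is reduced to exactly your two steps, namely the computation $\HH_K^\bullet(\bm{IC}_\mc{Q})\cong P_x$ with the twisted right $R$-action (Lemma~\ref{lem: standards are equiv cohomology}) and the compatibility $\HH_K^\bullet(\mc{F}\Theta_s)\cong\HH_K^\bullet(\mc{F})\otimes_R B_s$ (Lemma~\ref{lem: push pull is tensoring with Bs}), the latter proved by the equivariant version of Soergel's argument that you sketch. The only cosmetic difference is that the paper identifies the twist on $H^*_K(\mc{Q})$ by a direct comparison of first Chern classes of $\C^\times$-bundles rather than by localization to fixed points, but both verifications are standard and equivalent.
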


By construction, $\mc{N}_{LV}^0$ is generated by objects of the form
\[
P_x \otimes_R B_{s_1} \otimes_R B_{s_2} \otimes \cdots \otimes_R B_{s_n}
\]
for $x \in {^KW^\theta}$ and $s_i \in S$. By Lemma~\ref{lem: push pull is convolution with ICs}, $\mc{M}_{LV}^0$ is generated by objects of the form
\[
\bm{IC}_\mc{Q} \Theta_{s_1} \Theta_{s_2} \cdots \Theta_{s_n}
\]
for $\mc{Q}$ a closed $K$-orbit and $s_i \in S$. Hence to prove Theorem~\ref{thm: image of hypercohomology} it suffices to prove the following two lemmas. 
\begin{lemma}
\label{lem: standards are equiv cohomology}
For any closed $K$-orbit $\mc{Q} \subseteq X$,
\[
\HH^\bullet_K(\bm{IC}_\mc{Q}) = P_x
\]
for some $x\in {^KW^\theta}$, where $P_x$ is as in Definition~\ref{def: standard modules}. Moreover, for every $x \in {^KW^\theta}$, there exists a closed $K$-orbit $\mc{Q} \subseteq X$ such that $\HH_{K}(\bm{IC}_\mc{Q}) = P_x$. 
\end{lemma}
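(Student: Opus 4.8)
The plan is to compute $\HH^\bullet_K(\bm{IC}_{\mc{Q}})$ by identifying the closed orbit $\mc{Q}$ with a flag variety of $K$ and then tracking the $(P^K,R)$-bimodule structure through the restriction map $i_{\mc{Q}}^*\colon H^*_K(X)\to H^*_K(\mc{Q})$. First I would record the geometric input: by Lemma~\ref{lem: fixed points in closed orbits}, $\mc{Q}$ contains a $T$-fixed point $x_w$ with $w\in W^\theta$, and (as in the proof of that lemma) $\stab_K(x_w)=K\cap wBw^{-1}$ is a Borel subgroup of $K$, which contains $T_K$ since $x_w$ is $T_K$-fixed. Hence $\mc{Q}\cong K/(K\cap wBw^{-1})$ $K$-equivariantly via $k(K\cap wBw^{-1})\mapsto kwB$. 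Since $\bm{IC}_{\mc{Q}}=i_{\mc{Q}*}\Q_{\mc{Q}}[\dim\mc{Q}]$ and $i_{\mc{Q}}$ is a closed embedding, adjunction gives $\HH^\bullet_K(\bm{IC}_{\mc{Q}})=H^*_K(\mc{Q})[\dim\mc{Q}]$, and the change-of-groups isomorphism together with $T_K$ being a maximal torus of $K\cap wBw^{-1}$ identifies $H^*_K(\mc{Q})=H^*_{K\cap wBw^{-1}}(\mathrm{pt})=H^*_{T_K}(\mathrm{pt})=P$; so, up to the grading shift by $\dim\mc{Q}$, $\HH^\bullet_K(\bm{IC}_{\mc{Q}})=P$ as a graded vector space, and everything comes down to understanding the ring map $i_{\mc{Q}}^*$.

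Next I would identify the bimodule structure using the description $H^*_K(X)=P^K\otimes_{R^W}R$ of Lemma~\ref{lem: K equivariant cohomology of X}, in which the left factor $P^K=H^*_K(\mathrm{pt})$ records equivariant parameters and the right factor $R$ is the subalgebra generated by the tautological Chern classes $c_1^K(\mc{L}_\chi)$, $\chi\in X(T)_\Q$, exactly as in the $B$-equivariant story of Section~\ref{sec: Relationship between the geometric Hecke category and Soergel bimodules}. On the left factor, $i_{\mc{Q}}^*$ is the canonical map $H^*_K(\mathrm{pt})\to H^*_K(\mc{Q})$, which under the identifications above is restriction along $K\cap wBw^{-1}\hookrightarrow K$, i.e.\ the standard inclusion $P^K=P^{W_K}\hookrightarrow P$ — this matches the left $P^K$-action in Definition~\ref{def: standard modules}. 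On the right factor, the key point is the pullback of $\mc{L}_\chi=G\times_B\C_\chi$ along $\mc{Q}\cong K/(K\cap wBw^{-1})\hookrightarrow G/B$: a direct check shows this is the $K$-equivariant line bundle $K\times_{K\cap wBw^{-1}}\C_\psi$ with $\psi(b)=\chi(w^{-1}bw)$, so that $c_1^K(\mc{L}_\chi)|_{\mc{Q}}=\psi|_{T_K}=\phi(w\chi)\in X(T_K)_\Q=H^2_{T_K}(\mathrm{pt})$. Thus the right $R$-action on $P$ is $p\cdot r=p\,\phi(wr)$, which is precisely the structure of the standard bimodule $P_w$. Using Lemma~\ref{lem: standards are parameterized by cosets} to replace $w$ by the chosen representative $x$ of $W_Kw$ in ${}^KW^\theta$ gives $\HH^\bullet_K(\bm{IC}_{\mc{Q}})\cong P_x$, proving the first assertion. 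For the converse, Corollary~\ref{cor: closed Q orbit bijection} identifies the closed $K$-orbits with $W_K\backslash W^\theta$ by sending $\mc{Q}$ to the coset of $w$ with $x_w\in\mc{Q}$; hence given $x\in{}^KW^\theta$, the orbit corresponding to the coset $W_Kx$ satisfies $\HH^\bullet_K(\bm{IC}_{\mc{Q}})\cong P_x$ by the first part.

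The main obstacle will be the precise identification of the right $R$-action: one must pin down $H^*_K(G/B)$ as ``equivariant parameters $\otimes$ tautological classes'' and carry out the line-bundle bookkeeping with the element $w\in N_G(T)/T$ so that the answer comes out as $P_w$ rather than some other standard bimodule (getting the $W$-twist and the sign in $c_1$ right). As an alternative that avoids the line-bundle computation, one can instead restrict one further step to the $T$-fixed point $x_w\in\mc{Q}$: for $G/B$ the tautological class $c_1^T(\mc{L}_\chi)$ restricts to $w\chi$ at $x_w$ by standard localization, and composing with $\phi\colon R=H^*_T(\mathrm{pt})\to P=H^*_{T_K}(\mathrm{pt})$ yields the same twist, while equivariant formality of the flag variety $\mc{Q}$ of $K$ makes $H^*_K(\mc{Q})\hookrightarrow\bigoplus_{v\in W_K}H^*_{T_K}(x_{vw})$ injective, so the structure on this sum of local rings determines the bimodule.
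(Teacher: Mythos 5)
Your proposal is correct and follows essentially the same route as the paper: identify $\mc{Q}$ with the flag variety of $K$ via a $T$-fixed point $x_w$ with $w\in{}^KW^\theta$, reduce $H^*_K(\mc{Q})$ to $H^*_{T_K}(x_w)=P$ by the induction/change-of-groups isomorphisms, and extract the $w$-twisted right $R$-action from first Chern classes of the tautological line bundle restricted to the orbit (the paper organizes this same computation as a tower of Cartesian squares of $\C^\times$-bundles $\mu_i^\chi$). The only differences are cosmetic, and your explicit tracking of the grading shift by $\dim\mc{Q}$ is, if anything, slightly more careful than the paper's.
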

\begin{lemma}
\label{lem: push pull is tensoring with Bs}
 For every $s \in S$ and $\mc{F} \in \mc{M}_{LV}^0$, 
\[
\HH^\bullet_K(\mc{F}\Theta_s) = \HH^\bullet_K(\mc{F}) \otimes_R B_s. 
\]
\end{lemma}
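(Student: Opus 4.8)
The plan is to compute both sides of the asserted equality by relating the push-pull functor $\Theta_s = \pi_s^\ast\pi_{s\ast}(-)[1]$ on the geometric side to tensoring with $B_s = R\otimes_{R^s}R(1)$ on the algebraic side, using functoriality of equivariant hypercohomology together with the fact that $\pi_s$ is a $\mathbb{P}^1$-bundle. First I would reduce to a statement about the base change of $\mathbb{H}_K^\bullet$ along the maps $\pi_s \colon X \to G/P_s$. Writing $\mathbb{H}_{K,G/P_s}^\bullet$ for equivariant hypercohomology on $G/P_s$, the projection formula and proper base change give canonical isomorphisms
\[
\mathbb{H}_K^\bullet(\pi_{s\ast}\mc{F}) \cong \mathbb{H}_{K,G/P_s}^\bullet(\pi_{s\ast}\mc{F}), \qquad
\mathbb{H}_K^\bullet(\pi_s^\ast\mc{G}) \cong \mathbb{H}_{K,G/P_s}^\bullet(\mc{G}) \otimes_{H^\ast_K(G/P_s)} H^\ast_K(X),
\]
the second because $\pi_s$ is a smooth proper map with $\mathbb{H}^\ast$ of its fibers $\mathbb{P}^1$ free (apply Lemma \ref{lemma: derived pushforward} to the smooth approximations, as in the Remark following it). Composing, $\mathbb{H}_K^\bullet(\mc{F}\Theta_s) \cong \mathbb{H}_K^\bullet(\mc{F}) \otimes_{H^\ast_K(G/P_s)} H^\ast_K(X)$ up to the shift $(1)$.

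The second step is to identify this base-changed module with $- \otimes_R B_s$ on the nose. By Lemma \ref{lem: K equivariant cohomology of X} and the analogous computation for the partial flag variety, $H^\ast_K(G/P_s) = P^K \otimes_{(R^{s})^{W}}\! R^s$ (or more precisely the image of $P^K\otimes R^s$ in $H^*_K(G/B)$), and under the identification \eqref{eq: move to bimodules} the inclusion $H^\ast_K(G/P_s)\hookrightarrow H^\ast_K(X)$ corresponds to the inclusion $P^K\otimes_{R^W} R^s \hookrightarrow P^K\otimes_{R^W} R$. Since for any $(P^K,R)$-bimodule $M$ viewed through \eqref{eq: move to bimodules} one has $M\otimes_{P^K\otimes_{R^W}R^s}(P^K\otimes_{R^W}R) \cong M\otimes_{R^s}R \cong M\otimes_R(R\otimes_{R^s}R) = M\otimes_R B_s(-1)$, the grading shift $(1)$ built into $\Theta_s$ precisely cancels the $(-1)$, giving $\mathbb{H}_K^\bullet(\mc{F}\Theta_s) \cong \mathbb{H}_K^\bullet(\mc{F})\otimes_R B_s$. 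One must check that these isomorphisms are natural in $\mc{F}$ and respect the $(P^K,R)$-bimodule structures, and that we may restrict attention to $\mc{F}\in\mc{M}_{LV}^0$, where Lemma \ref{lemma: push pull star even} guarantees all sheaves in sight are $\ast$-even so that no spectral-sequence degeneration issues arise; this is what lets the base-change formula for $\mathbb{H}^\bullet_K$ hold without higher $\mathrm{Tor}$ corrections.

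The main obstacle I anticipate is the projection/base-change formula for \emph{equivariant} hypercohomology of the $\mathbb{P}^1$-bundle $\pi_s$: one needs $\mathbb{H}^\bullet_{K}(\pi_s^\ast\mc{G})$ to be the honest tensor product $\mathbb{H}^\bullet_{K,G/P_s}(\mc{G})\otimes_{H^\ast_K(G/P_s)} H^\ast_K(X)$ rather than a derived tensor product, and this requires knowing $H^\ast_K(X)$ is free (indeed a rank-$2$ free module, by the Leray--Hirsch theorem applied fiberwise as in Lemma \ref{lemma: derived pushforward}) over $H^\ast_K(G/P_s)$, and that $\mathbb{H}^\bullet_{K,G/P_s}(\mc{G})$ carries no higher Tor against it. Flatness of $H^*_K(X)$ over $H^*_K(G/P_s)$, combined with the $\ast$-evenness provided by Lemma \ref{lemma: push pull star even}, resolves this; alternatively one can argue directly on finite-dimensional approximations $E_N\times_K X \to E_N\times_K(G/P_s)$, which are genuine $\mathbb{P}^1$-bundles, and pass to the limit. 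Once the base-change formula is in hand, the remaining identifications of rings and bimodules are routine bookkeeping with the isomorphisms of Lemma \ref{lem: K equivariant cohomology of X} and Soergel's $B_s = R\otimes_{R^s}R(1)$.
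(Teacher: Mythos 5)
Your proposal is correct and follows essentially the same route as the paper: the paper's proof reduces to Proposition~\ref{prop: push pull version one} (Soergel's argument with added equivariance), namely $\mathbb{H}^\bullet_K(\pi_s^*\pi_{s*}\mc{F})\simeq C_K\otimes_{C_K^s}\mathrm{Res}^s\,\mathbb{H}^\bullet_K(\mc{F})$, whose key input is the splitting $\pi_{s*}\Q_X[2]\simeq \Q_Y\oplus\Q_Y[2]$ --- precisely your Leray--Hirsch / rank-two-freeness observation for the $\mathbb{P}^1$-bundle $\pi_s$ --- followed by the same bookkeeping identifying $C_K\otimes_{C_K^s}(-)$ with $-\otimes_R B_s$ up to the shift. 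One small remark: the appeal to $\ast$-evenness (Lemma~\ref{lemma: push pull star even}) is not needed here, since the rank-two freeness of $H^*_K(X)$ over $H^*_K(G/P_s)$ already eliminates any higher $\mathrm{Tor}$, and accordingly the paper proves the base-change statement for arbitrary $\mc{F}\in D^b_K(X)$, not only for parity objects.
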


\begin{example}
\label{example: Lemmas 7.3 and 7.4 for SL2} $($Lemmas \ref{lem: standards are equiv cohomology} and \ref{lem: push pull is tensoring with Bs} for $SL_2(\R))$ In our running example (Examples~\ref{example: LV module for SL(2,R)}, \ref{example: Type I}, \ref{example: blocks for SL(2,R)}, \ref{example: category generated by closed orbits SL(2,R)}, \ref{example: bimodule category for SL(2,R)}), the closed $K$-orbits are precisely the $T$-fixed points in $X$: in the notation of (\ref{eq: T-fixed points}), $\mc{Q}_0 = x_{id}$, and $\mc{Q}_\infty = x_s$. Hence the $K$-equivariant hypercohomology of $\IC_{\mc{Q}_0}$ and $\IC_{\mc{Q}_\infty}$ are given by the $T$-equivariant cohomology of the corresponding $T$-fixed point: 
\begin{align*}
\HH_K(\IC_{\mc{Q}_0}) &= \Hom^\bullet_{T}(\Q_{x_{id}}, \Q_{x_{id}}) = H_T^*(x_{id}), \text{ and }\\
\HH_K(\IC_{\mc{Q}_\infty}) &= \Hom^\bullet_{T}(\Q_{x_{s}}, \Q_{x_{s}}) = H_T^*(x_{s}).
\end{align*}
For any $T$-fixed point $x_w$, $H^*_T(x_w)\simeq R$ as left $R$-modules via the Borel homomorphism. The right $R$-module structure on $H^*_T(x_w)$ is twisted by the parameter $w \in W$ specifying the $T$-fixed point. (This can be seen by comparing first Chern classes of the line bundles 
\[
ET \times_T \C_\chi^\times \rightarrow ET \times_T x_w \hspace{1mm} \text{ and } \hspace{1mm} ET \times_T wB \times_B \C_\chi^\times \rightarrow ET \times_T x_w
\]
in $H^*(x_w)$.) 

With this computation, Lemma~\ref{lem: push pull is tensoring with Bs} can be seen for $SL_2(\R)$ by comparing the convolution actions (\ref{eq: Hecke action on closed orbit SL2}), (\ref{eq: Hecke action on other closed orbit SL}), and (\ref{eq: Hecke action on open orbit SL2}) with the bimodule actions (\ref{eq: SBim action for SL2}), (\ref{eq: second SBim action for SL2}) and (\ref{eq: square of Bs}).
\end{example}

\noindent
\begin{proof}[Proof of Lemma~\ref{lem: standards are equiv cohomology}]
Fix a closed $K$-orbit $\mc{Q}$. By Lemma~\ref{lem: fixed points in closed orbits}, there exists $w \in {^KW^\theta}$ such that the $T$-fixed point $x_w=wB/B$ lies in $\mc{Q}$. Denote by $K_w:=\stab_{K}x_w$. Recall that $K_w$ is a Borel subgroup in $K$ containing $T_K$ (see proof of Lemma~\ref{lem: fixed points in closed orbits}) and $\mc{Q} \simeq K/K_w \simeq K \times_{K_w} x_w$. We have the following isomorphisms of vector spaces: 
\begin{equation}
\label{eq: v.s. isos}
\mathbb{H}_K^\bullet (\bm{IC}_\mc{Q}) \xrightarrow{\sim} H^*_K(\mc{Q}) \xrightarrow{\sim} H^*_{K_w}(x_w) \xrightarrow{\sim} H^*_{T_K}(x_w).
\end{equation}
The first is given by the adjunction $(i_\mc{Q}^*, i_{\mc{Q}*})$, the second is the induction isomorphism in equivariant cohomology \cite[\S1, Rmk.\ 3]{Brion}, and the third follows from the fact that $K_w/T_K$ is affine. 

Recall that the $T_K$-equivariant cohomology of a point is isomorphic to $P$. Explicitly, the isomorphism 
\begin{equation}
\label{eq: Borel iso}
P \xrightarrow{\sim} H^*_{T_K}(x_w)
\end{equation}
is given by the {\em Borel homomorphism}, which is constructed as follows. Recall that the ring $P$ is generated by $X(T_K)_\Q$. From a character $\chi \in X(T_K)$, we construct a principal $\C^\times$-bundle
\begin{equation}
\label{eq: pi_chi}
ET_K \times_{T_K} \C^\times_\chi \xrightarrow{\pi_\chi} ET_K \times_{T_K} x_w \simeq BT_K,
\end{equation}
where $\C_\chi^\times$ is the $T_K$-space given by the action $t \cdot z = \chi(t)z$ for $t \in T_K$ and $z \in \C^\times$. Because $\pi_\chi$ is a $\C^\times$-bundle, it has a first Chern class $c_1(\pi_\chi) \in H^2(BT_K) = H^2_{T_K}(x_w)$. The homomorphism (\ref{eq: Borel iso}) is defined by extending the assignment $\chi \mapsto c_1(\pi_\chi)$ multiplicatively. 

This establishes that $\mathbb{H}^\bullet_K(\bm{IC}_\mc{Q}) \simeq P$ as graded vector spaces. Moreover, all isomorphisms in (\ref{eq: v.s. isos}) are clearly compatible with the left $H_K^*(\mathrm{pt})$-module structures. As $H_K^*(\mathrm{pt}) \simeq P^K$ \cite[Prop.\ 1(i)]{Brion}, we see that $\mathbb{H}^\bullet_K(\bm{IC}_\mc{Q}) \simeq P$ as left $P^K$-modules.

Each space in (\ref{eq: v.s. isos}) also has a right $P$-action coming from the Borel homomorphism. Specifically, from $\chi \in X(T_K)$, we construct four principal $\C^\times$-bundles:
\[
\begin{tikzcd}
{\scriptstyle EK \times_{T_K} wB \times_B \C^\times_\chi } \arrow[d, "\mu_1^\chi"] \\ {\scriptstyle EK \times_{T_K} wB \times_B \mathrm{pt} }
\end{tikzcd},
\begin{tikzcd}
{\scriptstyle EK \times_{K_w} wB \times_B \C^\times_\chi } \arrow[d, "\mu_2^\chi"] \\ {\scriptstyle EK \times_{K_w} wB \times_B \mathrm{pt} }
\end{tikzcd},
\begin{tikzcd}
{\scriptstyle EK \times_K KwB \times_B \C^\times_\chi }\arrow[d, "\mu_3^\chi"] \\ {\scriptstyle EK \times_K KwB \times_B \mathrm{pt} }
\end{tikzcd}, \mathrm{ \small and}
\begin{tikzcd}
{\scriptstyle EK \times_K G \times_B \C^\times_\chi } \arrow[d, "\mu_4^\chi"] \\ {\scriptstyle EK \times_K G \times_B \mathrm{pt} }
\end{tikzcd},
\]
where the $B$-action on $\C^\times_\chi$ is defined by trivially extending the $T_K$-action. By taking Chern classes, we obtain ring homomorphisms $\varphi_i: \chi \mapsto c_1(\mu_i^\chi)$:
\[
\begin{tikzcd} [column sep=small]
 & & R \arrow[dll, "\varphi_1"'] \arrow[dl, "\varphi_2"] \arrow[dr, "\varphi_3"'] \arrow[drr, "\varphi_4"] & &\\
H^*_{T_K}(x_w) & H^*_{K_w}(x_w) & & H^*_K(\mc{Q}) & H^*_K(X)
\end{tikzcd}
\]
The ring homomorphisms $\varphi_i$ give each space in (\ref{eq: v.s. isos}) a right $P$-module structure. Moreover, because the $\C^\times$-bundles $\mu_i^\chi$ fit into the diagram
\[
\begin{tikzcd}
EK \times_{T_K} wB \times_B \C^\times_\chi \arrow[r, "\mu_1^\chi"] \arrow[d, twoheadrightarrow] & EK \times_{T_K} wB \times_B \mathrm{pt} \arrow[d, twoheadrightarrow] \\
EK \times_{K_w} wB \times_B \C^\times_\chi \arrow[r, "\mu_2^\chi"] \arrow[d, hookrightarrow] & EK \times_{K_w} wB \times_B \mathrm{pt} \arrow[d, hookrightarrow] \\
EK \times_{K_w} KwB \times_B \C^\times_\chi \arrow[r] \arrow[d, twoheadrightarrow] & EK \times_{K_w} KwB \times_B \mathrm{pt} \arrow[d, twoheadrightarrow] \\
EK \times_K KwB \times_B \C^\times_\chi \arrow[r, "\mu_3^\chi"] \arrow[d, hookrightarrow] & EK \times_K KwB \times_B \mathrm{pt} \arrow[d, hookrightarrow] \\
EK \times_K G \times_B \C^\times_\chi \arrow[r, "\mu_4^\chi"]  & EK \times_K G \times_B \mathrm{pt}
\end{tikzcd}
\]
in which every square is Cartesian, the isomorphisms in (\ref{eq: v.s. isos}) are compatible with these right $P$-actions. By precomposing with the map $\phi:R \rightarrow P$ from (\ref{eq: phi}), each of the vector spaces in (\ref{eq: v.s. isos}) obtains a compatible right $R$-module structure. 

It remains to compute the right $R$-action on $H^*_{T_K}(x_w)$ obtained from $\varphi_1 \circ \phi$. First note that the natural map 
\[
H^*_T(x_w) \rightarrow H^*_{T_K}(x_w)
\]
obtained by post- and pre-composing $\phi$ with the Borel isomorphisms $R \xleftarrow{\sim} H^*_T(x_w)$ and $P \xrightarrow{\sim} H^*_{T_K}(x_w)$ sends 
\[
c_1(\pi^T_\lambda) \mapsto c_1(\pi_{\phi(\lambda)})
\]
for $\lambda \in X(T)$, where $\pi_\lambda^T$ is the $\C^\times$-bundle $ET \times_T \C_\lambda^\times \rightarrow ET \times_T x_w$ and $\pi_{\phi(\lambda)}$ is as in (\ref{eq: pi_chi}). Moreover, if $\mu_T^\lambda$ is the $\C^\times$-bundle 
\[
ET \times_T wB \times_B \C^\times_\lambda \xrightarrow{\mu_T^\lambda} ET \times_T wB \times_B \mathrm{pt},
\]
then $\mu_T^\lambda \simeq \pi_{w \lambda}^T$ as $\C^\times$-bundles, so in $H^*_T(x_w)$, we have $c_1(\mu_T^\lambda) = wc_1(\pi_\lambda^T)$. 

Because the square 
\[
\begin{tikzcd}
ET \times_{T_K} wB \times \C^\times_{\phi(\lambda)} \arrow[r, "\mu^{\phi(\lambda)}"] \arrow[d, twoheadrightarrow] & ET \times_{T_K} wB \times_B \mathrm{pt} \arrow[d, twoheadrightarrow] \\
ET \times_T wB \times_B \C^\times_\lambda \arrow[r, "\mu_T ^\lambda"] & ET \times_T wB \times_B \mathrm{pt}
\end{tikzcd}
\]
is Cartesian, we have 
\[
c_1(\mu^{\phi(\lambda)}) = \phi(c_1(\mu_T^\lambda))= \phi(w c_1(\pi_\lambda^T))
\]
in $H^*_{T_K}(x_w)$. Hence the right action of $g \in R$ on $p \in P \simeq H^*_{T_K}(x_w)$ is given by 
\[
p \cdot g = p \phi(wg) = p \cdot_{w} g. 
\]
This completes the proof of the first statement. The second statement follows from the fact that every $T$-fixed point corresponding to $w \in {W^\theta}$ lies in some closed orbit $\mc{Q}$ (q.v.\ proof of Lemma~\ref{lem: fixed points in closed orbits}).  
\end{proof}

\vspace{3mm}

\begin{proof}[Proof of Lemma~\ref{lem: push pull is tensoring with Bs}]
Set $Y:= G/P_s$ and denote by $C_K:=H^*_K(X) = P^K \otimes_{R^W}R$ (Lemma~\ref{lem: K equivariant cohomology of X}). The map $\pi_s^*:\End^\bullet_K(\Q_Y) \rightarrow \End^\bullet_K(\Q_X)=C_K$ is injective with image $C_K^s= P^K \otimes_{R^W}R^s$. We identify $H^*_K(Y)=\End_K^\bullet(\Q_Y)$ with its image under $\pi_s^*$. Consider the diagram of functors
\[
\begin{tikzcd}
D^b_K(X) \arrow[r, "\mathbb{H}^\bullet_K"] \arrow[d, "\pi_{s*}", shift left] & C_K \mathrm{-gmod} \arrow[d, "\mathrm{Res}^s"] \\
D^b_K(Y) \arrow[u, "\pi_s^*", shift left] \arrow[r, "\mathbb{H}^s_K"] & C^s_K\mathrm{-gmod} 
\end{tikzcd}
\]
where $\mathbb{H}^s_K(-):= \Hom^\bullet_{D^b_K(Y)}(\Q_{Y}, -)$ is the hypercohomology functor on $D^b_K(Y)$ and $\mathrm{Res}^s$ is the functor given by restriction to $C^s_K \subseteq C_K$. The adjunction $(\pi_s^*, \pi_{s*})$ gives a natural isomorphism 
\begin{equation}
\label{eq: pushing is restriction}
\mathbb{H}_K^s \pi_{s*} \simeq \mathrm{Res}^s \mathbb{H}_K.
\end{equation}
Lemma~\ref{lem: push pull is tensoring with Bs} follows from the following proposition.

\begin{proposition}
\label{prop: push pull version one}
For $\mc{F} \in D^b_K(X)$ and $s \in S$, 
\[
\mathbb{H}_K^\bullet(\pi_s^* \pi_{s*} \mc{F}) \simeq C_K \otimes_{C_K^s} \mathrm{Res}^s\mathbb{H}^\bullet_K(\mc{F})
\]
in $C_K\mathrm{-gmod}$. 
\end{proposition}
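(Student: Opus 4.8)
The plan is to deduce this from a relative Leray--Hirsch isomorphism for the $\mathbb{P}^1$-bundle $\pi_s\colon X\to Y=G/P_s$, combined with the adjunction identity \eqref{eq: pushing is restriction}. Set $\mc{G}:=\pi_{s*}\mc{F}\in D^b_K(Y)$. Since $\Q_X=\pi_s^*\Q_Y$, pullback along $\pi_s$ is a morphism of graded modules
\[
\pi_s^*\colon \mathbb{H}^s_K(\mc{G})=\Hom^\bullet_{D^b_K(Y)}(\Q_Y,\mc{G})\;\longrightarrow\;\Hom^\bullet_{D^b_K(X)}(\Q_X,\pi_s^*\mc{G})=\mathbb{H}^\bullet_K(\pi_s^*\mc{G})
\]
over $C_K^s=\End^\bullet_K(\Q_Y)$, where $C_K^s$ acts on the target through its inclusion $\pi_s^*\colon C_K^s\hookrightarrow C_K=\End^\bullet_K(\Q_X)$ and $\mathbb{H}^\bullet_K(\pi_s^*\mc{G})$ carries the usual $C_K$-module structure by precomposition. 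By the universal property of $\otimes_{C_K^s}$ this extends uniquely to a morphism of graded $C_K$-modules
\[
\Phi\colon C_K\otimes_{C_K^s}\mathbb{H}^s_K(\mc{G})\;\longrightarrow\;\mathbb{H}^\bullet_K(\pi_s^*\mc{G}),\qquad c\otimes m\longmapsto c\cdot\pi_s^*(m).
\]
Once $\Phi$ is known to be an isomorphism, we specialize $\mc{G}=\pi_{s*}\mc{F}$ and apply \eqref{eq: pushing is restriction}, which gives $\mathbb{H}^s_K(\pi_{s*}\mc{F})\cong\mathrm{Res}^s\mathbb{H}^\bullet_K(\mc{F})$ as $C_K^s$-modules; tensoring up with $C_K$ yields the proposition.

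To prove that $\Phi$ is an isomorphism I would pass through the adjunction $(\pi_s^*,\pi_{s*})$ and the projection formula to write $\mathbb{H}^\bullet_K(\pi_s^*\mc{G})=\Hom^\bullet_{D^b_K(Y)}(\Q_Y,\pi_{s*}\pi_s^*\mc{G})$ with $\pi_{s*}\pi_s^*\mc{G}\cong\mc{G}\otimes_\Q\pi_{s*}\Q_X$. Realize $X=\mathbb{P}_Y(\mc{V})$ for the rank-two $G$-equivariant bundle $\mc{V}$ whose fibrewise projectivization is $\pi_s$, and let $\eta\in H^2_K(X)=C_K$ be the $K$-equivariant first Chern class of the relative $\cO(1)$; it restricts to a generator of $H^2$ of each fibre $\pi_s^{-1}(y)\cong\mathbb{P}^1$. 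Then the unit $\Q_Y\to\pi_{s*}\Q_X$ together with the composite $\Q_Y[-2]\xrightarrow{\ \mathrm{unit}[-2]\ }\pi_{s*}\Q_X[-2]\xrightarrow{\ \cup\,\eta\ }\pi_{s*}\Q_X$ defines a morphism $\Q_Y\oplus\Q_Y[-2]\to\pi_{s*}\Q_X$ in $D^b_K(Y)$ which is an isomorphism, because on cohomology sheaves (equivalently on stalks, via proper base change for $\pi_s$) it is the classical Leray--Hirsch basis $\{1,\eta|_{\pi_s^{-1}(y)}\}$ of $H^\bullet(\mathbb{P}^1)$. Tensoring with $\mc{G}$ and applying $\Hom^\bullet_{D^b_K(Y)}(\Q_Y,-)$ then gives a decomposition $\mathbb{H}^\bullet_K(\pi_s^*\mc{G})\cong\mathbb{H}^s_K(\mc{G})\oplus\mathbb{H}^s_K(\mc{G})(-2)$ in which the first summand is the image of $\pi_s^*$ and the second is the image of $\eta\cup\pi_s^*(-)$. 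On the other side, the projective bundle formula identifies $C_K$ as a free $C_K^s$-module with basis $\{1,\eta\}$, so $C_K\otimes_{C_K^s}\mathbb{H}^s_K(\mc{G})=(1\otimes\mathbb{H}^s_K(\mc{G}))\oplus(\eta\otimes\mathbb{H}^s_K(\mc{G}))$; under these matching decompositions $\Phi$ is visibly bijective.

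I expect the main obstacle to be the bookkeeping of module structures rather than any single hard input: producing an additive isomorphism $\mathbb{H}^\bullet_K(\pi_s^*\pi_{s*}\mc{F})\cong\mathbb{H}^\bullet_K(\mc{F})\oplus\mathbb{H}^\bullet_K(\mc{F})(-2)$ of graded vector spaces is immediate, but the content of the proposition is that it is $C_K$-linear for the \emph{induced} $C_K$-module structure on $C_K\otimes_{C_K^s}\mathrm{Res}^s\mathbb{H}^\bullet_K(\mc{F})$, and it is precisely routing everything through the canonical map $\Phi$ built from $\pi_s^*$ and $\cup\,\eta$ that forces this compatibility. The remaining verifications are routine: that $\eta$ restricts to a fibre generator and that $C_K$ is free over $C_K^s$ on $\{1,\eta\}$ (projective bundle formula, using $C_K=P^K\otimes_{R^W}R$ and $C_K^s=P^K\otimes_{R^W}R^s$ with $R$ free over $R^s$); that the displayed stalk computation is the $\mathbb{P}^1$ Leray--Hirsch theorem; and the cartesian-square compatibilities already packaged in \eqref{eq: pushing is restriction}. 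Alternatively, one may invoke Lemma~\ref{lemma: derived pushforward} (applied equivariantly, via the remark following it) to obtain $\pi_{s*}\pi_s^*\mc{G}\cong\mc{G}\otimes H^\bullet(\mathbb{P}^1)$ directly; since that isomorphism is itself constructed from cup products with Leray--Hirsch classes, it carries the same module-theoretic compatibility, and the proof goes through unchanged.
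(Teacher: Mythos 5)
Your proof is correct and is essentially the paper's argument (which follows Soergel's \S 3.2): both construct the same canonical $C_K$-linear map out of $C_K\otimes_{C_K^s}\mathbb{H}^s_K(\pi_{s*}\mc{F})$ and prove it is bijective by splitting $\pi_{s*}\Q_X\simeq \Q_Y\oplus\Q_Y[-2]$. The only difference is cosmetic: you produce the splitting explicitly via the relative Chern class $\eta$ (Leray--Hirsch) and match it against the basis $\{1,\eta\}$ of $C_K$ over $C_K^s$, whereas the paper obtains it from the unit/counit composite together with the decomposition theorem and routes the $\Hom$ computation through $\pi_s^*=\pi_s^![-2]$ and the $(\pi_{s!},\pi_s^!)$ adjunction rather than the projection formula.
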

Let $M \in C_K\mathrm{-gmod}$. Under the identification of $C_K\mathrm{-gmod}$ with its image in $(P^K, R)\mathrm{-gbim}$ (see \ref{eq: move to bimodules}), the left $C_K$-module $C_K \otimes_{C_K^s} \mathrm{Res}^sM$ corresponds to the $(P^K, R)$-bimodule $M \otimes_R B_s$, so Lemma~\ref{lem: push pull is tensoring with Bs} immediately follows from Proposition~\ref{prop: push pull version one}. 

\begin{proof}[Proof of Proposition~\ref{prop: push pull version one}] 
Our argument is exactly the argument in \cite[\S 3.2]{Soergel90} with added equivariance. We repeat the argument here for the benefit of readers less comfortable with the German language. 

For any $\mc{F} \in D^b_K(X)$, there is a canonical morphism of $C_K$-modules
\begin{align}
\label{eq: canonical morphism}
C_K \otimes_{C_K^s} \mathbb{H}_K^s(\pi_{s*}\mc{F}) &= \Hom_K^\bullet (\Q_X, \pi_s^*\Q_Y) \otimes_{\End^\bullet_K(\Q_Y)} \Hom_K^\bullet(\Q_Y, \pi_{s*} \mc{F})  \\
& \rightarrow \Hom^\bullet(\Q_X, \pi_s^*\pi_{s*}\mc{F}) = \mathbb{H}_K^\bullet(\pi_s^* \pi_{s*} \mc{F}) \nonumber
\end{align}
given by $\varphi \otimes \psi \mapsto \pi_s^* \psi \circ \varphi$. We will show that this canonical morphism is an isomorphism. 

Because $\pi_s$ is smooth of relative dimension one, we have $\pi_s^* = \pi_s^![-2]$. Using this relation, along with the adjunction $(\pi_{s!}, \pi_s^!)$ and the fact that $\pi_s$ is proper (hence $\pi_{s!} = \pi_{s*}$), we compute
\begin{equation}
\label{eq: first steps}
\mathbb{H}_K^\bullet(\pi_s^* \pi_{s*} \mc{F}) = \Hom^\bullet_K(\Q_X, \pi_s^! \pi_{s*} \mc{F}[-2]) = \Hom^\bullet_K(\pi_{s*}\Q_X[2], \pi_{s*} \mc{F}). 
\end{equation}
We have canonical morphisms
\[
\Q_Y \rightarrow \pi_{s*}\pi_s^* \Q_Y = \pi_{s*}\Q_X = \pi_{s!} \pi_s^! \Q_Y[-2] \rightarrow \Q_Y[-2]
\]
given by the unit of the adjunction $(\pi_s^*, \pi_{s*})$ and the counit of the adjunction $(\pi_{s!}, \pi_s^!)$. Shifted, this is
\[
\Q_Y[2] \rightarrow \pi_{s*} \Q_X[2] \rightarrow \Q_Y. 
\]
Because $Y$ has no negative cohomology groups, the composition is zero. By the decomposition theorem, the sequence splits, and we conclude that 
\begin{equation}
\label{eq: splitting of structure sheaf}
\pi_{s*} \Q_X[2] = \Q_Y \oplus \Q_Y[2]. 
\end{equation}
(Alternatively, \eqref{eq: splitting of structure sheaf} also follows from Lemma~\ref{lemma: derived pushforward}.) Using the splitting (\ref{eq: splitting of structure sheaf}) and the natural isomorphism (\ref{eq: pushing is restriction}), we continue the computation in (\ref{eq: first steps}). 
\begin{align*}
\Hom_K^\bullet(\pi_{s*}\Q_X[2], \pi_{s*}\mc{F}) &= \Hom_K^\bullet (\Q_Y \oplus \Q_Y[2], \pi_{s*}\mc{F}) \\
&= \mathbb{H}_K^s (\pi_{s*}\mc{F}) \oplus \mathbb{H}_K^s(\pi_{s*} \mc{F})[-2] \\
&= (C_K^s \oplus C_K^s[-2]) \otimes_{C_K^s} \mathbb{H}^s_K(\pi_{s*} \mc{F}) \\
&= \Hom^\bullet_K( \Q_Y, \Q_Y \oplus \Q_Y[-2]) \otimes_{C_K^s} \mathbb{H}^s_K(\pi_{s*}\mc{F}) \\
&= \Hom^\bullet_K(\Q_Y, \pi_{s*} \Q_X) \otimes_{C_K^s} \mathbb{H}_K^s(\pi_{s*} \mc{F}) \\
&= \mathrm{Res}^s \mathbb{H}^\bullet_K(\Q_X) \otimes_{C_K^s} \mathbb{H}_K^s (\pi_{s*} \mc{F})\\
&= \mathrm{Res}^s C_K \otimes_{C_K^s} \mathbb{H}_K^s(\pi_{s*} \mc{F})
\end{align*}
This shows that the canonical morphism (\ref{eq: canonical morphism}) is an isomorphism of $C^s_K$-modules, hence it must be an isomorphicm of $C_K$-modules. The proposition then follows from the natural isomorphism (\ref{eq: pushing is restriction}). 
\end{proof}
This completes the proof of Lemma~\ref{lem: push pull is tensoring with Bs}, and in turn, the proof of Theorem~\ref{thm: image of hypercohomology}. 
\end{proof}

\begin{example}
\label{example: hypercohomology vanishes on mobius band} $($Hypercohomology vanishes outside of $\mc{M}_{\mathrm{LV}}^0$ for $SL_2(\R))$ Consider the $\IC$ sheaf corresponding to the M\"obius band local system $\mc{L}$ in Example~\ref{example: LV module for SL(2,R)}, $\IC(X, \mc{L})$, and its non-equivariant version, $\IC(X, \For \mc{L})$. Recall that because $\mc{L}$ is clean, $\IC(X, \For \mc{L}) = j_* \For \mc{L}$, where $j:\mc{O} \hookrightarrow X$ is inclusion of the open orbit. Hence by equation (\ref{eq: hypercohomology agrees with sheaf cohomology}), \[
\HH^\bullet(\IC(X, \For \mc{L})) = H^*(X; j_* \For \mc{L}) = H^*(\C^\times; \For \mc{L}).
\]
We showed (in two ways) in Example~\ref{example: Hecke action on the non-trivial block SL2} that $H^*(\C^\times; \For \mc{L})=0$, so the ordinary hypercohomology functor (and hence the equivariant hypercohomology functor) vanishes on $\IC(X, \For \mc{L})$ (resp.\ $\IC(X, \mc{L})$). This example illustrates that our techniques do not necessarily capture the entire Lusztig--Vogan module in examples with non-trivial (after forgetting equivariance) local systems.   
\end{example}

Theorem~\ref{thm: image of hypercohomology} establishes that the hypercohomology functor 
\[
\mathbb{H}^\bullet_K: \mc{M}_{LV}^0 \rightarrow \mc{N}_{LV}^0
\]
is essentially surjective. In the following section, we show that it is also fully faithful, and hence establishes an equivalence of categories 
\[
\mc{M}_{LV}^0 \simeq \mc{N}_{LV}^0.
\]
\begin{remark}
This equivalence of categories appears (in slightly different language) as Theorem 3.12 in \cite{BV}.
\end{remark}


\subsection{Hypercohomology is fully faithful}
\label{sec: hypercohomology is fully faithful}

\begin{theorem}
\label{thm: fully faithful}
The equivariant hypercohomology functor 
\[
\HH^\bullet_K:\mc{M}^0_{LV}\longrightarrow \mc{N}_{LV}^0
\]
is fully faithful. 
\end{theorem}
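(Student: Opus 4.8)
The plan is to reduce the statement to a computation on torus-fixed points, following the general philosophy of Soergel's \emph{Erweiterungssatz} but replacing the argument about dimensions in category $\mc{O}$ with localization techniques in the spirit of \cite{GKM} and the theory of parity sheaves of \cite{ParitySheaves}. First I would observe that, since both $\mc{M}^0_{LV}$ and $\mc{N}^0_{LV}$ are generated under grading shifts, direct sums, direct summands, and the (compatible, by Lemma~\ref{lem: push pull is tensoring with Bs}) right actions of $\mc{H}$ and $\SBim$ by the objects $\bm{IC}_\mc{Q}$ and $P_x$ respectively, and $\HH^\bullet_K$ intertwines these actions and is essentially surjective (Theorem~\ref{thm: image of hypercohomology}), it suffices to prove that for any two objects $\mc{F}, \mc{G}$ of $\mc{M}^0_{LV}$ the map
\[
\HH^\bullet_K : \Hom^\bullet_K(\mc{F}, \mc{G}) \longrightarrow \Hom^\bullet_{(P^K,R)\text{-gbim}}(\HH^\bullet_K(\mc{F}), \HH^\bullet_K(\mc{G}))
\]
is an isomorphism of graded $(P^K,R)$-bimodules. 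By the usual dévissage it is enough to check this when $\mc{F}=\bm{IC}_\mc{Q}\ast\mc{E}$ for a closed orbit $\mc{Q}$ and $\mc{E}\in\mc{H}$, and similarly for $\mc{G}$; and by adjunction for the (proper, smooth) push-pull functors $\Theta_s$ and the corresponding $(-\otimes_R B_s, -\otimes_R B_s)$ adjunctions on the algebraic side, one reduces further to the case $\mc{F}=\bm{IC}_\mc{Q}$, i.e.\ to computing $\Hom^\bullet_K(\bm{IC}_\mc{Q}, \mc{G})$ for $\mc{G}$ ranging over the generators of $\mc{M}^0_{LV}$.

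The key step is to show that every object of $\mc{M}^0_{LV}$ is a \emph{$\ast$-parity} (indeed $\ast$-even) complex — this is exactly the content of Lemma~\ref{lemma: push pull star even}, since the generators $\bm{IC}_\mc{Q}$ are $\ast$-even (constant sheaves on closed orbits) and $\Theta_s$ preserves $\ast$-evenness — and to combine this with a localization argument. Concretely, I would use that the closed orbit $\mc{Q}$ contains $T$-fixed points, and that for a $\ast$-even complex $\mc{G}$ the restriction map to the fixed-point locus $X^T = X^{T_K}$ (Lemma~\ref{lem: agreement of fixed points}) is injective on equivariant hypercohomology, with image controlled by the GKM-type description of $H^*_K(X)$ as $P^K\otimes_{R^W}R$ (Lemma~\ref{lem: K equivariant cohomology of X}). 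Since $\bm{IC}_\mc{Q}=i_{\mc{Q}!}\Q_\mc{Q}[\dim\mc{Q}]$ is (co)supported on the \emph{closed} orbit $\mc{Q}$, the adjunction $(i_\mc{Q}^*, i_{\mc{Q}*})$ gives
\[
\Hom^\bullet_K(\bm{IC}_\mc{Q}, \mc{G}) \;\cong\; \Hom^\bullet_{K}(\Q_\mc{Q}, i_\mc{Q}^*\mc{G})[-\dim\mc{Q}] \;\cong\; \HH^\bullet_K(i_\mc{Q}^*\mc{G})[-\dim\mc{Q}],
\]
and since $i_\mc{Q}^*\mc{G}$ is again $\ast$-even on the affine-type space $\mc{Q}\simeq K/K_w$, its equivariant hypercohomology is a free module on which the structure is computed by the first Chern class calculation of Lemma~\ref{lem: standards are equiv cohomology}. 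On the algebraic side one has the matching identity $\Hom^\bullet_{(P^K,R)\text{-gbim}}(P_x, N) \cong N$ (with its induced bimodule structure) for $P_x$ the standard bimodule attached to $x=w\in{}^KW^\theta$, because $P_x$ is free of rank one over $P^K$ and $\Hom_{P^K}(P,N)\cong N$; and this $N$ is precisely $\HH^\bullet_K(i_\mc{Q}^*\mc{G})$ by Theorem~\ref{thm: image of hypercohomology} applied to the (smaller) orbit closures. Chasing these identifications shows the canonical map is an isomorphism for $\mc{F}=\bm{IC}_\mc{Q}$, and the reduction above promotes this to all of $\mc{M}^0_{LV}$.

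The main obstacle I anticipate is controlling the passage from the fixed-point locus back to $X$ — that is, proving that $\HH^\bullet_K$ detects morphisms rather than merely objects. The clean way to do this is to establish, once and for all, that parity complexes in $D^b_K(X)$ are \emph{Soergel objects}: their equivariant hypercohomology is free over $H^*_K(\mathrm{pt})=P^K$, the restriction-to-fixed-points map $\HH^\bullet_K(\mc{G})\hookrightarrow \bigoplus_{w}\HH^\bullet_{T}(x_w)$ is injective, and $\Hom$-spaces between parity complexes are computed by the corresponding "$\Gamma$-graph" conditions inside $P^K\otimes_{R^W}R$. This is where the parity-sheaf input of \cite{ParitySheaves} and the GKM description are genuinely used: they guarantee that the canonical map is injective (by restricting everything to generic points / open strata, where $\mc{G}$ is a shifted local system and the statement is a Leray–Hirsch computation as in Lemma~\ref{lemma: derived pushforward}) and surjective (by a dimension/support count on the strata, exactly as in the proof of Lemma~\ref{lemma: push pull star even}, forcing every bimodule homomorphism to lift). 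I would structure the write-up so that this "Soergel-type" statement for $\ast$-even complexes is proved first as a standalone lemma, after which Theorem~\ref{thm: fully faithful} follows formally from the reductions in the first paragraph together with the explicit $\Hom$-computations $\Hom^\bullet_K(\bm{IC}_\mc{Q}, -)\cong \HH^\bullet_K(i_\mc{Q}^*(-))[-\dim\mc{Q}]$ and $\Hom^\bullet(P_x, -)\cong (-)$ on the two sides.
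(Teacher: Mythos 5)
Your overall architecture — reduce to $\mc{F}=\bm{IC}_\mc{Q}$ via the compatible adjunctions, then use parity of objects in $\mc{M}_{LV}^0$ together with localization to $T_K$-fixed points — is the same as the paper's. But the two explicit identifications you offer as the ``key step'' are both wrong, and fixing them is exactly the missing content. First, $i_{\mc{Q}*}$ is the \emph{right} adjoint of $i_\mc{Q}^*$, so $\Hom_K(\bm{IC}_\mc{Q},\mc{G})$ is \emph{not} $\HH^\bullet_K(i_\mc{Q}^*\mc{G})[-\dim\mc{Q}]$; the correct adjunction is $(i_{\mc{Q}!},i_\mc{Q}^!)$, giving $\Hom_K(\bm{IC}_\mc{Q},\mc{G})\cong\Hom_{D^b_K(\mc{Q})}(\Q_\mc{Q},i_\mc{Q}^!\mc{G})$ (already for $\mc{G}=\Q_X$ on $\mathbb{P}^1$ and $\mc{Q}$ a point, $i^*\mc{G}=\Q_{\mathrm{pt}}$ while $i^!\mc{G}=\Q_{\mathrm{pt}}[-2]$, so the graded Hom-spaces genuinely differ). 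Second, $\Hom_{(P^K,R)\text{-gbim}}(P_x,N)$ is \emph{not} $N$: as a left $P^K$-module $P_x=P$ is free of rank $|W_K|$, not one, and more importantly a bimodule homomorphism must also respect the $x$-twisted right $R$-action, so evaluation at $1$ only embeds $\Hom_{\mathrm{bim}}(P_x,N)$ into $N$ with image the elements $n$ satisfying the ``graph'' condition $(p\otimes 1)\cdot n=(1\otimes x^{-1}r)\cdot n$ for $r\in\phi^{-1}(p)$.

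These two errors are dual to each other and both paper over the actual difficulty: one must show that, inside $\HH^\bullet_K(\mc{G})$, the image of $\Hom_K(\bm{IC}_\mc{Q},\mc{G})$ under the counit of $(i_{\mc{Q}!},i_\mc{Q}^!)$ coincides with the subspace cut out by the bimodule graph condition. This requires (i) that the canonical map $\HH^\bullet_K(i_{\mc{Q}!}i_\mc{Q}^!\mc{G})\to\HH^\bullet_K(\mc{G})$ be injective — which uses the open-closed triangle and the parity of $\mc{G}$ in the style of \cite{ParitySheaves} — and (ii) an identification of its image with the graph-condition subspace after restricting to $T_K$-equivariant cohomology and localizing to fixed points, where one checks that the support condition ``vanishes off the fixed points $W_Kv$ of $\mc{Q}$'' matches the annihilation condition $(p\otimes 1-1\otimes v^{-1}r)\cdot\varphi=0$ coordinatewise on $\bigoplus_w P_w$. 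Your third paragraph correctly names this as the obstacle and defers it to a ``Soergel-type'' lemma, but the sketch given (``a dimension/support count on the strata'') does not engage with either point, so the proposal as written does not constitute a proof of fullness.
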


To prove Theorem~\ref{thm: fully faithful}, we must show that for any $\mc{F}, \mc{G} \in \mc{M}_{LV}^0$, the map
\begin{equation}
\label{eq: fully faithful}
\Hom_K(\mc{F}, \mc{G}) \xrightarrow{\mathbb{H}_K^\bullet} \Hom_{(P^K, R)\mathrm{-gbim}}(\mathbb{H}_K^\bullet (\mc{F}), \mathbb{H}_K^\bullet(\mc{G}))
\end{equation}
is bijective. We will do so in three steps:
\begin{enumerate}
\item First, we establish that $\HH^\bullet_K$ is compatible with certain adjunctions, allowing us to reduce to the case where $\mc{F} = \bm{IC}_\mc{Q}$. This is Lemma~\ref{lem: hypercohomology commutes with adjunctions} and the preceding material.
\item Second, we establish that $\mathbb{H}_K^\bullet$ is faithful by showing that the map \eqref{eq: fully faithful} is injective. This is Theorem~\ref{thm: faithful}. 
\item Third, we establish that $\mathbb{H}_K^\bullet$ is full by showing that the map \eqref{eq: fully faithful} is surjective. This is Theorem~\ref{thm: full}. 
\end{enumerate}

\begin{lemma}
\label{lem: self-adjoint}
For any $s \in S$, the functor $\Theta_s:\mc{M}_{LV}^0 \rightarrow \mc{M}_{LV}^0$ and the functor $- \otimes_R B_s: \mc{N}_{LV}^0 \rightarrow \mc{N}_{LV}^0$ are self-adjoint.
\end{lemma}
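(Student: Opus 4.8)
The statement asserts two self-adjunctions, one geometric (for $\Theta_s$ on $\mc{M}_{LV}^0$) and one algebraic (for $-\otimes_R B_s$ on $\mc{N}_{LV}^0$), so the proof naturally splits into these two halves. For the geometric half, recall that $\Theta_s = \pi_s^*\pi_{s*}[1]$ and that $\pi_s\colon X\to G/P_s$ is a proper smooth morphism of relative dimension one. The plan is to exploit the two adjunctions $(\pi_s^*,\pi_{s*})$ and $(\pi_{s!},\pi_s^!)$ together with the smoothness relation $\pi_s^! \cong \pi_s^*[2]$ and properness $\pi_{s!}=\pi_{s*}$. For $\mc{F},\mc{G}\in\mc{M}_{LV}^0$, one computes
\[
\Hom_K(\mc{F}\Theta_s,\mc{G}) = \Hom_K(\pi_s^*\pi_{s*}\mc{F}[1],\mc{G}) = \Hom_K(\pi_{s*}\mc{F}[1],\pi_{s*}\mc{G}[2][-1])
\]
using $(\pi_s^*,\pi_{s*})$ in the form $(\pi_{s!},\pi_s^!)$ after the shift identification $\pi_s^* = \pi_s^![-2]$; chasing the shifts symmetrically yields $\Hom_K(\mc{F},\pi_s^*\pi_{s*}\mc{G}[1]) = \Hom_K(\mc{F},\mc{G}\Theta_s)$. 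So $\Theta_s$ is its own both-sided adjoint. One must also check that these Hom-computations stay within $\mc{M}_{LV}^0$, which is immediate since $\mc{M}_{LV}^0$ is stable under $\Theta_s$ (established after Lemma~\ref{lem: push pull is convolution with ICs}).

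For the algebraic half, the self-adjunction of $-\otimes_R B_s$ is the standard fact that $B_s = R\otimes_{R^s}R(1)$ is a Frobenius object in the monoidal category of $R$-bimodules: concretely, $B_s\otimes_R B_s \cong B_s(1)\oplus B_s(-1)$ and there are degree $\pm 1$ unit/counit maps $R(-1)\to B_s$, $B_s\to R(-1)$ (and their shifted partners $R(1)\to B_s$, $B_s\to R(1)$) satisfying the zigzag identities up to the appropriate grading shift. The cleanest route is to invoke that $R$ is free of rank two over $R^s$ with a trace form, so that $-\otimes_R B_s = -\otimes_R R\otimes_{R^s}R(1) \cong (\mathrm{Res}^{R}_{R^s}-)\otimes_{R^s}R(1)$, i.e.\ $-\otimes_R B_s$ factors as restriction along $R^s\hookrightarrow R$ followed by extension of scalars along $R^s\hookrightarrow R$, shifted by $(1)$. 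Since restriction and induction between $R$ and $R^s$ are biadjoint up to a shift by $(2)$ coming from the relative dualizing module $R^s(-2)\hookrightarrow R$ (equivalently the trace form), the composite $-\otimes_R B_s$ is self-adjoint: the shift $(1)$ on each side is arranged precisely so that $(\text{shift }(1))+(\text{shift }(1)) = (2)$ matches the shift in the restriction/induction adjunction. This gives natural isomorphisms $\Hom(M\otimes_R B_s, N)\cong \Hom(M, N\otimes_R B_s)$ on both sides; one then checks these Hom-spaces remain in $\mc{N}_{LV}^0$, which again follows from stability of $\mc{N}_{LV}^0$ under $-\otimes_R B_s$ (Definition~\ref{def: LV category}).

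\textbf{Main obstacle.} The computations themselves are routine adjunction chases; the real care is bookkeeping the grading/homological shifts so that the ``self''-adjunction is genuinely with no shift (rather than an adjunction between $\Theta_s$ and $\Theta_s(n)$ for some $n\neq 0$). On the geometric side this is exactly where the $[1]$ in the definition $\Theta_s = \pi_s^*\pi_{s*}[1]$ and the relative-dimension-one shift $\pi_s^!=\pi_s^*[2]$ conspire to cancel; on the algebraic side it is where the $(1)$ in $B_s = R\otimes_{R^s}R(1)$ meets the $(2)$-shift in the Frobenius structure of $R/R^s$. A secondary point worth stating explicitly is the compatibility: the two self-adjunctions should be identified under $\HH_K^\bullet$ via Lemma~\ref{lem: push pull is tensoring with Bs}, so that the unit and counit morphisms on the geometric side are sent to those on the algebraic side; this is what makes Lemma~\ref{lem: self-adjoint} usable in the subsequent reduction step of the proof of Theorem~\ref{thm: fully faithful}. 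I would record this compatibility either here or as an immediate remark, since it is the only reason the lemma is phrased with both functors simultaneously.
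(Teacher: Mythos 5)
Your proposal follows essentially the same route as the paper on both halves: the geometric self-adjunction via the adjunctions $(\pi_s^*,\pi_{s*})$ and $(\pi_{s!},\pi_s^!)$ combined with $\pi_s^!=\pi_s^*[2]$ and $\pi_{s!}=\pi_{s*}$, and the algebraic one via the degree-one Frobenius extension $R^s\subseteq R$ with the $(1)$-shifts in $B_s$ cancelling the $(2)$-shift in the biadjunction. The only blemish is the displayed intermediate term $\Hom_K(\pi_{s*}\mc{F}[1],\pi_{s*}\mc{G}[2][-1])$, which should read $\Hom_K(\pi_{s*}\mc{F}[1],\pi_{s*}\mc{G})$; the compatibility with $\HH_K^\bullet$ that you propose to record is exactly the paper's subsequent Lemma~\ref{lem: hypercohomology commutes with adjunctions}, proved there via dual pairs.
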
  

\begin{proof}
For $\mc{F}, \mc{G} \in D^b_K(X)$, 
\begin{align*}
\Hom(\mc{F}, \pi_s^* \pi_{s*} \mc{G}[1] ) &= \Hom (\mc{F}, \pi_s^!\pi_{s*}\mc{G}[-1]) \\
&= \Hom( \pi_{s!}\mc{F}, \pi_{s*} \mc{G}[-1]) \\ 
&= \Hom(\pi_s^* \pi_{s*} \mc{F}[1], \mc{G}).
\end{align*}
This establishes the first claim. To see that $- \otimes_R B_s$ is self-adjoint, note that the ring inclusion $R^s \hookrightarrow R$ is a degree $1$ Frobenius extension. (See \cite[Ch. 8]{SBim} for a detailed discussion of Frobenius extensions.) Hence we have two adjoint pairs: 
\[
(\Ind_{R^s}^R, \Res_{R^s}^R) \text{ and } (\Res_{R^s}^R(2), \Ind_{R^s}^R).
\]
Each functor can be realized as tensor product: 
\[
\Ind_{R^s}^R(-) = - \otimes_{R^s}R \text{ and } \Res_{R^s}^R(-) = - \otimes_R R.
\]
It follows that for $M,N \in (P^K, R)\mathrm{-gbim}$, 
\begin{align*}
    \Hom(M, N \otimes_R B_s) &= \Hom(M, N \otimes_R R \otimes_{R^s} R(1) ) \\
    &= \Hom(M \otimes_R R, N\otimes_R R(-1)) \\
    &= \Hom(M\otimes_R R \otimes_{R^s} R(1), N) \\ 
    &= \Hom(M \otimes_R B_s, N). 
\end{align*}
\end{proof}

To show that the hypercohomology functor commutes with these adjunctions, we will utilize the categorical module structure of $\mc{M}_{LV}^0$ and $\mc{N}_{LV}^0$. Our arguments depend on some fundamental properties of {\em dual pairs} in monoidal categories, which we recall in Appendix~\ref{app: dual pairs}. We encourage any reader unfamiliar with these tools to read Appendix~\ref{app: dual pairs} before proceding.  

\begin{lemma}
\label{lem: ICs are self-dual}
For $s \in S$, the pair $({\bm{IC}}_s, {\bm{IC}}_s)$ is a dual pair in $\mc{H}$, and the pair $(B_s, B_s)$ is a dual pair in $\SBim$.
\end{lemma}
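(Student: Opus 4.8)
The plan is to recall the general criterion for a dual pair in a monoidal category (q.v.\ Appendix~\ref{app: dual pairs}) and verify it concretely in each of the two cases, exploiting that the two statements are essentially the same statement transported across the equivalence $\HH^\bullet_B\colon\mc{H}\xrightarrow{\sim}\SBim$ of \eqref{eq: hypercohomology is an equivalence}. Since $\HH^\bullet_B(\bm{IC}_s)=B_s$ by \eqref{eq: hypercohomology of ICs} and $\HH^\bullet_B$ is monoidal, it suffices to prove the statement in $\SBim$; the claim for $\mc{H}$ then follows because a monoidal equivalence carries dual pairs to dual pairs. Alternatively one proves both cases directly and in parallel, which is what I would actually write, since the geometric side is also classical and illuminating.

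First I would treat $(B_s,B_s)$ in $\SBim$. One must produce coevaluation and evaluation maps $\eta\colon R\to B_s\otimes_R B_s$ and $\varepsilon\colon B_s\otimes_R B_s\to R$ (of the appropriate internal degrees, using the shift conventions of \eqref{eq: Bs}) satisfying the two zig-zag identities. The decomposition $B_s\otimes_R B_s\simeq B_s(1)\oplus B_s(-1)$ of \eqref{eq: square of Bs} together with the unit/counit of the Frobenius extension $R^s\hookrightarrow R$ (which is the input already used in Lemma~\ref{lem: self-adjoint}) supplies exactly these maps: $\eta$ is the composite of the unit $R\hookrightarrow B_s$ with the inclusion of the $B_s(-1)$ summand, and $\varepsilon$ is the composite of the projection onto $B_s(1)$ with the counit (the Frobenius trace) $B_s\twoheadrightarrow R(2)$, up to the relevant shift. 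The zig-zag identities then reduce to the defining Frobenius identities, i.e.\ to explicit identities among the polynomials $\alpha_s^\vee$, $\partial_s$; this is a short direct computation in $R\otimes_{R^s}R$. I would cite \cite[Ch.\ 8--9]{SBim} for the fact that $B_s$ is self-biadjoint as a one-morphism, which is precisely the assertion that $(B_s,B_s)$ is a dual pair.

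For $({\bm{IC}}_s,{\bm{IC}}_s)$ in $\mc{H}$: here $\bm{IC}_s=\Q_{P_s/B}[1]$, and the needed coevaluation/evaluation maps $\Q_X\to\bm{IC}_s*\bm{IC}_s$ and $\bm{IC}_s*\bm{IC}_s\to\Q_X$ come from the unit and counit of the adjunctions $(\pi_s^*,\pi_{s*})$ and $(\pi_{s!},\pi_s^!)=(\pi_{s*},\pi_s^*[2])$ for the proper smooth $\mathbb{P}^1$-bundle $\pi_s\colon X\to G/P_s$, exactly as in the proof of Lemma~\ref{lem: self-adjoint}; recall $\mc{F}*\bm{IC}_s\simeq\pi_s^*\pi_{s*}\mc{F}[1]$ by Lemma~\ref{lem: push pull is convolution with ICs}, so $\bm{IC}_s*\bm{IC}_s\simeq\pi_s^*\pi_{s*}\Q_X[2]\simeq\Q_X\oplus\Q_X[2]$ by \eqref{eq: splitting of structure sheaf}. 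The zig-zag identities are then the standard triangle identities for these $(\pi_s^*,\pi_{s*})$-adjunctions, suitably shifted; that they hold on the nose (rather than up to the higher terms) uses that $G/P_s$ has no negative cohomology exactly as in the derivation of \eqref{eq: splitting of structure sheaf}. Then I would remark that this also follows from the $\SBim$ case via $\HH^\bullet_B$, giving a second proof and cross-checking the degree conventions.

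The main obstacle I anticipate is purely bookkeeping: getting the internal grading shifts on $\eta$ and $\varepsilon$ to match the conventions fixed in \eqref{eq: Bs} and \eqref{eq: IC sheaf}, so that the maps have degree $0$ in the sense used by the appendix's definition of dual pair, and then checking the zig-zag identities land with no residual shift. There is no deep content—both statements are well known (self-biadjointness of $B_s$, resp.\ of $\Theta_s$)—so the write-up should be short: set up the four structure maps, invoke the Frobenius identities \cite[Ch.\ 8]{SBim} (resp.\ the triangle identities for $(\pi_s^*,\pi_{s*})$), and note the transport across $\HH^\bullet_B$ reconciles the two.
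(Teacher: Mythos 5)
Your overall strategy is the same as the paper's: both for $\mc{H}$ and for $\SBim$, the coevaluation and evaluation are obtained by evaluating the unit and counit of the self-adjunction (of $\Theta_s^B$, resp.\ of $-\otimes_R B_s$) at the monoidal unit, and the dual-pair axioms \eqref{eq: dual pair axiom 1}--\eqref{eq: dual pair axiom 2} are exactly the zig-zag identities for that adjunction. Your additional observation that one case can be transported to the other across the monoidal equivalence $\HH^\bullet_B$ of \eqref{eq: hypercohomology is an equivalence} is valid and would shorten the argument, though the paper prefers to treat both sides directly. Your explicit Frobenius-extension description of the structure maps for $B_s$ is a more concrete version of what the paper does and is correct.

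There is, however, a concrete slip on the geometric side. The monoidal unit of $(\mc{H},*)$ is the skyscraper sheaf $\Q_{B/B}$ at the base point $eB/B$ (denoted $\Q_B$ in the paper's proof), not the constant sheaf $\Q_X$ on all of $G/B$; the coevaluation must be a map $\Q_{B/B}\to\bm{IC}_s*\bm{IC}_s$. Relatedly, your formula $\bm{IC}_s*\bm{IC}_s\simeq\pi_s^*\pi_{s*}\Q_X[2]\simeq\Q_X\oplus\Q_X[2]$ is wrong except in rank one: since $\bm{IC}_s=\Q_{P_s/B}[1]$ (extended by zero), Lemma~\ref{lem: push pull is convolution with ICs} gives $\bm{IC}_s*\bm{IC}_s\simeq\pi_s^*\pi_{s*}\Q_{P_s/B}[2]\simeq\bm{IC}_s[1]\oplus\bm{IC}_s[-1]$, supported on $P_s/B$ and categorifying $b_s^2=(v+v^{-1})b_s$. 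The splitting \eqref{eq: splitting of structure sheaf} concerns $\pi_{s*}\Q_X$ and belongs to the $K$-equivariant module computation (Proposition~\ref{prop: push pull version one}), not to the computation of $\bm{IC}_s*\bm{IC}_s$ inside $\mc{H}$. The fix is exactly the bookkeeping you flagged: replace $\Q_X$ by the unit $\Q_{B/B}$ throughout, as the paper does, and the zig-zag identities for $(\Theta_s^B,\Theta_s^B)$ then give the dual-pair axioms verbatim.
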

\begin{proof}
Denote by $\Theta^B_s: D^b_B(X) \rightarrow D^b_B(X)$ the push-pull functor on $D^b_B(X)$:
\[
\Theta^B_s(\mc{F}):= \pi_s^* \pi_{s*} \mc{F}[1],
\]
where $\pi_s:X \rightarrow G/P_s$ is as in (\ref{eq: pi s}) and $\begin{tikzcd} D^b_B(X) \arrow[r, "\pi_{s*}"', shift  right] & D^b_B(G/P_s) \arrow[l, "\pi_s^*"', shift right] \end{tikzcd}$ are the direct and inverse image functors on the $B$-equivariant derived category. By analogous results to Lemma~\ref{lem: self-adjoint} and Lemma~\ref{lem: push pull is convolution with ICs}, $\Theta^B_s$ is self-adjoint and can be realized as convolution with $\bm{IC}_s$. Let $\eta: id \rightarrow \Theta^B_s \Theta^B_s$ (resp.\ $\epsilon: \Theta^B_s \Theta^B_s \rightarrow id$) be the unit (resp.\ counit) of the adjunction $(\Theta^B_s, \Theta^B_s)$ on $\mc{H}$. Here $id$ is the identity functor on $\mc{H}$. We define co-evaluation and evaluation morphisms in $\mc{H}$ by
\begin{align*}
&\Q_B \xrightarrow{\eta_{\Q_B}} \Q_B \Theta^B_s \Theta^B_s \simeq \Q_B * \bm{IC}_s * \bm{IC_s} \simeq \bm{IC}_s * \bm{IC}_s, \text{ and } \\
&\bm{IC}_s * \bm{IC}_s \simeq \Q_B * \bm{IC}_s * \bm{IC}_s \simeq \Q_B \Theta^B_s \Theta^B_s \xrightarrow{\epsilon_{\Q_B}} \Q_B.
\end{align*}
The zig-zag diagrams for the adjunction $(\Theta^B_s, \Theta^B_s, \eta, \epsilon)$ guarantee that the diagrams (\ref{eq: dual pair axiom 1}) and (\ref{eq: dual pair axiom 2}) commute. 

Similarly, the functor $-\otimes_R B_s: \SBim \rightarrow \SBim$ is self-adjoint, and we denote by $\eta': id \rightarrow - \otimes_RB_s \otimes_R B_s$ and $\epsilon': - \otimes_R B_s \otimes_R B_s \rightarrow id$ the unit and counit of the adjunction. We define co-evaluation and evaluation morphisms in $\SBim$ for the pair $(B_s, B_s)$ by 
\begin{align*}
    & R \xrightarrow{\eta'_R} R \otimes_R B_s \otimes_R B_s \simeq B_s \otimes_R B_s, \text{ and }\\
    & B_s \otimes_R B_s \simeq R \otimes_R B_s \otimes_R B_s \xrightarrow{\eta'_R} R.
\end{align*}
Again, the commutativity of the dual pair diagrams (\ref{eq: dual pair axiom 1}) and (\ref{eq: dual pair axiom 2}) follows from the zig-zag diagrams for the adjunction $(- \otimes_R B_s, - \otimes_R B_s, \eta', \epsilon')$.  
\end{proof}

\begin{lemma}
\label{lem: hypercohomology commutes with adjunctions}
For every $s \in S$ and every pair of objects $\mc{F}, \mc{G} \in \mc{M}_{LV}^0$, the diagram 
\[
\begin{tikzcd}
\Hom(\mc{F}, \mc{G}\Theta_s) \arrow[r, "\sim"] \arrow[d, "\mathbb{H}^\bullet_{K}"] & \Hom(\mc{F} \Theta_s, \mc{G}) \arrow[d, "\mathbb{H}_K^\bullet"] \\
\Hom(\mathbb{H}_K^\bullet(\mc{F}), \mathbb{H}_K^\bullet(\mc{G}) \otimes_R B_s) \arrow[r, "\sim"] & \Hom(\mathbb{H}_K^\bullet(\mc{F}) \otimes_R B_s, \mathbb{H}_K^\bullet (\mc{G}))
\end{tikzcd}
\]
commutes. Here the horizontal arrows are adjunction isomorphisms, and we are using the isomorphism in Lemma~\ref{lem: push pull is tensoring with Bs} to replace  $\mathbb{H}_K^\bullet(\mc{G} \Theta_s)$ with $\mathbb{H}_K^\bullet(\mc{G}) \otimes_R B_s$. 
\end{lemma}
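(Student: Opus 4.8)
The plan is to recognize both horizontal arrows of the square as the adjunction bijections attached to the dual pairs of Lemma~\ref{lem: ICs are self-dual} through the categorical module structures of $\mc{M}_{LV}^0$ over $\mc{H}$ and of $\mc{N}_{LV}^0$ over $\SBim$, and then to reduce commutativity, by naturality, to a single statement about evaluation morphisms. Write $\mathrm{ev}\colon\bm{IC}_s*\bm{IC}_s\to\Q_B$ and $\mathrm{coev}\colon\Q_B\to\bm{IC}_s*\bm{IC}_s$ for the evaluation and coevaluation of the dual pair $(\bm{IC}_s,\bm{IC}_s)$ in $\mc{H}$, and $\mathrm{ev}_{B_s}\colon B_s\otimes_R B_s\to R$, $\mathrm{coev}_{B_s}\colon R\to B_s\otimes_R B_s$ for those of $(B_s,B_s)$ in $\SBim$. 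Then the adjunction isomorphism $\Hom(\mc{F},\mc{G}\Theta_s)\xrightarrow{\sim}\Hom(\mc{F}\Theta_s,\mc{G})$ is $g\mapsto(\id_\mc{G}*\mathrm{ev})\circ(g*\id_{\bm{IC}_s})$, and likewise on the algebraic side with $\mathrm{ev}_{B_s}$; in particular the counit of $(\Theta_s,\Theta_s)$ at $\mc{G}$ is $\id_\mc{G}*\mathrm{ev}$ and that of $(-\otimes_R B_s,-\otimes_R B_s)$ at $M$ is $\id_M\otimes_R\mathrm{ev}_{B_s}$. (That the adjunction of Lemma~\ref{lem: self-adjoint} used in the statement agrees with the one so described follows from the fact that the dual pair in Lemma~\ref{lem: ICs are self-dual} was itself built out of exactly that self-adjunction.)

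Next comes the reduction. Write $c_\mc{F}\colon\HH_K^\bullet(\mc{F}\Theta_s)\xrightarrow{\sim}\HH_K^\bullet(\mc{F})\otimes_R B_s$ for the isomorphism of Lemma~\ref{lem: push pull is tensoring with Bs}. One first observes that $c_\mc{F}$ is natural in $\mc{F}$: this is immediate from its construction, since it is the isomorphism induced by the canonical morphism $\varphi\otimes\psi\mapsto\pi_s^*\psi\circ\varphi$ of Proposition~\ref{prop: push pull version one}, which is visibly natural. Using naturality of the top adjunction isomorphism in $\mc{F}$, it suffices to check commutativity for $\mc{F}=\mc{G}\Theta_s$ and $g=\id$; the top-then-right composite then becomes $\HH_K^\bullet(\id_\mc{G}*\mathrm{ev})$ and the left-then-bottom composite becomes $\id_{\HH_K^\bullet(\mc{G})}\otimes_R\mathrm{ev}_{B_s}$, both regarded as maps $\HH_K^\bullet(\mc{G})\otimes_R B_s\otimes_R B_s\to\HH_K^\bullet(\mc{G})$ after identifying sources by $c$ (applied twice). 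Naturality of $c$ also accounts for the term $\HH_K^\bullet(g*\id_{\bm{IC}_s})$ appearing for general $g$, so the whole square reduces to the single equality
\[
\HH_K^\bullet(\id_\mc{G}*\mathrm{ev})\;=\;\id_{\HH_K^\bullet(\mc{G})}\otimes_R\mathrm{ev}_{B_s}
\]
under the $c$-identifications, for every $\mc{G}\in\mc{M}_{LV}^0$.

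This last equality is the heart of the matter, and it amounts to upgrading Lemma~\ref{lem: push pull is tensoring with Bs} to the statement that $(\HH_K^\bullet,c)$ is a module functor, in the sense of \cite[Ch.~7]{TensorCategoriesBook}, over the monoidal equivalence $\HH_B^\bullet\colon\mc{H}\xrightarrow{\sim}\SBim$ of \eqref{eq: hypercohomology is an equivalence}; granting that, $\HH_K^\bullet(\id_\mc{G}*\phi)=\id_{\HH_K^\bullet(\mc{G})}\otimes_R\HH_B^\bullet(\phi)$ for every morphism $\phi$ in $\mc{H}$, and one is left with $\HH_B^\bullet(\mathrm{ev})=\mathrm{ev}_{B_s}$. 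For the latter, note that $(\Theta_s^B,\Theta_s^B)$ on $\mc{H}$ is assembled, in the proof of Lemma~\ref{lem: self-adjoint}, from the adjunction $(\pi_s^*,\pi_{s*})$ together with the relative Poincar\'e duality $\pi_s^*\cong\pi_s^![-2]$ and $\pi_{s!}\cong\pi_{s*}$, while $(-\otimes_R B_s,-\otimes_R B_s)$ is assembled from $(\Ind_{R^s}^R,\Res_{R^s}^R)$ together with the degree-$1$ Frobenius structure of $R^s\subseteq R$; the $B$-equivariant case of Proposition~\ref{prop: push pull version one} (i.e.\ \cite[\S3.2]{Soergel90}, whose proof via the canonical morphism $\varphi\otimes\psi\mapsto\pi_s^*\psi\circ\varphi$ is manifestly compatible with units and counits) identifies the $\HH_B^\bullet$-image of the first package of adjunction data with the second, hence matches the two counits. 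To finish, one extends $c$ from the generators $\bm{IC}_s$ to all of $\mc{H}$ by additivity and multiplicativity and checks naturality in the $\mc{H}$-variable and the coherence hexagon, both of which are read off the construction in Proposition~\ref{prop: push pull version one}. I expect this last step --- promoting the object-level identity of Lemma~\ref{lem: push pull is tensoring with Bs} to a functorial, monoidal-module statement, and pinning down that $\HH_B^\bullet$ carries $\mathrm{ev}$ to $\mathrm{ev}_{B_s}$ on the nose, with no stray scalar --- to be the main obstacle; it should require no new ideas beyond those already present in the proofs of Proposition~\ref{prop: push pull version one} and Lemma~\ref{lem: ICs are self-dual}, only careful bookkeeping.
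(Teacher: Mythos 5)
Your proposal follows essentially the same route as the paper: both reduce the square to the statement that $\HH_K^\bullet$ carries the unit and counit of the self-adjunction of $\Theta_s$ (expressed via the dual pair $(\bm{IC}_s,\bm{IC}_s)$ and Lemma~\ref{lem: dual pairs give adjunctions}) to those of $-\otimes_R B_s$ (expressed via the dual pair $(B_s,B_s)$). The ``main obstacle'' you isolate --- upgrading Lemma~\ref{lem: push pull is tensoring with Bs} to a module-functor statement so that $\HH_K^\bullet(\id_{\mc{G}}*\mathrm{ev})=\id\otimes_R\mathrm{ev}_{B_s}$ --- is precisely the step the paper dispatches by citing that lemma, whose proof via the canonical morphism of Proposition~\ref{prop: push pull version one} supplies the naturality and counit-compatibility you ask for.
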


\begin{proof} It is enough to show that hypercohomology preserves the unit and counit of each adjunction. In other words, if the unit and counit of the adjoint pair $\begin{tikzcd} \mc{M}_{LV}^0 \arrow[r, shift left, "\Theta_s"] & \mc{M}_{LV}^0 \arrow[l, shift left, "\Theta_s"] \end{tikzcd}$ are $\eta: id \rightarrow \Theta_s \Theta_s$ and $\epsilon: \Theta_s \Theta_s \rightarrow id$, it is enough to show that the collections of morphisms 
\[
\{\mathbb{H}_K^\bullet(\eta_\mc{F})\}_{\mc{F} \in \mc{M}_{LV}^0} \text{ and } \{\mathbb{H}_K^\bullet(\epsilon_\mc{F})\}_{\mc{F} \in \mc{M}_{LV}^0}
\]
define the unit and counit of the adjunction of the adjunction  $\begin{tikzcd} \mc{N}_{LV}^0 \arrow[r, shift left, "-\otimes_R B_s"] & \mc{N}_{LV}^0 \arrow[l, shift left, "- \otimes_R B_s"] \end{tikzcd}$. 

We begin by noting that because $\mathbb{H}_K^\bullet: \mc{M}_{LV}^0 \rightarrow \mc{N}_{LV}^0$ is essentially surjective (Theorem~\ref{thm: image of hypercohomology}), every object $M \in \mc{N}_{LV}^0$ is isomorphic to $\mathbb{H}_K^\bullet(\mc{F})$ for some $\mc{F} \in \mc{M}_{LV}^0$. Hence for each object in $\mc{N}_{LV}^0$, there is a corresponding morphism in $\{\mathbb{H}_K^\bullet(\eta_\mc{F})\}_{\mc{F} \in \mc{M}_{LV}^0}$ and in $\{\mathbb{H}_K^\bullet(\epsilon_\mc{F})\}_{\mc{F} \in \mc{M}_{LV}^0}$.

Because $(\bm{IC}_s, \bm{IC}_s)$ and $(B_s, B_s)$ are dual pairs in $\mc{H}$ and $\SBim$, respectively (Lemma~\ref{lem: ICs are self-dual}), we can apply Lemma~\ref{lem: dual pairs give adjunctions} to $\mc{M}_{LV}^0$ and $\mc{N}_{LV}^0$. We conclude that the unit and counit of the adjunction $(\Theta_s, \Theta_s)$ are
\begin{align*}
&\eta_\mc{F}: \mc{F} \xrightarrow{\mc{F} * \beta} \mc{F} * \bm{IC}_s * \bm{IC}_s \simeq \mc{F} \Theta_s \Theta_s, \text{ and} \\
& \epsilon_\mc{G}: \mc{G} \Theta_s \Theta_s \simeq \mc{G} * \bm{IC}_s * \bm{IC}_s \xrightarrow{\mc{G} * \mc{L}} \mc{G},
\end{align*}
where $\beta: \mathbbm{1} \rightarrow \bm{IC}_s * \bm{IC}_s$ and $\mc{L}: \bm{IC}_s * \bm{IC}_s \rightarrow \mathbbm{1}$ are the co-evaluation and evaluation morphisms for the dual pair $(\bm{IC}_s, \bm{IC}_s)$. Similarly, the unit and counit for the adjoint pair $\begin{tikzcd} \mc{N}_{LV}^0 \arrow[r, shift left, "- \otimes_R B_s"] & \mc{N}_{LV}^0 \arrow[l, shift left, "- \otimes_R B_s"] \end{tikzcd}$ are 
\begin{align*}
    &\eta'_M: M \xrightarrow{M \otimes_R \beta'} M \otimes_R B_s \otimes_R B_s, \text{ and } \\
    &\epsilon_M: N \otimes_R B_s \otimes_R B_s \xrightarrow{N \otimes_R \mc{L}'} N,
\end{align*}
where $\beta': R \rightarrow B_s \otimes_R B_s$ and $\mc{L}': B_s \otimes_R B_s \rightarrow R$ are the co-evaluation and evaluation morphisms for the dual pair $(B_s, B_s)$. 

By Lemma~\ref{lem: push pull is tensoring with Bs}, 
\[
\mathbb{H}_K^\bullet(\eta_\mc{F}) = \mathbb{H}_K^\bullet(\mc{F}) \otimes_R \beta' = \eta'_{\mathbb{H}_K^\bullet(\mc{F})}: \mathbb{H}_K^\bullet(\mc{F}) \rightarrow \mathbb{H}_K^\bullet(\mc{F}) \otimes_R B_s \otimes_R B_s,
\]
and 
\[
\mathbb{H}^\bullet_K(\epsilon_\mc{F}) = \mathbb{H}_K^\bullet(\mc{F}) \otimes \mc{L}' = \epsilon'_{\mathbb{H}^\bullet_K(\mc{F})}: \mathbb{H}_K^\bullet (\mc{F}) \otimes_R B_s \otimes_R B_s \rightarrow \mathbb{H}_K^\bullet(\mc{F}). 
\]
This proves the lemma. 
\end{proof}

This completes the first step in our proof of Theorem~\ref{thm: fully faithful}. By Lemma~\ref{lem: hypercohomology commutes with adjunctions}, proving Theorem~\ref{thm: fully faithful} reduces to showing that the natural map
\begin{equation}
\label{eq: reduction} 
\Hom_K(\bm{IC}_\mc{Q}, \mc{G}) \xrightarrow{\mathbb{H}_K^\bullet} \Hom_{(P^K, R)\mathrm{-gbim}}(\mathbb{H}^\bullet_K(\bm{IC}_\mc{Q}), \mathbb{H}_K^\bullet(\mc{G}))
\end{equation}
is bijective for any $\mc{G} \in \mc{M}_{LV}^0$ and closed $K$-orbit $\mc{Q}$. 

\begin{theorem}
\label{thm: faithful}
The map (\ref{eq: reduction}) is injective for any $\mc{G} \in \mc{M}_{LV}^0$ and closed $K$-orbit $\mc{Q}$. 
\end{theorem}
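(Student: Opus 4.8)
\emph{Strategy.} The plan is to use the adjunction along the closed orbit to replace the source $\bm{IC}_\mc{Q}$ by a ``skyscraper'' on $\mc{Q}\cong K/B_K$, and then to exploit that all objects of $\mc{M}_{LV}^0$ are parity complexes. Write $j\colon X\smallsetminus\mc{Q}\hookrightarrow X$ for the open complement.

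\emph{Step 1 (parity).} Every object of $\mc{M}_{LV}^0$ is a parity complex: the generators $\bm{IC}_\mc{Q}=i_{\mc{Q}*}\Q_\mc{Q}[\dim\mc{Q}]$ are shifted constant sheaves on the smooth closed orbit $\mc{Q}$, and each $\Theta_s$ preserves the class of parity complexes by the $*$-evenness of Lemma~\ref{lemma: push pull star even} together with Verdier self-duality for the $!$-side (cf.\ \cite{ParitySheaves}). Moreover, by the dimension-parity statement in the proof of Lemma~\ref{lemma: push pull star even}, each indecomposable summand occurring in $\mc{M}_{LV}^0$ is supported on $K$-orbits whose complex dimensions all share one parity; since $H^*_K(\mc{O})=H^*_{K_x}(\mathrm{pt})$ is concentrated in even degrees for every orbit $\mc{O}=K/K_x$, a recollement induction shows that the equivariant hypercohomology of any such object is concentrated in a single parity. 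After decomposing $\mc{G}$ we may thus assume $\mc{G}$ and $\bm{IC}_\mc{Q}$ have the same parity, the complementary summands contributing nothing to either side of~\eqref{eq: reduction}; we may also assume $\mc{Q}\subseteq\operatorname{supp}(\mc{G})$, else $\Hom_K(\bm{IC}_\mc{Q},\mc{G})=0$.

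\emph{Step 2 (reduce the source).} Since $\mc{Q}$ is closed, $i_{\mc{Q}!}=i_{\mc{Q}*}$, and the adjunction $(i_{\mc{Q}!},i_\mc{Q}^!)$ gives a natural isomorphism $\Hom_K(\bm{IC}_\mc{Q},\mc{G})\cong\Hom_K(\Q_\mc{Q}[\dim\mc{Q}],i_\mc{Q}^!\mc{G})$. As $\mc{Q}$ has connected isotropy groups, $\Q_\mc{Q}$ is its only irreducible $K$-equivariant local system, and because $\mc{G}$ is parity and $\operatorname{Ext}^{\mathrm{odd}}_K(\Q_\mc{Q},\Q_\mc{Q})=0$, the complex $i_\mc{Q}^!\mc{G}$ splits in $D^b_K(\mc{Q})$ as a finite direct sum of shifts of $\Q_\mc{Q}$. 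On the additive subcategory of $D^b_K(\mc{Q})$ generated by such sheaves the functor $\mathbb{H}_K^\bullet(\mc{Q};-)=\Hom^\bullet_K(\Q_\mc{Q},-)$ is fully faithful, essentially by definition (for shifts of $\Q_\mc{Q}$ it just records multiplication inside $H^*_K(\mc{Q})$). Combined with the tautological identification $\mathbb{H}_K^\bullet(\bm{IC}_\mc{Q})=\mathbb{H}_K^\bullet(\mc{Q};\Q_\mc{Q})[\dim\mc{Q}]$ of Lemma~\ref{lem: standards are equiv cohomology}, applying $\mathbb{H}_K^\bullet(\mc{Q};-)$ identifies the left-hand side of~\eqref{eq: reduction} with $\Hom\big(\mathbb{H}_K^\bullet(\bm{IC}_\mc{Q}),\,\mathbb{H}_K^\bullet(\mc{Q};i_\mc{Q}^!\mc{G})\big)$.

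\emph{Step 3 (compare with $X$).} The counit $i_{\mc{Q}*}i_\mc{Q}^!\mc{G}\to\mc{G}$ sits in the recollement triangle $i_{\mc{Q}*}i_\mc{Q}^!\mc{G}\to\mc{G}\to j_*j^*\mc{G}\xrightarrow{+1}$. All three terms are parity complexes, so by Step 1 all three equivariant hypercohomologies are concentrated in one and the same parity; the connecting maps of the associated long exact sequence shift degree by one and therefore vanish, so the sequence breaks into short exact sequences and $\mathbb{H}_K^\bullet(\mc{Q};i_\mc{Q}^!\mc{G})\to\mathbb{H}_K^\bullet(\mc{G})$ is injective (this is a map of $(P^K,R)$-bimodules; surjectivity of $H^*_K(X)\to H^*_K(\mc{Q})$, which one checks from $\phi$ being surjective, shows the bimodule structure is the full relevant one). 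By functoriality of the adjunction and of $R\Gamma_K$, the composite of the identification of Step 2 with post-composition by this injection is exactly the map~\eqref{eq: reduction}; hence~\eqref{eq: reduction} is injective, which is the claim. (Equivalently: if $\mathbb{H}_K^\bullet(f)=0$ then the injection of Step 3 forces $\mathbb{H}_K^\bullet(\mc{Q};\bar f)=0$ for the adjoint $\bar f\colon\Q_\mc{Q}[\dim\mc{Q}]\to i_\mc{Q}^!\mc{G}$, and full faithfulness in Step 2 forces $\bar f=0$, whence $f=0$.)

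\emph{Main obstacle.} The substantive input is Step 1 — that the $K$-orbit stratification of $G/B$ and the functors $\Theta_s$ are compatible with parity-sheaf theory, with enough control on pariversities and shifts that the three terms of the recollement triangle in Step 3 have equivariant hypercohomology in a single, common parity. Lemma~\ref{lemma: push pull star even} does the bulk of this; the remaining point is bookkeeping. Everything downstream of Step 1 is formal homological algebra. (An alternative route to the injectivity in Step 3 uses localization in the spirit of \cite{GKM}: $\mathbb{H}^\bullet_{T_K}(\mc{G})$ is a free $H^*_{T_K}(\mathrm{pt})$-module, hence torsion-free, so restriction to the $T$-fixed points lying in $\mc{Q}$, which by Lemma~\ref{lem: agreement of fixed points} are genuine $T$-fixed points, is injective and detects $f$.)
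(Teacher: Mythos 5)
Your proof is correct and follows essentially the same route as the paper: reduce along the adjunction $(i_{\mc{Q}!}, i_\mc{Q}^!)$, then use the parity of objects of $\mc{M}_{LV}^0$ (the paper's Corollary~\ref{cor: M_LV is parity}) together with the open--closed distinguished triangle to show that the counit induces an injection on equivariant hypercohomology (the paper's Lemma~\ref{lem: injective}). The one point you dispatch with ``by functoriality of the adjunction and of $R\Gamma_K$'' --- that your composite really coincides with the map \eqref{eq: reduction} --- is exactly the content of the paper's commuting-pentagon Lemma~\ref{lem: commuting pentagon}, a routine but nontrivial diagram chase that should not be omitted.
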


The proof of Theorem~\ref{thm: faithful} proceeds as follows. First we show that for any $\mc{G} \in D^b_K(X)$, the injectivity of (\ref{eq: reduction}) follows from the injectivity of the canonical map 
\begin{equation}
\label{eq: counit of shriek}
\mathbb{H}_K^\bullet(i_{\mc{Q}!}i_\mc{Q}^! \mc{G}) \rightarrow \mathbb{H}_K^\bullet(\mc{G})
\end{equation}
coming from the adjunction $(i_{\mc{Q}!}, i_\mc{Q}^!)$. This is the content of Lemma~\ref{lem: commuting pentagon}. Then we show that (\ref{eq: counit of shriek}) is injective for objects $\mc{G} \in D^b_K(X)$ satisfying certain parity conditions (Lemma~\ref{lem: injective}). Finally, we show that objects in $\mc{M}_{LV}^0$ satisfy the necessary parity conditions (Corollary \ref{cor: M_LV is parity}).  

We start by recalling some basic facts in $D^b_K(X)$. Denote by $i:=i_\mc{Q}:\mc{Q} \hookrightarrow X$, and let $\mc{G} \in D^b_K(X)$ be arbitrary. Denote the hypercohomology functor on $D^b_K(\mc{Q})$ by $\mathbb{H}_K^\mc{Q} := \Hom_{D^b_K(\mc{Q})}^\bullet(\Q_\mc{Q}, -): D^b_K(\mc{Q}) \rightarrow H^*_K(\mc{Q})\mathrm{-gmod}$. The adjunction $(i^*, i_*)$ gives isomorphisms
\begin{equation}
\label{eq: one}
\mathbb{H}_K^\bullet(\bm{IC}_\mc{Q}) \simeq \mathbb{H}_K^\mc{Q}(\Q_\mc{Q})\simeq H^*_K(\mc{Q}).
\end{equation}
The adjunction $(i_!, i^!)$ gives an injection 
\begin{equation}
\label{eq: two}
\Hom_K(\bm{IC}_\mc{Q}, \mc{G}) \simeq \Hom_{D_K^b(\mc{Q})}(\Q_\mc{Q}, i^! \mc{G}) \hookrightarrow \Hom^\bullet_{D^b_K(\mc{Q})}(\Q_\mc{Q}, i^!\mc{G}) = \mathbb{H}^\mc{Q}_K(i^! \mc{G}). 
\end{equation}
Because $i^*\Q_X = \Q_\mc{Q}$, we have 
\begin{equation}
\label{eq: three}
\Hom^\bullet_{D_K^b(\mc{Q})}(\Q_\mc{Q}, i^! \mc{G}) \simeq \Hom^\bullet_K(\Q_X, i_* i^! \mc{G}) \simeq \mathbb{H}_K^\bullet (i_! i^! \mc{G}).
\end{equation}
And finally, because $H^*_K(\mc{Q})$ is cyclic as a $H^*_K(X)$-module (Lemma~\ref{lem: standards are equiv cohomology}), there is a natural injecton 
\begin{equation}
\label{eq: four}
\Hom_K(H^*_K(\mc{Q}), \mathbb{H}_K(\mc{G})) \hookrightarrow \mathbb{H}_K(\mc{G})
\end{equation}
given by $\varphi \mapsto \varphi(1)$. 

For $M,N \in (P^K, R)\mathrm{-gbim}$, denote by 
\[
\Hom_\mathrm{bim}(M,N):= \Hom_{(P^K, R)\mathrm{-gbim}}(M,N).
\]
\begin{lemma}
\label{lem: commuting pentagon}
The diagram 
\begin{equation}
\label{eq: commuting pentagon}
\begin{tikzcd}
\Hom_K(\bm{IC}_\mc{Q}, \mc{G}) \arrow[d, hookrightarrow, "a"'] \arrow[rr, "\mathbb{H}_K^\bullet"]  & & \Hom_{\mathrm{bim}}(H^*_K(\mc{Q}), \mathbb{H}^\bullet_K(\mc{G})) \arrow[d, hookrightarrow, "d"]\\
\mathbb{H}_K^\mc{Q}(i^!\mc{G}) \arrow[dr, "b", "\sim"']  & & \mathbb{H}^\bullet_K(\mc{G}) \\
& \mathbb{H}_K^\bullet(i_!i^!\mc{G}) \arrow[ur, "c"] &
\end{tikzcd}
\end{equation}
obtained from (\ref{eq: one})-(\ref{eq: four}) commutes for any $\mc{G} \in D^b_K(X)$. 
\end{lemma}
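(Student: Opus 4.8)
The plan is to show that the pentagon \eqref{eq: commuting pentagon} commutes by unwinding every arrow into an explicit formula in terms of the standard adjunction (co)units and checking the two composites agree on elements. The maps $a$, $b$, $c$, $d$ are all built from canonical adjunction morphisms (for $(i^*,i_*)$, $(i_!,i^!)$, and the cyclicity of $H^*_K(\mc Q)$ over $H^*_K(X)$), so the whole diagram is a diagram of natural transformations; by the naturality of each piece the verification reduces to a single calculation on the image of the generator. Concretely, I would chase the identity map $\mathrm{id}\in\Hom_K(\bm{IC}_\mc Q,\bm{IC}_\mc Q)$ through both branches after first reducing the general $\mc G$ to the universal case $\mc G=\bm{IC}_\mc Q$ (since every arrow in \eqref{eq: commuting pentagon} is natural in $\mc G$ and every $\varphi\colon\bm{IC}_\mc Q\to\mc G$ factors as $\mc G_*(-)$ applied to $\mathrm{id}$, naturality lets one conclude commutativity for all $\mc G$ from the single case).

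First I would fix notation: write $\eta,\varepsilon$ for the unit/counit of $(i^*,i_*)$ and $\eta',\varepsilon'$ for those of $(i_!,i^!)$, and recall $i^*\Q_X=\Q_\mc Q=i^!\bm{IC}_\mc Q[\text{shift}]$ clean-ly since $\mc Q$ is closed (so $i_!=i_*$ on the constant sheaf). Then:
\begin{itemize}
\item[(i)] The left-hand composite $b\circ a$ sends $\varphi\colon\bm{IC}_\mc Q\to\mc G$ to the element of $\mathbb H^\bullet_K(i_!i^!\mc G)$ corresponding under $(i_!,i^!)$-adjunction and $i^*\Q_X=\Q_\mc Q$ to the morphism $\Q_\mc Q\to i^!\mc G$ adjoint to $\varphi$; composing with $c$ (which is $\mathbb H^\bullet_K$ of the counit $i_!i^!\mc G\to\mc G$) recovers exactly $\mathbb H^\bullet_K(\varphi)$ precomposed with $\mathbb H^\bullet_K$ of the counit $i_!i^!\bm{IC}_\mc Q\to\bm{IC}_\mc Q$, i.e.\ the image of $1\in H^*_K(\mc Q)$.
\item[(ii)] The right-hand composite $d\circ\mathbb H^\bullet_K$ sends $\varphi$ to the map of bimodules $\mathbb H^\bullet_K(\varphi)\colon H^*_K(\mc Q)\to\mathbb H^\bullet_K(\mc G)$ and then evaluates at the generator $1\in H^*_K(\mc Q)$ (that is the definition of $d$), giving $\mathbb H^\bullet_K(\varphi)(1)$.
\end{itemize}
So both branches produce ``$\mathbb H^\bullet_K(\varphi)$ evaluated at the generator,'' and the content of the lemma is precisely that the two descriptions of ``the generator'' match — one coming from the triangle identity for $(i_!,i^!)$ applied to $\Q_\mc Q$, the other from the isomorphism $\mathbb H^\bullet_K(\bm{IC}_\mc Q)\simeq H^*_K(\mc Q)$ of \eqref{eq: one}. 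This compatibility is exactly the statement that $\mathbb H^\bullet_K$ takes the $(i_!,i^!)$-adjunction counit on $\bm{IC}_\mc Q$ to the structure isomorphism, which is built into the identifications \eqref{eq: one}–\eqref{eq: three}.

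The main obstacle — and really the only nontrivial point — is bookkeeping: making sure that the three distinct canonical maps feeding into $a$, $b$, $c$ and the cyclicity map feeding into $d$ are set up with mutually consistent conventions (in particular that the ``$\simeq$'' in $\mathbb H^\bullet_K(\bm{IC}_\mc Q)\simeq H^*_K(\mc Q)$ is literally the one induced by $i^*$, and that the injection $d$ uses the same generator that arises when pushing $\Q_\mc Q$ forward). Once the conventions are pinned down, commutativity is a formal consequence of the triangle (zig-zag) identities and the naturality of adjunction units and counits; no geometry beyond $\mc Q$ being closed (so $i$ proper and the sheaf clean) is needed. I would therefore present the proof as: (1) reduce to $\mc G=\bm{IC}_\mc Q$, $\varphi=\mathrm{id}$ by naturality; (2) expand each arrow via its defining adjunction; (3) invoke the relevant triangle identity to identify the two resulting elements of $\mathbb H^\bullet_K(\mc G)$.
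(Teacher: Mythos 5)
Your proposal is correct and follows essentially the same route as the paper: unwind $a$, $b$, $c$, $d$ into explicit composites of adjunction units and counits, identify the counterclockwise image of $\varphi$ as $\overset{!}{\epsilon}_{\mc G}\circ i_!i^!\varphi\circ i_!\overset{!}{\eta}_{\Q_\mc Q}\circ\overset{*}{\eta}_{\Q_X}$ and the clockwise image as $\varphi\circ\overset{*}{\eta}_{\Q_X}$, and conclude by the zig-zag identity for $(i_!,i^!)$ together with naturality of $\overset{!}{\epsilon}$. The only difference is your preliminary Yoneda-style reduction to $\varphi=\mathrm{id}$, which is valid but unnecessary — the paper simply chases a general $\varphi$, and the computation is identical.
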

\begin{proof}
Denote by $ \overset{*}{\eta}:\mathbbm{1} \rightarrow i_*i^*$ the unit of the adjunction $(i^*, i_*)$, and by $\overset{!}{\eta}: \mathbbm{1} \rightarrow i^! i_!$ (resp.\ $\overset{!}{\epsilon}: i_! i^! \rightarrow \mathbbm{1}$) the unit (resp.\ counit) of the adjunction $(i_!, i^!)$. 

Let $(\bm{IC}_\mc{Q} \xrightarrow{\varphi} \mc{G}) \in \Hom_K(\bm{IC}_\mc{Q}, \mc{G})$. We start by computing the image of $\varphi$ as we move counterclockwise around the diagram. Under the injection $a$, $\varphi$ maps to the composition 
\[
\Q_\mc{Q} \xrightarrow{\overset{!}{\eta}_{\Q_\mc{Q}}} i^! i_! \Q_\mc{Q} \xrightarrow{i^! \varphi} i^! \mc{G}.
\]
Applying the isomorphism $b$ to this and using the fact that $\Q_\mc{Q}=i^*\Q_X$, we obtain
\[
\Q_X \xrightarrow{\overset{*}{\eta}_{\Q_X}} i_* i^* \Q_X \xrightarrow{i_*\overset{!}{\eta}_{\Q_\mc{Q}}} i_* i^! i_! \Q_\mc{Q} \xrightarrow{ i_* i^! \varphi} i_* i^! \mc{G}.
\]
Finally, applying the map $c$ and using the fact that $i_!=i_*$, we obtain the homomorphism
\begin{equation}
\label{eq: counterclockwise}
\Q_X \xrightarrow{\overset{*}{\eta}_{\Q_X}} i_! i^* \Q_X \xrightarrow{i_! \overset{!}{\eta}_{\Q_\mc{Q}}} i_! i^! i_! \Q_\mc{Q} \xrightarrow{i_! i^! \varphi} i_! i^! \mc{G} \xrightarrow{\overset{!}{\epsilon}_\mc{G}} \mc{G}
\end{equation}
in $\mathbb{H}_K^\bullet(\mc{G})$.

Now we compute the image of $\varphi$ as we move clockwise around the diagram. Under $\mathbb{H}_K^\bullet$, $\varphi$ maps to 
\[
H^*_K(\mc{Q}) \simeq \Hom_{D^b_K(\mc{Q})}^\bullet(i^*\Q_X, \Q_\mc{Q}) \xrightarrow{\sim} \Hom_K^\bullet(\Q_X, i_*\Q_\mc{Q}) \xrightarrow{\mathbb{H}_K^\bullet(\varphi)} \Hom_K^\bullet (\Q_X, \mc{G}) = \mathbb{H}^\bullet_K(\mc{G}),
\]
where the first arrow is the adjunction isomorphism sending 
\[
(\Q_\mc{Q} = i^*\Q_X \xrightarrow{f} \Q_\mc{Q}) \mapsto (\Q_X \xrightarrow{\overset{*}{\eta}_{\Q_X}} i_* i^* \Q_X \xrightarrow{i_* f} i_* \Q_\mc{Q} )
\]
and the second arrow is post-composition with $\varphi$. Under $d$, this maps to the image of the identity map:
\begin{equation}
\label{eq: clockwise}
\Q_X \xrightarrow{\overset{*}{\eta}_{\Q_X}} i_* i^* \Q_X = i_* \Q_\mc{Q} \xrightarrow{\varphi} \mc{G}.
\end{equation}
The sequences (\ref{eq: clockwise}) and (\ref{eq: counterclockwise}) fit into the following diagram:
\[
\begin{tikzcd}
\Q_X \arrow[r, "\overset{*}{\eta}_{\Q_X}"] & i_! i^* \Q_X \arrow[r, "i_! \overset{!}{\eta}_{\Q_\mc{Q}}"] \arrow[rd, "id"'] & i_! i^! i_! \Q_\mc{Q} \arrow[r, "i_! i^! \varphi"] \arrow[d, "\overset{!}{\epsilon}_{i_!\Q_\mc{Q}}"] & i_! i^! \mc{G} \arrow[r, "\overset{!}{\epsilon}_\mc{G}"] & \mc{G} \\
& & i_* \Q_\mc{Q} \arrow[rru, "\varphi"'] & &
\end{tikzcd}
\]
The zig-zag diagrams for the adjunction $(i_!, i^!)$ guarantee that the left triangle commutes. The right triangle commuted by the naturality of $\overset{!}{\epsilon}$. Hence the images agree, and the diagram (\ref{eq: commuting pentagon}) commutes. 
\end{proof}

Next we establish a condition on complexes $\mc{G} \in D^b_K(X)$ which guarantees that the map (\ref{eq: counit of shriek}) is injective. We will do this by applying parity results from \cite{ParitySheaves}. To begin, we need to establish that our geometric setting satisfies the necessary assumptions of loc.\ cit. 

Let $\Lambda$ be the stratification of $X$ by $K$-orbits. We need to show that the pair $(X, \Lambda)$ satisfies conditions (2.1) and (2.2) of \cite{ParitySheaves}. Because we are taking coefficients in a field, these assumptions reduce to establishing the following lemma \cite[Rmk.\ 2.2]{ParitySheaves}.

\begin{lemma}
\label{lem: odd cohomology of K-orbits vanishes}
For any $K$-orbit $\mc{O}$ in $X$ and $\mc{L} \in \Loc_K(\mc{O})$, $H^n_K(\mc{O}; \mc{L})=0$ for $n$ odd. 
\end{lemma}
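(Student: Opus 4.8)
The statement is that for any $K$-orbit $\mc{O} \subseteq X$ and any $\mc{L} \in \Loc_K(\mc{O})$, the equivariant cohomology $H^n_K(\mc{O}; \mc{L})$ vanishes in odd degrees. The plan is to reduce this to a computation of the $T_K$-equivariant (equivalently, $T$-equivariant) cohomology of a point, using the homogeneity of the orbit. First I would write $\mc{O} \simeq K/K_x$ where $K_x = \stab_K(x)$, so that by the induction isomorphism in equivariant cohomology (as cited from \cite{Brion}), $H^\bullet_K(\mc{O}; \mc{L}) = H^\bullet_{K_x}(\pt; \mc{L}_x)$, where $\mc{L}_x$ is the representation of $K_x$ (factoring through the finite group $\pi_0(K_x)$) corresponding to $\mc{L}$. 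Next I would pass from $K_x$ to its identity component $K_x^0$: since $\pi_0(K_x)$ is finite and we are working with $\Q$-coefficients, taking $\pi_0(K_x)$-invariants is exact, so $H^\bullet_{K_x}(\pt; \mc{L}_x)$ is a direct summand of $H^\bullet_{K_x^0}(\pt; \mc{L}_x|_{K_x^0}) = H^\bullet_{K_x^0}(\pt) \otimes \mc{L}_x$ (the local system is trivial after restriction to the connected group $K_x^0$). Thus it suffices to show $H^n_{K_x^0}(\pt)$ vanishes in odd degrees.

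For the connected reductive-or-solvable group $K_x^0$, I would use that it is homotopy equivalent (after choosing a maximal compact) to a compact connected Lie group, whose classifying space has cohomology concentrated in even degrees: $H^\bullet(BK_x^0; \Q) = H^\bullet(BT_{K_x^0}; \Q)^{W(K_x^0)}$, a ring of invariant polynomials, hence purely even. (Equivalently: $K_x^0$ retracts onto a maximal torus up to the Weyl group action at the level of classifying spaces, and $H^\bullet(BT; \Q)$ is a polynomial ring in degree-$2$ generators.) Concretely, one can invoke $H^\bullet_{K_x^0}(\pt) = \Q[\mf{t}_{K_x^0}^*]^{W(K_x^0)}$ where $\mf{t}_{K_x^0}^*$ sits in degree $2$, which is manifestly even. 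Assembling the three reductions gives the claim.

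I expect the only genuine subtlety is the bookkeeping in the second step — checking that the $\pi_0(K_x)$-equivariant cohomology with coefficients in the finite-group representation $\mc{L}_x$ really is a summand of the untwisted $K_x^0$-equivariant cohomology, i.e. that the local system becomes trivial on restriction to $K_x^0$ and that invariants under the finite group $\pi_0(K_x)$ commute with taking cohomology over $\Q$. Both are standard: $\pi_1(\mc{O})$ surjects onto $\pi_0(K_x)$ via the long exact sequence of the fibration $K_x \hookrightarrow K \twoheadrightarrow \mc{O}$ (as already used in Example~\ref{example: type II}), so the monodromy of $\mc{L}$ is captured entirely by the $\pi_0(K_x)$-action and the pullback along $EK_x^0 \to EK_x^0/\pi_0(K_x)$ trivializes it; and the Hochschild--Serre / transfer argument for the finite quotient is exact with $\Q$-coefficients. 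Once these are in place, no further calculation is needed, since everything reduces to the purely-even cohomology of a torus classifying space. I do not anticipate a hard obstacle here — the lemma is of the ``standard but needs to be stated'' variety flagged in the text as verifying hypotheses (2.1)–(2.2) of \cite{ParitySheaves}.
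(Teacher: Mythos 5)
Your argument is correct and is essentially the paper's proof: both reduce, via the homogeneity of $\mc{O}$ and the finite quotient $K_x/K_x^\circ$ (harmless over $\Q$), to showing that $H^*_{K_x^\circ}(\pt)$ is concentrated in even degrees, which follows since it is a ring of Weyl-group-invariant polynomials on a Cartan. The paper packages the finite-quotient step by observing that every simple $\mc{L}$ is a summand of $\pi_*\Q_{K/K_x^\circ}$ (the regular representation of the component group), whereas you phrase it via the induction isomorphism and $\pi_0(K_x)$-invariants, but these are the same argument.
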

\begin{proof}
Let $x \in \mc{O}$, $K_x = \stab_Kx$, and $K_x^\circ \subseteq K_x$ the connected component of the identity. There is a fibration
\[
\widetilde{\mc{O}}:=K/K_x^\circ \xrightarrow{\pi} \mc{O} \simeq K/K_x
\]
given by $gK_x^\circ \mapsto g \cdot x$. The fibres of $\pi$ are isomorphic to $K_x/K_x^\circ$, which has exponent $2$ \cite{LV}. The category $\Loc_K(\mc{O})$ is equivalent to the category of $\C[K_x/K_x^\circ]$-modules. In particular, $\Loc_K(\mc{O})$ is semisimple because $K_x/K_x^\circ$ is abelian. Under this equivalence, $\pi_*\Q_{\widetilde{\mc{O}}}$ corresponds to the regular representation. Hence all simple $\mc{L} \in \Loc_K(\mc{O})$ appear as summands of $\pi_*\Q_{\widetilde{\mc{O}}}$, so it is enough to show that $H^*_K(\mc{O}; \pi_*\Q_{\widetilde{\mc{O}}})$ vanishes in odd degree. 

Let $(K_x^\circ)^\mathrm{red}$ be the maximal reductive quotient of $K_x^\circ$. We have
\[
H^*_K(\mc{O}; \pi_*\Q_{\widetilde{\mc{O}}}) = H^*_K(\widetilde{\mc{O}}) = H^*_{K_x^\circ}(\mathrm{pt}) = H^*_{(K_x^\circ)^\mathrm{red}}(\mathrm{pt}) = \left( H^*_{T'}(\mathrm{pt})\right)^{W'},
\]
where $T'$ is a maximal torus in $(K_x^\circ)^\mathrm{red}$ and $W'$ is the Weyl group of $(K_x^\circ)^\mathrm{red}$. It is a classical result that $H_{T'}^*(\mathrm{pt})$ is a polynomial ring with generators in even degree (see Borel isomorphism in proof of Lemma~\ref{lem: standards are equiv cohomology}), which establishes the result. 
\end{proof}

Lemma~\ref{lem: odd cohomology of K-orbits vanishes} guarantees that we can apply results in \cite{ParitySheaves} in our setting. 

\begin{definition}
\label{def: parity} Let $\mc{G} \in D^b_K(X)$, and denote by $\mc{H}^i(\mc{G})$ its cohomology sheaves. For an orbit $\mc{O}$, denote by $k_\mc{O}: \mc{O} \rightarrow X$ the locally closed inclusion map. 
\begin{enumerate}
\item The complex $\mc{G}$ is {\em $*$-even} (resp.\ {\em $!$-even}) if for all $K$-orbits $\mc{O}$ in $X$, $\mc{H}^n(k_\mc{O}^*\mc{G})$ (resp.\ $\mc{H}^n(k_\mc{O}^! \mc{G})$) vanishes for $n$ odd. 
\item The complex $\mc{G}$ is {\em $*$-odd} (resp.\ {\em $!$-odd}) if $\mc{G}[1]$ is $*$-even (resp.\ $!$-even). 
\item The complex $\mc{G}$  is {\em even} (resp.\ {\em odd}) if it is both $*$-even (resp.\ $*$-odd) and $!$-even (resp.\ $!$-odd). 
\item The complex $\mc{G}$ is {\em parity} if it splits into the direct sum of an even and an odd complex. 
\end{enumerate}
\end{definition}

\begin{lemma}
\label{lem: hom vanishing}
\cite[Cor.\ 2.8]{ParitySheaves} If $\mc{F}$ is $*$-even and $\mc{G}$ is $!$-odd, then \[
\Hom_K(\mc{F}, \mc{G})=0.
\] 
\end{lemma}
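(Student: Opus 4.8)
The plan is to prove this directly by dévissage along the $K$-orbit stratification of $X$, following \cite[Cor.\ 2.8]{ParitySheaves}: first reduce to a single orbit using recollement, then reduce to local systems using the $t$-structure truncations, at which point the statement is exactly the odd-vanishing of Lemma~\ref{lem: odd cohomology of K-orbits vanishes}. Note that Lemma~\ref{lem: odd cohomology of K-orbits vanishes} is precisely the hypothesis needed to invoke \cite{ParitySheaves} in our setting, so the argument below is really the proof of that cited corollary specialized to $(X,\Lambda)$.

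\emph{Reduction to a single orbit.} I would induct on $N(\mc{F}):=\#\{\mc{O}:k_\mc{O}^*\mc{F}\neq 0\}$. If $\mc{F}=0$ there is nothing to prove. Otherwise $\operatorname{supp}\mc{F}$ is a nonempty closed $K$-stable subset of $X$, hence contains a closed $K$-orbit $\mc{Q}$; write $i\colon\mc{Q}\hookrightarrow X$ for the closed inclusion and $j\colon V:=X\smallsetminus\mc{Q}\hookrightarrow X$ for the open complement. Applying $\Hom_K(-,\mc{G})$ to the recollement triangle $j_!j^*\mc{F}\to\mc{F}\to i_*i^*\mc{F}\xrightarrow{+1}$ produces a long exact sequence, so it suffices to show $\Hom_K(j_!j^*\mc{F},\mc{G})=0$ and $\Hom_K(i_*i^*\mc{F},\mc{G})=0$. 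For the first, $(j_!,j^!)$-adjunction with $j^!=j^*$ gives $\Hom_K(j_!j^*\mc{F},\mc{G})\cong\Hom_{D^b_K(V)}(j^*\mc{F},j^*\mc{G})$; on $V$ the complex $j^*\mc{F}$ is still $*$-even and $j^*\mc{G}$ still $!$-odd, $V$ still satisfies the hypothesis of Lemma~\ref{lem: odd cohomology of K-orbits vanishes} (its orbits are orbits of $X$), and $N(j^*\mc{F})=N(\mc{F})-1$ since $\mc{Q}\subseteq\operatorname{supp}\mc{F}$, so this term vanishes by the inductive hypothesis.

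\emph{The single-orbit term.} By $(i_!,i^!)$-adjunction (with $i_!=i_*$ since $i$ is closed), $\Hom_K(i_*i^*\mc{F},\mc{G})\cong\Hom_{D^b_K(\mc{Q})}(i^*\mc{F},i^!\mc{G})$. Put $\mc{M}:=i^*\mc{F}$ and $\mc{N}:=i^!\mc{G}$; by hypothesis $\mc{H}^n(\mc{M})=0$ for $n$ odd and $\mc{H}^n(\mc{N})=0$ for $n$ even. Both complexes are bounded, so a finite induction using the truncation triangles $\tau_{\le n}(-)\to(-)\to\tau_{>n}(-)\xrightarrow{+1}$ — noting that truncation preserves the parity of the set of nonvanishing cohomology sheaves — reduces the vanishing of $\Hom_{D^b_K(\mc{Q})}(\mc{M},\mc{N})$ to that of $\Hom_{D^b_K(\mc{Q})}(\mc{L}[-a],\mc{L}'[-b])$ for all $K$-equivariant local systems $\mc{L},\mc{L}'$ on $\mc{Q}$ and all even $a$, odd $b$. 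Since $\mc{L}$ is locally free of finite rank, $R\cHom(\mc{L},\mc{L}')=\cHom(\mc{L},\mc{L}')$ is again a $K$-equivariant local system, and
\[
\Hom_{D^b_K(\mc{Q})}(\mc{L}[-a],\mc{L}'[-b])\;=\;\Hom^{\,a-b}_{D^b_K(\mc{Q})}(\mc{L},\mc{L}')\;=\;H^{a-b}_K(\mc{Q};\cHom(\mc{L},\mc{L}')).
\]
As $a-b$ is odd, Lemma~\ref{lem: odd cohomology of K-orbits vanishes} gives $H^{a-b}_K(\mc{Q};\cHom(\mc{L},\mc{L}'))=0$, completing both inductions and the proof.

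The only step requiring genuine care is the truncation dévissage in the single-orbit case: one must check that the intermediate truncations of $\mc{M}$ (resp.\ $\mc{N}$) still have cohomology sheaves concentrated in even (resp.\ odd) degrees and that the process terminates by boundedness; everything else is a formal consequence of the recollement six-functor formalism together with Lemma~\ref{lem: odd cohomology of K-orbits vanishes}.
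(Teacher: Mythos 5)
The paper gives no proof of this lemma: it is imported verbatim from \cite[Cor.\ 2.8]{ParitySheaves}, with Lemma~\ref{lem: odd cohomology of K-orbits vanishes} serving only to verify that the standing hypotheses of that reference hold for the $K$-orbit stratification. Your d\'evissage is exactly the argument of loc.\ cit.\ specialized to this setting, and it is correct in substance; you have in effect written out the proof the paper delegates to the literature. Two small repairs are worth making. First, the induction variable: you induct on $N(\mc{F})=\#\{\mc{O}: k_\mc{O}^*\mc{F}\neq 0\}$ and choose a closed orbit $\mc{Q}\subseteq\operatorname{supp}\mc{F}$, but $\mc{Q}\subseteq\operatorname{supp}\mc{F}$ does not force $k_\mc{Q}^*\mc{F}\neq 0$ (e.g.\ $\mc{F}=j_!\Q_\mc{O}$ for $\mc{O}$ open dense has full support yet vanishing stalks on every closed orbit), so $N$ need not drop at the inductive step. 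Inducting instead on the number of orbits contained in $\operatorname{supp}\mc{F}$ (or in the ambient open union of strata) fixes this, since $\operatorname{supp}(j^*\mc{F})=\operatorname{supp}(\mc{F})\smallsetminus\mc{Q}$ loses an orbit, and the closed term of the recollement is handled exactly as you do whether or not $i^*\mc{F}$ vanishes. Second, Lemma~\ref{lem: odd cohomology of K-orbits vanishes} is stated for irreducible $\mc{L}\in\Loc_K(\mc{O})$, whereas you apply it to $\cHom(\mc{L},\mc{L}')$; this is harmless because the category of $K$-equivariant local systems on an orbit is semisimple (as established in the proof of that lemma), but the reduction to irreducibles should be stated. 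With these adjustments the argument is complete.
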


Using Lemma~\ref{lem: hom vanishing}, we are able to prove our desired result. Fix a closed orbit $\mc{Q}$ and set $i:=i_\mc{Q}:\mc{Q} \hookrightarrow X$. 
\begin{lemma}
\label{lem: injective}
If $\mc{G} \in D^b_K(X)$ is parity, then the natural map $\mathbb{H}_K^\bullet(i_! i^! \mc{G}) \rightarrow \mathbb{H}_K^\bullet(\mc{G})$ is injective. 
\end{lemma}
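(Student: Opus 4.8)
The plan is to first reduce to the case $\mc{G}$ \emph{even}, and then exploit the recollement triangle for the closed orbit $\mc{Q}$ together with a parity bookkeeping argument in the associated long exact sequence. Since $i_!$, $i^!$, the shift $(-)[1]$ and $\mathbb{H}_K^\bullet$ are all additive and commute with finite direct sums, and since a parity complex splits as $\mc{G}\cong\mc{E}\oplus\mc{F}[1]$ with $\mc{E},\mc{F}$ even, the map in question for $\mc{G}$ is a direct sum of shifts of the corresponding maps for $\mc{E}$ and for $\mc{F}$; a direct sum of injections is injective, so it suffices to treat $\mc{G}$ even. Assuming this, let $j\colon U:=X\smallsetminus\mc{Q}\hookrightarrow X$ be the open complement and consider the triangle $i_!i^!\mc{G}\to\mc{G}\to j_*j^*\mc{G}\xrightarrow{+1}$ (using $i_!=i_*$). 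Applying $\mathbb{H}_K^\bullet$ produces a long exact sequence containing
\[
\mathbb{H}_K^{n-1}(j_*j^*\mc{G})\xrightarrow{\;\delta\;}\mathbb{H}_K^{n}(i_!i^!\mc{G})\longrightarrow\mathbb{H}_K^{n}(\mc{G}),
\]
so injectivity for all $n$ is equivalent to the vanishing of the connecting maps $\delta$. For this it is enough to show that \emph{both} $\mathbb{H}_K^\bullet(i_!i^!\mc{G})$ and $\mathbb{H}_K^\bullet(j_*j^*\mc{G})$ are concentrated in even degrees: then for each $n$ either $n$ is odd and the target of $\delta$ vanishes, or $n$ is even and the source of $\delta$ vanishes.

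The two evenness statements I would deduce from the Hom-vanishing Lemma~\ref{lem: hom vanishing}, applied in the equivariant derived categories of $\mc{Q}$ and of $U$ — these satisfy the hypotheses (2.1)--(2.2) of \cite{ParitySheaves} because Lemma~\ref{lem: odd cohomology of K-orbits vanishes} covers every $K$-orbit of $X$, in particular the ones lying in $\mc{Q}$ (only $\mc{Q}$ itself) or in $U$. By the adjunction $(i^*,i_*)$ and $i^*\Q_X=\Q_\mc{Q}$ one has $\mathbb{H}_K^{n}(i_!i^!\mc{G})\cong\Hom_{D^b_K(\mc{Q})}(\Q_\mc{Q},i^!\mc{G}[n])$; here $\Q_\mc{Q}$ is $*$-even, while $i^!\mc{G}$ is $!$-even, since its cohomology sheaves are the $\mc{H}^n(k_\mc{Q}^!\mc{G})$, which vanish for $n$ odd as $\mc{G}$ is $!$-even. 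Hence $i^!\mc{G}[n]$ is $!$-odd for $n$ odd and Lemma~\ref{lem: hom vanishing} forces the $\Hom$ to vanish. Likewise, by $(j^*,j_*)$ one has $\mathbb{H}_K^{n}(j_*j^*\mc{G})\cong\Hom_{D^b_K(U)}(\Q_U,j^*\mc{G}[n])$; now $\Q_U$ is $*$-even, and $j^*\mc{G}$ is $!$-even on $U$ because for any orbit $\mc{O}\subseteq U$ the inclusion factors through $j$, so $k_\mc{O}^!j^*\mc{G}=k_\mc{O}^!\mc{G}$ has vanishing odd cohomology sheaves; again Lemma~\ref{lem: hom vanishing} gives $0$ for $n$ odd. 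This establishes both claims and completes the even case, hence the lemma.

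The main obstacle — and essentially the only delicate point — is the complex $j_*j^*\mc{G}$: as a sheaf on $X$ it need not be even, because an open direct image can create odd stalks along the closed orbit $\mc{Q}$, so one cannot invoke purity directly on $X$. The resolution is the identification of $\mathbb{H}_K^\bullet(j_*j^*\mc{G})$ with $\Hom^\bullet_{D^b_K(U)}(\Q_U,j^*\mc{G})$, i.e. with equivariant cohomology computed \emph{on $U$}, where $j^*\mc{G}$ is genuinely even and the parity-sheaf machinery of \cite{ParitySheaves} applies. Everything else — the reduction to $\mc{G}$ even, the recollement triangle, and the complementary-parity argument in the long exact sequence — is routine.
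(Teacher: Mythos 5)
Your proof is correct and takes essentially the same route as the paper: the open--closed recollement triangle for $\mc{Q}$, the resulting long exact sequence in $\mathbb{H}_K^\bullet$, and parity vanishing via Lemma~\ref{lem: hom vanishing}. The only differences are cosmetic and in your favor: you verify the evenness of $\mathbb{H}^\bullet_K(j_*j^*\mc{G})$ by passing to $U$ via the $(j^*,j_*)$ adjunction instead of checking that $j_*j^*\mc{G}$ is $!$-even on all of $X$ (both work, since Lemma~\ref{lem: hom vanishing} only needs $!$-evenness of the target), and you explicitly record that $\mathbb{H}^{\bullet}_K(i_!i^!\mc{G})$ vanishes in odd degrees, which is exactly what makes $f^n$ injective (rather than merely zero) in odd degrees --- a point the paper's write-up leaves implicit.
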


\begin{proof}
Let $U=X \smallsetminus \mc{Q}$. We  have open and closed immersions
\[
U \xhookrightarrow{j} X \xhookleftarrow{i} \mc{Q},
\]
so for any $\mc{G} \in D^b_K(X)$, there is a distinguished triangle
\begin{equation}
\label{eq: open closed triangle}
i_!i^! \mc{G} \rightarrow \mc{G} \rightarrow j_* j^* \mc{G} \xrightarrow{[1]}. 
\end{equation}
Because the functor $\Hom_K(\Q_X, -)$ is cohomological, we obtain from (\ref{eq: open closed triangle}) a long exact sequence
\begin{equation}
\label{eq: long exact sequence}
\begin{split}
\cdots \rightarrow \Hom^0_K(\Q_X, i_! i^! \mc{G}) &\rightarrow \Hom^0_K(\Q_X, \mc{G}) \\ &\rightarrow \Hom^0(\Q_X, j_* j^* \mc{G}) \rightarrow 
\Hom^1(\Q_X, i_! i^! \mc{G}) \rightarrow \cdots
\end{split}
\end{equation}

Assume that $\mc{G}$ is $!$-even. Then by Lemma~\ref{lem: hom vanishing}, $\Hom_K^n(\Q_X, \mc{G}) = \Hom_K(\Q_X, \mc{G}[n])=0$ for $n$ odd, so every seventh term in (\ref{eq: long exact sequence}) vanishes. Morover, we claim that if $\mc{G}$ is $!$-even, then $j_*j^*\mc{G}$ is $!$-even as well, so every 8th term in (\ref{eq: long exact sequence}) also vanishes. To see this, first note that $i^!j_*=0$, so $\mc{H}^n(i^!j_*j^*\mc{G})=0$ for all $n$. Any other orbit $\mc{O} \neq \mc{Q}$ is contained in $U$, and the inclusion map $k_\mc{O}$ factors 
\[
\begin{tikzcd}
\mc{O} \arrow[r, "\tilde{j}", hookrightarrow] \arrow[rr, bend right, "k_\mc{O}", hookrightarrow] & U \arrow[r, "j", hookrightarrow] & X.
\end{tikzcd}
\]
Hence
\[
k_\mc{O}^! j_* j^* \mc{G} = \tilde{j}^! j^! j_* j^* \mc{G} = \tilde{j}^! j^* j_* j^* \mc{G} = \tilde{j}^! j^* \mc{G} = k_\mc{O}^! \mc{G}.
\]
Here $j^!=j^*$ because $j$ is an open immersion. This shows that $j_*j^* \mc{G}$ is also $!$-even, so the long exact sequence (\ref{eq: long exact sequence}) splits into exact sequences 
\begin{equation}
\label{eq: short exact sequence}
\begin{split}
0 \rightarrow \Hom^n_K(\Q_X, i_! i^! \mc{G}) &\xrightarrow{f^n} \Hom_K^n(\Q_X, \mc{G}) \\
&\rightarrow \Hom^n_K(\Q_X, j_* j^* \mc{G}) 
\rightarrow \Hom_K^{n+1}(\Q_X, i_! i^! \mc{G}) \rightarrow 0
\end{split}
\end{equation} 
for $n$ even. In particular, for $!$-even $\mc{G}$, the map $f^n$ is injective for all $n$ even (and zero for all $n$ odd). Analogously, if $\mc{G}$ is $!$-odd, then (\ref{eq: long exact sequence}) splits into (\ref{eq: short exact sequence}) for $n$ odd, and $f^n$ is injective for $n$ odd (and zero for $n$ even). Hence if $\mc{G}$ is parity, $f^n$ is injective for all $n$. As the natural map (\ref{eq: counit of shriek}) is given by $\bigoplus_{n \in \Z} f^n$, this shows that if $\mc{G}$ is parity, (\ref{eq: counit of shriek}) is injective.  
\end{proof}

Together, Lemma~\ref{lem: injective} and Lemma~\ref{lem: commuting pentagon} imply that (\ref{eq: reduction}) is injective if $\mc{G}$ is parity. Our final step is to show that every object in $\mc{M}_{LV}^0$ is parity. 

\begin{corollary}
\label{cor: M_LV is parity} 
All complexes in $\mc{M}_{LV}^0$ are parity. 
\end{corollary}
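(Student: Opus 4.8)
The plan is to show that the class of parity complexes in $D^b_K(X)$ is preserved under the two operations used to build $\mc{M}_{LV}^0$: the functors $\Theta_s$ and direct summands (direct sums and shifts are trivially fine, since shifting an even complex by one gives an odd complex and parity is closed under $\oplus$). Since $\mc{M}_{LV}^0$ is generated by the objects $\bm{IC}_\mc{Q}$ for closed $K$-orbits $\mc{Q}$ under right convolution by $\mc{H}$, direct sums, shifts, and direct summands, it suffices to check: (i) each $\bm{IC}_\mc{Q}$ is parity; (ii) the $\Theta_s$ of a parity complex is parity; and (iii) a direct summand of a parity complex is parity.

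For (i): a closed $K$-orbit $\mc{Q}$ is smooth and $\bm{IC}_\mc{Q} = i_{\mc{Q}!}\Q_\mc{Q}[\dim\mc{Q}] = i_{\mc{Q}*}\Q_\mc{Q}[\dim\mc{Q}]$ by (\ref{eq: IC_Q}). Its $*$- and $!$-restrictions to any orbit are either $\Q_\mc{Q}[\dim\mc{Q}]$ (on $\mc{Q}$ itself) or zero (since $\mc{Q}$ is closed, any other orbit is in the complement and $i_\mc{Q}^*$, $i_\mc{Q}^!$ of a constant sheaf supported on $\mc{Q}$ restricted off $\mc{Q}$ vanish), so $\bm{IC}_\mc{Q}$ is even up to the single shift by $\dim\mc{Q}$, hence parity. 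For (iii): this is \cite[Remark 2.4]{ParitySheaves} (parity complexes are closed under direct summands), which applies because Lemma~\ref{lem: odd cohomology of K-orbits vanishes} establishes the hypotheses of \cite{ParitySheaves} for the pair $(X,\Lambda)$. For (ii): the key input is that $\Theta_s = \pi_s^*\pi_{s*}[1]$ and that $\pi_s$ is a proper, smooth, locally trivial $\mathbb{P}^1$-bundle; thus $\pi_s$ is even in the sense of \cite{ParitySheaves}, and by \cite[Prop.\ 2.34 / \S4.1]{ParitySheaves} the functor $\pi_{s*}\pi_s^!$ (equivalently $\pi_s^*\pi_{s*}$ up to shift, since $\pi_s^! = \pi_s^*[2]$) preserves parity complexes. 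One can also argue directly: $\pi_{s*}$ of a $*$-even (resp.\ $!$-even) complex is $*$-even (resp.\ $!$-even) on $G/P_s$ because $\pi_s$ restricted to each $K$-orbit closure is a proper map with even cohomology fibers — this is exactly the content that underlies Lemma~\ref{lemma: push pull star even} — and $\pi_s^*$ trivially preserves both parity conditions since it commutes with restriction to orbits (orbits in $G/P_s$ pull back to unions of orbits in $X$).

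The cleanest write-up invokes \cite{ParitySheaves} directly: since $\pi_s$ is an even morphism, $(\pi_s)_*$ and $\pi_s^*$ each send parity complexes to parity complexes, hence so does $\Theta_s$, and together with (i) and (iii) this shows every object of $\mc{M}_{LV}^0$ is parity. Alternatively, one reduces to Lemma~\ref{lemma: push pull star even}: that lemma shows $\pi_s^*\pi_{s*}\widetilde\cL$ is $*$-even for every $\cL \in \cD$; combined with the Verdier-duality symmetry (the $\IC$ sheaves $\bm{IC}(\overline\cO,\cL)$ are self-dual, and $\Theta_s$ commutes with Verdier duality up to shift since $\pi_s$ is proper and smooth) one gets $!$-evenness as well, and then an induction on the number of $\Theta_s$ factors — using that $\bm{IC}_\mc{Q}$ is even and that a direct summand of a parity complex is parity (to handle the decomposition theorem splitting of $\pi_s^*\pi_{s*}$) — yields the claim.

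The main obstacle is keeping track of the shifts and the interaction with the decomposition theorem: $\Theta_s$ applied to an even complex need not be even on the nose (it is a sum of even and odd pieces after Verdier self-duality forces symmetry), so one must work with the notion of \emph{parity} (even $\oplus$ odd) throughout rather than \emph{even}, and one must be careful that the splitting off of direct summands stays within the parity class — which is guaranteed only because Lemma~\ref{lem: odd cohomology of K-orbits vanishes} lets us invoke the full machinery of \cite{ParitySheaves}. Once the framework of \cite{ParitySheaves} is in place, each individual step is short; the subtlety is purely in assembling them correctly.
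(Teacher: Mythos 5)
Your overall architecture (verify the generators, then closure under $\Theta_s$ and under direct summands) is reasonable, and steps (i) and (iii) are fine, but the justification you give for step (ii) has a genuine gap. The map $\pi_s\colon G/B\to G/P_s$ is \emph{not} an even morphism in the sense of \cite{ParitySheaves} for the $K$-orbit stratifications: evenness of a morphism is a condition on its restrictions to the strata, not on the fibers of the total map, and for a real root the restriction of $\pi_s$ to a $K$-orbit $\mc{O}$ is a fibration whose fiber is $L_x^s$ minus two points, i.e.\ a copy of $\C^\times$, which has nonvanishing odd compactly supported cohomology. Concretely, in the $SL_2(\R)$ example the open orbit $\mc{O}\simeq\C^\times$ gives a $*$-even complex $k_{\mc{O}!}\Q_{\mc{O}}$ whose pushforward $\pi_{s!}k_{\mc{O}!}\Q_{\mc{O}}$ has stalk $H^\bullet_c(\C^\times)=\Q[-1]\oplus\Q[-2]$, which is not $*$-even; the same example refutes your ``direct argument'' that $\pi_{s*}$ preserves $*$-evenness because ``$\pi_s$ restricted to each orbit closure is proper with even cohomology fibers.'' So you cannot invoke the even-morphism machinery of \cite{ParitySheaves} to conclude that $\Theta_s$ preserves parity; the fact that it does preserve parity on the complexes that actually arise is a consequence of the corollary's content, not an available input to it.

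The correct route is the one you relegate to an ``alternative,'' and it is essentially the paper's proof, which is moreover shorter than your proposed induction. Remark~\ref{remark: purity} gives that every $\widetilde\cL$ is $*$-even; since $\bm{IC}(\overline{\mc{O}},\cL)$ is self-dual it is then also $!$-even by \cite[Rmk.\ 2.5(3)]{ParitySheaves}, so every $\bm{IC}$ sheaf is even or odd according to the parity of the shift. By the decomposition theorem, $\pi_s^*\pi_{s*}$ of an $\bm{IC}$ sheaf is again a finite direct sum of shifts of $\bm{IC}$ sheaves, so every object of $\mc{M}_{LV}^0$ is such a direct sum (Krull--Schmidt takes care of direct summands), hence splits as an even complex plus an odd complex. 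No claim that $\pi_{s*}$ preserves parity of arbitrary complexes is needed anywhere.
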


\begin{proof}
Let $k_\mc{O}: \mc{O} \rightarrow X$ be inclusion. Because the functors $k_\mc{O}^*$ are exact for all $K$-orbits $\mc{O}$, Remark~\ref{remark: purity} implies that $\mc{H}^n(k_{\mc{O}'}^*\bm{IC}(\overline{\mc{O}}, \mc{L}) )=0 $ for any $\mc{O}' \in K \backslash X$ and $n$ odd. Hence $\bm{IC}(\overline{\mc{O}}, \mc{L})$ is $*$-even. Because $\bm{IC}(\overline{\mc{O}}, \mc{L})$ is self-dual, it must also be $!$-even \cite[Rmk.\ 2.5(3)]{ParitySheaves}. By the decomposition theorem, $\pi_s^* \pi_{s*} \bm{IC}(\overline{\mc{O}}, \mc{L})$ is a sum of shifts of $\IC$ sheaves. Because $\mc{M}_{LV}^0$ is generated by the action of push-pull functors, any object in $\mc{M}_{LV}^0$ decomposes into a sum of shifts of $\IC$ sheaves. Hence any object in $\mc{M}_{LV}^0$ can be split into a sum of an even complex and an odd complex. 
\end{proof}

This completes the second step in the proof of Theorem~\ref{thm: fully faithful}. We have now established that the functor $\mathbb{H}_K^\bullet$ is  faithful on $\mc{M}_{LV}^0$. It remains to show that $\mathbb{H}_K^\bullet$ is full. 

\begin{theorem}
\label{thm: full}
The map (\ref{eq: reduction}) is surjective for $\mc{G} \in \mc{M}_{LV}^0$. 
\end{theorem}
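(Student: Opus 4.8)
The plan is to reduce, via the commuting pentagon (\ref{eq: commuting pentagon}) of Lemma~\ref{lem: commuting pentagon}, the surjectivity of (\ref{eq: reduction}) to an equality of submodules of $\mathbb{H}_K^\bullet(\mc{G})$, and then to establish that equality by localizing to torus fixed points. Since $\mc{M}_{LV}^0$ is stable under $[1]$, proving (\ref{eq: reduction}) for all $\mc{G}$ in $\mc{M}_{LV}^0$ is the same as proving the graded version $\Hom_K^\bullet(\bm{IC}_\mc{Q},\mc{G})\to\Hom^\bullet_{(P^K,R)\mathrm{-gbim}}(\mathbb{H}_K^\bullet(\bm{IC}_\mc{Q}),\mathbb{H}_K^\bullet(\mc{G}))$ for a single $\mc{G}$; in this graded setting the maps $a$ and $b$ of (\ref{eq: commuting pentagon}) become isomorphisms (they come from the adjunctions $(i_{\mc{Q}!},i_\mc{Q}^!)$ and $(i_\mc{Q}^*,i_{\mc{Q}*})$ together with $i_\mc{Q}^*\Q_X=\Q_\mc{Q}$, $i_{\mc{Q}!}=i_{\mc{Q}*}$, and $\bm{IC}_\mc{Q}=i_{\mc{Q}*}\Q_\mc{Q}[\dim\mc{Q}]$), while $d$ is the injection identifying $\Hom^\bullet_{(P^K,R)\mathrm{-gbim}}(\mathbb{H}_K^\bullet(\bm{IC}_\mc{Q}),\mathbb{H}_K^\bullet(\mc{G}))$ with $\{m\in\mathbb{H}_K^\bullet(\mc{G}):I_\mc{Q}\cdot m=0\}$, using the fully faithfulness of (\ref{eq: move to bimodules}) and the cyclicity of $\mathbb{H}_K^\bullet(\bm{IC}_\mc{Q})=H_K^*(\mc{Q})$ (Lemma~\ref{lem: standards are equiv cohomology}), where $I_\mc{Q}:=\ker(H_K^*(X)\to H_K^*(\mc{Q}))$. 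Commutativity of the pentagon gives $\im(d\circ\mathbb{H}_K^\bullet)=\im(c)$, so (\ref{eq: reduction}) is surjective if and only if $\im\bigl(c\colon\mathbb{H}_K^\bullet(i_{\mc{Q}!}i_\mc{Q}^!\mc{G})\to\mathbb{H}_K^\bullet(\mc{G})\bigr)=\{m\in\mathbb{H}_K^\bullet(\mc{G}):I_\mc{Q}\cdot m=0\}$. The inclusion $\subseteq$ is formal: the $H_K^*(X)$-module structure on $\mathbb{H}_K^\bullet(i_{\mc{Q}!}i_\mc{Q}^!\mc{G})=\mathbb{H}_K^\mc{Q}(i_\mc{Q}^!\mc{G})$ factors through $H_K^*(X)\to H_K^*(\mc{Q})$, hence is killed by $I_\mc{Q}$, and $c$ is $H_K^*(X)$-linear.

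For the reverse inclusion I would use localization. Put $U:=X\smallsetminus\mc{Q}$ with open inclusion $j\colon U\hookrightarrow X$; applying $\Hom_K^\bullet(\Q_X,-)$ to the triangle $i_{\mc{Q}!}i_\mc{Q}^!\mc{G}\to\mc{G}\to j_*j^*\mc{G}\xrightarrow{+1}$ and invoking exactness of the resulting long exact sequence identifies $\im(c)=\ker\bigl(\mathbb{H}_K^\bullet(\mc{G})\to\mathbb{H}_K^\bullet(U;j^*\mc{G})\bigr)$. Now $\mc{G}$ is parity (Corollary~\ref{cor: M_LV is parity}) and every $K$-orbit has vanishing odd cohomology (Lemma~\ref{lem: odd cohomology of K-orbits vanishes}), so the parity formalism of \cite{ParitySheaves} applies; working $T_K$-equivariantly and recovering the $K$-statements as $W_K$-invariants (using $H_K^*(-)=H_{T_K}^*(-)^{W_K}$ and $I_\mc{Q}^{T_K}=I_\mc{Q}\cdot H_{T_K}^*(X)$), the modules $\mathbb{H}_{T_K}^\bullet(\mc{G})$, $\mathbb{H}_{T_K}^\bullet(U;j^*\mc{G})$, and each $\mathbb{H}_{T_K}^\bullet(\mc{G}_{x_w})$ are free over $P=H_{T_K}^*(\mathrm{pt})$, and by the localization theorem of \cite{GKM} restriction to the fixed-point set $X^{T_K}=X^T=\{x_w\}_{w\in W}$ (Lemma~\ref{lem: agreement of fixed points}) is injective on $\mathbb{H}_{T_K}^\bullet(\mc{G})$ and on $\mathbb{H}_{T_K}^\bullet(U;j^*\mc{G})$. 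Given $m\in\mathbb{H}_{T_K}^\bullet(\mc{G})$ with $I_\mc{Q}\cdot m=0$, I claim $m_{x_w}=0$ whenever $x_w\notin\mc{Q}$: the localization theorem shows $\Res(I_\mc{Q})$ is a $P$-submodule of $\bigoplus_{x_{w'}\notin\mc{Q}}P$ that spans after inverting the (all nonzero) torus weights, so its $x_w$-component is a nonzero ideal of $P$; choosing $f\in I_\mc{Q}$ with $f|_{x_w}\neq0$ and using freeness of $\mathbb{H}_{T_K}^\bullet(\mc{G}_{x_w})$ over $P$, the relation $f\cdot m=0$ forces $m_{x_w}=0$. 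Hence $m$ restricts to $0$ in $\bigoplus_{x_w\in U}\mathbb{H}_{T_K}^\bullet(\mc{G}_{x_w})$, so by fixed-point injectivity on $U$ it restricts to $0$ in $\mathbb{H}_{T_K}^\bullet(U;j^*\mc{G})$, i.e. $m\in\im(c)$. Taking $W_K$-invariants (exact, as $\Q[W_K]$ is semisimple) gives the desired equality for $\mathbb{H}_K^\bullet$; combined with Theorem~\ref{thm: faithful} this completes the proof of Theorem~\ref{thm: fully faithful}.

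The step I expect to be the main obstacle is assembling the parity-plus-localization package in this somewhat non-standard geometric situation: checking that the hypotheses of \cite{ParitySheaves} and \cite{GKM} genuinely apply to the $K$-orbit stratification of $X$ (freeness of $\mathbb{H}_{T_K}^\bullet$ of parity complexes, injectivity of restriction to $X^{T_K}$, and that $U$ inherits all the structure needed to rerun the argument), and keeping the passage between $K$- and $T_K$-equivariant cohomology, and between $I_\mc{Q}$ and $I_\mc{Q}^{T_K}$, consistent. Once these inputs are in place the fixed-point computation above is short.
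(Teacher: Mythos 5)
Your proposal is correct and follows essentially the same route as the paper: reduce via the commuting pentagon to matching two submodules of $\mathbb{H}_K^\bullet(\mc{G})$, then use parity of objects in $\mc{M}_{LV}^0$, injectivity of restriction to $T_K$-equivariant cohomology, and GKM localization to the torus fixed points (which for $\mc{Q}$ form the coset $W_K v$) to establish the equality. The only difference is bookkeeping: the paper compares the images of both sides inside $\mathbb{H}_{T_K}^\bullet(i_{T_K}^*\rres\mc{G})$ via the explicit standard-bimodule decomposition of Lemma~\ref{lem: bimodule structure on fixed points}, whereas you phrase the bimodule side as the annihilator of $I_\mc{Q}$ and the geometric side as the kernel of restriction to the open complement $U$; the localization inputs (equivariant formality of $\mc{G}$ and of $j_*j^*\mc{G}$) are the same in both arguments.
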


The structure of the proof of Theorem~\ref{thm: full} is as follows. By Lemma~\ref{lem: commuting pentagon} and Theorem~\ref{thm: faithful}, for $\mc{G} \in \mc{M}_{LV}^0$ we have injections 
\begin{equation}
\label{eq: F and G}
\begin{tikzcd}[row sep=tiny]
\Hom_K(\bm{IC}_\mc{Q}, \mc{G}) \arrow[rd, hookrightarrow, "F", start anchor={[yshift=1ex]}] & \\
& \mathbb{H}_K^\bullet(\mc{G}) \\
\Hom_\mathrm{bim}(\mathbb{H}_K^\bullet(\bm{IC}_\mc{Q}), \mathbb{H}_K^\bullet(\mc{G})) \arrow[ru, hookrightarrow, "G"',  start anchor={[yshift=-2ex]} ] \hspace{10mm}& 
\end{tikzcd}
\end{equation}
given by $F=c \circ b \circ a$ and $G=d \circ \mathbb{H}_K^\bullet$ in the pentagon diagram (\ref{eq: commuting pentagon}). Moreover, because $\mc{G}$ is parity (Corollary \ref{cor: M_LV is parity}), the natural restriction map (\ref{eq: restriction})
\[
\mathbb{H}_K^\bullet(\mc{G}) \xrightarrow{\rres} \mathbb{H}_{T_K}^\bullet(\rres\mc{G})
\]
and the localization map 
\begin{equation}
\label{eq: localization}
\mathbb{H}_{T_K}^\bullet(\rres\mc{G}) \xrightarrow{i_{T_K}^*} \mathbb{H}_{T_K}^\bullet(i_{T_K}^*\rres\mc{G}),
\end{equation}
where $i_{T_K}: X^{T_K} \hookrightarrow X$ is the inclusion of $T_K$-fixed points, are also injective. This is shown in Lemma~\ref{lem: res is injective} and Lemma~\ref{lem: loc is injective}. In Lemma~\ref{lem: equating images} we show that the image of $\Hom_K(\bm{IC}_\mc{Q}, \mc{G})$ and the image of $\Hom_\mathrm{bim}(\mathbb{H}_K^\bullet(\bm{IC}_\mc{Q}), \mathbb{H}_K^\bullet(\mc{G}))$ in $\mathbb{H}_{T_K}^\bullet(i_{T_K}^* \rres \mc{G})$ agree under this series of injections. Because the diagram (\ref{eq: commuting pentagon}) in Lemma~\ref{lem: commuting pentagon} commutes and (\ref{eq: reduction}) is injective (Theorem~\ref{thm: faithful}), this implies the surjectivity of (\ref{eq: reduction}). 

Let $\rres:D^b_K(X) \rightarrow D^b_{T_K}(X)$ be the restriction functor sending $\mc{G} = (\mc{G}_X, \overline{\mc{G}}, \beta)$ to $\rres\mc{G}:= (\mc{G}_X, f^*\overline{\mc{G}}, \beta)$, where $f$ is the projection 
\[
X \times_{T_K} EK \xrightarrow{f} X \times_K EK. 
\]
Denote by $\mathbb{H}_K^\bullet(\mc{G}) \xrightarrow{\rres} \mathbb{H}_{T_K}^\bullet(\mc{G})$ the induced map on hypercohomology:
\begin{equation}
\label{eq: restriction}
\varphi = (\varphi_X, \overline{\varphi}) \xmapsto{\rres} \rres \varphi:=(\varphi_X, f^* \overline{\varphi}). 
\end{equation}
\begin{lemma}
\label{lem: res is injective}
If $\mc{G} \in D^b_K(X)$ is parity, then $\mathbb{H}_K^\bullet(\mc{G}) \xrightarrow{\rres} \mathbb{H}^\bullet_{T_K}(\rres\mc{G})$ is injective. 
\end{lemma}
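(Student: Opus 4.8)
The plan is to exploit the spectral sequence \eqref{eq: spectral sequence for equivariant hypercohomology} computing equivariant hypercohomology, comparing the two fibrations $X\times_K EK\to BK$ and $X\times_{T_K}EK\to BT_K$. First I would observe that for parity $\mc{G}$ both spectral sequences degenerate: since $\mc{G}$ is a direct sum of shifts of $\bm{IC}$ sheaves, its ordinary hypercohomology $\mathbb{H}^\bullet(\mc{G})$ is concentrated in one parity in each relevant range, and $H^*(BK)$, $H^*(BT_K)$ are concentrated in even degrees (polynomial rings on even generators, by the Borel isomorphism as in the proof of Lemma~\ref{lem: standards are equiv cohomology}). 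Hence the $E_2$ pages have no room for differentials, and $E_2 = E_\infty$ in both cases. Concretely, $\mathbb{H}^\bullet_K(\mc{G}) \cong H^*(BK)\otimes \mathbb{H}^\bullet(\mc{G})$ and $\mathbb{H}^\bullet_{T_K}(\rres\mc{G})\cong H^*(BT_K)\otimes\mathbb{H}^\bullet(\mc{G})$ as graded vector spaces, compatibly with the module structures over $H^*(BK)\to H^*(BT_K)$.

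Next I would identify the restriction map $\rres$ on hypercohomology with the map induced by $H^*(BK)\to H^*(BT_K)$ after this identification; this follows from functoriality of the Leray spectral sequence applied to the morphism of fibrations (the square with vertical maps the two Borel constructions of $\mc{G}$ and horizontal maps $f$ and $BT_K\to BK$ commutes). Since $H^*(BK) = H^*(BT_K)^{W_K} = P^K \hookrightarrow P = H^*(BT_K)$ is injective (a retract, even, via the averaging idempotent over $W_K$ — recall $\mathbb{Q}$-coefficients), tensoring the injection $P^K\hookrightarrow P$ with the vector space $\mathbb{H}^\bullet(\mc{G})$ over $\mathbb{Q}$ preserves injectivity. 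Therefore $\rres$ is injective on $\mathbb{H}^\bullet_K(\mc{G})$.

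The main obstacle is making the degeneration argument precise: one must check that the $E_2$-differential $d_2\colon E_2^{p,q}\to E_2^{p+2,q-1}$ genuinely vanishes, which requires that $\mathbb{H}^q(\mc{G})$ and $\mathbb{H}^{q-1}(\mc{G})$ never both survive — equivalently, that $\mathbb{H}^\bullet(\mc{G})$ is concentrated in a single parity. This is exactly where Corollary~\ref{cor: M_LV is parity} enters: an object of $\mc{M}_{LV}^0$ splits as $\mc{G}_{\mathrm{even}}\oplus\mc{G}_{\mathrm{odd}}$, and it suffices to treat each summand separately, so we may assume $\mc{G}$ is even (or odd). For an even complex $\mc{G}$, the stalk cohomology $\mc{H}^n(k_\mc{O}^*\mc{G})$ vanishes for $n$ odd on every stratum, and since $H^*_K(\mc{O};\mc{L})$ is concentrated in even degrees (Lemma~\ref{lem: odd cohomology of K-orbits vanishes}), a further spectral sequence (or induction on strata using the open-closed triangles) shows $\mathbb{H}^n(\mc{G}) = H^n(X;\mc{G}_X)$ vanishes for $n$ odd; similarly for odd $\mc{G}$ in odd degrees. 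With the parity concentration in hand, $d_2$ lands between groups of which at least one is zero, so it vanishes, and likewise all higher differentials; the spectral sequence degenerates and the argument above goes through. (One could alternatively cite \cite[Cor.\ 2.8]{ParitySheaves} together with \eqref{eq: hypercohomology agrees with sheaf cohomology} to get the parity vanishing of $\mathbb{H}^\bullet(\mc{G})$ directly.)
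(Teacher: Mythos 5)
Your overall strategy---compare the two Borel constructions via $BT_K \to BK$, degenerate both spectral sequences \eqref{eq: spectral sequence for equivariant hypercohomology}, and conclude from the (split) injection $P^K = H^*(BK) \hookrightarrow H^*(BT_K) = P$---has a genuine gap at the degeneration step. The degeneration of $E_2^{pq} = H^p(BK)\otimes \mathbb{H}^q(\mc{G}) \Rightarrow \mathbb{H}^{p+q}_K(\mc{G})$ requires the \emph{ordinary} hypercohomology $\mathbb{H}^\bullet(\mc{G})$ to be concentrated in a single parity, and your justification for this conflates equivariant and ordinary cohomology. Lemma~\ref{lem: odd cohomology of K-orbits vanishes} says $H^{\mathrm{odd}}_K(\mc{O};\mc{L})=0$; it says nothing about $H^{\mathrm{odd}}(\mc{O};\mc{L})$, which is typically nonzero (already for $SL_2(\R)$ the open orbit is $\C^\times$, with $H^1(\C^\times)=\Q$). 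Consequently the stratum-by-stratum induction you sketch for ordinary hypercohomology breaks down, and \cite[Cor.\ 2.8]{ParitySheaves} cannot be invoked non-equivariantly either, since the non-equivariant analogue of the parity assumptions (2.1)--(2.2) of that paper fails for the $K$-orbit stratification. The one-parity concentration of $\mathbb{H}^\bullet(\mc{G})$ is simply not a formal consequence of $\mc{G}$ being a parity complex here; the paper only establishes it for $\mc{G}\in\mc{M}_{LV}^0$, and does so in Lemma~\ref{lem: loc is injective} by an entirely different mechanism (Soergel's computation $\mathbb{H}^\bullet(\mc{G}) = C\otimes_{C^{s_n}}\cdots\otimes_{C^{s_1}}H^*(\mc{Q})[\dim\mc{Q}]$), not by parity-sheaf formalism.

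The paper's proof of this lemma sidesteps the issue by using a different fibration: $K/T_K \to X\times_{T_K}EK \xrightarrow{f} X\times_K EK$, whose Leray spectral sequence has $E_2^{pq} = H^p(X\times_K EK;\overline{\mc{G}})\otimes H^q(K/T_K)$. Here the base already computes $\mathbb{H}_K^\bullet(\mc{G})$, which \emph{does} vanish in one parity for $!$-even or $!$-odd $\mc{G}$ by the equivariant Lemma~\ref{lem: hom vanishing}, and the fiber $K/T_K$ (homotopy equivalent to the flag variety of $K$) has purely even cohomology $P/P^K_+$. Both inputs hold for arbitrary parity $\mc{G}$, the checkerboard degeneration goes through, and one gets $\mathbb{H}^\bullet_{T_K}(\rres\mc{G})\simeq P\otimes_{P^K}\mathbb{H}_K^\bullet(\mc{G})$ with $\rres$ the unit $m\mapsto 1\otimes m$, injective since $P$ is free over $P^K$. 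If you want to rescue your route, you would need to restrict to $\mc{G}\in\mc{M}_{LV}^0$ and import the ordinary-cohomology parity vanishing from the argument of Lemma~\ref{lem: loc is injective}; but then your lemma proves less than the stated one, and the logic becomes circular-adjacent with the later localization step.
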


\begin{proof}
Let $\mc{G} = (\mc{G}_X, \overline{\mc{G}}, \beta) \in D^b_K(X)$. The Leray spectral sequence for $\rres \mc{G}=(\mc{G}_X, f^*\overline{\mc{G}}, \beta) \in D^b_{T_K}(X)$ associated to the fibration 
\[
K/{T_K} \rightarrow X \times_{T_K} EK \xrightarrow{f} X \times_K EK
\]
is 
\[
E^{pq}_2=H^p(X \times_K EK; R^q f_* f^*\overline{\mc{G}}) \implies H^{p+q}(X \times_{T_K} EK; f^*\overline{\mc{G}}). 
\]
Because $f$ is a fibration with fiber $K/{T_K}$, we have 
\[
R^qf_* f^* \overline{\mc{G}} \simeq \overline{\mc{G}} \otimes H^q(K/{T_K}). 
\]
Hence $\mathbb{H}^\bullet_{T_K}(\rres\mc{G}) = H^*(X \times_T EK; f^*\overline{\mc{G}})$ can be computed with the spectral sequence
\begin{equation}
\label{eq: s.s. for H_T in terms of H_K}
H^p(X \times_K EK; \overline{\mc{G}}) \otimes H^q(K/{T_K}) \implies H^{p+q}(X \times_{T_K} EK; f^* \overline{\mc{G}}). 
\end{equation}
Let $B_K = K \cap B$, so $K/B_K$ is the flag variety of $K$. We have a fibration 
\[
B_K/{T_K} \rightarrow K/{T_K} \rightarrow K/B_K
\]
with affine fibres, so 
\[
H^*(K/{T_K})=H^*(K/B_K) \simeq P/P_+^K,
\]
where $P_+ \subseteq P$ is the augmentation ideal and $P_+^K:=P_+^{W_K}$ is the $W_K$ invariants in $P_+$. In particular, $H^*(K/{T_K})$ vanishes in odd degrees. 

By Lemma~\ref{lem: hom vanishing}, if $\mc{G}$ is $!$-even (resp.\ $!$-odd), then $\mathbb{H}_K^\bullet(\mc{G}) = H^*(X \times_KEK; \overline{\mc{G}})$ vanishes in odd (resp.\ even) degree. Hence if $\mc{G}$ is $!$-parity (i.e., either $!$-even or $!$-odd), the spectral sequence (\ref{eq: s.s. for H_T in terms of H_K}) has a chessboard pattern, hence degenerates on the $E_2$ page. In this case, 
\begin{equation}
\label{H_T}
\mathbb{H}_{T_K}^\bullet(\rres \mc{G}) \simeq P \otimes_{P^K} \mathbb{H}_K^\bullet(\mc{G}).
\end{equation}
If $\mc{G}$ is parity, then $\mc{G}$ is the direct sum of a $!$-even and a $!$-odd complex, hence (\ref{H_T}) also holds. This implies the result. 
\end{proof}

Next we will show that the localization map (\ref{eq: localization}) is also injective. 
\begin{lemma}
\label{lem: loc is injective}
Let $\mc{G} \in \mc{M}_{LV}^0$ and $i_{T_K}:X^{T_K} \hookrightarrow X$ be the inclusion of $T_K$-fixed points. The localization map $\mathbb{H}_{T_K}^\bullet(\rres \mc{G}) \xrightarrow{i_{T_K}^*} \mathbb{H}_{T_K}^\bullet(i_{T_K}^* \rres \mc{G})$ is injective. 
\end{lemma}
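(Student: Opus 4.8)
The plan is to use the standard GKM-style localization argument, which applies because $\mc{G}\in\mc{M}_{LV}^0$ is parity (Corollary~\ref{cor: M_LV is parity}). The localization map $i_{T_K}^*$ is injective precisely when the kernel of restriction to fixed points vanishes, and for $T_K$ a torus this is controlled by the supports of the cone of the adjunction unit $\mc{G}\to i_{T_K*}i_{T_K}^*\rres\mc{G}$. The key input is that the $T_K$-equivariant cohomology of any orbit $\mc{O}$ with a non-fixed point is a torsion module over $P=H^*_{T_K}(\mathrm{pt})$ — indeed, since $\mc{O}$ has no $T_K$-fixed point (Lemma~\ref{lem: agreement of fixed points}: $X^{T_K}=X^T$, and only $T$-fixed points $x_w$ are fixed), a subtorus $T'\subseteq T_K$ acts on $\mc{O}$ with all orbits positive-dimensional, so $H^*_{T_K}(\mc{O};\mc{L})$ is annihilated by a nonzero element of $P$ (the localization theorem for the $T_K$-action, e.g. via the standard argument that $H^*_{T'}(\mc{O};\mc{L})$ vanishes after inverting suitable characters).

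First I would set up the distinguished triangle associated to the closed inclusion $i_{T_K}:X^{T_K}\hookrightarrow X$ with open complement $U=X\smallsetminus X^{T_K}$:
\[
i_{U!}i_U^!\rres\mc{G}\to \rres\mc{G}\to i_{T_K*}i_{T_K}^*\rres\mc{G}\xrightarrow{+1},
\]
and apply $\mathbb{H}_{T_K}^\bullet=\Hom^\bullet_{T_K}(\Q_X,-)$ to obtain a long exact sequence. The map $i_{T_K}^*$ in the statement is, up to this identification, the middle map of the triangle on hypercohomology; its kernel is a quotient of $\mathbb{H}_{T_K}^\bullet(i_{U!}i_U^!\rres\mc{G})$. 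The second step is to show this group is a torsion $P$-module: stratifying $U$ by the $K$-orbits it contains (all of which, by the above, have $P$-torsion equivariant cohomology with any equivariant local system coefficients) and using the long exact sequences of the stratification, together with $\mathbb{H}_{T_K}^\bullet(i_{U!}i_U^!\rres\mc{G})$ being built from the $\mathbb{H}_{T_K}^\bullet(k_{\mc{O}!}k_\mc{O}^!\rres\mc{G})$ which restrict to $\mc{O}$-cohomology with local system coefficients, shows the whole group is torsion. On the other hand, $\mathbb{H}_{T_K}^\bullet(\rres\mc{G})$ is a \emph{free} $P$-module — this follows from \eqref{H_T} in the proof of Lemma~\ref{lem: res is injective}, since $\mathbb{H}_K^\bullet(\mc{G})\in\mc{N}_{LV}^0$ is a direct summand of a sum of shifts of bimodules of the form $P_x\otimes_R BS(\underline w)$, each of which is free as a left $P^K$-module (hence, after base change, $\mathbb{H}_{T_K}^\bullet(\rres\mc{G})=P\otimes_{P^K}\mathbb{H}_K^\bullet(\mc{G})$ is free over $P$). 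A torsion submodule of a free $P$-module is zero, so the kernel of $i_{T_K}^*$ vanishes.

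The last step is to verify that $\mathbb{H}_{T_K}^\bullet(i_{T_K}^*\rres\mc{G})$ is the target appearing in the statement and that the composite map is indeed $i_{T_K}^*$ as claimed — this is a formal consequence of the adjunction $(i_{T_K}^*,i_{T_K*})$ and the fact that $i_{T_K}^*\Q_X=\Q_{X^{T_K}}$, exactly as in \eqref{eq: one}--\eqref{eq: three}. The main obstacle is the torsion claim for $\mathbb{H}_{T_K}^\bullet(i_{U!}i_U^!\rres\mc{G})$: one must be careful that $i_U^!$ does not preserve parity and the stratification argument must track both $*$- and $!$-restrictions, but since every $K$-orbit in $U$ is a single orbit of positive dimension with respect to a suitable subtorus, each graded piece is finitely generated torsion over $P$, and finite extensions of torsion modules are torsion. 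I would also note the alternative, cleaner route: since $\mc{G}$ is parity, $\rres\mc{G}$ is a $T_K$-parity complex, and one can cite the general fact (e.g. from the theory of parity sheaves, \cite{ParitySheaves}, or directly GKM \cite{GKM}) that for a parity complex on a variety with a torus action having isolated fixed points, restriction to the fixed points is injective on equivariant hypercohomology; but spelling out the torsion-vs-free dichotomy as above keeps the argument self-contained.
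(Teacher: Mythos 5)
Your proof is correct, but it takes a genuinely different route from the paper's. The paper invokes the localization theorem of \cite{GKM} in the form ``equivariant formality implies injectivity of restriction to fixed points,'' and verifies equivariant formality by showing that the \emph{ordinary} hypercohomology $\mathbb{H}^\bullet(\mc{G})$ is concentrated in degrees of a single parity; this in turn is done by reducing to $\mc{G}=\bm{IC}_\mc{Q}*\bm{IC}_{s_1}*\cdots*\bm{IC}_{s_n}$ and citing Soergel's computation $\mathbb{H}^\bullet(\mc{G})=C\otimes_{C^{s_n}}\cdots\otimes_{C^{s_1}}H^*(\mc{Q})[\dim\mc{Q}]$ together with the evenness of $H^*(K/B_K)$. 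You instead split the statement into (a) the kernel of $i_{T_K}^*$ is a torsion $P$-module, and (b) $\mathbb{H}_{T_K}^\bullet(\rres\mc{G})$ is torsion-free over $P$, the latter via the base-change formula \eqref{H_T} from Lemma~\ref{lem: res is injective} and the explicit description of $\mathbb{H}_K^\bullet(\mc{G})$ as a direct summand of sums of shifts of $P_x\otimes_R BS(\underline{w})$, each free as a left $P^K$-module (this uses Theorem~\ref{thm: image of hypercohomology} and Lemma~\ref{lem: push pull is tensoring with Bs}, which precede this lemma, so there is no circularity). Both arguments ultimately establish freeness of $\mathbb{H}_{T_K}^\bullet(\rres\mc{G})$ over $P$ --- formality gives it as a corollary --- but yours avoids the non-equivariant computation from \cite{Soergel90} entirely and leans on the algebraic output already proved. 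Two small cautions: your phrase ``stratifying $U$ by the $K$-orbits it contains'' is imprecise, since $U=X\smallsetminus X^{T_K}$ is not $K$-stable (only $T$- and $N_K(T_K)$-stable) and most $K$-orbits meeting $X^{T_K}$ do so in a nonempty proper subset; you should either work with the $T_K$-stable pieces $\mc{O}\cap U$ or, more simply, quote the weak localization theorem (restriction to the fixed locus is an isomorphism after inverting nonzero characters, with no formality hypothesis), which immediately gives that the kernel is torsion. Also, for step (b) it suffices that $\mathbb{H}_K^\bullet(\mc{G})$ be a direct summand of a free $P^K$-module, since torsion-freeness passes to direct summands; you do not need to invoke that graded projectives are free.
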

\begin{proof}
Let $\For:D^b_K(X) \rightarrow D^b(X)$ be the forgetful functor, as in (\ref{eq: forgetful functor}). Clearly $\For$ factors through $D^b_K(X) \xrightarrow{\rres} D^b_{T_K}(X)$. 

By the localization theorem in equivariant hypercohomology \cite[Thm.\ 1.6.2]{GKM}, $i_{T_K}^*$ is injective if $\rres\mc{G}$ is {\em equivariantly formal}; that is, if the spectral sequence for its equivariant cohomology   
\begin{equation}
\label{eq: s.s. for H_T}
E_2^{pq}=H^p_{T_K}(\mathrm{pt}) \otimes \mathbb{H}^\bullet( \mc{G}) \implies \mathbb{H}^{p+q}_{T_K}(\rres\mc{G})
\end{equation}
degenerates at the $E_2$ page. Here $\mathbb{H}^\bullet(\mc{G}) = \Hom^\bullet_{D^b_c(X)}(\Q_X, \For \mc{G})$ as in (\ref{eq: forgetful functor}). Because $H^*_{T_K}(\mathrm{pt})$ vanishes in odd degree, to show that $\rres \mc{G}$ is equivariantly formal, it is enough to show that $\mathbb{H}^\bullet(\mc{G})$ vanishes in all degrees of one parity (as this implies the spectral sequence (\ref{eq: s.s. for H_T}) has a checkerboard pattern, hence degenerates at $E_2$). 

Let $C:=H^*(X) = R/R_+^W$ be the cohomology ring of $X$. Note that $C$ is a graded ring that vanishes in odd degree. Since $\mc{G} \in \mc{M}_{LV}^0$, $\mc{G}$ is a direct summand of a finite direct sum of shifts of objects of the form $\bm{IC}_\mc{Q} * \bm{IC}_{s_1} * \cdots * \bm{IC}_{s_n}$ for some closed orbit $\mc{Q}$ and sequence of simple reflections $(s_1, \ldots, s_n)$. If $\mathbb{H}^\bullet(\bm{IC}_\mc{Q} * \bm{IC}_{s_1} * \cdots * \bm{IC}_{s_n})$ vanishes in odd degree for all closed orbits $\mc{Q}$ and all subsets of simple roots, then $\mathbb{H}^\bullet(\mc{G})$ does as well, so without loss of generality, we may assume that $\mc{G} = \bm{IC}_\mc{Q} * \bm{IC}_{s_1} * \cdots * \bm{IC}_{s_n}$. By \cite[Lem.\ 13, Kor. 2]{Soergel90}, we have
\[
\mathbb{H}^\bullet(\mc{G}) = C \otimes_{C^{s_n}}  \cdots \otimes_{C^{s_2}}C\otimes_{C^{s_1}} H^*(\mc{Q})[\dim Q]. 
\]
As $\mc{Q} \simeq K/B_K$ is a flag variety, $H^*(\mc{Q})$ vanishes in odd degrees, hence $\mathbb{H}^\bullet( \mc{G})$ vanishes in one parity. 
\end{proof}
Our final step is to compare the images of $\Hom_\mathrm{bim}(H^*_K(\mc{Q}), \mathbb{H}_K^\bullet(\mc{G}))$ and $\Hom_K(\bm{IC}_\mc{Q}, \mc{G})$ in $\mathbb{H}_{T_K}^\bullet( i_{T_K}^* \rres \mc{G})$. To do so, we will use the $R$-bimodule structure of $\mathbb{H}^\bullet_{T_K}(i^*_{T_K} \rres \mc{G})$. 

The space $\mathbb{H}_{T_K}^\bullet(\rres \mc{G})$ is naturally a module over $H^*_{T_K}(X) = P \otimes_{R^W} R$, so it also has the structure of an $(P,R)$-bimodule. Via $i_{T_K}^*$, $\mathbb{H}_{T_K}^\bullet(i_{T_K}^*\rres \mc{G})$ inherits a $P \otimes_{R^W} R$-module structure from $\mathbb{H}_{T_K}^\bullet(\rres \mc{G})$: for $\varphi \in \mathbb{H}_{T_K} ^\bullet(i_{T_K}^* \rres \mc{G})$ and $f \otimes g \in P \otimes_{R^W} R$, 
\[
f \otimes g \cdot \varphi = \varphi \circ i_{T_K}^*(f \otimes g). 
\]
With respect to this action, $i_{T_K}^*$ is a homomorphism of $P \otimes_{R^W}R$-modules. 

Recall that the $T_K$-fixed points in $X$ align with the $T$ fixed points, so they are isolated and in bijection with $W$ (Lemma~\ref{lem: agreement of fixed points}). Denote the skyscraper sheaf on $x_w =wB/B \in X^{T_K}$ by $\Q_w$. Analogously to the notation defined in (\ref{eq: polynomial powers}) for raising $R$-bimodules to polynomial powers, we set the following notation for objects in derived categories: for every $q = \sum q_i v^i \in \Z[v^{\pm 1}]$ and $\mc{G}$ a complex in a derived category, we denote by 
\[
\mc{G}^{\oplus q}:= \bigoplus \mc{G}[i]^{\oplus q_i}.
\]

\begin{lemma}
\label{lem: bimodule structure on fixed points}
As graded $(P,R)$-bimodules, 
\[
\mathbb{H}_{T_K}^\bullet (i_{T_K}^* \rres \mc{G}) \simeq \bigoplus_{w \in W} (P_w)^{\oplus q_w}
\]
where $q_w \in \Z[v^{\pm 1}]$ are given by the decomposition $i_{T_K}^* \rres \mc{G} = \bigoplus_{w \in W} (\Q_w)^{\oplus q_w}$ and $P_w \in (P,R)\mathrm{-gbim}$ is the standard module corresponding to $w \in W$ (given by the obvious adaptation to Definition~\ref{def: standard modules}). 
\end{lemma}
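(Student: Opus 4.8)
The plan is to reduce everything to a pointwise computation over $X^{T_K}$ together with the fact that $\mathbb{H}_{T_K}^\bullet$ is additive. By Lemma~\ref{lem: agreement of fixed points} the $T_K$-fixed locus $X^{T_K}=X^T$ is a finite discrete set, indexed by $W$ via $w\mapsto x_w$, so $D^b_{T_K}(X^{T_K})$ is the product over $w\in W$ of copies of $D^b_{T_K}(\mathrm{pt})$, and $i_{T_K}^\ast\rres\mc{G}$ is determined by its restriction to each $x_w$. I would carry this out in three steps: (i) show $i_{T_K}^\ast\rres\mc{G}\simeq\bigoplus_{w\in W}(\Q_w)^{\oplus q_w}$ for uniquely determined $q_w\in\Z_{\geq 0}[v^{\pm 1}]$; (ii) identify $\mathbb{H}_{T_K}^\bullet(\Q_w)$ with the standard bimodule $P_w$ as a $(P,R)$-bimodule; and (iii) apply additivity of $\mathbb{H}_{T_K}^\bullet$ and its compatibility with the shift functor.

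For step (i): since $\mc{G}\in\mc{M}_{LV}^0$ is parity by Corollary~\ref{cor: M_LV is parity}, both $\rres\mc{G}$ and $i_{T_K}^\ast\rres\mc{G}$ are parity, so after splitting off the even and odd summands we may assume the cohomology sheaves of $i_{T_K}^\ast\rres\mc{G}$ at every $x_w$ are concentrated in degrees of a single parity. On a discrete set the truncation triangles for such a complex have connecting maps living in odd-degree equivariant cohomology of a point, which vanishes; hence the complex is the direct sum of the shifts of its cohomology sheaves, each of which is a direct sum of skyscrapers $\Q_w$. This produces the decomposition and defines the $q_w$. (The parity hypothesis is genuinely needed here: without it $D^b_{T_K}(\mathrm{pt})$ contains objects such as $\mathrm{cone}(\Q_{\mathrm{pt}}\xrightarrow{\,c\,}\Q_{\mathrm{pt}}[2])$ for $c\in H^2_{T_K}(\mathrm{pt})$, whose hypercohomology is not free over $H^*_{T_K}(\mathrm{pt})$, so it is not a sum of shifted skyscrapers.)

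For step (ii): $\mathbb{H}_{T_K}^\bullet(\Q_w)=H^\ast_{T_K}(x_w)$, and via the Borel homomorphism this is $P$ as a graded left $P$-module, where $P=H^\ast_{T_K}(\mathrm{pt})$ acts through the left tensor factor of $H^\ast_{T_K}(X)=P\otimes_{R^W}R$ (Lemma~\ref{lem: K equivariant cohomology of X}). The right $R$-module structure is the one induced from the $R=H^\ast_T(X)$-action by restriction along $H^\ast_T(X)\to H^\ast_T(x_w)\to H^\ast_{T_K}(x_w)$, and the first of these maps twists Chern classes by $w$ — this is exactly the computation carried out in the proof of Lemma~\ref{lem: standards are equiv cohomology}, with the pair $(\mc{Q},K_w)$ there replaced by $(X,T_K)$, which is if anything simpler because $x_w$ is a single point. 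Thus $g\in R$ acts on $p\in P$ by $p\cdot g=p\,\phi(wg)$, which is precisely the defining right action of the standard bimodule $P_w$ (the evident $(P,R)$-variant of Definition~\ref{def: standard modules}), so $\mathbb{H}_{T_K}^\bullet(\Q_w)\cong P_w$. Step (iii) is then immediate: $\mathbb{H}_{T_K}^\bullet$ is additive and intertwines the homological shift $[1]$ with the grading shift $(1)$ (consistently with the $v$-actions used on both sides), so $\mathbb{H}_{T_K}^\bullet(i_{T_K}^\ast\rres\mc{G})\simeq\bigoplus_{w\in W}(P_w)^{\oplus q_w}$ as graded $(P,R)$-bimodules.

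The main obstacle is step (ii), namely verifying that restriction to the fixed point $x_w$ twists the right $R$-module structure by $w$. I do not expect this to be hard in substance, since it is the same first-Chern-class bookkeeping as in Lemma~\ref{lem: standards are equiv cohomology}; the only care needed is to run that argument for $(T_K,X)$ in place of $(K_w,\mc{Q})$ and to keep the left/right actions straight. The only other point to be careful about is confirming, in step (i), that the parity of $\mc{G}$ is exactly what licenses the equivariant splitting into shifted skyscrapers.
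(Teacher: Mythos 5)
Your proposal is correct and follows essentially the same route as the paper: the substance in both cases is the Borel-homomorphism/first-Chern-class computation of the restriction map $H^*_{T_K}(X)=P\otimes_{R^W}R\to H^*_{T_K}(x_w)$, already carried out in the proof of Lemma~\ref{lem: standards are equiv cohomology}, which shows the left $P$-action is untwisted and the right $R$-action is twisted by $w$, i.e.\ $\mathbb{H}^\bullet_{T_K}(\Q_w)\cong P_w$. The only difference is your step (i): the paper's statement takes the decomposition $i_{T_K}^*\rres\mc{G}\simeq\bigoplus_w(\Q_w)^{\oplus q_w}$ as given, whereas you (correctly) derive it from parity of $\mc{G}$ and the vanishing of odd-degree $\Hom$'s in $D^b_{T_K}(\mathrm{pt})$.
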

\begin{proof}
To begin, note that since $\Q_{X^{T_K}} = \bigoplus_{w \in W} \Q_w$, we clearly have 
\[
\mathbb{H}_{T_K}^\bullet(i_{T_K}^* \rres \mc{G}) = \Hom_{T_K}^\bullet(\bigoplus_{w \in W} \Q_w, \bigoplus_{w \in W} (\Q_w)^{\oplus q_w}) = \bigoplus_{w \in W}(P_w)^{\oplus q_w}
\]
as vector spaces, so the content of the lemma is that the $R$-bimodule structures align. 

The $(P,R)$-bimodule structure on $\mathbb{H}_{T_K}^\bullet(i_{T_K}^* \rres \mc{G})$ comes from the algebra homomorphism 
\begin{equation}
    \label{eq: fixed point pullback on rings}
P \otimes_{R^W} R = H^*_{T_K}(X) \xrightarrow{i_{T_K}^*} H^*_{T_K}(X^{T_K}) = \bigoplus_{w \in W} P_w,
\end{equation}
so to prove the lemma, we need to determine the image of an element $f \otimes g \in P \otimes_{R^W} R$ under $i_{T_K}^*$. The algebra $P \otimes_{R^W} R$ is generated by $P \otimes 1$ and $1 \otimes X(T)_\Q$, so it suffices to determine the image of $p \otimes 1$ and $1 \otimes \chi$ for $p \in R$ and $\chi \in X(T)_\Q$.

The map (\ref{eq: fixed point pullback on rings}) is compatible with the left $P=H^*_{T_K}(\mathrm{pt})$-module structures on either side because the commutativity of the the diagram 
\[
\begin{tikzcd}
X \arrow[rd, "\pi"'] \arrow[rr, hookleftarrow, "i_{T_K}"]& & X^{T_K}  \arrow[dl, "\pi_{T_K}"] \\
& \mathrm{pt} &
\end{tikzcd}
\]
implies the commutativity of the diagram 
\[
\begin{tikzcd}
H^*_{T_K}(X) \arrow[rr, "i_{T_K}^*"] & & H^*_{T_K}(X^{T_K}) \\
& H^*_{T_K}(\mathrm{pt}) \arrow[ul, "\pi^*"] \arrow[ur, "\pi_{T_K}^*"'] &
\end{tikzcd}
\]
Hence 
\[
i_{T_K}^*(p \otimes 1) = p \cdot i_{T_K}^*(1 \otimes 1) = p \cdot (1, \ldots, 1) = (p, \ldots, p).
\]

It remains to determine the image of $1 \otimes \chi$. First, note that we have a natural ring homomorphism $P \rightarrow H_{T_K}^*(X^{T_K})$ given by the Borel homomorphism. The construction is similar to that in the proof of Lemma~\ref{lem: standards are equiv cohomology}: for $\chi \in X(T_K)$ and $w \in W$, construct $\C^\times$-bundles
\begin{align*}
    ET_{K} \times_{T_K} G \times_B \C^\times_\chi &\xrightarrow{\mu^\chi} ET_K \times_{T_K} G \times_B \mathrm{pt} \simeq ET_K \times_{T_K} X\\
    ET_K \times_{T_K} wB \times_B \C^\times_\chi &\xrightarrow{\mu_w^\chi} ET_K \times_{T_K} wB \times_B \mathrm{pt} \simeq BT_K,
\end{align*}
where $\chi$ is inflated trivially to a character of $B$. By taking first Chern classses, we obtain ring homomorphisms 
\begin{align*}
P &\xrightarrow{\varphi}H^*_{T_K}(X); \chi \mapsto c_1(\mu^\chi), \text{ and } \\
P &\xrightarrow{\varphi_w} H^*_{T_K}(x_w); \chi \mapsto c_1(\mu_w^\chi).
\end{align*}
Because $\mu^\chi$ and $\mu^\chi_w$ fit into the Cartesian square
\[
\begin{tikzcd}
ET_K \times_{T_K} G \times_B \C^\times_\chi \arrow[r, "\mu^\chi"] & ET_K \times_{T_K} G \times_B \mathrm{pt} \simeq ET_K \times_{T_K} X \\
ET_K \times_{T_K} wB \times_B \C^\times_\chi \arrow[u, hookrightarrow] \arrow[r, "\mu_w^\chi"] & ET_K \times_{T_K} wB \times_B \mathrm{pt} \simeq BT_K \arrow[u, hookrightarrow], 
\end{tikzcd}
\]
the diagram 
\[
\begin{tikzcd}
& H^*_{T_K}(X) \arrow[dd, "i^*_{T_K}"] \\
P  \arrow[ur, "\varphi", end anchor={[yshift=1ex]}] \arrow[dr, "\displaystyle{\bigoplus_{w \in W}\varphi_w}"', end anchor={[yshift=-1ex]}, end anchor = {[xshift=-1ex]}] &  \\
& \displaystyle{\bigoplus_{w \in W} H^*_{T_K}(x_w)}
\end{tikzcd}
\]
commutes. This gives a natural ring homomorphism $\bigoplus_{w \in W} \varphi_w: P \rightarrow H^*_{T_K}(X^{T_K})$. The image of $1 \otimes \chi \in P \otimes_{R^W} R$ is then given by pre-composing this ring homomorphism with $\phi:R \rightarrow P$:
\[
1 \otimes \chi \mapsto \bigoplus_{w \in W} 1 \otimes \varphi_w \circ \phi(\chi). 
\]
In the proof of Lemma~\ref{lem: standards are equiv cohomology}, we showed that the right action of $g \in R$ on $p \in P \simeq H^*_{T_K}(x_w)$ obtained from $\varphi_w \circ \phi$ is given by 
\[
p \cdot g = p\varphi(wg), 
\]
so (\ref{eq: fixed point pullback on rings}) sends 
\[
1 \otimes \chi \mapsto (\phi(wx))_{w \in W}. 
\]
Hence for $f \otimes g \in P \otimes_{R^W} R$, 
\[
i_{T_K}^*: f \otimes g \mapsto (f\phi(wg))_{w \in W}.
\]
This completes the proof. \end{proof}

Using the $(P,R)$-bimodule structure of $\mathbb{H}^\bullet_{T_K}(i_{T_K}^*\rres \mc{G})$, we can now show that the images of $i_{T_K}^* \circ \rres \circ F$ and $i_{T_K}^* \circ \rres \circ G$ agree. As before, we set $i:=i_\mc{Q}: \mc{Q} \hookrightarrow X$. 

\begin{lemma}
\label{lem: equating images}
Let $F, G, \rres, i_{T_K}^*$ be as in (\ref{eq: F and G}), (\ref{eq: restriction}), and (\ref{eq: localization}). Then the images of $i_{T_K}^* \circ \rres \circ F$ and $i_{T_K} \circ \rres \circ G$ agree. 
\end{lemma}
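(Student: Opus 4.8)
The plan is to unwind both composite maps explicitly on a generating morphism and check they land on the same element of $\mathbb{H}^\bullet_{T_K}(i_{T_K}^\ast\rres\mc{G})$, using the explicit bimodule description from Lemma~\ref{lem: bimodule structure on fixed points}. Let $w_0 \in {^KW^\theta}$ be such that $x_{w_0}\in\mc{Q}$; by Lemma~\ref{lem: standards are equiv cohomology} we may identify $\mathbb{H}^\bullet_K(\bm{IC}_\mc{Q})$ with $P_{w_0}$, and the closed orbit $\mc{Q}$ contains exactly the $T$-fixed points $\{x_{vw_0}\mid v\in W_K\}$ by Lemma~\ref{lem: fixed points in closed orbits}. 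The key observation is that both $F$ and $G$ are $(P,R)$- (indeed $(P^K,R)$-) module maps: $F$ is built from the adjunction maps $a,b,c$ in the pentagon, each of which is $H^*_K$-linear and hence (after $\rres$ and $i_{T_K}^\ast$) $(P,R)$-linear; $G = d\circ\mathbb{H}^\bullet_K$ is $(P^K,R)$-linear since $\mathbb{H}^\bullet_K$ is a functor to $(P^K,R)\mathrm{-gbim}$ and $d$ is the evaluation-at-$1$ inclusion. Since $\mathbb{H}^\bullet_{T_K}(i^\ast_{T_K}\rres\mc{G})\cong\bigoplus_{w\in W}(P_w)^{\oplus q_w}$ as $(P,R)$-bimodules and a homomorphism $\bm{IC}_\mc{Q}\to\mc{G}$ corresponds under $F$ (resp.\ $G$) to an element that is determined by its image under restriction to the $T$-fixed points inside $\mc{Q}$, it suffices to check agreement after projecting to the $\mc{Q}$-summands $\bigoplus_{v\in W_K}(P_{vw_0})^{\oplus q_{vw_0}}$.

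First I would show that both $i_{T_K}^\ast\circ\rres\circ F$ and $i_{T_K}^\ast\circ\rres\circ G$, restricted to the fixed-point summands coming from $\mc{Q}$, factor through the localization map for the orbit $\mc{Q}$ itself. Concretely, the inclusion $i:\mc{Q}\hookrightarrow X$ and the $T_K$-fixed points $\mc{Q}^{T_K}=\{x_{vw_0}\mid v\in W_K\}$ give a commutative square relating $i^!\mc{G}$ (resp.\ $i^\ast\mc{G}$) on $\mc{Q}$ to their further restriction to $\mc{Q}^{T_K}$; since $\mc{Q}$ is a flag variety for $K$, the localization theorem applies on $\mc{Q}$ and identifies $\mathbb{H}^\bullet_{T_K}(\rres i^!\mc{G})$ with a sub-bimodule of $\bigoplus_{v\in W_K}P_{vw_0}$-valued functions, determined by GKM-type conditions along one-dimensional $T_K$-orbits in $\mc{Q}$. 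Under this identification, $F$ sends $\varphi:\bm{IC}_\mc{Q}\to\mc{G}$ to the element obtained by: applying the $(i_!,i^!)$-unit, restricting to fixed points of $\mc{Q}$, and pushing forward to $X$; unwinding, this is exactly the tuple $(\varphi|_{x_{vw_0}})_{v\in W_K}\in\bigoplus P_{vw_0}$ placed into $\mathbb{H}^\bullet_{T_K}(i^\ast_{T_K}\rres\mc{G})$ via the fixed-point pushforward, which on fixed points is just the inclusion of a direct summand (no Euler-class twisting, since the fixed points are isolated).

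Next I would unwind $G$. The map $\mathbb{H}^\bullet_K(\varphi)\colon P_{w_0}=\mathbb{H}^\bullet_K(\bm{IC}_\mc{Q})\to\mathbb{H}^\bullet_K(\mc{G})$ is a $(P^K,R)$-bimodule map, hence is determined by the image of $1\in P_{w_0}$, an element $m:=\mathbb{H}^\bullet_K(\varphi)(1)\in\mathbb{H}^\bullet_K(\mc{G})$ killed by $\ker(R\to P_{w_0})$; the map $d$ then sends $\mathbb{H}^\bullet_K(\varphi)$ to $m$. Applying $\rres$ and then $i^\ast_{T_K}$, $m$ becomes $i^\ast_{T_K}(\rres m)\in\mathbb{H}^\bullet_{T_K}(i^\ast_{T_K}\rres\mc{G})$; using the Cartesian squares from the proof of Lemma~\ref{lem: standards are equiv cohomology} together with the explicit formula $i^\ast_{T_K}(f\otimes g)=(f\phi(wg))_{w\in W}$ from Lemma~\ref{lem: bimodule structure on fixed points}, one computes that the $x_{vw_0}$-component of $i^\ast_{T_K}(\rres m)$ equals the value at $x_{vw_0}$ of the original sheaf map $\varphi$ — this is the same compatibility of Chern-class pullbacks that was already established in Lemma~\ref{lem: standards are equiv cohomology}, now applied fixed-point by fixed-point. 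Thus both routes produce the tuple $(\varphi|_{x_{vw_0}})_{v\in W_K}$, so the images agree.

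The main obstacle I anticipate is the bookkeeping around the $(i_!,i^!)$ versus $(i_\ast,i^\ast)$ adjunctions: $F$ is defined using the $!$-adjunction (it is $c\circ b\circ a$, where $b$ uses $i^!$), while the natural description of $G$ passes through $i^\ast$, and one must use $i_!=i_\ast$ for the closed embedding $i$ to reconcile them — this is exactly the identification already carried out in the commuting pentagon of Lemma~\ref{lem: commuting pentagon}, so the cleanest approach is to invoke that commutativity and reduce the whole claim to: the composite $i^\ast_{T_K}\circ\rres$, restricted to the image of the injection $c\colon\mathbb{H}^\bullet_K(i_!i^!\mc{G})\to\mathbb{H}^\bullet_K(\mc{G})$, lands in and surjects onto the image of $i^\ast_{T_K}\circ\rres\circ d$. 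Said differently, once Lemma~\ref{lem: commuting pentagon} is in hand, $F$ and $G$ have the same image in $\mathbb{H}^\bullet_K(\mc{G})$ is \emph{not} automatic (that is the content of fullness), but their images in $\mathbb{H}^\bullet_{T_K}(i^\ast_{T_K}\rres\mc{G})$ coincide because after localizing to fixed points the $!$- and $\ast$-restrictions to $\mc{Q}$ agree up to the invertible Euler class of the (zero-dimensional, hence trivial) normal data at each isolated fixed point $x_{vw_0}$. I would make this precise by writing the localization square for $i$ and for $i_{T_K}$ simultaneously and chasing $1\in P_{w_0}$ through; the only real computation is matching the right $R$-action, which is handled verbatim by the Chern-class argument in Lemma~\ref{lem: standards are equiv cohomology} and the formula in Lemma~\ref{lem: bimodule structure on fixed points}.
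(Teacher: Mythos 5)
There is a genuine gap, and it sits at the heart of what the lemma is for. The two maps $F$ and $G$ have \emph{different domains}: $F$ starts from $\Hom_K(\bm{IC}_\mc{Q},\mc{G})$, while $G$ starts from the a priori larger space $\Hom_{\mathrm{bim}}(\mathbb{H}^\bullet_K(\bm{IC}_\mc{Q}),\mathbb{H}^\bullet_K(\mc{G}))$, and the whole point of the lemma is to show their images in $\mathbb{H}^\bullet_{T_K}(i_{T_K}^\ast\rres\mc{G})$ coincide so that fullness follows. Your computation only ever evaluates $G$ on elements of the form $\mathbb{H}^\bullet_K(\varphi)$ for $\varphi\in\Hom_K(\bm{IC}_\mc{Q},\mc{G})$; showing that $i_{T_K}^\ast\rres F(\varphi)$ and $i_{T_K}^\ast\rres G(\mathbb{H}^\bullet_K(\varphi))$ agree is just Lemma~\ref{lem: commuting pentagon} post-composed with $i_{T_K}^\ast\circ\rres$, and it yields only the inclusion $\im(i_{T_K}^\ast\rres F)\subseteq\im(i_{T_K}^\ast\rres G)$. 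The substantive direction --- that every bimodule homomorphism $\psi$, including those not known to come from a sheaf map, localizes into the image of $F$ --- is exactly what your argument does not address; your appeal to ``the $!$- and $\ast$-restrictions agreeing up to an invertible Euler class'' compares adjunctions for the fixed-point inclusion but says nothing about the image of $G$ on its full domain.

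To close the gap you need two intrinsic characterizations, both present in the paper's proof. First, for $\im G$: an arbitrary $\psi$ is a $(P^K,R)$-bimodule map out of $H^\ast_K(\mc{Q})\simeq P_v$, so $\psi(1)$ is annihilated by $p\otimes 1-1\otimes v^{-1}r$ for all $p\in P^K$, $r\in\phi^{-1}(p)$; using the explicit formula of Lemma~\ref{lem: bimodule structure on fixed points}, this annihilation forces the component of $i_{T_K}^\ast\rres\psi(1)$ at $x_w$ to vanish unless $w\in W_Kv$, giving $\im(i_{T_K}^\ast\rres G)=\im i_{T_K}^\ast\cap\bigoplus_{w\in W_Kv}(P_w)^{\oplus q_w}$. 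Second, for $\im F$: you must show not merely that $F(\varphi)$ localizes to a tuple supported on $W_Kv$, but that \emph{every} element of $\im i_{T_K}^\ast$ supported on $W_Kv$ arises this way; the paper gets this from the short exact sequence $0\to\mathbb{H}^\bullet_{T_K}(i_!i^!\mc{G})\to\mathbb{H}^\bullet_{T_K}(\mc{G})\to\mathbb{H}^\bullet_{T_K}(j_\ast j^\ast\mc{G})\to 0$ (a parity-sheaf consequence) combined with the localization theorem applied to each term. Without these two steps your argument establishes one inclusion that was already known and leaves the reverse inclusion --- the one fullness actually requires --- unproven.
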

\begin{proof}
In the proof of Lemma~\ref{lem: commuting pentagon}, we showed that the image of $F$ in $\mathbb{H}_K^\bullet(\mc{G})$ is 
\[
 \left\{ \Q_X \xrightarrow{\varphi} \mc{G} \mid \varphi \text{ factors as } \begin{tikzcd} \Q_X \arrow[r, "\overset{*}{\eta}_{\Q_X}"']  \arrow[rr, bend left, "\varphi"] & \bm{IC}_\mc{Q} \arrow[r, "\psi"'] & \mc{G} \end{tikzcd} \text{ for some }\psi \in \Hom_K(\bm{IC}_\mc{Q}, \mc{G}) \right\}, 
\]
where $\overset{*}{\eta}:\mathbbm{1} \rightarrow i_* i^*$ is the unit of the adjunction $(i^*, i_*)$. 

We can also identify the image of $G$ in $\mathbb{H}_K^\bullet(\mc{G})$. As elements of $\im G$ are of the form $\psi(1)$ for some $\psi \in \Hom_\mathrm{bim}(H^*_K(\mc{Q}), \mathbb{H}_K^\bullet(\mc{G}))$, and $H^*_K(\mc{Q}) \simeq P_v$ as $P^K \otimes_{R^W}R$-modules for some $v \in {^KW^\theta}$ (Lemma~\ref{lem: standards are equiv cohomology}), any element $g = \psi(1) \in \im G$ has the property that for any $p \in P^K$ and $r \in \varphi^{-1}(p)$, 
\begin{align}
\label{eq: r-action}
p \otimes 1 \cdot g = p \otimes 1 \cdot \psi(1) &= \psi(p \otimes 1 \cdot 1) \\
&= \psi(p) \nonumber\\
&= \psi(1 \otimes v^{-1} r \cdot 1) \nonumber\\
&= 1 \otimes v^{-1} r \cdot \psi(1) = 1 \otimes v^{-1}r \cdot g. \nonumber
\end{align}
Moreover, if $g \in \mathbb{H}_K^\bullet(\mc{G})$ satisfies (\ref{eq: r-action}), there exists $\psi \in \Hom_{\mathrm{bim}}(H^*_K(\mc{Q}), \mathbb{H}_K^\bullet(\mc{G}))$ defined by $\psi(1) = g$ such that $G(\psi)=g$, so 
\[
\im G = \left\{ \varphi \in \mathbb{H}_K^\bullet(\mc{G}) \mid (p \otimes 1 - 1 \otimes v^{-1}r) \cdot \varphi = 0 \text{ for all } p \in P^K  \text{ and } r \in \phi^{-1}(p)\right\}.
\]

We will show that under $i_{T_K}^* \circ \rres$, $\im F$ and $\im G$ map to the same subset of $\mathbb{H}_{T_K}^\bullet(i^*_{T_K} \rres \mc{G})$. Let $\varphi  \in \im F$. Under $\rres$ and $i^*_{T_K}$, $\varphi$ maps to 
\[
\Q_{X^{T_K}} = \bigoplus_{w \in W} \Q_w \xrightarrow{i_{T_K}^* \rres \varphi} i_{T_K}^* \rres \mc{G} = \bigoplus_{w \in W} (\Q_w)^{\oplus q_w}.
\]
Because $\varphi$ factors through $\bm{IC}_\mc{Q}$, $i_{T_K}^* \rres \varphi$ must factor through $i_{T_K}^* \bm{IC}_\mc{Q}$. The $T_K$-fixed points in $\mc{Q}$ correspond to Weyl group elements in the same $W_K$-coset as $v$ (Lemma~\ref{lem: fixed points in closed orbits}), so 
\[
i_{T_K}^* \bm{IC}_\mc{Q} = \bigoplus_{w \in W_K v} \Q_w.
\]
 Hence $i_{T_K}^* \rres \varphi$ vanishes outside of $W_Kv$, and we have an inclusion 
\[
\im i_{T_K}^* \circ \rres \circ F \subseteq \im i_{T_K}^* \bigcap \bigoplus_{w \in W_K v} (R_w)^{\oplus q_w}.
\]
We claim that this is actually an equality
\begin{equation}
\label{eq: imF}
\im i_{T_K}^* \circ \rres \circ F = \im i_{T_K}^* \bigcap \bigoplus_{w \in W_K v} (R_w)^{\oplus q_w}.
\end{equation}
Indeed, recall that if we denote by $i$ the inclusion of $\mc{Q}$ into $X$ and $j$ the inclusion of the complement, we have 
\[
\Hom_K(\IC_\mc{Q}, \mc{G}) \simeq \Hom_{D^b_K(\mc{Q})} (\Q_\mc{Q}, i^! \mc{G})
\]
(see \eqref{eq: two}) and the short exact sequence
\begin{equation}
    \label{eq: useful ses}
    0 \rightarrow \mathbb{H}^\bullet_{T_K}(i_!i^! \mc{G}) \rightarrow \mathbb{H}^\bullet_{T_K}(\mc{G}) \rightarrow \mathbb{H}_{T_K}^\bullet(j_* j^* \mc{G}) \rightarrow 0
\end{equation}
given by standard parity sheaf arguments. Applying the localization theorem to every term in \eqref{eq: useful ses} implies that an element in $\mathbb{H}^\bullet_{T_K}(\mc{G})$ (represented by a tuple $(f_w)_{w \in W}$) is in the image of $\mathbb{H}_{T_K}^\bullet(i_!i^!\mc{G})$ in \eqref{eq: useful ses} if and only if $f_w = 0$ for all $w$ outside of $W_Kv$, so \eqref{eq: imF} holds.

Now let $\varphi \in \im G$. As both $\rres$ and $i_{T_K}^*$ are compatible with $(P^K, R)$-bimodule structures, we have that $(p \otimes 1 - 1 \otimes v^{-1}r)\in P^K \otimes_{R^W} R$ annihilates $i_{T_K}^* \rres \varphi = (p_w^{q_w}) _{w \in W} \in \bigoplus_{w \in W} (P_w)^{q_w}$ for all $p \in P^K$ and $r \in \phi^{-1}(p)$. By Lemma~\ref{lem: bimodule structure on fixed points}, for $p \in P^K$ and $r \in \varphi^{-1}(p)$, 
\begin{align}
\label{eq: left action}
p \otimes 1 \cdot i_T^* \rres \varphi &= (p p_w^{q_w})_{w \in W}, \text{ and }\\
\label{eq: right action}
1 \otimes {v^{-1} r} \cdot i_{T_K}^* \rres \varphi &= (\phi(wv^{-1}r) p_w^{q_w})_{w \in W}.
\end{align}
The actions (\ref{eq: left action}) and (\ref{eq: right action}) agree if and only if $w \in W_K v$. Hence
\begin{equation}
\label{eq: imG}
\im i_T^* \circ \rres \circ G = \im i_T^* \bigcap \bigoplus_{w \in W_K v}(P_w) ^{\oplus q_w}.
\end{equation}
As (\ref{eq: imF}) and (\ref{eq: imG}) align, we are done.
\end{proof}

This completes the proof of Theorem~\ref{thm: fully faithful}, and establishes an equivalence of categories between $\mc{M}_{LV}^0$ and $\mc{N}_{LV}^0$. By Theorem~\ref{thm: geometric categorification}, we have established the main result of our paper. 

\begin{corollary}
\label{cor: main theorem}
Let $\mc{N}_{LV}^0$ be the category in Definition~\ref{def: LV category} and $M_{LV}^0$ the $\bm{H}$-module in Theorem~\ref{lem: closed orbits generate the block}. As right $\bm{H}$-modules,
\[
[\mc{N}_{LV}^0]_\oplus \simeq M_{LV}^0.
\]
\end{corollary}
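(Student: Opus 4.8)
The plan is to deduce the statement by combining the two halves of Theorem~\ref{thm: main theorem} with the already-established geometric categorification Theorem~\ref{thm: module structure of Mgeom}. First, Theorem~\ref{thm: image of hypercohomology} shows that $\HH^\bullet_K\colon \mc{M}^0_{LV}\to \mc{N}^0_{LV}$ is essentially surjective, and Theorem~\ref{thm: fully faithful} shows it is fully faithful; hence $\HH^\bullet_K$ is an equivalence of additive categories $\mc{M}^0_{LV}\simeq \mc{N}^0_{LV}$. An equivalence of additive categories induces an isomorphism of split Grothendieck groups, so $[\mc{M}^0_{LV}]_\oplus \simeq [\mc{N}^0_{LV}]_\oplus$ as abelian groups. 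It remains to check that this isomorphism is one of right $\bm{H}$-modules and then to identify the left-hand side via Theorem~\ref{thm: module structure of Mgeom}.

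The second step is to verify that the equivalence $\HH^\bullet_K$ intertwines the categorical $\mc{H}$-action on $\mc{M}^0_{LV}$ with the categorical $\SBim$-action on $\mc{N}^0_{LV}$, under the identification $\HH^\bullet_B\colon \mc{H}\xrightarrow{\sim}\SBim$ of \eqref{eq: hypercohomology is an equivalence}. Since $\mc{M}^0_{LV}$ is generated by right convolution with the objects $\bm{IC}_s$ (via $\Theta_s=\pi_s^\ast\pi_{s\ast}(-)[1]\simeq (-)*\bm{IC}_s$, Lemma~\ref{lem: push pull is convolution with ICs}) and $\mc{N}^0_{LV}$ is generated by $-\otimes_R B_s$, and $\HH^\bullet_B(\bm{IC}_s)=B_s$ by \eqref{eq: hypercohomology of ICs}, it suffices to invoke Lemma~\ref{lem: push pull is tensoring with Bs}, which gives a natural isomorphism $\HH^\bullet_K(\mc{F}\Theta_s)\simeq \HH^\bullet_K(\mc{F})\otimes_R B_s$ for all $\mc{F}\in\mc{M}^0_{LV}$. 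Passing to Grothendieck groups and using that $[\mc{H}]_\oplus\simeq \bm{H}\simeq[\SBim]_\oplus$ (Theorems~\ref{thm: geometric categorification} and \ref{thm: Soergel's categorification theorem}) with $[\bm{IC}_s]$ and $[B_s]$ both corresponding to $b_s$, we conclude that $[\HH^\bullet_K]$ is an isomorphism of right $\bm{H}$-modules $[\mc{M}^0_{LV}]_\oplus \xrightarrow{\sim} [\mc{N}^0_{LV}]_\oplus$.

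Finally, Theorem~\ref{thm: module structure of Mgeom} identifies $[\mc{M}^0_{LV}]_\oplus \simeq M^0_{LV}$ as right $\bm{H}$-modules; composing with the isomorphism from the previous step yields $[\mc{N}^0_{LV}]_\oplus \simeq M^0_{LV}$, as desired. There is no substantive obstacle here: all the real work has been done in Theorems~\ref{thm: image of hypercohomology}, \ref{thm: fully faithful}, and \ref{thm: module structure of Mgeom}, together with Lemma~\ref{lem: push pull is tensoring with Bs}. The only point requiring a modicum of care is the bookkeeping in the second step — namely, checking that the equivalence is genuinely one of module categories over the (identified) Hecke categories, so that the induced map on split Grothendieck groups is $\bm{H}$-linear rather than merely $\Z[v^{\pm 1}]$-linear. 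This follows formally from the naturality statement in Lemma~\ref{lem: push pull is tensoring with Bs}.
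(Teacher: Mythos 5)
Your proposal is correct and follows essentially the same route as the paper: the paper deduces the corollary immediately from the equivalence $\mc{M}_{LV}^0\simeq\mc{N}_{LV}^0$ (Theorems~\ref{thm: image of hypercohomology} and \ref{thm: fully faithful}) combined with Theorem~\ref{thm: module structure of Mgeom}, with the $\bm{H}$-linearity coming from Lemma~\ref{lem: push pull is tensoring with Bs} exactly as you describe. The only remark worth making is that for $\bm{H}$-linearity on split Grothendieck groups the object-level isomorphism $\HH^\bullet_K(\mc{F}\Theta_s)\simeq\HH^\bullet_K(\mc{F})\otimes_R B_s$ already suffices, so no naturality beyond what the lemma states is needed.
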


\begin{example}
\label{example: final formulas for SL(2,R)} $($An alternate basis of $M_{LV}$ for $SL_2(\R))$ The formulas in Example~\ref{example: bimodule category for SL(2,R)} describe the Lusztig--Vogan action on an alternate basis of $M_{LV}$ (the basis corresponding to $\IC$ sheaves). In particular, by passing to the split Grothendieck group in equations (\ref{eq: SBim action for SL2}), (\ref{eq: second SBim action for SL2}) and (\ref{eq: square of Bs}), we obtain the following equations in $M_{LV}$: 
\begin{align}
    \Q_{\mc{Q}_0} \cdot b_s &= m_s, \\
    \Q_{\mc{Q}_\infty} \cdot b_s &= m_s, \\ 
    m_s \cdot b_s &= vm_s + v^{-1} m_s,
\end{align}
where $m_s = v\Q_{\mc{Q}_0} + v \Q_{\mc{Q}_\infty} + \Q_\mc{O} \in M_{LV}$ corresponds to $[B_s] \in [\mc{N}_{\mathrm{LV}}^0]_\oplus$ (resp.\ $[\IC(X, \Q_\mc{O})] \in [\mc{M}_\mathrm{LV}^0]_\oplus$) under the isomorphism $[\mc{N}_\mathrm{LV}^0]_\oplus \simeq M_\mathrm{LV}$ (resp.\ $[\mc{M}_\mathrm{LV}^0]_\oplus \simeq M_\mathrm{LV}$). We note that in this example, the formulas describing the Lusztig--Vogan action are considerably cleaner in this basis. (Compare to equations (\ref{eq: T_s formulas for SL2}).) 
\end{example}


\appendix
\section{Dual pairs in monoidal categories}
\label{app: dual pairs}

In this appendix we recall the definition of a dual pair in a monoidal category and describe a procedure for constructing adjunctions from dual pairs. These results provide the categorical framework for the proof of Lemma~\ref{lem: hypercohomology commutes with adjunctions}. 

Recall that a {\em monoidal category} consists of the data of a category $\mc{A}$, along with a bifunctor $\otimes:\mc{A} \times \mc{A} \rightarrow \mc{A}$, a unit object $\mathbbm{1}$, and natural isomorphisms 
\begin{align*}
\alpha_{X, Y, Z}: (X \otimes Y) \otimes Z &\xrightarrow{\sim} X \otimes (Y \otimes Z) \hspace{3mm} \text{(associator)}, \\
\lambda_X: \mathbbm{1} \otimes X &\xrightarrow{\sim} X \hspace{3mm} \text{(left unitor), and } \\
\rho_X: X \otimes \mathbbm{1} &\xrightarrow{\sim} X \hspace{3mm} \text{(right unitor)}
\end{align*}
satisfying the pentagon and unit axiom \cite[Ch. 1]{TensorCategoriesBook}.

\begin{definition}
\label{def: dual pair}
A {\em dual pair} in a monoidal category $(\mc{A}, \otimes, \alpha, \mathbbm{1}, \rho, \lambda)$ is a pair $(X, X^*)$ of objects in $\mc{A}$ such that there exist morphisms 
\[
\beta: \mathbbm{1} \rightarrow X \otimes X^* \text{ (co-evaluation)}, \hspace{3mm} \gamma:X^* \otimes X \rightarrow \mathbbm{1} \text{ (evaluation)}
\]
making the diagrams 
\begin{equation}
  \label{eq: dual pair axiom 1}
\begin{tikzpicture}[commutative diagrams/every diagram]
\node (P0) at (90:2.3cm) {$\mathbbm{1} \otimes X$};
\node (P1) at (90+72:2cm) {$(X \otimes X^*) \otimes X$} ;
\node (P2) at (90+2*72:2cm) {\makebox[5ex][r]{$X \otimes (X^* \otimes X)$}};
\node (P3) at (90+3*72:2cm) {\makebox[5ex][l]{$X \otimes \mathbbm{1}$}};
\node (P4) at (90+4*72:2cm) {$X$};
\path[commutative diagrams/.cd, every arrow, every label]
(P0) edge node[swap] {$\beta \otimes X$} (P1)
(P1) edge node[swap] {$\alpha_{X, X^*, X}$} (P2)
(P2) edge node {$X \otimes \gamma$} (P3)
(P3) edge node[swap] {$\rho_X$} (P4)
(P0) edge node {$\lambda_X$} (P4);
\end{tikzpicture}
\end{equation}
and 
\begin{equation}
  \label{eq: dual pair axiom 2}
\begin{tikzpicture}[commutative diagrams/every diagram]
\node (P0) at (90:2.3cm) {$X^* \otimes \mathbbm{1}$};
\node (P1) at (90+72:2cm) {$X^* \otimes (X \otimes X^*) $} ;
\node (P2) at (90+2*72:2cm) {\makebox[5ex][r]{$(X^* \otimes X ) \otimes X^* $}};
\node (P3) at (90+3*72:2cm) {\makebox[5ex][l]{$ \mathbbm{1}\otimes X^*$}};
\node (P4) at (90+4*72:2cm) {$X^*$};
\path[commutative diagrams/.cd, every arrow, every label]
(P0) edge node[swap] {$X^* \otimes \beta$} (P1)
(P1) edge node[swap] {$\alpha_{X^*, X, X^*}$} (P2)
(P2) edge node {$\gamma \otimes X^*$} (P3)
(P3) edge node[swap] {$\lambda_{X^*}$} (P4)
(P0) edge node {$\rho_{X^*}$} (P4);
\end{tikzpicture}
\end{equation}
commute.
\end{definition}

Let $(\mc{A}, \otimes, \alpha, \mathbbm{1}, \rho, \lambda)$ be a monoidal category. Recall that a right $\mc{A}$-module consists of the data of a category $\mc{M}$, equipped with a bifunctor $\otimes: \mc{M} \times \mc{A} \rightarrow \mc{M}$ and natural isomorphisms 
\begin{align*}
    \alpha_{M, X, Y}: (M \otimes X) \otimes Y &\xrightarrow{\sim} M \otimes (X \otimes Y) \text{ (associator)}\\
    \rho_M : M \otimes \mathbbm{1} &\xrightarrow{\sim} M \text{ (right unitor) }
\end{align*}
satisfying the pentagon axiom and the unit axoim \cite[Ch. 7]{TensorCategoriesBook}. 
\begin{lemma}
\label{lem: dual pairs give adjunctions}
Let $(X, X^*, \beta: \mathbbm{1} \rightarrow X \otimes X^*, \gamma: X^* \otimes X \rightarrow \mathbbm{1})$ be a dual pair in a monoidal category $(\mc{A}, \otimes, \alpha, \mathbbm{1}, \rho, \lambda)$, and let $\mc{M}$ be a right $\mc{A}$-module. Define functors 
\[
\begin{tikzcd}
\mc{M} \arrow[r, shift left, "- \otimes X"] & \mc{M} \arrow[l, shift left,  " - \otimes X^*"]
\end{tikzcd}
\]
and morphisms 
\begin{align*}
    &\eta_M: M \xrightarrow{\rho_M^{-1}} M \otimes \mathbbm{1} \xrightarrow{M \otimes \beta} M \otimes X \otimes X^* \\
    &\epsilon_M: M \otimes X^* \otimes X \xrightarrow{M \otimes \gamma}  M \otimes \mathbbm{1} \xrightarrow{\rho_M} M 
\end{align*}
for all objects $M$ in $\mc{M}$. Then $(- \otimes X, - \otimes X^*)$ is an adjoint pair of functors, with unit $\eta: id \rightarrow - \otimes X \otimes X^*$ and counit $\epsilon: - \otimes X^* \otimes X \rightarrow id$.
\end{lemma}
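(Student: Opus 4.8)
The plan is to verify the two triangle identities (zig-zag equations) for the proposed unit $\eta$ and counit $\epsilon$ directly, reducing everything to the two dual-pair axioms \eqref{eq: dual pair axiom 1} and \eqref{eq: dual pair axiom 2} together with the coherence (pentagon and unit) axioms for the module category $\mc{M}$. Recall that to show $(-\otimes X, -\otimes X^*)$ is an adjoint pair with unit $\eta$ and counit $\epsilon$, it suffices to check that the composites
\[
(-\otimes X) \xRightarrow{\eta \otimes X} (-\otimes X \otimes X^* \otimes X) \xRightarrow{(-\otimes X)\otimes \gamma \text{ etc.}} (-\otimes X)
\]
and
\[
(-\otimes X^*) \xRightarrow{(-\otimes X^*)\otimes \eta} (-\otimes X^* \otimes X \otimes X^*) \xRightarrow{\epsilon \otimes X^* \text{ etc.}} (-\otimes X^*)
\]
are both the identity natural transformation. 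So the first step is to write out these two composites evaluated at an arbitrary object $M$ of $\mc{M}$, unwinding $\eta_M$ and $\epsilon_M$ into their defining pieces $\rho_M^{-1}$, $M\otimes\beta$, $M\otimes\gamma$, $\rho_M$.

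Second, I would massage each composite using naturality of the associators $\alpha$ and the right unitor $\rho$ in the module category, plus the coherence axioms, until the relevant portion looks like $M \otimes (\text{a composite living entirely in } \mc{A})$. Concretely: in the first triangle identity, after inserting associators appropriately, the string of morphisms acting on the "$X \otimes X^* \otimes X$" part becomes exactly the boundary of the pentagon in \eqref{eq: dual pair axiom 1} (tensored on the left by $M$ and using the module associator to transport the monoidal-category associator); so that axiom forces the composite to equal $M \otimes \lambda_X$ composed with the appropriate unitors, which by the triangle/unit coherence in $\mc{M}$ collapses to the identity on $M \otimes X$. The second triangle identity is handled symmetrically using \eqref{eq: dual pair axiom 2}. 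The only mild subtlety is bookkeeping the difference between the associator $\alpha_{X,X^*,X}$ internal to $\mc{A}$ and the module associator $\alpha_{M,X,X^*}$ — these are related by the module coherence axioms, and keeping track of which is which is the main place an error could creep in.

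The main obstacle, such as it is, is purely organizational: making sure every application of naturality and every coherence rewrite is legitimate, and that the two dual-pair diagrams are being invoked in precisely the form stated (note \eqref{eq: dual pair axiom 1} is the identity witnessing $X$ "dualizable on one side" and \eqref{eq: dual pair axiom 2} the other). There is no genuine mathematical difficulty — this is the standard fact that a dual pair in a monoidal category induces, for any module category, an ambidextrous-looking adjunction $(-\otimes X) \dashv (-\otimes X^*)$, and the proof is a diagram chase. I would present it compactly: state the two triangle identities to be checked, draw (or describe) the pasting diagram for each, point to which region is an instance of \eqref{eq: dual pair axiom 1} or \eqref{eq: dual pair axiom 2} and which regions commute by naturality or module coherence, and conclude. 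If space is tight one can simply remark that this is the evident module-category analogue of \cite[Ch.~2]{TensorCategoriesBook} on dual objects and leave the full chase to the reader, since it is entirely routine.
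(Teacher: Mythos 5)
Your proposal is correct and follows essentially the same route as the paper's proof: both verify the two zig-zag identities by factoring each into a square that is an instance of the corresponding dual-pair axiom \eqref{eq: dual pair axiom 1} or \eqref{eq: dual pair axiom 2} and triangles that commute by the unit and pentagon coherence axioms of the module category, with naturality of $\rho$ supplying naturality of $\eta$ and $\epsilon$. No substantive difference; your remark about tracking the associator of $\mc{A}$ versus the module associator is the same bookkeeping the paper dispatches by "ignoring associativity isomorphisms."
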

\begin{proof}
Naturality of $\rho$ guarantees that the collection of morphisms $\{\eta_M\}_{M \in \mc{M}}$ and $\{ \epsilon_M\}_{M \in \mc{M}}$ form natural transformations. To see that the zig-zag diagrams commute, we factor them as follows (ignoring associativity isomorphisms):
\begin{equation}
    \label{eq:zig-zag 1}
\begin{tikzcd}
M \otimes X \arrow[r, "\rho_M^{-1} \otimes X"] \arrow[dr, "id"'] & M \otimes \mathbbm{1} \otimes X \arrow[r, "M \otimes \beta \otimes X"] \arrow[d, "M \otimes \lambda_X"] & M \otimes X \otimes X^* \otimes X \arrow[d, "M \otimes X \otimes \gamma"] \\
& M \otimes X \arrow[dr, "id"'] & M \otimes X \otimes \mathbbm{1} \arrow[l, "M \otimes \rho_X"'] \arrow[d, "\rho_{M \otimes X}"] \\
& & M \otimes X 
\end{tikzcd}
\end{equation}
\begin{equation}
    \label{eq:zig-zag 2}
\begin{tikzcd}
M \otimes X^* \arrow[r, "\rho_{M \otimes X^*}^{-1}"] \arrow[dr, "id"'] & M \otimes X^* \otimes \mathbbm{1} \arrow[r, "M \otimes X^* \otimes \beta"] \arrow[d] & M \otimes X^* \otimes X \otimes X^* \arrow[d, "M \otimes \gamma \otimes X^*"] \\
& M \otimes X^* \arrow[dr, "id"'] & M \otimes \mathbbm{1} \otimes X^* \arrow[l] \arrow[d, "\rho_M \otimes X^*"] \\
& & M \otimes X^* 
\end{tikzcd}
\end{equation}

In (\ref{eq:zig-zag 1}), the commutativity of the top left triangle follows from the unit axiom in $\mc{M}$, commutativity of the top right square follows from the first dual pair axiom (\ref{eq: dual pair axiom 1}), and commutativity of the bottom right triangle follows from the pentagon axiom and unit axiom of $\mc{M}$, as in \cite[Prop.\ 2.2.4]{TensorCategoriesBook}. 

In (\ref{eq:zig-zag 2}), commutativity of the top left triangle follows from the pentagon axiom and unit axiom in $\mc{M}$, as in \cite[Prop.\ 2.2.4]{TensorCategoriesBook}, commutativity of the top right square follows from the second dual pair axiom (\ref{eq: dual pair axiom 2}), and commutativity of the bottom right triangle follows from the unit axiom in $\mc{M}$. 
\end{proof}

\bibliographystyle{alpha}
\bibliography{LV}

\end{document}